\documentclass{jams-l}

\usepackage{amssymb}
\usepackage{graphicx}
\usepackage[cmtip,all]{xy}
\usepackage[usenames,dvipsnames]{color}
\usepackage{upgreek}
\usepackage{multirow}
\usepackage{hyperref}
\definecolor{linkcolour}{rgb}{0,0.2,0.6}
\hypersetup{colorlinks,breaklinks,urlcolor=linkcolour, linkcolor=linkcolour}
\usepackage{colortbl}

\newtheorem{theorem}{Theorem}[section]
\newtheorem{lemma}[theorem]{Lemma}
\newtheorem{proposition}[theorem]{Proposition}
\newtheorem{corollary}[theorem]{Corollary}

\theoremstyle{definition}
\newtheorem{definition}[theorem]{Definition}

\newtheorem{convention}[theorem]{Convention}
\newtheorem{example}[theorem]{Example}

\theoremstyle{remark}
\newtheorem{remark}[theorem]{Remark}

\numberwithin{equation}{section}

\usepackage{geometry}
 \geometry{
 a4paper,
 left=35mm,
 right=35mm,
 top=35mm,
 bottom=35mm,
 }

\begin{document}

\title[Elim. of quot. in various loc. of premodels into models]{Elimination of quotients in various localisations of premodels into models}

\author{R\'{e}my Tuy\'{e}ras}
\address{epartment of Mathematics, Massachusetts Institute of Technology, 77 Massachusetts Avenue, Cambridge, MA 02139-4307, USA}
\curraddr{}
\email{rtuyeras@mit.edu}
\thanks{This research was carried out under an international Macquarie University Research Excellence Scholarship (iMQRES)}

\date{}

\dedicatory{}

\begin{abstract}
The contribution of this article is quadruple. It (1) unifies various schemes of premodels/models including situations such as presheaves/sheaves, sheaves/flabby sheaves, prespectra/$\Omega$-spectra, simplicial topological spaces/(complete) Segal spaces, pre-localised rings/localised rings, functors in categories/strong stacks and, to some extent, functors from a limit sketch to a model category \emph{versus} the homotopical models for the limit sketch; (2) provides a general construction from the premodels to the models; (3) proposes technics that allows one to assess the nature of the universal properties associated with this construction; (4) shows that the obtained localisation admits a particular presentation, which organises the structural and relational information into bundles of data. This presentation is obtained via a process called an \emph{elimination of quotients} and its aim is to facilitate the handling of the relational information appearing in the construction of higher dimensional objects such as weak $(\omega,n)$-categories, weak $\omega$-groupoids and higher moduli stacks.
\end{abstract}

\maketitle

\section{Introduction}\label{sec:introduction}
\subsection{Motivation 1} \label{ssec:motivation_1}
There is an abundant literature on how to construct an algebraic object from one of its presentations \cite{Lawvere1963, Kelly, FreydKelly, AdamekRosicky, Bourceux}---this process will be referred to as a \emph{localisation}. It is also well-known that the category of algebraic objects will satisfy strict universal properties if the objects themselves can be distinguished from their presentations by strict properties and, similarly, the category will usually satisfy weak universal properties if the objects can be distinguished from their presentations by weak properties, but little is known about how to derive strict universal properties for the category when the algebraic objects are only characterised by weak properties. One of the goals of the present paper is to address this lack.

 If we think of an algebraic object as a model for a limit sketch \cite{AdamekRosicky}, then algebraic objects can usually be distinguished from their presentations by lifting properties. Specifically, in the case of a limit sketch $D$, the presentations are given by the functors $D \to \mathbf{Set}$ while the models are given by those presentations $D \to \mathbf{Set}$ that preserve the chosen limits of $D$; as shown in \cite{FreydKelly}, this type of property can be expressed in terms of a lifting property in the functor category $\mathbf{Set}^{D}$. On the other hand, the localisation of a presentation $X$ into a model $Q(X)$ is endowed with a \emph{reflection} property, which equips $X$ with a map $i:X \to Q(X)$ such that for every arrow $f:X \to M$ where $M$ is a model, there exists an arrow $f':Q(X) \to M$ making the following diagram commute.
 
\[
\xymatrix{
X\ar[d]_{i}\ar[r]^{f}&M\\
Q(X)\ar[ru]_{f'}&
}
\]

If the lifting properties characterising the models are strict, then one is able to show that the reflection is strict, that is to say that the arrow $f':Q(X) \to M$ is unique for any given $f:X \to M$. For instance, in \cite{FreydKelly}, 
one starts by characterising the models via strict lifting properties and the strictness of these is naturally carried over to the reflection property. This is the same idea in \cite{Kelly} where the author is able to construct a (strict) reflection from the strict lifting properties inherently associated with well-pointed endofunctors.

On the other hand, if the lifting properties are weak, then one is usually only able to show that the reflection is weak, in which case the arrow $f':Q(X) \to M$ is only proven to exist. For instance, in \cite{AdamekTholen}, the \emph{small object argument} (recall that this argument comes from Homotopy Theory, which mostly, if not only, deals with weak lifting properties; see \cite{Hirschhorn,Quillen67}) is used to construct weak reflections for subcategories of injective objects.
  Similarly, in Garner's framework \cite{Garner09, Garner10}, the small object argument is generalised to construct weak homomorphisms of $\infty$-categories \emph{\`{a} la} Batanin \cite{Batanin98} while the possibility to construct $\infty$-categories is assumed: the reason being that $\infty$-categories are objects that can be characterised by strict lifting properties \cite[Corollary 1.19]{Berger} while weak homomorphisms between these do not require such a strictness.

However, to the best of my knowledge, there has not been any published work explaining how to obtain strict reflection properties from weak lifting constructions such as the small object argument. In fact, it is not even clear how to obtain strict universal properties from weak characterisations in general. For instance, in \cite{EssentialWeakTholen}, essential weak factorisation systems were introduced to study injective and projective hulls, which are meant to capture canonical envelops of injective and projective objects, with the goal of strengthening the lifting properties associated with the usual associated replacements (see intro. \emph{ibid.}), but it is not said if these hulls can satisfy strict universal properties; in fact \cite{TholenInjnotNat} gives a hint that this is unlikely and states that only an \emph{almost} reflection property can be shown. The paper even emphasises the need of methods to pass from a weak setting to a strict one in its last section \cite[Section 4]{TholenInjnotNat}, in which it is asked if it is possible to know when strict universal properties, such as naturality and functoriality, can be shown to be satisfied by a given weak reflection.

In an area of Mathematics in which the weakening of definitions and theories (e.g., $\infty$-topos theory, univalent homotopy type theory, devired algebraic geometry, etc.) have now taken more and more importance,
but whose language---Category Theory---also takes advantage of strict universal properties, it is, indeed, of interest to know if there are theorems that allow one to determine whether a set of weak lifting properties defining a type of algebraic object can provide the associated category with a strict universal property---at least stricter than the expected one.

The present paper is an effort to provide a set of technics and theorems  showing that such a scheme is possible. Precisely, one of the main contributions of this paper is to propose a language (or context) in which it is possible to say if a category of algebraic objects that are characterised by weak lifting properties can be shown to possess a strict universal property (see Section \ref{ssec:result_1}). We will even show that the proposed argument is a generalisation of Quillen's small object argument (see Corollary \ref{cor:factorisation_SOA}) and will thus answer one of our earlier questions. The theorems given herein are meant to be generalised in future work (in which the boundary between strictness and weakness will become blurrier), the purpose being to pave the way for the construction of models taking their values in higher categorical structures.

\subsection{Motivation 2}\label{ssec:motivation_2}
The second matter that motivates the present paper is the so-called \emph{elimination of quotients} mentioned in the title, which basically comes down to conclude that the way we encode an object is as important as its inherent properties. For instance, it is this same type of ideas that motivated

$\triangleright$ the introduction of the \emph{elimination of imaginaries}, in Shelah's Model Theory \cite{Poizat83,Shelah90}, in which quotients are eliminated in the form of definable quotient maps by using the various sorts available from the ambient (multi-sorted) theory;

$\triangleright$ the development of the concept of \emph{covering space}, in Algebraic Topology \cite{Hatcher}, that provides ways to blow up the quotients acting on a space and to bring out its homotopical properties by studying the automorphisms acting on the resulting quotient maps;

$\triangleright$ the definition of \emph{stack}, in Algebraic Geometry \cite{Letter_Grothendieck}, due to the existence of non-trivial automorphisms that may occur because of the different ways a moduli space can be represented. 

To really understand how the coding of objects, and, even that of sets, matters from the point of view of their algebraic structures, let us consider an example. Take a set $X$ and consider the coproduct $E := X+X$ encoded by the following logical specification. 
\[
\{(i,x)~|~x \in X\text{ and }i \in \{0,1\}\}
\]

If one takes $R$ to be the binary relation on $X+X$ that identifies $(0,x)$ with $(1,x)$ for every $x \in X$, then the quotient $E/R$ is obviously isomorphic to $X$. However, in much the same way as it is fundamental to not confuse an isomorphism with an identity, it is, here, important to understand that $E/R$ is not \emph{same} as $X$. From the point of view of the present paper, the difference between $X$ and $E/R$ lies in the implicit algebraic structure with which $E/R$ is equipped. 
This object can indeed be seen as a surjection $p:E \to X$ equipped with two sections $s_0,s_1:X \hookrightarrow E$ whose cospan structure defines a universal cocone, and this structure is noticeable even thought $E/X$ is isomorphic to a mere set. In other words, the quotient $E/R$ can be seen as living way beyond the category of sets, for the simple reason that isomorphisms are not the same as identities.

All this shows that the way we construct algebraic objects matters quite substantially, mainly because the algebraic properties coming along with their representations can turn out to be either very useful or extremely cumbersome (e.g., $X$ versus $E/R$).

The goal of our so-called `elimination of quotients' will be to eliminate the cumbersome quotients that may occur in the representation of algebraic objects and organise, in the form of quotient maps, the useful ones. Here, I feel important to mention that such a re-organisation is possible because our objects are characterised by \emph{weak} lifting properties, which allow more freedom than strict ones.

If we look at how Kelly \cite[Theorem 10.2]{Kelly} constructs algebraic objects, and to be more specific, models for some limit sketch $(D,K)$, where $K$ denotes the set of limit cones associated with $D$, we see that he isolates each cone $c \in K$ and constructs, for each of these and every presentation $X:D \to \mathbf{Set}$, a well-pointed endofunctor $i_c:X \to P_c(X)$ where the object $P_c(X)$ completes the presentation $X$ with operations required by the sub-theory $(D,\{c\})$ of $(D,K)$. To complete $X$ with respect to the operations required by the whole theory $(D,K)$, he pushes out the wide span made of the arrows $i_c$, for all $c \in K$, to obtain a well-pointed endofunctor $i:X \to P(X)$. In particular, each cone $c \in K$ is equipped with a factorisation as follows.
\[
\xymatrix@R-20pt{
X\ar@{->}[rd]_{i_c}\ar@{->}[rr]^i&&P(X)\\
&*+!L(.0){P_c(X)}\ar@{-->}[ru]_{j_c}&
}
\]

Finally, the reflector $X \to Q(X)$ associated with the theory $(D,K)$ is computed through a transfinite composition of the following form.
\[
\xymatrix{
X \ar[r]^{i}&P(X) \ar[r]^{P(i)} & P^2(X)\ar[r]^{P^2(i)} & P^3(X)\ar[r]^{P^3(i)} & \dots
}
\]

Isolating each cone $c$ in $K$ and proceeding to a pushout of the well-pointed endofunctors $X \to Q_c(X)$ is a necessity if one wants to use the very neat and compact formalism of well-pointed endofunctors. However, this pushout procedure, as elegant as it may be,
adds more cumbersome quotients than useful ones. Precisely, the wide pushout of the objects $Q_c(X)$ looks more like the type $(X+X)/R \cong X$ because it mostly identifies all the copies of $X$ living in each $Q_c(X)$ through the maps $X \to Q_c(X)$. 

As we can imagine, these cumbersome quotients become much more abundant when enriching our algebraic objects to other categories than $\mathbf{Set}$ and it would not be imaginable to be willing to do combinatorics with representations that repeat and contract the same information over and over.
Not only do the results proposed in the present article avoid these cumbersome quotients, but they also bring out the hidden algebraic structure of the useful ones, where, here, the term `algebraic structure' is used in the sense previously discussed for the quotient $E/R$.

In fact, our results go in the direction of Lawvere's work \cite{Lawvere1963}, in which the concept of \emph{congruence} is used to construct a reflector from the category of presentations to that of models by showing how the quotients act on the free algebra functor applied on the presentations \cite[Theorem 5.1]{Lawvere1963}. It is worth noting that the concept of congruence has given rise to a very rich theory regarding the characterisation of congruence lattices for varieties of algebras \cite{KissKearnes,McKenzie}. Our results can therefore be seen as a refined extension of Lawvere's work. This refinement is presented in the form of a formal language that could be seen as suitable for a generalisation of Congruence Lattice Theory to more general objects than those proposed by Lawvere.

\subsection{Results for motivation 1}\label{ssec:result_1}
In the same fashion as there are categories of models for a theory \cite{AdamekRosicky}, or categories of fibrants objects \cite{Brown} or even systems of fibrant objects \cite{ProWar}, it is, here, proposed the definition of \emph{system of premodels} (see Definition \ref{def:System:premodels}), which gathers in the same structure a category of \emph{presentations} together with maps along which the \emph{models} are defined via weak lifting properties. An interesting feature of this structure is that it encompasses many examples that are meant to be captured \emph{operibus citatis}; particular examples can also be found in \cite{Maltsi,Rezk,Joyal_Tierney,Stanculescu}. There is also a novelty in the fact that the maps along which the weak lifting properties are defined are not maps in the category of values or that of presentations, but in a category whose level of definition allows one to verify whether the subcategory of the resulting models possesses a strict reflection property.
For instance, this allows us to retrieve and explain the strict reflection property associated with the models for a limit sketch. 

If we restrict ourselves to algebraic objects defined by limit-preserving functors, say valued in a category in which choices of colimits are obvious, a \emph{system of premodels} is given by
\begin{itemize}
\item[(1)] a limit sketch $(D,K)$;
\item[(2)] a category $\mathcal{C}$ with enough limits and pushouts, if not all;
\item[(3)] a subcategory $\mathcal{P} \hookrightarrow \mathcal{C}^D$;
\item[(4)] for every cone $c \in K$, a set $\mathtt{V}_c$ of commutative squares in $\mathcal{C}$, say as follows.
\[
\xymatrix{
\mathbb{S}\ar[r]^{\gamma_1}\ar[d]_{\gamma_2}&\mathbb{D}_1\ar[d]^{\beta_1}\\
\mathbb{D}_2\ar[r]_{\beta_2}&\mathbb{D}'
}
\]
\end{itemize}

Before giving the definition of a model for this structure, we need to recall that a cone $c$ in $K$ is a natural transformation $\Delta_{A}(\mathtt{ou}(c)) \Rightarrow \mathtt{in}(c)$ where $\mathtt{ou}(c)$ is an object in $D$, $\mathtt{A}$ is a small category, $\Delta_{A}(\mathtt{ou}(c))$ is the obvious constant functor $\mathtt{A} \to \mathbf{1} \to D$ picking out the object $\mathtt{ou}(c)$ in $D$ and $\mathtt{in}(c)$ is some functor $\mathtt{A} \to D$.
Now, a \emph{model} for the previous structure is a functor $D \to \mathcal{C}$ in $\mathcal{P}$ such that for every $c \in K$, the canonical arrow 
\[
P(\mathtt{ou}(c)) \to \mathrm{lim}\, P \circ \mathtt{in}(c),
\]
for which we shall prefer the more compact notation $P[c]:=\mathrm{lim}\, P \circ \mathtt{in}(c)$, is orthogonal in the arrow category $\mathcal{C}^{\mathbf{2}}$ to every commutative square in $\mathtt{V}_c$ (as shown below).
\[
\xymatrix@C-1.75pc{
&*+!R(.3){P\mathtt{ou}(c)}\ar[rr]^-{}&&P[c]\\
\mathbb{S}\ar[ur]^x\ar[d]_{\gamma_2}\ar[rr]^{\gamma_1}&&\mathbb{D}_1\ar[ru]^y\ar[d]_{\beta\delta_1}&\\
\mathbb{D}_2\ar@{-->}[ruu]\ar[rr]_{\beta\delta_2}&&\mathbb{D}'\ar@{-->}[ruu]&
}
\]

In the case of limits sketches, we retrieve the usual definition of model by taking, for every cone $c \in K$, the following pair of commutative squares in $\mathbf{Set}$; the leftmost one encodes the surjectiveness of the map $P(\mathtt{ou}(c)) \to P[c]$ while the other one encodes its injectiveness.
\[
\xymatrix{
\emptyset\ar[r]^{\gamma_1}\ar[d]_{\gamma_2}&\mathbf{1}\ar[d]^{\beta_1}\\
\mathbf{1}\ar[r]_{\beta_2}&\mathbf{1}
}
\quad\quad\quad\quad\quad\quad
\xymatrix{
\mathbf{1}+\mathbf{1}\ar[r]^{\gamma_1}\ar[d]_{\gamma_2}&\mathbf{1}\ar[d]^{\beta_1}\\
\mathbf{1}\ar[r]_{\beta_2}&\mathbf{1}
}
\]

One of the very advantages of this language is to allow the specification of more general arrows than bijections such as weak equivalences (see characterisation in \cite[Lemma 7.5.1]{Simpson}). This explains why this language is expected to be generalised to higher categorical structures in the future.

Now, our main result, given in Theorems \ref{th:localisation_universal} and  \ref{th:admissible_quotiented_factorisable_model}, can be simplified in terms of Theorem \ref{theo:elimitation_reflection_simple_case}, in which items (i) and (ii) are in fact redundant. The statement makes use of the arrow $\beta:\mathbb{S}' \to \mathbb{D}'$, which denotes, for every commutative square contained in $\mathtt{V}_c$ and every $c \in K$, the universal arrow induced by the pair of arows $\beta_1$ and $\beta_2$ under the pushout (denoted by $\mathbb{S}'$) of the arrows $\gamma_1$ and $\gamma_2$.

\begin{theorem}\label{theo:elimitation_reflection_simple_case}
Suppose that $\mathcal{P} \hookrightarrow \mathcal{C}^D$ is an identity. For every object $A$ in $\mathcal{P}$, there exists an arrow $i:A \to Q(A)$ in $\mathcal{P}$ (Theorem \ref{th:admissible_quotiented_factorisable_model}) such that for every arrow $f:A \to X$ in $\mathcal{P}$ where $X$ is a model for the system of premodels, if
\begin{itemize}
\item[(i)] the map $\beta$ is an epimorphism for every square in $\mathtt{V}_c$ and every $c \in K$;
\item[(ii)] the arrow $X(\mathtt{ou}(c)) \to X[c]$ is a monomorphism in $\mathcal{C}$;
\item[(iii)] the arrow $\beta_1$ is an epimorphism for every square in $\mathtt{V}_c$ and every $c \in K$,
\end{itemize}
then there exists a unique arrow $g:Q(A) \to X$ making the following diagram commute (Theorems \ref{th:localisation_universal} and \ref{th:admissible_quotiented_factorisable_model}).
\[
\xymatrix{
A\ar[r]^{f}\ar[d]_{i}&X\\
Q(A)\ar[ru]_{g}&
}
\]
\end{theorem}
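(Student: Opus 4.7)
The construction of $i:A \to Q(A)$ is produced by Theorem \ref{th:admissible_quotiented_factorisable_model}, and the \emph{existence} of $g$ by Theorem \ref{th:localisation_universal}; the substantive content here is the \emph{uniqueness} of $g$, i.e.\ the promotion of the weak reflection property coming out of the small-object-style argument to a strict one. The plan is to record the iterative shape of $Q(A)$, use it to build $g$ stagewise, and then exploit hypothesis (ii) to kill ambiguity at each cell.

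First, I would recall the structure of $i:A \to Q(A)$ from Theorem \ref{th:admissible_quotiented_factorisable_model}: it is presented as a transfinite composition $A = A_{0} \to A_{1} \to \cdots \to A_{\alpha} \to \cdots$ in $\mathcal{P}$, each step $A_{\alpha} \to A_{\alpha+1}$ being obtained by simultaneously pushing out, for every $c \in K$, every square in $\mathtt{V}_{c}$, and every solution-seeking pair $(\mathbb{S}' \to A_{\alpha}(\mathtt{ou}(c)),\ \mathbb{D}' \to A_{\alpha}[c])$ compatible with the canonical arrow, the universal map $\beta:\mathbb{S}' \to \mathbb{D}'$. To build $g$, I would extend $f_{0}:=f$ transfinitely: at a successor stage, the orthogonality of $X(\mathtt{ou}(c)) \to X[c]$ against every square in $\mathtt{V}_{c}$ (the defining property of the model $X$) provides a filler for each attached cell, which, combined with $f_{\alpha}$ under the universal property of the pushout defining $A_{\alpha+1}$, yields $f_{\alpha+1}$; at a limit stage, $f_{\alpha}$ is obtained from the colimit property. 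Passing to the colimit of the whole tower produces $g:Q(A) \to X$ as in Theorem \ref{th:localisation_universal}.

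To prove uniqueness, I would assume two extensions $g,g':Q(A) \to X$ of $f$ along $i$, and show by transfinite induction that their restrictions $g_{\alpha},g'_{\alpha}:A_{\alpha} \to X$ coincide. The base case is the hypothesis $g \circ i = g' \circ i = f$, and the limit case follows from the colimit property of $A_{\alpha}$. For the successor step, since $A_{\alpha+1}$ is obtained from $A_{\alpha}$ by attaching cells along the maps $\beta:\mathbb{S}' \to \mathbb{D}'$, it is enough to check that, on each such cell, the two induced maps $\mathbb{D}' \to X(\mathtt{ou}(c))$ agree. Both are fillers of the same commutative square
\[
\xymatrix{
\mathbb{S}'\ar[r]\ar[d]_{\beta}&X(\mathtt{ou}(c))\ar[d]\\
\mathbb{D}'\ar[r]&X[c],
}
\]
so they have identical composite with the right vertical arrow; by hypothesis (ii) this arrow is a monomorphism, forcing the two fillers to coincide. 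Hence $g_{\alpha+1}=g'_{\alpha+1}$, completing the induction.

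I would close by explaining why hypotheses (i) and (iii) do not appear explicitly in the uniqueness argument: they are consumed upstream, inside Theorem \ref{th:admissible_quotiented_factorisable_model}, to ensure that the pushouts defining each $A_{\alpha+1}$ compute correctly in the functor category $\mathcal{P}=\mathcal{C}^{D}$ and that the resulting $Q(A)$ is itself a model for the system of premodels---which is consistent with the paper's earlier remark that in the simplified setting (i) and (ii) are redundant. The main obstacle I anticipate is precisely this bookkeeping at the successor step: the cells are glued in $\mathcal{C}^{D}$ rather than in $\mathcal{C}$, so to turn the pointwise mono argument above into an equality of natural transformations $g_{\alpha+1}=g'_{\alpha+1}$, one must carefully identify the attaching diagrams as left Kan extensions of data in $\mathcal{C}$ along evaluation functors $D \to \mathcal{C}$, and verify that the orthogonality condition propagates through these extensions. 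This is the delicate but routine part of the argument.
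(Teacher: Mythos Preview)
Your proposal has a genuine gap rooted in a misreading of the construction of $Q(A)$. The successor step $A_\alpha \to A_{\alpha+1}$ is \emph{not} obtained by pushing out along the stems $\beta:\mathbb{S}'\to\mathbb{D}'$. In the paper's construction (the $\Upgamma$-realisation of Section~\ref{sssec:Combinatorial_constructors}), the merolytic tome pushes out along two kinds of maps: the \emph{seeds} $\gamma_2:\mathbb{S}\to\mathbb{D}_2$ (the analytic part, which freely adds cells) and the transitive quotientors $\upnu(\vartheta)$ (the quotient part, which imposes relations coming from the \emph{previous} step via the rectification of Section~\ref{ssec:Rectification}). This two-phase, ``degree~2'' mechanism is precisely what distinguishes the elimination of quotients from Quillen's argument; see Example~\ref{exa:elimination_of_quotients_sketches} and Remark~\ref{rem:structure_narrative_degree_2}.

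Because of this, your successor-step uniqueness argument fails. For a cell attached along a seed $\gamma_2$, the two candidate maps $\mathbb{D}_2 \to X(\mathtt{ou}(c))$ induced by $g_{\alpha+1}$ and $g'_{\alpha+1}$ do \emph{not} a priori have the same composite into $X[c]$: the constraint tying that composite down is only imposed at step $\alpha+2$, through the obstruction square (Diagram~(\ref{eq:obstruction_square_define_quotient})). This is exactly why Theorem~\ref{th:localisation_universal} assumes that $g$ factors through \emph{two} successive realisations, and why its proof uses all three hypotheses: condition~(i) ($\beta$ epi, i.e.\ $\upnu(\vartheta)$ epi) handles the quotient cells; condition~(iii) ($\beta_1=\beta\delta_1$ epi) is used to cancel on the $\mathbb{D}'$-side via $\uppi_1(\theta,s)$; and only then does condition~(ii) finish the job. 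Your claim that (i) and (iii) are ``consumed upstream'' in Theorem~\ref{th:admissible_quotiented_factorisable_model} is incorrect---that theorem uses neither. You will need to rework the successor step along the lines of the proof of Theorem~\ref{th:localisation_universal}, tracking the analytic and quotient parts separately and using the two-step factorisation.
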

As one can see, the previous theorem explains, in the language of systems of premodels, why one can expect a strict reflection property in the case of set-valued models for a limit sketch. 

In Theorem \ref{theo:elimitation_reflection_simple_case}, the assumption that the inclusion $\mathcal{P} \hookrightarrow \mathcal{C}^D$ is an identity will be replaced, in Theorems \ref{th:localisation_universal} and  \ref{th:admissible_quotiented_factorisable_model}, with the notion of \emph{effectiveness}, which translates a variation of the concept of \emph{definability} in $\mathcal{P}$ (notice the parallelism with the concept of elimination of imaginaries given in Section \ref{ssec:motivation_2}). As will be shown in Theorem \ref{th:admissible_quotiented_effective_model}, this concept of definability becomes trivial if $\mathcal{P}$ is taken to be equal to $\mathcal{C}^D$.

\subsection{Results for Motivation 2}\label{ssec:result_2}
From the point of view of motivation 2, the present paper mainly focus on models for limit sketches in $\mathbf{Set}$, so that we will mostly state our results from the perspective of these objects. This will nevertheless give an idea of what our theorems look like when generalised to other categories. The proof of the results stated below will be recapitulated in the conclusion of the present paper (Section \ref{sec:conclusion}).

We now consider a limit sketch $(D,K)$, where, for simplicity only, $K$ is supposed to be a finite set of finite-limit cones. The proposition given below states that it is possible to construct the reflector of any presentation in a very specific way, which is not visible from Kelly's construction \cite{Kelly}.

\begin{proposition}\label{prop:Elimination_quotient_intro_1}
For every presentation $X$ in $\mathbf{Set}^{D}$ and ordinal $i\in \omega$, there exist a pair of objects $E_i(X)$ and $B_i(X)$ and an epimorphism $p_i:B_i(X)+E_i(X) \to B_{i+1}(X)$ such that the reflector of $X$ for the theory $(D,K)$ is given by the transfinite composition of the following sequence of arrows  in $\mathbf{Set}^{D}$.
\[
\xymatrix{
B_0(X)+E_0(X) \ar[r]^-{p_0}& B_1(X)+E_1(X) \ar[r]^-{p_1} & B_2(X)+E_2(X) \ar[r]^-{p_2} & \dots
}
\]

In addition, the mappings $X \mapsto E_i(X)$ and $X \mapsto B_i(X)$ are functorial and the arrow $p_i:B_i(X)+E_i(X) \to B_{i+1}(X)$ is natural in $X$.
\end{proposition}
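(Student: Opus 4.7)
The plan is to build the sequence $(B_i(X), E_i(X), p_i)$ inductively from the general reflector construction of Theorem \ref{th:admissible_quotiented_factorisable_model}, reorganising each iteration step so that the underlying well-pointed endofunctor is exhibited as a coproduct followed by an epimorphic quotient. The object $B_i(X)$ is meant to gather the ``bulk'' structural and relational content accumulated so far, while $E_i(X)$ records the fresh elements that must be added at stage $i+1$ to satisfy the lifting conditions associated with the cones of $K$. The epimorphism $p_i$ then performs only the identifications strictly required by the theory, thereby realising the elimination of quotients discussed in Section \ref{ssec:motivation_2}.

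First I would set $B_0(X):=X$ and $E_0(X):=\emptyset$, and then perform one step of the construction. For each cone $c \in K$, I would use the two squares of $\mathtt{V}_c$ exhibited in Section \ref{ssec:result_1}: the square with $\gamma_1\colon\emptyset \to \mathbf{1}$ yields the fresh elements missing from $(B_i(X)+E_i(X))(\mathtt{ou}(c))$ relative to the limit $(B_i(X)+E_i(X))[c]$, which I would collect, coproduct over all $c \in K$, into $E_{i+1}(X)$; the square with $\gamma_2\colon\mathbf{1}+\mathbf{1}\to \mathbf{1}$ yields the identifications that must be imposed, which are realised as a quotient map $B_i(X)+E_i(X) \to B_{i+1}(X)$. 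Since $K$ is finite and each cone is a finite-limit cone, these coproducts and quotients stay in $\mathbf{Set}^{D}$ and are computed pointwise. The map $p_i$ is automatically an epimorphism as it is a quotient map in $\mathbf{Set}^D$.

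To identify the transfinite composition of the $p_i$ with the reflector, I would invoke Theorem \ref{th:admissible_quotiented_factorisable_model}, which produces the unit $X \to Q(X)$ as a colimit of a well-pointed endofunctor, together with Theorem \ref{th:localisation_universal}, which supplies the strict universal property once the lifting conditions (i)--(iii) of Theorem \ref{theo:elimitation_reflection_simple_case} hold; in the present $\mathbf{Set}$-valued situation these conditions are immediate, so the iteration $B_i(X)+E_i(X) \xrightarrow{p_i} B_{i+1}(X) \hookrightarrow B_{i+1}(X)+E_{i+1}(X)$ converges after countably many steps to $Q(X)$. Functoriality of $X \mapsto B_i(X)$ and $X \mapsto E_i(X)$, and naturality of $p_i$, follow because each building block is assembled from pushouts, coproducts and pointwise limits in $\mathbf{Set}^{D}$, all of which depend naturally on $X$.

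The main obstacle I expect is precisely the point that distinguishes this presentation from Kelly's: showing that the general reflector really factors through the specific shape ``coproduct followed by epimorphism'' at every stage, so that the bulk $B_i(X)$ is never recomputed from scratch and the fresh data $E_i(X)$ is never identified with old data against its will. This is the content of the effectiveness argument referenced in Section \ref{ssec:result_1}, and it is what allows the quotients to be reorganised into the clean form $p_i\colon B_i(X)+E_i(X) \twoheadrightarrow B_{i+1}(X)$ rather than dissolved into a tangled wide pushout as in \cite{Kelly}. A secondary but routine verification is that termination after $\omega$-many steps follows from the finiteness of $K$ and of each cone shape, which ensures that the object at the limit satisfies the lifting properties of all cones simultaneously.
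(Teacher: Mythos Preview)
Your proposal follows the same overall architecture as the paper: build the sequence from the factorisation machinery of Sections~\ref{sec:Constructors}--\ref{sec:Combinatorial_cat}, read off the coproduct-plus-epimorphism shape from the analysis of the two vertebrae in $\mathbf{Set}$, and identify the colimit with the reflector via the results of Section~\ref{sec:Universal_property}. The paper's own proof (recapitulated in Section~\ref{ssec:conclusion_motivation_2}) does exactly this, pointing to Example~\ref{exa:elimination_of_quotients_sketches} for the specific form of $[f/\mathfrak{q}](d)$.

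There is, however, one substantive omission in your description of the quotient $p_i$. You attribute the identifications defining $B_{i+1}(X)$ solely to the vertebra $v_1$ (the square with seed $\mathbf{1}+\mathbf{1}\to\mathbf{1}$). In the paper's construction the quotient at stage $i+1$ has \emph{two} sources: the relations coming from $\Upgamma_{A,1}^K(!_X)(d)$ (your $v_1$), and the relations coming from the rectification $\tilde{\mathfrak{q}}(d)=\mathrm{Rec}(!_{A_{i-1}},\mathfrak{u}_{i-1})$ produced at the previous stage (Section~\ref{ssec:Rectification} and Example~\ref{exa:elimination_of_quotients_sketches_the_binary_relations}). The first forces injectivity of $S_i(\mathtt{ou}(c))\to S_i[c]$; the second glues the freely added tuples in $E_i(X)$ back to their projections in $B_i(X)$, and without it the sequence would not converge to a model. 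This rectification mechanism is precisely what makes the narrative have degree $2$ (Remark~\ref{rem:structure_narrative_degree_2}) and is the technical content of item~(2) in Theorem~\ref{theo:Elimination_quotient_intro}. Your ``obstacle'' paragraph gestures at the issue but misidentifies it as effectiveness; effectiveness (Definition~\ref{def:effectiveness_quotiented_models}) concerns whether the pushout factorisation lifts to $\mathcal{P}$, which is automatic here since $\mathcal{P}=\mathbf{Set}^D$, whereas the missing ingredient is the rectification quotient carried from one stage to the next.

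A minor point: the paper derives functoriality of the reflector $X\mapsto G(X)$ from its strict universal property (Remark~\ref{rem:final_universality}), whereas you argue directly from the functoriality of the building blocks. For the individual stages $B_i$, $E_i$, $p_i$ your argument is the relevant one, and it is correct.
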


Of course, one could argue that the map $X \to P(X)$ coming from Kelly's construction can be factorised into an epimorphism and a monomorphism $X \twoheadrightarrow B(X) \hookrightarrow P(X)$, so that we might recover the previous form, but it is not obvious whether $P(X)$ can be decomposed into a functorial sum $B(X)+E(X)$ in $\mathbf{Set}^{D}$, mainly because the quotients that acts on $P(X)$ might prevent from doing so. In fact, there is a much stronger way to assess the difference between Kelly's construction and the previous one, which is given below.

\begin{proposition}\label{prop:Elimination_quotient_intro_2}
For every presentation $X$ in $\mathbf{Set}^{D}$, there exist a sequence of epimorphism $(p_i:B_i(X)+E_i(X) \to B_{i+1}(X))_{i \in \omega}$, as given in Proposition \ref{prop:Elimination_quotient_intro_1}, for which there is a natural transformation of transfinite sequences
\[
\xymatrix{
B_0(X)+E_0(X) \ar[r]^-{p_0} \ar[d]_{\alpha_0}& B_1(X)+E_1(X) \ar[r]^-{p_1} \ar[d]_{\alpha_1}& B_2(X)+E_2(X) \ar[r]^-{p_2} \ar[d]_{\alpha_2}& \dots\\
X \ar[r]^-{i}& P(X) \ar[r]^-{P(i)} & P^2(X) \ar[r]^-{P^2(i)} & \dots\\
}
\]
for which $\alpha_0$ is the identity on $X$ and if there exists a pair of dashed arrows making the following triangle commute for $n>1$, then the front arrow must factorise through the canonical map $B_n(X) \to B_n(X)+E_n(X)$ and the object $P^{n-1}(X)$ is a model for the limit sketch $(D,\{c\})$.
\[
\xymatrix@C-20pt@R-5pt{
&&B_{n-1}(X)+E_{n-1}(X)\ar[rr]^-{p_{n-1}}\ar[d]^{\alpha_{n-1}}&&*+!L(.7){B_n(X)+E_n(X)}\ar[d]^{\alpha_n}\\
&&P^{n-1}(X)\ar[rr]|\hole&&P^{n}(X)\\
P^{n-1}(X)\ar@{-->}[rruu]\ar[rru]^{\mathrm{id}}\ar[rr]_-{i_c}&&*+!L(.9){P_cP^{n-1}(X)}\ar@{-->}[rruu]\ar[rru]_-{j_c}&&
}
\]
\end{proposition}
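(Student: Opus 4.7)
The comparison $\alpha_\bullet$ is constructed by induction on $n$. Unravelling Proposition \ref{prop:Elimination_quotient_intro_1}, one identifies $B_0(X)+E_0(X)$ with $X$ and sets $\alpha_0=\mathrm{id}_X$. Assuming $\alpha_{n-1}$ has been built, the object $B_n(X)+E_n(X)$ decomposes into a ``base'' summand $B_n(X)$, which is inherited from the previous stage via the epimorphism $p_{n-1}$, and an ``extension'' summand $E_n(X)$, which records one fresh generator for each unresolved lifting problem against a cone $c\in K$. Each such generator is tagged by a Kelly factorisation $P^{n-1}(X)\stackrel{i_c}{\to}P_cP^{n-1}(X)\stackrel{j_c}{\to}P^n(X)$, and I would define $\alpha_n$ to act on the $E_n(X)$-summand by sending the generator to its image under $j_c$ and on the $B_n(X)$-summand by the formula forced by $\alpha_n\circ p_{n-1}=P^{n-1}(i)\circ\alpha_{n-1}$. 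Commutativity of the natural transformation square is then automatic, and naturality in $X$ follows from the fact that $p_{n-1}$ is an epimorphism (which pins down $\alpha_n$ on the image of $B_n(X)$) together with the functoriality of Kelly's construction.

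For the structural claim, let $\delta:P^{n-1}(X)\to B_{n-1}(X)+E_{n-1}(X)$ and $\varepsilon:P_cP^{n-1}(X)\to B_n(X)+E_n(X)$ denote the two dashed arrows. Commutativity of the back triangle yields $\varepsilon\circ i_c=p_{n-1}\circ\delta$, whose right-hand side factors through the canonical injection $B_n(X)\hookrightarrow B_n(X)+E_n(X)$ because, by Proposition \ref{prop:Elimination_quotient_intro_1}, $B_n(X)$ is precisely the image of $p_{n-1}$. Thus $\varepsilon\circ i_c$ lands in $B_n(X)$, and the pushout-nature of $P_c$---every element of $P_cP^{n-1}(X)$ is either obtained from $P^{n-1}(X)$ or identified with one by the relations newly imposed for cone $c$---propagates this factorisation to all of $\varepsilon$, yielding the first half of the conclusion.

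The model property is extracted by combining this factorisation with the remaining commuting triangle $\alpha_{n-1}\circ\delta=\mathrm{id}_{P^{n-1}(X)}$. Once $\varepsilon$ is known to be valued in $B_n(X)$, the composite $\alpha_n|_{B_n(X)}\circ\varepsilon:P_cP^{n-1}(X)\to P^n(X)$ agrees with $j_c$ while simultaneously admitting a retraction back to $P^{n-1}(X)$; the well-pointedness of Kelly's endofunctor $(P_c,i_c)$, which ensures $P_c(i_c)=i_{c,P_c(-)}$, then forces $i_c$ itself to be invertible. This is exactly the orthogonality condition defining a model for the sub-sketch $(D,\{c\})$ in the sense of Section \ref{ssec:result_1}.

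The main obstacle is the cone-by-cone structural fact that the image of $p_{n-1}$ coincides with the $B_n(X)$-summand and is disjoint from $E_n(X)$ inside $B_n(X)+E_n(X)$. This must be read off the explicit construction in Proposition \ref{prop:Elimination_quotient_intro_1}: one has to verify that the ``elimination of quotients'' truly separates inherited data from freshly added generators rather than conflating them. Once this disjointness is in place, both the factorisation and the model statements follow from the diagram chase outlined above.
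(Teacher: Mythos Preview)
Your construction of the comparison $\alpha_\bullet$ is essentially what the paper does in Example~\ref{exa:clarifies_description_colimits}, where the arrows $[!_{X_n}/\mathfrak{u}_n]\Rightarrow P^{n+1}(X)$ are built by matching the summands $D(\mathtt{ou}(c),d)\times X_n[c]$ with the corresponding free summands of Kelly's pushout. So the first paragraph is fine.

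The gap is in your second paragraph. The assertion that ``every element of $P_cP^{n-1}(X)$ is either obtained from $P^{n-1}(X)$ or identified with one by the relations newly imposed for cone $c$'' is false: this would make $i_c$ surjective, which is precisely what fails when $P^{n-1}(X)$ is not yet a model for $c$. The free summand $D(\mathtt{ou}(c),d)\times P^{n-1}(X)[c]$ genuinely contains elements not identified with anything in the image of $i_c$. The paper's argument (Example~\ref{exa:clarifies_description_colimits}) is different and more delicate: it works objectwise and uses that under the relation $R_1$ the \emph{specific} elements $(\rho_z,(x_z)_z)$ in $P_cP^{n-1}(X)(\mathtt{in}(c)(z))$ are identified with $x_z$, hence lie in the image of $i_c$, hence $\varepsilon$ sends them into $B_n$. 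Since the coproduct $B_n+E_n$ is preserved by the functorial action of $S_n$ on morphisms of $D$, naturality of $\varepsilon$ then forces $\varepsilon((\mathrm{id}_{\mathtt{ou}(c)},(x_z)))\in B_n$ as well (otherwise its image under $S_n(\rho_z)$ would still lie in $E_n$, contradiction), and hence $\varepsilon((t,(x_z)))\in B_n$ for every $t$. That is how the factorisation is obtained; your ``propagation'' step skips this and rests on a wrong premise.

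There is a second gap in your model argument. You invoke a ``retraction back to $P^{n-1}(X)$'' of $i_c$ in order to use well-pointedness, but no such retraction has been produced: the data give $\alpha_{n-1}\circ\delta=\mathrm{id}_{P^{n-1}(X)}$ and $\alpha_n\circ\varepsilon=j_c$, neither of which yields a map $P_cP^{n-1}(X)\to P^{n-1}(X)$ splitting $i_c$. The paper instead argues directly with the explicit relations: once $\varepsilon$ lands in $B_n$, the analysis of $R_0$ at $d=\mathtt{ou}(c)$ with $t=\mathrm{id}$ forces the comparison $P^{n-1}(X)(\mathtt{ou}(c))\to P^{n-1}(X)[c]$ to be surjective, and injectivity for $n>1$ comes from the fact that $P^{n-1}(X)$ already carries the quotient corresponding to $\Upgamma_{A,1}^K$ (see Example~\ref{exa:elimination_of_quotients_sketches_the_binary_relations}). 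Your well-pointedness idea could be made to work if you first extracted a genuine retraction, but as written the step is unsupported.
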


In other words, Kelly's construction has too many quotients to be non-trivially lifted to the elimination of quotients, and if a lift exists, then it cannot be in the free part $E_n(X)$, which means that, at rank $n$, the free operations added to satisfy the theory $(D,\{c\})$ are superfluous. 

Even though the natural transformation $\alpha$ is to identify free operations between each other, note that it cannot identify too much information either as the universal property of a reflector implies that the transfinite colimit of $\alpha$ provides an isomorphism between the two underlying reflectors of $X$.
\[
\xymatrix@R-10pt{
X\ar[r]\ar@{=}[d]&*+!L(.6){Q_{\mathrm{Elim}(X)}}\ar[d]^{\cong}\\
X\ar[r]&*+!L(.6){Q_{\mathrm{Kelly}}(X)}
}
\]

In fact, we will show that, in the case of models for a limit sketch, the so-called elimination of quotients takes the form given in Theorem \ref{theo:Elimination_quotient_intro}, in which every cone $c$ in $K$ is again viewed as a natural transformation $\rho:\Delta_{A}(\mathtt{ou}(c)) \Rightarrow \mathtt{in}(c)$ where $\mathtt{ou}(c)$ is an object in $D$, $\mathtt{A}$ is a small category, $\Delta_{A}(\mathtt{ou}(c))$ is the obvious constant functor $\mathtt{A} \to \mathbf{1} \to D$ picking out $\mathtt{ou}(c)$ in $D$ and $\mathtt{in}(c)$ is some functor $\mathtt{A} \to D$.

\begin{theorem}\label{theo:Elimination_quotient_intro}
For every presentation $X$ in $\mathbf{Set}^{D}$, there exist a sequence of epimorphisms as given in Proposition \ref{prop:Elimination_quotient_intro_2},  for which we will denote the coproduct object $B_i(X)+E_i(X)$ as a functor $S_i:D \to \mathbf{Set}$, such that
\begin{itemize}
\item[-] $B_0(X) = X$ and $E_0(X) =\emptyset$;
\item[-] $E_{i+1}(X)$ is the left Kan extension of the functor
\[
\begin{array}{lllll}
\hat{S}_i[\_]&:&K& \to &\mathbf{Set}\\
&&c& \mapsto &\mathrm{lim}\, S_i \circ \mathtt{in}(c)\\
\end{array}
\]
along the functor $\mathtt{ou}:K \to D$, where $K$ is seen as a discrete small category;
\[
\xymatrix{
D\ar@{-->}[rd]^{E_{i+1}(X)}&\\
K\ar@{}[ru]|<<<<{\rotatebox[origin=c]{115}{$\Rightarrow$}a_i}\ar[u]^{\mathtt{ou}}\ar[r]_-{\hat{S}_i[\_]}&\mathbf{Set}
}
\]
\item[-] the epimorphism $p_i$ is the quotient map $S_i \to B_{i+1}(X)$ making the following identifications: 
\begin{itemize}
\item[(1)] for every object $d$ in $D$, it identifies a pair $x,y \in S_i(d)$ if there exists a cone $c \in K$ and an arrow $t:\mathtt{ou}(c) \to d$ in $D$ for which the pushout of the canonical arrow $S_i(\mathtt{ou}(c)) \to \hat{S}_i[c]$ along $S_i(t):S_i(\mathtt{ou}(c)) \to S(d)$ maps $x$ and $y$ to the same element;
\[
~\quad\quad\quad\quad\quad\quad\quad\quad
\xymatrix{
S_i(\mathtt{ou}(c))\ar[d]\ar[r]^{S_i(t)}&S_i(d)\ar@{-->}[d]^{\pi}&x,y\ar@{|->}[d]\\
\hat{S}_i[c]\ar@{-->}[r]&\ast&\pi(x) = \pi(y)
}
\]
\item[(2)] for every object $d$ in $D$, it identifies a pair $x,y \in S_i(d)$ where $x \in E_{i}(X)(d)$ and $y \in B_{i}(x)(d)$ if there exists a cone $c \in K$, an object $z$ in the diagram $\mathtt{A}$ of $c$ and a morphism $t:\mathtt{in}(c)(z) \to d$ in $D$ such that $x$ and $y$ can be lifted to a common element in $\hat{S}_{i-1}[c]$ via the span $E_{i}(X)(d) \leftarrow \hat{S}_{i-1}[c] \rightarrow B_{i}(x)(d)$ made of the following composites.
\[
\xymatrix@C+10pt@R-15pt{
\hat{S}_{i-1}[c] \ar[r]^-{a_{i-1}}& E_{i}(X)(\mathtt{ou}(c)) \ar[r]^-{E_{i}(X)(\rho_z)}& E_{i}(X)(\mathtt{in}(c)(z))\ar[r]^-{E_{i}(X)(t)}& E_{i}(X)(d)\\
\hat{S}_{i-1}[c] \ar[r]_-{\mathrm{proj}_z} & S_{i-1}\mathtt{in}(c)(z) \ar[r]_-{S_{i-1}(t)}& S_{i-1}(d)\ar[r]_-{p_{i-1}} & B_{i}(X)(d)\\
}
\]
\end{itemize}
\end{itemize}
\end{theorem}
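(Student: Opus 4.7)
My plan is to construct the triple $(B_i(X), E_i(X), p_i)_{i \in \omega}$ by induction on $i$, working inside $\mathbf{Set}^D$, and then to show that the resulting transfinite composition realises the reflector promised by Proposition~\ref{prop:Elimination_quotient_intro_1}. Naturality in $X$ will be automatic at each stage, because every piece used in the construction (coproducts, left Kan extensions along $\mathtt{ou}$, and quotients by generated congruences) is functorial in $X$.

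For the base case, set $B_0(X) := X$, $E_0(X) := \emptyset$, so $S_0 = X$. For the inductive step, suppose $S_i$ has been defined. Since $K$ is discrete, the left Kan extension of $\hat{S}_i[\_]: K \to \mathbf{Set}$ along $\mathtt{ou}: K \to D$ admits the pointwise description
\[
E_{i+1}(X)(d) \;=\; \coprod_{c \in K} \hat{S}_i[c] \times \mathrm{Hom}_D(\mathtt{ou}(c), d),
\]
and the unit $a_i$ sends $x \in \hat{S}_i[c]$ to $[x, \mathrm{id}_{\mathtt{ou}(c)}]$. Define $\sim_i$ as the smallest equivalence relation on $S_i$ that is compatible with the action of $D$ and is generated by the identifications~(1) and~(2) of the statement (rule~(2) is vacuous for $i=0$). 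Set $B_{i+1}(X) := S_i/{\sim_i}$, let $p_i: S_i \twoheadrightarrow B_{i+1}(X)$ be the induced quotient map, and put $S_{i+1} := B_{i+1}(X) + E_{i+1}(X)$.

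To verify that $S_\infty := \mathrm{colim}_i\, S_i$ is the reflector of $X$, I would check the orthogonality properties attached to the system of premodels corresponding to the limit sketch $(D,K)$. Surjectivity of $S_\infty(\mathtt{ou}(c)) \to S_\infty[c]$ is witnessed by the Kan-extension summands: any element of $\hat{S}_i[c]$ at stage $i$ is hit by $a_i \in E_{i+1}(X)(\mathtt{ou}(c))$, and since the cones in $K$ are finite while $\omega$-indexed colimits commute with finite limits in $\mathbf{Set}$, surjectivity carries through to the colimit. Injectivity is enforced by identification~(1), which at stage $i$ glues together any two elements of $S_i(\mathtt{ou}(c))$ whose images in $\hat{S}_i[c]$ agree (taking $t = \mathrm{id}_{\mathtt{ou}(c)}$). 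Identification~(2) prevents duplication: whenever a newly introduced free element of $E_i(X)(d)$ was already represented by an element of $B_i(X)(d)$ via some diagram component of a cone, the two are collapsed at stage $i$. The universal property of $S_\infty$ as a reflector can then be extracted from Theorem~\ref{th:localisation_universal}, once the identification rules are recast as the output of the small object argument applied to the sets $\mathtt{V}_c$ described in Section~\ref{ssec:result_1}.

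The main obstacle I expect is establishing compatibility of the generators of $\sim_i$ with the $D$-action and with morphisms $f: X \to X'$ of presentations, so that $p_i$ is natural in $X$; the delicate point is that the span in rule~(2) involves the map $a_{i-1}$ produced by the Kan extension of $\hat{S}_{i-1}[\_]$, whose naturality must be propagated across inductive steps. A related subtlety is verifying that identification~(2) is \emph{minimal} in the appropriate sense, so that the lifting obstruction of Proposition~\ref{prop:Elimination_quotient_intro_2}---namely, that any diagonal through $P_c P^{n-1}(X) \to P^n(X)$ must factor through the $B_n(X)$-summand of $S_n$---is correctly inherited from the construction and is not destroyed by over-aggressive quotienting.
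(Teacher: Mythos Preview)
Your approach is genuinely different from the paper's. The paper does \emph{not} prove Theorem~\ref{theo:Elimination_quotient_intro} by a direct inductive construction followed by a direct verification of the reflector property. Instead, the paper first builds the reflector $\rho_A:A\to G(A)$ via the general small object argument for systems of premodels (Theorem~\ref{theo:factorisation_SOA}, Theorem~\ref{th:admissible_quotiented_factorisable_model}), establishes its universal property via Theorem~\ref{th:localisation_universal}, and only \emph{then} computes what the successive $\Upgamma^K$-realisations $[!_{A_n}/\mathfrak{u}_n]$ look like in the sketch case. Examples~\ref{exa:elimination_of_quotients_sketches} and~\ref{exa:elimination_of_quotients_sketches_the_binary_relations} carry out this computation and show that the $\Upgamma^K$-realisation has exactly the form $X(d)/R(d) + \sum_{c\in K} D(\mathtt{ou}(c),d)\times X[c]$ with $R(d)$ given by rules~(1) and~(2); the left-Kan-extension description is then read off in Section~\ref{ssec:conclusion_motivation_2}. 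So for the paper, Theorem~\ref{theo:Elimination_quotient_intro} is a computation of what an already-constructed, already-universal object looks like, not a standalone construction.

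Your proposal has a structural gap at the point where you invoke Theorem~\ref{th:localisation_universal}. That theorem is stated for the $\Upgamma$-factorisation produced by the paper's machinery (effective quotiented arrows, merolytic tomes, rectification), not for an arbitrary transfinite sequence satisfying the stated identification rules. To appeal to it you would first have to show that your hand-built $(S_i,p_i)$ \emph{is} the $\Upgamma^K$-factorisation sequence of $!_X$---but that identification is precisely the content of Examples~\ref{exa:elimination_of_quotients_sketches}--\ref{exa:clarifies_description_colimits}, which is what the paper uses to prove the theorem. Without that identification, you need an independent argument for uniqueness of the factorisation through $S_\infty$, and you do not supply one. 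A secondary gap: your injectivity argument (``rule~(1) enforces injectivity'') only shows that a pair in $S_i(\mathtt{ou}(c))$ with equal images in $\hat S_i[c]$ becomes identified in $B_{i+1}$; you still need to argue that no new failures of injectivity are introduced by the $E_{i+1}$-summand and that the process stabilises at the colimit, which is exactly where rule~(2) and the interaction with rule~(1) across stages must be analysed carefully.
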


Even though we have only discussed the finite-limit case, all of the previous propositions hold for non-finite limit-sketches. In this case, the ordinal $\omega$ becomes the cardinality of the limit-sketch (see the end of Section \ref{sssec:Croquis}) and the transfinite sequence of arrows $B_i(X)+E_i(X) \to B_{i+1}(X)$ needs to be defined such that $B_{\alpha}(X)$ is the transfinite colimits of all the arrows preceding the rank $\alpha$.

\subsection{Road Map}
The main results of the paper start to be developed from Section \ref{sec:Category_of_models_for_a_croquis}, while Sections \ref{sec:Background_notations_conventions} and  \ref{sec:Convergent_functor} give an account of various notations, conventions and technicalities. Specifically, Section \ref{sec:Background_notations_conventions} introduces a set of conventions meant to facilitate our notations while Section \ref{sec:Combinatorial_cat} focuses on a notion of smallness that will only be used in Section \ref{sec:Combinatorial_cat}.

Even if Section \ref{sec:Background_notations_conventions} does not sound so attractive, the reader might want to skim through this section to get used to specific notations such as $\iota_{\kappa}$ (Section \ref{ssec:Ordinals}); $\mathrm{col}_D$ (Section \ref{ssec:Limits_colimits}); $\xi_i$ as well as $\mathrm{col}_i$ (Section \ref{sssec:Universal_shifting}) and $\mathbf{B}_d^D(\_)$ (Section \ref{ssec:Bounded_diag}).

Section \ref{sec:Convergent_functor} defines a notion of smallness that generalises the usual one. Recall that one usually says that an object $D$ in some category $\mathcal{C}$ is small if for any functor \footnote{or, sometimes, any functor belonging to a certain classes of functors. This restriction generally arises in non-accessible categories such as in the category of topological spaces}.  defined from  the ordinal category $\omega$ to $\mathcal{C}$, say $F:\omega \to \mathcal{C}$, the following canonical map is a bijection. 
\[
\mathrm{col}_{i \in \omega}\mathcal{C}(D,F(i)) \to \mathcal{C}(D,\mathrm{col}_{i \in \omega} F(i)) 
\]

On the other hand, the smallness condition defined in Section \ref{sec:Convergent_functor} would be more of the following type. The property is now centred on the functor $F$ and not on the object $D$ any more; we then consider a set of objects $G$ in $\mathcal{C}$ and say that a functor $F:\omega +1 \to \mathcal{C}$ is $G$-convergent if the following canonical map is a bijection for every object $D \in G$.
\begin{equation}\label{eq:description_of_the_idea_of_convergence}
\mathrm{col}_{i \in \omega}\mathcal{C}(D,F(i)) \to \mathcal{C}(D,F(\omega)) 
\end{equation}

The reason for this change is that the image $F(\omega)$ will not always be a colimit of the form $\mathrm{col}_{i \in \omega} F(i)$. 

Then comes Section \ref{sec:Category_of_models_for_a_croquis}, in which is defined the notion of system of premodels. The difference with the simplified version given in Section \ref{ssec:result_1} and that of Section \ref{sec:Category_of_models_for_a_croquis} is that the canonical map $X\mathtt{ou}(c) \to X[c]$ is now constructed from various parts of the system of premodel structure, so that it is now of the form $X\mathtt{ou}(c) \to RX[c]$ where $R$ is a right adjoint endofunctor on $\mathcal{C}$. This right adjoint $R$ will often be an identity functor in this paper, save for $\Omega$-spectra, in which case it will be equal to the loop space functor $\Omega$. In the future, the functor $R$ will however take multiple forms.

Sections \ref{sec:Constructors} and  \ref{sec:From_narratives_to_combinatorial_cat} work together to formalise the idea of algebraic structure associated with a quotient. Recall that completing a presentation with operations usually requires the adding of free operations along with certain quotients. In our case, the free structure will be added to the presentations, but the quotient structure will be \emph{resolved} in a separate object $\mathfrak{q}$ (see Section \ref{ssec:Quotiented-arrows}). The term \emph{resolved} here refers to the concept of resolution developed in \cite{Resoution_poly}, which should be viewed as a way of passing from what looks like a set $E/X$ to a higher dimensional structure, such as category or a quotient map $E \to X$. 

The purpose of Section \ref{sec:From_narratives_to_combinatorial_cat}, alone, is to give a theoritical generalisation of Quillen's small object argument \cite{Quillen67} while Section \ref{sec:Constructors} focuses on applying the formalism of Section \ref{sec:From_narratives_to_combinatorial_cat} to systems of premodels. 

The difference between our argument and Quillen's one is that one does no longer consider strict pushouts at every step and the lifts meant to be produced by these pushouts only commutes in the subsequent steps. These differences arise for two reasons. The first one is the desired elimination of quotients and the second one is due to the fact that the pushouts used in the usual argument do not necessarily commute with the right adjoints (including the limits) involved in the construction of the object $RX[c]$.

To be able to formalise the previous ideas, we will introduce the concept of tome, whose goal is to gather all the squares that one would like to force to admit a lift through the small object argument.
This will take the form of a functor $\varphi: \mathtt{S} \to \mathcal{C}^{\mathbf{2}}/h$, where $h$ is an object in the arrow category $\mathcal{C}^{\mathbf{2}}$. Note that this tool will mainly find its use in the way the category $\mathtt{S}$ is encoded. 

Specifically, in Section \ref{sec:Constructors}, this category $\mathtt{S}$ will be discrete and will take the form of a coproduct of what could look like two left Kan extensions.
\[
\mathtt{S}:=\Big(\sum_{\vartheta \in J_A} D(\upepsilon(\vartheta),d) \times \Lambda_A[\vartheta]\Big) + \Big(\sum_{\vartheta \in J_Q} D(\upchi(\vartheta),d) \times \Lambda_Q[\vartheta]\Big)
\]

The left-hand sum will allow us to parameterise all those squares that are to force the adding of the structural information to the presentations while the right-hand sum will allow us to handle all of the quotients that the adding of this information is supposed to generate.
Note that the rightmost sum of $\mathtt{S}$ is only meant to quotient out what has been added at a previous step, leaving free the information added by the current leftmost sum and thus producing the elimination of quotients discussed in Section \ref{ssec:result_2}. All the data needed to talk about an elimination of quotients such as $J_A$, $J_Q$, $\upepsilon$, $\upchi$, $\Lambda_A[\_]$, $\Lambda_Q[\_]$ (and some more) will be gathered into the notion of constructor (see Section \ref{sssec:Constructor}). Remarks \ref{rem:form_of_elt_I_J} and  \ref{rem:encoding_of_analytic_functors} might be helpful in seeing what all those left Kan extension-like constructions actually parameterise. 

Finally, the small object argument is carried out in Section \ref{sec:Combinatorial_cat} where the smallness condition is used to prove the usual lifting properties. The universal property satisfied by our construction is discussed in Section \ref{sec:Universal_property} via Theorems \ref{th:admissible_quotiented_factorisable_model} (existential part) and  \ref{th:localisation_universal} (uniqueness). The latter mainly focus on the properties required to prove Theorem \ref{theo:Elimination_quotient_intro}, whose proof is recapitulated in the conclusion (see Section \ref{ssec:conclusion_motivation_2}).

\subsection{Acknowledgments}
I would like to thank Steve Lack and the referees for comments that allowed the improvement of the earlier versions of this text. 
I would also like to thank the members of the Australian Category Seminar for various remarks regarding the content of this paper.

\section{Background, Notations and Conventions}\label{sec:Background_notations_conventions}

\subsection{Ordinals}\label{ssec:Ordinals}
Any ordinal will be identified with the preorder category it induces. For every ordinal $\kappa$, the inclusion functor $\kappa \hookrightarrow \kappa+1$ will be denoted by $\iota_{\kappa}$. For convenience, the preorder category of one and two objects will be denoted by $\mathbf{1}$ and $\mathbf{2}$, respectively. We shall also use the notation $\omega$ to denote the least infinite ordinal.

\subsection{Wide Subcategories} Let $\mathcal{C}$ be a category. A subcategory $\mathcal{A} \subseteq \mathcal{C}$ will be said to be \emph{wide} if the inclusion functor $\mathcal{A} \hookrightarrow \mathcal{C}$ is surjective on objects. Put simply, this means that $\mathcal{A}$ contains all the objects of $\mathcal{C}$.

\subsection{Limits and Colimits}\label{ssec:Limits_colimits}
For every category $\mathcal{C}$ and small category $D$, the obvious functor $\mathcal{C}^{\mathbf{1}} \to \mathcal{C}^{D}$ mapping an object $X:\mathbf{1} \to \mathcal{C}$ to the pre-composition of $X:\mathbf{1} \to \mathcal{C}$ with the canonical functor $D \to \mathbf{1}$ will be denoted by $\Delta_D$. For convenience, the category $\mathcal{C}^{\mathbf{1}}$ will often be identified with the category $\mathcal{C}$. 
If they exist, the left and right adjoints of $\Delta_D$ will be denoted by $\mathrm{col}_D$ and $\mathrm{lim}_D$, respectively. Recall that the images of these two functors are understood as the colimits and limits of $\mathcal{C}$ over $D$, respectively.
As usual, in the case where the functor $\mathrm{lim}_D:\mathcal{C}^D \to \mathcal{C}^{\mathbf{1}}$ exists, the category $\mathcal{C}$ will be said to be \emph{complete over $D$}. Similarly, the category $\mathcal{C}$ will be said to be \emph{cocomplete over $D$} when the functor $\mathrm{col}_D:\mathcal{C}^D \to \mathcal{C}^{\mathbf{1}}$ exists.

\begin{proposition}\label{prop:co_complete_category_then_functor_category_too}
If a category $\mathcal{C}$ is complete (resp. cocomplete), then so is $\mathcal{C}^D$ for any small category $D$ where
the limits (resp. colimits) are defined objectwise in $\mathcal{C}$.
\end{proposition}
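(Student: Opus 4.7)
My plan is to prove the completeness statement and note that cocompleteness is dual. Fix a small category $E$ and a functor $F : E \to \mathcal{C}^D$. By currying, this corresponds to a bifunctor $\tilde{F} : E \times D \to \mathcal{C}$. For each object $d \in D$, evaluation at $d$ gives a functor $F_d := \tilde{F}(-,d) : E \to \mathcal{C}$. Since $\mathcal{C}$ is complete, the limit $L(d) := \mathrm{lim}_E F_d$ exists in $\mathcal{C}$.

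The first step is to upgrade the assignment $d \mapsto L(d)$ to a functor $L : D \to \mathcal{C}$. Given a morphism $u : d \to d'$ in $D$, the natural transformation $\tilde{F}(-,u) : F_d \Rightarrow F_{d'}$ induces, by the universal property of $\mathrm{lim}_E$ (or equivalently by functoriality of $\mathrm{lim}_E : \mathcal{C}^E \to \mathcal{C}$), a unique morphism $L(u) : L(d) \to L(d')$ compatible with the limit cones. Functoriality $L(\mathrm{id}) = \mathrm{id}$ and $L(u'u) = L(u')L(u)$ follows from the uniqueness half of the same universal property applied to the relevant cones. This yields the candidate limit object $L \in \mathcal{C}^D$.

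Next I would assemble the limit cone. For each $e \in E$, the projections $\pi_{e,d} : L(d) \to F_d(e) = F(e)(d)$ defining $L(d)$ as a limit are natural in $d$ (again by the construction of $L(u)$), so they form a morphism $\pi_e : L \to F(e)$ in $\mathcal{C}^D$. Naturality in $e$ — i.e. that $(\pi_e)_{e \in E}$ is a cone under $F$ in $\mathcal{C}^D$ — is checked objectwise in $D$, where it reduces to the fact that $(\pi_{e,d})_{e \in E}$ is a cone under $F_d$.

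Finally, for the universal property, any cone $(\alpha_e : G \to F(e))_{e \in E}$ in $\mathcal{C}^D$ evaluates pointwise at each $d$ to a cone $(\alpha_{e,d} : G(d) \to F_d(e))_{e \in E}$ in $\mathcal{C}$, inducing a unique factorisation $\beta_d : G(d) \to L(d)$. The main (and only slightly delicate) point of the proof is to verify that the family $(\beta_d)_{d \in D}$ is natural in $d$, and that it is the unique such family factoring the $\alpha_e$ through the $\pi_e$ in $\mathcal{C}^D$; both facts follow by invoking the uniqueness in the universal property of $L(d')$ on two different composites that agree after post-composition with each $\pi_{e,d'}$. This establishes $L = \mathrm{lim}_E F$ with limits computed objectwise, and the dual argument handles colimits.
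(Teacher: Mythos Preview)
Your argument is correct and is the standard direct construction of objectwise limits in a functor category: build the candidate limit pointwise, assemble it into a functor via the universal property, and verify the cone and its universality objectwise. All the steps you outline go through without difficulty.

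The paper, however, takes a different and more conceptual route. Rather than constructing the limit by hand, it observes that for each object $d \in D$ the evaluation functor $\nabla_d : \mathcal{C}^D \to \mathcal{C}$, $X \mapsto X(d)$, admits a (right) adjoint given by Kan extension along the inclusion $\mathbf{1} \to D$ picking out $d$; hence $\nabla_d$ preserves limits, which is what ``objectwise'' means. Your approach is more self-contained and actually exhibits the limit and its universal property explicitly, whereas the paper's approach is shorter but leans on the Kan-extension machinery from \cite{MacLane} and is really only addressing \emph{preservation} (that limits, once they exist, are computed pointwise) rather than giving an explicit existence argument. Both are valid; yours is the more elementary and complete of the two.
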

\begin{proof}
Suppose that $\mathcal{C}$ is complete. For every object $d$ in $D$, the restriction functor $\nabla_d:\mathcal{C}^D \to \mathcal{C}$ mapping $X$ to  $X(d)$ has a right adjoint whose images are given by the Right Kan extensions along the functor $\mathbf{1} \to D$ picking out $d$ \cite{MacLane}. This implies that $\nabla_d$ commutes with limits. By duality, the other statement regarding colimits follows.
\end{proof}

\subsection{Cardinality} 
Let $A$ be an object in $\mathbf{Set}$. The \emph{cardinality of $A$} is the least ordinal $\kappa$ such that there is a bijection between $A$ and $\kappa$. In ZFC, the axiom of choice ensures that the cardinality of a set $A$ always exists, which will be denoted by 
$|A|$.

For any small category $D$, the \emph{cardinality of $D$} is the cardinality of the following coproduct of sets, where $\mathrm{Obj}(D)$ is the set of objects of $D$.
\[
\mathrm{Ar}(D):=\sum_{a,b \in \mathrm{Obj}(D)}D(a,b)
\]

The cardinality of $D$ will be denoted by $|D|$. Below is given a well-known result on the 
commutativity of limits and colimits.

\begin{proposition}\label{prop:SGA_limits_colimits_commute}
For every small category $D$ and limit ordinal $\kappa \geq |D|$, the canonical natural transformation 
$\mathrm{col}_{\kappa}\,\mathrm{lim}_{D} \Rightarrow \mathrm{lim}_{D}\,\mathrm{col}_{\kappa}$ valued in $\mathbf{Set}$ over $\mathbf{Set}^{\kappa \times D}$ is an isomorphism.
\end{proposition}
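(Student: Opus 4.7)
The plan is a direct element-chase in $\mathbf{Set}$, using the explicit descriptions of set-valued colimits (as quotients of disjoint unions) and limits (as compatible families). Fix a functor $F:\kappa \times D \to \mathbf{Set}$ and write $\phi_F:\mathrm{col}_{\kappa}\mathrm{lim}_{D}F \to \mathrm{lim}_{D}\mathrm{col}_{\kappa}F$ for the canonical comparison. An element of the domain is represented by a pair $(\alpha;(x_d)_{d\in \mathrm{Obj}(D)})$ with $\alpha<\kappa$ and $(x_d)$ a compatible family in $\mathrm{lim}_{D}F(\alpha,-)$, two such pairs being identified when they become equal after pushing up along the structural maps of the $\kappa$-chain. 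An element of the codomain is a compatible family $(y_d)_d$ where each $y_d$ is itself a class $[\alpha_d;z_d]$ with $z_d \in F(\alpha_d,d)$, compatibility under an arrow $f:d \to d'$ being witnessed by some $\beta_f \geq \alpha_d,\alpha_{d'}$ identifying the two transports in $F(\beta_f,d')$. The map $\phi_F$ sends $[\alpha;(x_d)]$ to $([\alpha;x_d])_d$, and I want to show it is bijective.

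For surjectivity, given such a family $(y_d)$, I would choose a representative $(\alpha_d,z_d)$ for every object $d$ and a witness $\beta_f$ for every arrow $f$ of $D$. This produces a collection of ordinals indexed by $\mathrm{Obj}(D)\sqcup \mathrm{Ar}(D)$, a set of cardinality at most $|D|$, each of whose terms is strictly below $\kappa$. Under the hypothesis that $\kappa$ is a limit ordinal with $\kappa \geq |D|$ (read so as to ensure $\mathrm{cof}(\kappa)>|D|$, see below), this collection admits a common strict upper bound $\gamma<\kappa$. Transporting every $z_d$ along the structural map $F(\alpha_d\leq \gamma,\mathrm{id})$ yields a family $(x_d)$ in $F(\gamma,d)$; because $\gamma$ exceeds every witness $\beta_f$, this family is compatible, so defines an element of $\mathrm{lim}_{D}F(\gamma,-)$ whose class $[\gamma;(x_d)]$ maps to $(y_d)$ by construction.

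For injectivity the pattern is the same. If two classes $[\alpha_1;(x^1_d)]$ and $[\alpha_2;(x^2_d)]$ have equal image under $\phi_F$, then for each object $d$ one extracts $\delta_d \geq \alpha_1,\alpha_2$ at which $x^1_d$ and $x^2_d$ agree in $F(\delta_d,d)$; bounding the family $\{\delta_d\}_{d\in \mathrm{Obj}(D)}$ below $\kappa$ by a single $\gamma<\kappa$ as above, the two families already coincide in $\mathrm{lim}_{D}F(\gamma,-)$, hence represent the same class in the colimit. Naturality of $\phi_F$ in $F$ over $\mathbf{Set}^{\kappa \times D}$ is automatic from its description as a comparison between adjoints.

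The only real obstacle is the cofinality step: bounding $|D|$-many ordinals below $\kappa$ by a single $\gamma<\kappa$ truly requires $\mathrm{cof}(\kappa)>|D|$ rather than the literal $\kappa \geq |D|$. I would therefore interpret the hypothesis as tacitly assuming $\kappa$ is a regular cardinal (so that cofinality coincides with cardinality), or, more pragmatically, as imposing the cofinality bound under which all the \emph{choose-an-upper-bound} steps go through; everything else is purely formal manipulation of representatives.
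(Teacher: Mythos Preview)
Your argument is essentially the same element-chase as the paper's: choose representatives and witnesses, bound the resulting $|D|$-indexed family of ordinals by a single stage, and transport everything there. The paper carries this out in its appendix with the same two steps (surjectivity via a supremum over $\mathrm{Ar}(D)$, injectivity via a supremum over $\mathrm{Obj}(D)$).

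Your caveat about cofinality is well taken and in fact applies equally to the paper's own proof: there too the line ``since $\kappa$ is a limit ordinal greater than or equal to $|D|$, we may define the following supremum in $\kappa$'' silently assumes that a $|D|$-indexed family below $\kappa$ has a bound below $\kappa$, which is exactly $\mathrm{cof}(\kappa)>|D|$ rather than the literal $\kappa\geq|D|$. So you have not introduced a gap; you have correctly identified one already present in the stated hypothesis, and your proposed reading (regular cardinal, or the cofinality bound) is the intended one for the argument to go through.
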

\begin{proof}
See Appendix \ref{Appendix:first}.
\end{proof}

Similarly, for every complete category $\mathcal{C}$ and small category $D$, the functor $\Delta_D:\mathcal{C} \to \mathcal{C}^D$ commutes with colimits (see Proposition \ref{prop:co_complete_category_then_functor_category_too}). In fact, it follows from Proposition \ref{prop:SGA_limits_colimits_commute} that the unit of the adjuncion $\Delta_D \vdash \mathrm{lim}_D$ commutes with colimits in $\mathbf{Set}$ as stated in the next proposition.

\begin{proposition}\label{prop:eta_commutes_with_colimits_O_kappa}
For every small category $D$ and limit ordinal $\kappa \geq |D|$, denote by the letter $\eta$ the units of the two adjunctions $\Delta_D \vdash \mathrm{lim}_D$ in $\mathbf{Set}$ and $\mathbf{Set}^{\kappa}$. The following diagram of canonical arrows in $\mathbf{Set}$ commutes for any functor $F:\kappa \to \mathbf{Set}$.
\[
\xymatrix@C-3pc{
*+!R(.3){\mathrm{col}_{\kappa}F}\ar[d]_-{\mathrm{col}_{\kappa}\eta F}\ar@{=}[rr]&& *+!L(.3){\mathrm{col}_{\kappa}F} \ar[d]^-{\eta_{\mathrm{col}_{\kappa}F}}\\
*+!R(.7){\mathrm{col}_{\kappa}\mathrm{lim}_D\Delta_DF}\ar[rr]_-{\cong}&&*+!L(.7){\mathrm{lim}_D\Delta_D(\mathrm{col}_{\kappa}F)}
}
\]
\end{proposition}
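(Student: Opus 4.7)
The plan is to verify the equality $\iota \circ \mathrm{col}_{\kappa}\eta F = \eta_{\mathrm{col}_{\kappa}F}$, where $\iota$ denotes the canonical isomorphism at the bottom of the diagram, by precomposing both sides with each colimit cocone $\sigma_i : F(i) \to \mathrm{col}_{\kappa}F$ ($i \in \kappa$) and invoking the uniqueness part of the universal property of $\mathrm{col}_{\kappa}F$. The preliminary step is to describe $\iota$ in cocone terms: unpacking the canonical natural transformation of Proposition \ref{prop:SGA_limits_colimits_commute}, one checks that $\iota$ is the unique map $\mathrm{col}_{\kappa}\mathrm{lim}_D\Delta_DF \to \mathrm{lim}_D\Delta_D\mathrm{col}_{\kappa}F$ satisfying $\iota \circ \tau_i = \mathrm{lim}_D\Delta_D(\sigma_i)$ for every $i \in \kappa$, where $\tau_i : \mathrm{lim}_D\Delta_DF(i) \to \mathrm{col}_{\kappa}\mathrm{lim}_D\Delta_DF$ denotes the corresponding colimit cocone.

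For the right-hand vertical arrow, naturality of the unit $\eta$ in $\mathbf{Set}$ applied to $\sigma_i$ rewrites $\eta_{\mathrm{col}_{\kappa}F} \circ \sigma_i$ as $\mathrm{lim}_D\Delta_D(\sigma_i) \circ \eta_{F(i)}$. For the left-hand vertical arrow, the definition of $\mathrm{col}_{\kappa}\eta F$ as the image under $\mathrm{col}_{\kappa}$ of the natural transformation $\eta F : F \Rightarrow \mathrm{lim}_D\Delta_D F$ in $\mathbf{Set}^{\kappa}$ gives $\mathrm{col}_{\kappa}\eta F \circ \sigma_i = \tau_i \circ \eta_{F(i)}$; postcomposing with $\iota$ and using the cocone characterisation above yields the same expression $\mathrm{lim}_D\Delta_D(\sigma_i) \circ \eta_{F(i)}$. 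The two composites therefore agree after precomposition with every $\sigma_i$, so the universal property of $\mathrm{col}_{\kappa}F$ closes the argument.

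The only step requiring attention is the cocone description of $\iota$; once this is unpacked, the remainder of the proof is a direct naturality computation combined with the uniqueness clause of the colimit universal property. I anticipate no serious obstacle: the statement is essentially bookkeeping that combines naturality of $\eta$ with the preservation of colimits by the left adjoint $\Delta_D$ (implicit, via Proposition \ref{prop:SGA_limits_colimits_commute}, for the constant-in-$D$ diagram $\Delta_D F$).
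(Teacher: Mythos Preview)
Your argument is correct and takes a genuinely different route from the paper's. The paper's proof in Appendix~\ref{Appendix:first} is an explicit element-chase in $\mathbf{Set}$: it writes down the unit $\eta_X$ as $x \mapsto (x)_{d \in \mathrm{Obj}(D)}$, describes elements of $\mathrm{col}_\kappa F$ as equivalence classes $[(k,x)]$, and then tracks a generic element through both legs of the square to see that both yield $([(k,x)])_{d \in \mathrm{Obj}(D)}$. Your proof instead works purely with universal properties: you characterise the bottom isomorphism $\iota$ by its effect on the colimit cocone $\tau_i$, use naturality of $\eta$ for the right leg, use the defining property of $\mathrm{col}_\kappa$ applied to a natural transformation for the left leg, and conclude by uniqueness of maps out of a colimit. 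Your approach is cleaner and would work verbatim in any category (not just $\mathbf{Set}$), whereas the paper's has the virtue of making the commutativity visible at the level of elements, which matches the concrete style of the surrounding appendix material.
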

\begin{proof}
See Appendix \ref{Appendix:first}.
\end{proof}

\subsection{Universal Shiftings} \label{sssec:Universal_shifting}
Let $i:\mathtt{T} \to \mathtt{S}$ be a functor between small categories. The pre-composition with $i$ induces an obvious functor $\_ \circ i:\mathcal{C}^{\mathtt{T}} \to \mathcal{C}^{\mathtt{S}}$. Mostly for convenience, the composition of this functor with the colimit functor $\mathrm{col}_{\mathtt{S}}:\mathcal{C}^{\mathtt{S}} \to \mathcal{C}$ will later be denoted by $\mathrm{col}_{i}:\mathcal{C}^{\mathtt{T}} \to \mathcal{C}$. The obvious canonical natural transformation $\xi_{i}:\mathrm{col}_{i} \Rightarrow \mathrm{col}_{\mathtt{S}}$ will be called the \emph{universal shifting along $i$}.
Similarly, the composition of the functor $\_ \circ i:\mathcal{C}^{\mathtt{T}} \to \mathcal{C}^{\mathtt{S}}$ with the limit functor $\mathrm{lim}_{\mathtt{S}}:\mathcal{C}^{\mathtt{S}} \to \mathcal{C}$ will be denoted by $\mathrm{lim}_i:\mathcal{C}^{\mathtt{T}} \to \mathcal{C}$.

\subsection{Right Lifting Property} Let $\mathcal{C}$ be a category and $\mathcal{A}$ be a class of arrows in $\mathcal{C}$. The class of arrows of $\mathcal{C}$ that have the right lifting property (abbrev. \emph{rlp}) with respect to the arrows of $\mathcal{A}$ will be denoted by $\mathbf{rlp}(\mathcal{A})$.

\subsection{Sequential Functors}\label{ssec:sequential_functor}
 Let $\kappa$ be some ordinal and $\mathcal{C}$ be a category. A functor $F:\kappa+1 \to \mathcal{C}$ will be said to be \emph{sequential} if for any limit ordinal $\alpha$ in $\kappa+1$, the object $F(\alpha)$ may be identified with the colimit of the functor $F \circ \iota_{\alpha}:\alpha \to \mathcal{C}$ such that, for every ordinal $\beta$ in $\alpha$, the morphism $F(\beta < \alpha):F(\beta) \to F(\alpha)$ corresponds to the arrow of the universal cocone of $\mathrm{col}_{\alpha} F \circ \iota_{\alpha}$ associated with $\beta$.

\begin{proposition}\label{prop:strictly_combinatorial_llp_fib_f}
If a morphism $f:X \to Y$ has the rlp with respect to every arrow $F(k < k+1)$ for every $k \in \kappa$, then $f$ belongs to $\mathbf{rlp}(\{F(0 < k)~|~k \in \kappa+1\})$.
\end{proposition}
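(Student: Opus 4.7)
The plan is to proceed by transfinite recursion on $k \in \kappa+1$. Fix a morphism $f:X \to Y$ satisfying the hypothesis, and fix an arbitrary lifting problem for $F(0 < k)$, that is, maps $u:F(0) \to X$ and $v:F(k) \to Y$ with $f \circ u = v \circ F(0 < k)$. I would construct, by transfinite recursion on $\beta \leq k$, a family of morphisms $\ell_{\beta}:F(\beta) \to X$ satisfying the three compatibility conditions
\[
\ell_0 = u, \qquad \ell_{\gamma} = \ell_{\beta} \circ F(\gamma < \beta) \text{ for } \gamma < \beta, \qquad f \circ \ell_{\beta} = v \circ F(\beta < k).
\]
Taking $\beta = k$ then produces the desired lift (noting $F(k < k)$ is the identity).

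The base case $\beta = 0$ simply sets $\ell_0 := u$, and the compatibility condition $f \circ \ell_0 = v \circ F(0 < k)$ is exactly the commutativity of the original lifting problem. At a successor $\beta = \gamma + 1$, the inductive hypothesis yields $f \circ \ell_{\gamma} = v \circ F(\gamma < k) = v \circ F(\gamma+1 < k) \circ F(\gamma < \gamma+1)$, so the square formed by $\ell_\gamma$ on top, $v \circ F(\gamma+1 < k)$ on the bottom, $F(\gamma < \gamma+1)$ on the left and $f$ on the right commutes. Invoking the hypothesis that $f \in \mathbf{rlp}(\{F(k<k+1) \mid k \in \kappa\})$ produces $\ell_{\gamma+1}:F(\gamma+1) \to X$ satisfying the required equations.

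The limit case is where the sequentiality of $F$ is essential: for a limit ordinal $\beta \leq k$, the already constructed morphisms $\{\ell_{\gamma}\}_{\gamma < \beta}$ form a cocone over $F \circ \iota_{\beta}$ by the second compatibility condition. Since $F(\beta)$ is, by sequentiality, the colimit of this diagram with universal cocone given by the maps $F(\gamma < \beta)$, there exists a unique $\ell_{\beta}:F(\beta) \to X$ with $\ell_{\beta} \circ F(\gamma < \beta) = \ell_{\gamma}$ for every $\gamma < \beta$. The identity $f \circ \ell_{\beta} = v \circ F(\beta < k)$ then follows from the uniqueness part of the colimit's universal property, since both sides agree after precomposition with every $F(\gamma < \beta)$.

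The only delicate point to watch is the limit step: one must ensure that the individually produced lifts at successor stages are in fact mutually compatible, which is precisely why the recursion constructs the whole coherent family at once rather than invoking the lifting property independently at each ordinal. Once this is organised, the argument is a direct transfinite induction.
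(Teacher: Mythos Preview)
Your proof is correct and is precisely the transfinite induction the paper gestures at. The paper's own argument is a two-line sketch: it observes that the rlp is closed under composition of two arrows and then asserts that this ``directly generalises to the transfinite case''; your proposal is exactly that generalisation spelled out, with the limit step handled via the sequentiality hypothesis on $F$.
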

\begin{proof}
It is straightforward to show that if a morphism $f$ has the rlp with respect to two composable arrows $i$ and $j$, then it has the rlp with respect to the composition $i \circ j$. A direct generalisation to the transfinite case shows the proposition.
\end{proof}

\subsection{Limit Sketches}
A \emph{limit sketch} is a small category $\mathtt{S}$ equipped with a subset $Q$ of its cones \footnote{Recall that these are, by definition, natural transformations of the form $\Delta_A(d) \Rightarrow U$ in $\mathtt{S}$ where $A$ is a small category, $U$ is a functor $A \to \mathtt{S}$ and $d$ an object in $\mathtt{S}$, called the \emph{peak}}. The cones in $Q$ will be said to be \emph{chosen}. A \emph{model for a limit sketch $\mathtt{S}$ in a category $\mathcal{C}$} is a functor $\mathtt{S} \to \mathcal{C}$ that sends the chosen cones in $Q$ to universal cones \footnote{`Universal' here means that the cone, say $\Delta_A(d) \Rightarrow U$, defines a limit of the functor $U:A \to \mathtt{S}$} in $\mathcal{C}$. The models of a limit sketch $\mathtt{S}$ in $\mathcal{C}$ define the objects of a category $\mathbf{Mod}_{\mathcal{C}}(\mathtt{S})$ whose morphisms are natural transformations in $\mathcal{C}$ over $\mathtt{S}$. For any limit sketch $\mathtt{S}$, the category of models for $\mathtt{S}$ in $\mathbf{Set}$ will be denoted by $\mathbf{Mod}(\mathtt{S})$.

\begin{example}[Limit sketch for monoids]
The category of monoids in $\mathbf{Set}$ may be defined as a category of models for a certain limit sketch $\mathtt{Mon}$. The underlying small category of $\mathtt{Mon}$ is freely generated over a set of arrows and quotiented by commutativity relations. Specifically, the category $\mathtt{Mon}$ has four objects $g_0$, $g_1$, $g_2$ and $g_3$, where $g_1$ is called the \emph{underlying object} of the sketch, and a set of arrows as follows, where the identities have been forgotten.
\[
g_2 \mathop{\longrightarrow}\limits^{\mu} g_1\quad g_0 \mathop{\longrightarrow}\limits^{\eta} g_1
\quad g_3 \mathop{\longrightarrow}\limits^{p_{\underline{1}2}} g_1\quad g_3 \mathop{\longrightarrow}\limits^{p_{1\underline{2}}} g_2
\quad g_3 \mathop{\longrightarrow}\limits^{p_{\underline{2}1}} g_1\quad g_3 \mathop{\longrightarrow}\limits^{p_{2\underline{1}}} g_2
\]
\[
g_2 \mathop{\longrightarrow}\limits^{p_1} g_1 \quad g_2 \mathop{\longrightarrow}\limits^{p_2} g_1 \quad g_3 \mathop{\longrightarrow}\limits^{\mu^*} g_2 \quad g_3 \mathop{\longrightarrow}\limits^{\mu_*} g_2\quad g_1 \mathop{\longrightarrow}\limits^{\eta^*} g_2 \quad g_1 \mathop{\longrightarrow}\limits^{\eta_*} g_2\quad g \mathop{\longrightarrow}\limits^{!} g_0
\]

The commutativity relations are given by the diagrams
\[
\begin{array}{ccc}
\xymatrix@C-0.2pc@R-1.6pc{
&g_1\ar[r]^-{1}&g_1\\
g_3\ar[ru]^{p_{\underline{1}2}}\ar[rd]_{p_{1\underline{2}}}\ar[r]|-{\mu^*}&g_2\ar[ru]|{p_1}\ar[rd]|{p_2}&\\
&g_2\ar[r]_-{\mu}&g_1\\
}&
\xymatrix@C-0.2pc@R-1.6pc{
&g_1\ar[r]^-{1}&g_1\\
g_3\ar[ru]^{p_{2\underline{1}}}\ar[rd]_{p_{\underline{2}1}}\ar[r]|-{\mu_*}&g_2\ar[ru]|{p_1}\ar[rd]|{p_2}&\\
&g_2\ar[r]_-{\mu}&g_1\\
}&
\xymatrix@C-0.2pc@R-0.2pc{
g_3\ar[r]^{\mu_*}\ar[d]_{\mu^*}&g_2\ar[d]^{\mu}\\
g_2\ar[r]_{\mu}&g_1
}
\\
\xymatrix@C-0.2pc@R-1.6pc{
&g_1\ar[r]^-{1}&g_1\\
g_1\ar[ru]^{1}\ar[rd]_{!}\ar[r]|-{\eta^*}&g_2\ar[ru]|{p_1}\ar[rd]|{p_2}&\\
&g_0\ar[r]_-{\eta}&g_1\\
}&
\xymatrix@C-0.2pc@R-1.6pc{
&g_1\ar[r]^-{1}&g_1\\
g_1\ar[ru]^{1}\ar[rd]_{!}\ar[r]|-{\eta_*}&g_2\ar[ru]|{p_2}\ar[rd]|{p_1}&\\
&g_0\ar[r]_-{\eta}&g_1\\
}&
\xymatrix@C-0.2pc@R-0.2pc{
g_1\ar[r]^{\eta_*}\ar[dr]_{1}&g_2\ar[d]^{\mu}&\ar[ld]^{1}\ar[l]_{\eta^*}g_1\\
&g_1&
}
\end{array}
\]
while the chosen cones are given by the trivial cone, of peak $g_0$, defined over the empty category and the following spans.
\[
g_1 \mathop{\longleftarrow}\limits^{p_{1}} g_2 \mathop{\longrightarrow}\limits^{p_{2}} g_1
\quad\quad 
g_1 \mathop{\longleftarrow}\limits^{p_{2\underline{1}}} g_3 \mathop{\longrightarrow}\limits^{p_{\underline{2}1}} g_2
\quad\quad 
g_1 \mathop{\longleftarrow}\limits^{p_{1\underline{2}}} g_3 \mathop{\longrightarrow}\limits^{p_{\underline{1}2}} g_2
\]

The astute reader might have noticed that $\mu$ and $\eta$ stand for the multiplication and unit of the monoid structure. It will come in handy to denote the preceding limit sketch by $\mathtt{Mon}(\mu,\eta)$. Note that other limit sketches can give rise to the same models, so that the previous limit sketch is only an example among other possible presentations of the theory of monoids.
\end{example}

\begin{example}[Limit sketch for commutative monoids]
It is possible to add one more arrow and two diagrams to the limit sketch $\mathtt{Mon}(\mu,\eta)$ so that the category of models associated with the resulting limit sketch, say $\mathtt{Cmon}(\mu,\eta)$, is that of commutative monoids. Precisely, this would imply the adding of an arrow $\sigma:g_2 \to g_2$ that makes the following diagrams commute.
\[
\begin{array}{cccc}
\xymatrix@C-0.2pc@R-1.6pc{
&g_1\ar[r]^-{1}&g_1\\
g_2\ar[ru]^{p_1}\ar[rd]_{p_2}\ar[r]|-{\sigma}&g_2\ar[ru]|{p_2}\ar[rd]|{p_1}&\\
&g_1\ar[r]_-{1}&g_1\\
}&&&
\xymatrix@C-0.2pc@R-0.2pc{
g_2\ar[r]^{\sigma}\ar[d]_{\mu}&g_2\ar[d]^{\mu}\\
g_1\ar[r]_{1}&g_1
}
\end{array}
\]
\end{example}

\begin{example}[Limit sketch for abelian groups]
It is also possible to add three more arrows and two diagrams to the limit sketch $\mathtt{Cmon}(\mu,\eta)$ so that the category of models associated with the resulting limit sketch, which will later be denoted by $\mathtt{Ab}(\mu,\eta,\delta)$ for the notations given below, is that of abelian groups. Precisely, this would imply the adding of three arrows $\delta:g_1 \to g_2$ , $\alpha:g_1 \to g_1$ and $\alpha_*:g_2 \to g_2$ that makes the following diagrams commute.
\[
\begin{array}{cccc}
\xymatrix@C-0.2pc@R-1.6pc{
&g_1\ar[r]^-{\alpha}&g_1\\
g_2\ar[ru]^{p_1}\ar[rd]_{p_2}\ar[r]|-{\alpha_*}&g_2\ar[ru]|{p_2}\ar[rd]|{p_1}&\\
&g_1\ar[r]_-{1}&g_1\\
}
&&&
\xymatrix@C-0.2pc@R-0.2pc{
g_1\ar[r]^{\delta}\ar[d]_{!}&g_2\ar[r]^{\alpha_*}&g_2\ar[d]^{\mu}\\
g_0\ar[rr]_{\eta}&&g_1
}
\end{array}
\]
\end{example}

\begin{example}[Limit sketch for rings]\label{exa:rings_sketch_Rg}
By definition, the subcategory of $\mathtt{Ab}(\mu,\eta,\delta)$ generated by $p_1$, $p_2$, $p_{\underline{1}2}$, $\dots$, $p_{1\underline{2}}$ and $!$ is also included in $\mathtt{Mon}(\mu',\eta')$. The pushout of $\mathtt{Ab}(\mu,\eta,\delta)$ and $\mathtt{Mon}(\mu',\eta')$ along these underlying inclusions provides a certain limit sketch $\mathtt{pRg}(\mu,\mu',\eta,\eta')$ that contains five objects and all the arrows and cones appearing in $\mathtt{Ab}(\mu,\eta,\delta)$ and $\mathtt{Mon}(\mu',\eta')$; the associated limit sketch combines the structure of a monoid with the structure of a commutative monoid. One thus recovers the theory of rings if one adds an object $g_4$, a chosen cone
$g_2 \mathop{\longleftarrow}\limits^{q_{1}} g_4 \mathop{\longrightarrow}\limits^{q_{2}} g_2$
and the following arrows and commutativity relations to $\mathtt{pRg}(\mu,\mu',\eta,\eta')$.
\[
g_3 \mathop{\longrightarrow}\limits^{\delta^*} g_4 \quad\quad
g_3 \mathop{\longrightarrow}\limits^{\delta_*} g_4 \quad\quad
g_4 \mathop{\longrightarrow}\limits^{\pi_1} g_2 \quad\quad
g_4 \mathop{\longrightarrow}\limits^{\pi_2} g_2 \quad\quad
g_4 \mathop{\longrightarrow}\limits^{\mu^{\prime\prime}} g_2 
\]
\[
\begin{array}{cccc}
\xymatrix@C-0.2pc@R-1.6pc{
&g_1\ar[r]^-{\delta}&g_2\\
g_3\ar[ru]^{p_{\underline{1}2}}\ar[rd]_{p_{1\underline{2}}}\ar[r]|-{\delta^*}&g_4\ar[ru]|{q_1}\ar[rd]|{q_2}&\\
&g_2\ar[r]_-{1}&g_2\\
}&
\xymatrix@C-0.2pc@R-1.6pc{
&g_2\ar[r]^-{1}&g_2\\
g_3\ar[ru]^{p_{\underline{2}1}}\ar[rd]_{p_{2\underline{1}}}\ar[r]|-{\delta_*}&g_4\ar[ru]|{q_1}\ar[rd]|{q_2}&\\
&g_1\ar[r]_-{\delta}&g_2\\
}
&
\xymatrix@C-0.2pc@R-1.6pc{
&g_2\ar[r]^-{p_1}&g_1\\
g_4\ar[ru]^{q_1}\ar[rd]_{q_2}\ar[r]|-{\pi_1}&g_2\ar[ru]|{p_1}\ar[rd]|{p_2}&\\
&g_2\ar[r]_-{p_1}&g_1\\
}&
\xymatrix@C-0.2pc@R-1.6pc{
&g_2\ar[r]^-{p_2}&g_1\\
g_4\ar[ru]^{q_1}\ar[rd]_{q_2}\ar[r]|-{\pi_2}&g_2\ar[ru]|{p_1}\ar[rd]|{p_2}&\\
&g_2\ar[r]_-{p_2}&g_1\\
}
\end{array}
\]
\[
\begin{array}{ccc}
\xymatrix@C-0.2pc@R-1.6pc{
&g_2\ar[r]^-{\mu'}&g_1\\
g_4\ar[ru]^{\pi_1}\ar[rd]_{\pi_2}\ar[r]|-{\mu^{\prime\prime}}&g_2\ar[ru]|{p_1}\ar[rd]|{p_2}&\\
&g_2\ar[r]_-{\mu'}&g_1\\
}&
\xymatrix@C-0.2pc@R-0.2pc{
g_3\ar[r]^{\delta^*}\ar[d]_{\mu^*}&g_4\ar[r]^{\mu^{\prime\prime}}&g_2\ar[d]^{\mu}\\
g_2\ar[rr]_{\mu'}&&g_1
}
&
\xymatrix@C-0.2pc@R-0.2pc{
g_3\ar[r]^{\delta_*}\ar[d]_{\mu_*}&g_4\ar[r]^{\mu^{\prime\prime}}&g_2\ar[d]^{\mu}\\
g_2\ar[rr]_{\mu'}&&g_1
}
\end{array}
\]

The resulting limit sketch $\mathtt{Rg}(\mu,\mu',\eta,\eta')$ then defines a sketch for which the models are rings. The limit sketch $\mathtt{Rg}(\mu,\mu',\eta,\eta')$ to which the identity morphism $1_{g_1}:g_1 \to g_1$ is added to the set of chosen cones---when seen as a trivial cone---will later be denoted by $\mathtt{Rg}$.
\end{example}

\subsection{Subfunctors}
Let $D$ be a small category and $F:D \to \mathbf{Set}$ be a functor. A \emph{subfunctor of $F$} is a functor $G:D \to \mathbf{Set}$ such that 
(1) for every object $d$ in $D$, the inclusion $G(d) \subseteq F(d)$ holds and
(2) for every morphism $t:d \to d'$ in $D$, the function $G(t):F(d) \to F(d')$ is the restriction of $F(t)$ along the respective inclusions of the domain and codomain.

\subsection{Overcategories}\label{ssec:Overcategories}
Let $\mathcal{C}$ be a category and $X$ be an object in $\mathcal{C}$. The obvious functor $\mathcal{C}/X \to \mathcal{C}$ mapping
an arrow $f:A \to X$ in $\mathcal{C}$ to the object $A$ in $\mathcal{C}$ will be denoted by $\partial$. 

\begin{remark}\label{rem:functor_in_overcats_give_nat_transf}
Let $\mathtt{T}$ be a small category.
Any functor $F:\mathtt{T} \to \mathcal{C}/X$ may be seen as a natural transformation in $\mathcal{C}$
over $\mathtt{T}$ of the form $h:\partial F \Rightarrow \Delta_{\mathtt{T}}(X)$. The converse is also true.
\end{remark}

Let now $G:\mathcal{A} \to \mathcal{C}$ be a functor. It will come in handy to denote by $\mathcal{C}\rotatebox[origin=c]{-90}{$\multimap$}G$
the obvious functor on $\mathcal{A}$ satisfying the following mapping rule on the objects.
\[
\begin{array}{ccc}
X &\mapsto& \mathcal{C}/G(X)\\
(u:X \to Y) &\mapsto& \mathcal{C}/G(u)\\
\end{array}
\]

\subsection{Covering Families}\label{sssec:Sieves}
Let $D$ be a small category and $d$ be an object in $D$. A \emph{covering family} on $d$ is a collection $S:=\{u_i:d_i \to d\}_{i \in A}$ of arrows in $D$.
For every morphism $f:c \to d$ in $D$, we shall speak of the \emph{pullback of $S$ along $f$} to refer to a collection of arrows $f^*S:=\{v_i:c_i \to c\}_{i \in A}$ where the arrow $v_i$ is a pullback of $u_i$ along $f$. Also, note that every morphism $g:d \to c$ gives rise to a family $g \circ S:=\{g \circ u_i\}_{i \in A}$. This last operation is used to define a more complex operation on $S$ as follows. For every $i \in A$, take a covering family $T_{i}$ on $d_i$. We will denote by $S \circ \{T_{i}\}_{i \in A}$ the covering family on $d$ obtained by the disjoint union of families $u_i \circ T_{i}$ for every $i \in A$.

\subsection{Grothendieck Pretopologies}
Let $D$ be a small category. A \emph{Grothendieck pretopology} on $D$ consists, for every object $d$ in $D$, of a collection $J_d$ of covering families $S$ on $d$ such that
\begin{itemize}
\item[(1)] (Stability) for every arrow $f:c \to d$ in $D$, the pullback $f^*S$ exists in $J_c$;
\item[(2)] (Locality) for every $i \in A$ and $T_{i}$ in $J_{d_i}$, the covering family $S \circ  \{T_{i}\}_{i \in A}$ is in $J_d$;
\item[(3)] (Identity) for every object $d$ in $D$, the singleton $\{\mathrm{id}_d:d \to d\}$ is in $J_d$.
\end{itemize}

Such a collection will usually be denoted by $J$. A category $D$ equipped with a Grothen\-dieck pretopology $J$ on $D$ will be called a \emph{site}.

\begin{remark}\label{rem:Grothendieck_pretopologies_rem_stabilisation}
Every covering family $S=\{u_i:d_i \to d\}_{i \in A}$ on an object $d$ in $J_d$ may be seen as a functor $A \to D/d$ if $A$ is seen as a discrete category. It follows from the stability and locality axioms that this functor extends to a product-preserving functor $A' \to D/d$ where $A'$ is the completion of $A$ under products. This functor will be called the \emph{stabilisation of $S$}.
\end{remark}

\subsection{Families}
For any category $\mathcal{C}$, the notation $\mathbf{Fam}(\mathcal{C})$ will be used to denote the category whose objects
are pairs $(S,F)$ where $S$ is a discrete category and $F$ is a functor $F:S \to \mathcal{C}$ and whose morphisms $(S,F) \Rightarrow (S',F')$ are given by pairs $(a,\alpha)$ where $a$ is a functor $a:S \to S'$ and $\alpha$ is a natural transformation $\alpha:F \Rightarrow F'a$.

\subsection{Bounded Diagrams}\label{ssec:Bounded_diag}
Let $D$ be a small category, $d$ be an object in $D$ and $\mathcal{C}$ be a category. We will denote by
$\mathbf{B}_d^{D}\mathcal{C}$ the category whose objects are triples $(P,e,Q)$ where $P$ and $Q$ are functors $D \to \mathcal{C}$ and 
$e$ is an arrow $P(s) \to Q(s)$ in $\mathcal{C}$ and whose morphisms, say $(P,e,Q) \to (P',e',Q')$, are given by pairs of natural transformations $(\alpha,\alpha')$ of respective forms $P \Rightarrow P'$ and $Q \Rightarrow Q'$ making the following square commute.
\[
\xymatrix{
P(d)\ar[r]^{\alpha(d)}\ar[d]_e&P'(d)\ar[d]^f\\
Q(d)\ar[r]_{\alpha'(d)}&Q'(d)
}
\]

Note that $\mathbf{B}_d^{D}\mathcal{C}$ is also a functor category $\mathcal{C}^{\mathbf{2}_d[D]}$ where $\mathbf{2}_d[D]$ is the
smallest subcategory of $\mathbf{2} \times D$ consisting of the two copies of $D$ and the arrow linking the two copies of $d$.

\section{Convergent Functors}\label{sec:Convergent_functor}

This section aims to define the notion of convergent functor, which is to replace the notion of `small object' that is usually used in transfinite constructions.

\subsection{Emulations}
Let $\mathtt{S}$ and $\mathtt{T}$ be two small categories and $\mathcal{C}$ be a category. A pair of functors $g:\mathcal{C}^{\mathtt{S}} \to \mathcal{C}^{\mathtt{T}}$ and $h:\mathbf{Set}^{\mathtt{S}} \to \mathbf{Set}^{\mathtt{T}}$ will be called an \emph{$(\mathtt{S}\downarrow \mathtt{T})$-emulation} in $\mathcal{C}$ if it is equipped with a natural isomorphism as follows.
\[
\xymatrix{
\mathcal{C}^{\mathrm{op}} \times\mathcal{C}^{\mathtt{S}}\ar@{}[rrd]|{\text{\rotatebox[origin=c]{45}{$\cong$}}}\ar[rr]^{\mathrm{id}_{\mathcal{C}} \times g}\ar[d]_{\mathcal{C}(\_,\_)}&&\mathcal{C}^{\mathrm{op}} \times\mathcal{C}^{\mathtt{T}}\ar[d]^{\mathcal{C}(\_,\_)}\\
\mathbf{Set}^{\mathtt{S}}\ar[rr]_h&&\mathbf{Set}^{\mathtt{T}}
}
\]

In terms of an equation, the previous diagram means that $(g,h)$ is equipped with a natural isomorphism (in the variables $X \in \mathcal{C}$, $Y \in \mathcal{C}^{\mathtt{S}}$ and $t \in \mathtt{T}$) as follows.
\begin{equation}\label{eq:g_hat_g_natura;_siomorphism_convergence}
\mathcal{C}\Big(X,g(Y)(t)\Big) \cong h\Big(\mathcal{C}(X,Y(\_))\Big)(t)
\end{equation}

\begin{example}\label{exam:emulation:identity}
Let $\mathtt{T}$ be a small category and $\mathcal{C}$ be a category. Take $g$ to be the identity functor $\mathrm{id}:\mathcal{C}^{\mathtt{T}} \to \mathcal{C}^{\mathtt{T}}$ and $h$ to be the identity functor $\mathrm{id}:\mathbf{Set}^{\mathtt{T}} \to \mathbf{Set}^{\mathtt{T}}$. By definition, the pair $(g,h)$ is a $(\mathtt{T}\downarrow \mathtt{T})$-emulation.
\end{example}

\begin{example}\label{exam:emulation:precomposition_functor}
Let  $U:\mathtt{T} \to \mathtt{S}$ be a functor between small categories and $\mathcal{C}$ be a category. Take $g$ to be the pre-composition functor $\mathcal{C}^{\mathtt{S}} \to \mathcal{C}^{\mathtt{T}}$ induced by $U$ and $h$ to be the equivalent version of $g$ in $\mathbf{Set}$. It suffices a few lines of calculation to show that the following isomorphism holds, which implies that the pair $(g,h)$ defines an $(\mathtt{S}\downarrow \mathtt{T})$-emulation.
\[
\mathcal{C}\Big(X,Y \circ U(t)\Big) \cong \Big(\mathcal{C}(X,Y(\_))\Big)(U(t))
\]
\end{example}

\begin{example}\label{exam:emulation:delta}
Let $\mathtt{T}$ be a small category and $\mathcal{C}$ be a category. Take $g$ to be the functor $\Delta_{\mathtt{T}}:\mathcal{C}^{\mathbf{1}} \to \mathcal{C}^{\mathtt{T}}$ and $h$ to be the functor $\Delta_{\mathtt{T}}:\mathbf{Set}^{\mathbf{1}} \to \mathbf{Set}^{\mathtt{T}}$. It follows from Example \ref{exam:emulation:precomposition_functor} that the pair $(g,h)$ is a $(\mathbf{1}\downarrow \mathtt{T})$-emulation.
\[
\mathcal{C}\Big(X,\Delta_{\mathtt{T}}(Y)(t)\Big) \cong \Delta_{\mathtt{T}}\Big(\mathcal{C}(X,Y)\Big)(t)
\]
\end{example}

\begin{example}\label{exam:emulation:case_of_limits}
Let $\mathtt{S}$ be a small category and $\mathcal{C}$ be a category complete over $\mathtt{S}$. Take $g$ to be the limit functor $\mathrm{lim}_{\mathtt{S}}:\mathcal{C}^{\mathtt{S}} \to \mathcal{C}^{\mathbf{1}}$ and $h$ to be the limit functor $\mathrm{lim}_{\mathtt{S}}:\mathbf{Set}^{\mathtt{S}} \to \mathbf{Set}^{\mathbf{1}}$. It is a well-known fact following from Yoneda's Lemma that the pair $(g,h)$ is an $(\mathtt{S}\downarrow \mathbf{1})$-emulation.
\end{example}

\begin{example}\label{exam:emulation:case_of_unit_of_limits}
Let $\mathtt{S}$ be a small category and $\mathcal{C}$ be a category complete over $\mathtt{S}$. We will denote by $\eta$ the unit of the adjunction $\Delta_{\mathtt{S}} \vdash \mathrm{lim}_{\mathtt{S}}$ valued in any category. Now, take $g$ to be the obvious functor $\mathcal{C} \to \mathcal{C}^{\mathbf{2}}$ mapping an object $X$ in $\mathcal{C}$ to the arrow $\eta_X:X \to \mathrm{lim}_{\mathtt{S}} \Delta_{\mathtt{S}}(X)$ in $\mathcal{C}$ and $h$ to be the equivalent version of $g$ in the category $\mathbf{Set}$ (which is complete over $\mathtt{S}$). It follows from Yoneda's Lemma that the following diagram commutes, which implies that the pair $(g,h)$ is an $(\mathtt{S}\downarrow \mathbf{2})$-emulation.
\[
\xymatrix{
\mathcal{C}(X,Y)\ar[r]^{=}\ar[d]_{\eta_{\mathcal{C}(X,Y)}}&\mathcal{C}(X,Y)\ar[d]^{\mathcal{C}(X,\eta_Y)}\\
*+!R(.45){\mathrm{lim}_{\mathtt{S}}\Delta_{\mathtt{S}}\mathcal{C}(X,Y)}\ar[r]_{\cong}&*+!L(.5){\mathcal{C}(X,\mathrm{lim}_{\mathtt{S}}\Delta_{\mathtt{S}}(Y))}
}
\]
\end{example}

\begin{example}\label{exam:emulation:unit_plus_cone}
Let $\mathtt{S}$ be a small category. For this example, we shall additionally need a small category $A$ together a cone $r:\Delta_{A}(s) \Rightarrow U$ in $\mathtt{S}^A$. Let now $\mathcal{C}$ denote a complete category over $A$. The unit of the adjunction $\Delta_{A} \vdash \mathrm{lim}_{A}$ in $\mathcal{C}$ will be denoted by $\eta$. Now, to define our emulation, take $g$ to be the obvious functor $\mathcal{C}^{\mathtt{S}} \to \mathcal{C}^{\mathbf{2}}$ mapping a functor $P:\mathtt{S} \to \mathcal{C}$ to the arrow
\[
\xymatrix{
P(s) \ar[rr]^-{\eta_{P(s)}}&&\mathrm{lim}_{A} \Delta_{A} (P(s)) \ar[rr]^-{\mathrm{lim}_{A} P r}&& \mathrm{lim}_{A} PU
}
\]
in $\mathcal{C}$ and $h$ to be the equivalent version of $g$ in the category $\mathbf{Set}$. It follows from Yoneda's Lemma that the pair $(g,h)$ is an $(\mathtt{S}\downarrow \mathbf{2})$-emulation. Specifically, the isomorphism associated with the pair $(g,h)$ may be deduced from the isomorphisms involved in Examples \ref{exam:emulation:precomposition_functor}, \ref{exam:emulation:case_of_limits} and  \ref{exam:emulation:case_of_unit_of_limits}.
\end{example}

\begin{example}\label{exam:emulation:gluing_functor}
Let $\mathtt{S}$ be a small category. For this example, we shall need a small category $A$ together a cone $r:\Delta_{A}(s) \Rightarrow U$ in $\mathtt{S}^A$. Let now $\mathcal{C}$ denote a complete category over $A$. The unit of the adjunction $\Delta_{A} \vdash \mathrm{lim}_{A}$ in $\mathcal{C}$ will be denoted by $\eta$. Now, to define our emulation, take $g$ to be the obvious functor $\mathbf{B}_s^{\mathtt{S}}\mathcal{C} \to \mathcal{C}^{\mathbf{2}}$ mapping an object $(P,e,Q)$ in $\mathbf{B}_s^{\mathtt{S}}\mathcal{C}$ to the arrow
\[
\xymatrix{
P(s) \ar[rr]^-{\eta_{P(s)}}&&\mathrm{lim}_{A} \Delta_{A} P(s) \ar[rr]^-{\mathrm{lim}_{A}\Delta_{A} e}&& \mathrm{lim}_{A} \Delta_{A}Q(s) \ar[rr]^{\mathrm{lim}_{A} Qr}&&
\mathrm{lim}_{A} QU
}
\]
in $\mathcal{C}$ and $h$ to be the equivalent version of $g$ in the category $\mathbf{Set}$. It follows from the isomorphisms involved in Examples \ref{exam:emulation:delta}, \ref{exam:emulation:case_of_limits} and  \ref{exam:emulation:unit_plus_cone} that the pair $(g,h)$ is an $(\mathbf{2}_s[\mathtt{S}]\downarrow \mathbf{2})$-emulation.
\end{example}

\subsection{Cocontinuous Emulations}
Let $\mathtt{S}$ and $\mathtt{T}$ be two small categories, $\mathcal{C}$ be a category and $\kappa$ be a limit ordinal.
An $(\mathtt{S}\downarrow \mathtt{T})$-emulation $(g,h)$ in $\mathcal{C}$ will be said to be \emph{$\kappa$-cocontinuous}, if for every object $t \in \mathtt{T}$, the functor
$h:\mathbf{Set}^{\mathtt{S}} \to \mathbf{Set}^{\mathtt{T}}$ preserves colimits over $\kappa$.

\begin{example}
Since identity functors preverse colimits, the pair $(g,h)$ of Example \ref{exam:emulation:identity} is a $\kappa$-cocontinuous $(\mathtt{T}\downarrow \mathtt{T})$-emulation for every limit ordinal $\kappa$.
\end{example}

\begin{example}\label{exam:emulation:cocontinuous:precomposition_functor}
Consider the same context as that used in Example \ref{exam:emulation:precomposition_functor}. Since $\mathbf{Set}$ is cocomplete over any small category $D$, the colimits of $\mathbf{Set}^{\mathtt{S}}$ are componentwise colimits, which means that for every functor $F:D \to \mathbf{Set}^{\mathtt{S}}$, the following isomorphism holds for every $s \in \mathtt{S}$.
\[
\Big(\mathrm{col}_{d \in D} F\Big)(s) \cong \mathrm{col}_{d \in D}(F(s))
\]

This directly implies that the functor $h:\mathbf{Set}^{\mathtt{S}} \to \mathbf{Set}^{\mathtt{T}}$ preserves colimits, which shows that the $(\mathtt{S}\downarrow \mathtt{T})$-emulation $(g,h)$ is $\kappa$-cocontinuous for every limit ordinal $\kappa$.
\end{example}

\begin{example}\label{exam:emulation:cocontinuous:Delta}
It follows from Example \ref{exam:emulation:cocontinuous:precomposition_functor} that the $(\mathbf{1}\downarrow \mathtt{T})$-emulation $(g,h)$ of Example \ref{exam:emulation:delta} is $\kappa$-cocontinuous for every limit ordinal $\kappa$.
\end{example}

\begin{example}\label{exam:emulation:cocontinuous:case_of_limits}
Consider the same context as that used in Example \ref{exam:emulation:case_of_limits} and suppose to be given a limit ordinal $\kappa$ satisfying the inequality $|\mathtt{T}| \leq \kappa$.
It directly follows from Proposition \ref{prop:SGA_limits_colimits_commute} that the functor $h:\mathbf{Set}^{\mathtt{S}} \to \mathbf{Set}^{\mathtt{T}}$ preserves colimits over $\kappa$. This shows that the $(\mathtt{S}\downarrow \mathbf{1})$-emulation $(g,h)$ is $\kappa$-cocontinuous.
\end{example}

\begin{example}\label{exam:emulation:cocontinuous:case_of_unit_of_limits}
Consider the same context as that used in Example \ref{exam:emulation:case_of_unit_of_limits} and suppose to be given an limit ordinal $\kappa$ satisfying the inequality $|\mathtt{T}| \leq \kappa$.
It follows from Proposition \ref{prop:eta_commutes_with_colimits_O_kappa} that the functor $h:\mathbf{Set}^{\mathtt{S}} \to \mathbf{Set}^{\mathtt{T}}$ preserves colimits over $\kappa$. This shows that the $(\mathtt{S}\downarrow \mathbf{2})$-emulation $(g,h)$ is $\kappa$-cocontinuous.
\end{example}

\begin{example}
By using the cocontinuity involved in Examples \ref{exam:emulation:cocontinuous:case_of_limits} and  \ref{exam:emulation:cocontinuous:case_of_unit_of_limits}, we may show that the
$(\mathtt{S}\downarrow \mathbf{2})$-emulation $(g,h)$ is $\kappa$-cocontinuous for any limit ordinal $\kappa$ satisfying the inequality $|A| \leq \kappa$.
\end{example}

\begin{example}\label{exam:emulation:cocontinuous:gluing_functor}
By using the cocontinuity involved in Examples \ref{exam:emulation:cocontinuous:precomposition_functor}, \ref{exam:emulation:cocontinuous:case_of_limits} and \ref{exam:emulation:cocontinuous:case_of_unit_of_limits}, Example we may show that the
$(\mathbf{2}_s[\mathtt{S}]\downarrow \mathbf{2})$-emulation $(g,h)$ is $\kappa$-cocontinuous for any limit ordinal $\kappa$ satisfying the inequality $|A| \leq \kappa$.
\end{example}

\subsection{Convergent Functors}\label{sssec:convergent_functors}
For any class $\mathcal{G}$ of objects of $\mathcal{C}$, a functor $F:\kappa+1 \to \mathcal{C}$ will be said to 
be \emph{$\mathcal{G}$-convergent in $\mathcal{C}$} if for every object $\mathbb{D}$ in $\mathcal{G}$, the following canonical function (obtained by homing)
is an isomorphism in $\mathbf{Set}$.
\[
\mathrm{col}_{\kappa}\mathcal{C}(\mathbb{D},F\iota_{\kappa}) \to \mathcal{C}(\mathbb{D},F(\kappa)) 
\]

If the class $\mathcal{G}$ turns out to be a singleton $\{\mathbb{D}\}$, the functor will more explicitly be said to be $\mathbb{D}$-convergent. 

\begin{remark}\label{rem:surjectiveness_property_of_convergence_for_SOA}
One of the useful implications of the previous definition is that if a functor $F:\kappa+1 \to \mathcal{C}$ is $\mathcal{G}$-convergent in $\mathcal{C}$, then for every object $\mathbb{D} \in \mathcal{G}$ and morphism $f:\mathbb{D} \to F(\kappa)$ in $\mathcal{C}$, there exist an ordinal $\alpha \in \kappa$ and a morphism $f':\mathbb{D} \to F(\alpha)$ making the following diagram commute  in $\mathcal{C}$.
\[
\xymatrix@R-1.8pc{
&F(\alpha)\ar[rd]^{F(\alpha < \kappa)}&\\
\mathbb{D}\ar[rr]_{f}\ar[ru]^{f'}&&F(\kappa)
}
\]
\end{remark}

Let now $\mathtt{T}$ and $\mathtt{S}$ denote two small categories and $G:\mathtt{T} \to \mathcal{C}$ be a functor. A functor $F:\kappa+1 \to \mathcal{C}^{\mathtt{S}}$ will be said to 
be \emph{unimorly $G$-convergent in $\mathcal{C}$} if for every object $s$ in $\mathtt{S}$ and object $t$ in $\mathtt{T}$, the following canonical function is an isomorphism in $\mathbf{Set}$.
\[
\mathrm{col}_{\kappa}\mathcal{C}(G(t),(F\iota_{\kappa}(\_))(s)) \to \mathcal{C}(G(t),F(\kappa)(s)) 
\]

In other words, the evaluation of $F$ at an object $s$ in $\mathtt{S}$ is $\{G(t)~|~t\in \mathrm{Obj}(\mathtt{T})\}$-convergent.

\begin{lemma}\label{lem:uniform_convergence_to_functorial_convergence}
Let $\mathtt{T}$ and $\mathtt{S}$ be two small categories such that $|\mathtt{T}| \leq \kappa$ and $\mathcal{C}$ be a category. Let $G:\mathtt{T} \to \mathcal{C}$ be a functor and consider a uniformly $G$-convergent functor $F:\kappa+1 \to \mathcal{C}^{\mathtt{S}}$ in $\mathcal{C}$. For every cocontinuous $(\mathtt{S}\downarrow \mathtt{T})$-emulation $(g,h)$, the composite functor $g \circ F:\kappa+1 \to \mathcal{C}^{\mathtt{T}}$ is $G$-convergent in $\mathcal{C}^{\mathtt{T}}$.
\end{lemma}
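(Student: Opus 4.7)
The plan is to unfold the conclusion as a single-object convergence with target $G\in\mathcal{C}^{\mathtt{T}}$, express the relevant hom-set as an end, commute the transfinite colimit past this end via Proposition \ref{prop:SGA_limits_colimits_commute}, and then reduce the resulting pointwise problem to the uniform convergence of $F$ by means of the emulation $(g,h)$ and its cocontinuity.

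Concretely, I would first write
\[
\mathcal{C}^{\mathtt{T}}(G,gF(\alpha)) \;\cong\; \int_{t \in \mathtt{T}} \mathcal{C}(G(t),gF(\alpha)(t))
\]
for each $\alpha\in\kappa+1$. This end is a small limit in $\mathbf{Set}$ indexed by a category whose cardinality is bounded by that of $\mathtt{T}$, hence by $\kappa$. Proposition \ref{prop:SGA_limits_colimits_commute} therefore allows us to commute the colimit over $\kappa$ with this end, so it suffices to show that, for each fixed $t \in \mathtt{T}$, the canonical map
\[
\mathrm{col}_{\alpha \in \kappa} \mathcal{C}(G(t),gF(\alpha)(t)) \;\longrightarrow\; \mathcal{C}(G(t),gF(\kappa)(t))
\]
is a bijection, in a manner natural in $t$ so as to reassemble into an isomorphism of ends.

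Second, for this pointwise statement I would apply the emulation isomorphism (\ref{eq:g_hat_g_natura;_siomorphism_convergence}) with $X := G(t)$ and $Y := F(\alpha)$, yielding a natural bijection
\[
\mathcal{C}(G(t),gF(\alpha)(t)) \;\cong\; h\bigl(\mathcal{C}(G(t),F(\alpha)(\_))\bigr)(t).
\]
Since colimits in $\mathbf{Set}^{\mathtt{T}}$ are computed pointwise and $(g,h)$ is $\kappa$-cocontinuous, the transfinite colimit over $\alpha$ can be pushed inside $h$ and past the evaluation at $t$. The uniform $G$-convergence of $F$ then supplies, for each $s \in \mathtt{S}$, a bijection $\mathrm{col}_{\alpha \in \kappa} \mathcal{C}(G(t),F(\alpha)(s)) \cong \mathcal{C}(G(t),F(\kappa)(s))$; because colimits in $\mathbf{Set}^{\mathtt{S}}$ are likewise pointwise, these glue into an isomorphism in $\mathbf{Set}^{\mathtt{S}}$, and applying $h$, evaluating at $t$, and using the emulation isomorphism once more returns us to $\mathcal{C}(G(t),gF(\kappa)(t))$.

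The main obstacle will be the naturality bookkeeping: one must verify that the composition of the isomorphisms produced above coincides with the canonical comparison map induced by the structure morphisms $gF(\alpha < \kappa):gF(\alpha) \Rightarrow gF(\kappa)$, both pointwise in $t$ and in a fashion compatible with the end presentation of natural transformations. This should follow from the naturality of the emulation isomorphism (\ref{eq:g_hat_g_natura;_siomorphism_convergence}) in both variables $X$ and $Y$, combined with the universal property of the colimits involved.
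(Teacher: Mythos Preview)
Your proposal is correct and follows essentially the same approach as the paper: express the hom-set $\mathcal{C}^{\mathtt{T}}(G,gF(-))$ as an end over $\mathtt{T}$, use the emulation isomorphism (\ref{eq:g_hat_g_natura;_siomorphism_convergence}) to rewrite each integrand via $h$, invoke the $\kappa$-cocontinuity of $h$ and the uniform $G$-convergence of $F$, and apply Proposition \ref{prop:SGA_limits_colimits_commute} to commute the colimit over $\kappa$ with the end. The only difference is the order in which you apply these steps (you swap colimit and end first, then invoke the emulation; the paper applies the emulation first and swaps last), which is immaterial.
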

\begin{proof}
The following series of natural isomorphisms proves the statement.
\begin{align*}
\mathcal{C}^{\mathtt{T}}(G,g \circ F(\kappa)) & \cong \int_{t \in \mathtt{T}}\mathcal{C}\big(G(t),g \circ F(\kappa)(t)\big)&\text{(Definition)}\\
 &\cong  \int_{t \in \mathtt{T}}h\Big(\mathcal{C}\big(G(t),F(\kappa)(\_)\big)\Big)(t)&\text{(Equation (\ref{eq:g_hat_g_natura;_siomorphism_convergence}))} \\
 & \cong  \int_{t \in \mathtt{T}}h\Big(\mathrm{col}_{\kappa}\mathcal{C}\big(G(t),F(\iota_{\kappa}(\_))(\_)\big)\Big)(t)&\text{(Uniform conv.)}\\
 & \cong  \int_{t \in \mathtt{T}}\mathrm{col}_{\kappa}h\Big(\mathcal{C}\big(G(t),F(\iota_{\kappa}(\_))(\_)\big)\Big)(t)&\text{(Cocontinuity)} \\
 & \cong  \mathrm{col}_{\kappa}\int_{t \in \mathtt{T}}h\Big(\mathcal{C}\big(G(t),F(\iota_{\kappa}(\_))(\_)\big)\Big)(t)&\text{(Proposition \ref{prop:SGA_limits_colimits_commute})} \\
 & \cong  \mathrm{col}_{\kappa}\int_{t \in \mathtt{T}}\mathcal{C}\big(G(t),g(F\circ \iota_{\kappa}(\_))(t)\big)&\text{(Equation (\ref{eq:g_hat_g_natura;_siomorphism_convergence}))} \\
 & \cong  \mathrm{col}_{\kappa}\mathcal{C}^{\mathtt{T}}(G,g \circ F\circ \iota_{\kappa}(\_))&\text{(Definition)} 
\end{align*}

This last isomorphism shows that $g \circ F$ is $G$-convergent in $\mathcal{C}^{\mathtt{T}}$.
\end{proof}

\begin{example}
Applying Lemma \ref{lem:uniform_convergence_to_functorial_convergence} to the $(\mathtt{T}\downarrow \mathtt{T})$-emulation $(g,h)$ of Example \ref{exam:emulation:identity} implies that if a functor $F:\kappa+1 \to \mathcal{C}^{\mathtt{T}}$ is uniformly $G$-convergent in $\mathcal{C}$ and the inequality $|\mathtt{T}| \leq \kappa$ holds, then the functor $F:\kappa+1\to \mathcal{C}^{\mathtt{T}}$ is $G$-convergent in $\mathcal{C}^{\mathtt{T}}$.
\end{example}

\begin{example}\label{exam:lemma:convegence_of_functors:gluing_functor}
Applying Lemma \ref{lem:uniform_convergence_to_functorial_convergence} to the $(\mathbf{2}_s[\mathtt{S}]\downarrow \mathbf{2})$-emulation $(g,h)$ of Example \ref{exam:emulation:cocontinuous:gluing_functor} implies that if a functor $(P,e,Q):\kappa+1 \to \mathbf{B}_s^{\mathtt{S}}\mathcal{C}^{\mathtt{S}}$ is uniformly $G$-convergent in $\mathcal{C}$ for some functor $G:\mathbf{2} \to \mathcal{C}$ and the inequality $\mathbf{2} \leq \kappa$ holds, then the functor mapping an ordinal $n$ in $\kappa+1$ to the following composite arrow in $\mathcal{C}$ is $G$-convergent in $\mathcal{C}^{\mathbf{2}}$.
\[
\xymatrix{
P_n(s) \ar[rr]^-{\eta_{P_n(s)}}&&\mathrm{lim}_{A} \Delta_{A} P_n(s) \ar[rr]^-{\mathrm{lim}_{A}\Delta_{A} e_n}&& \mathrm{lim}_{A}\Delta_{A} Q_n(s) \ar[rr]^{\mathrm{lim}_{A} Q_nr}&&\mathrm{lim}_{A} Q_nU
}
\]
\end{example}

\begin{remark}\label{rem:kappa_smallness_colimit_of_uniform_convergent_functor}
It follows from Lemma \ref{lem:uniform_convergence_to_functorial_convergence} that if a functor $F:\kappa+1 \to \mathcal{C}$ is uniformly $G$-convergent in $\mathcal{C}$, then $F:\kappa+1 \to \mathcal{C}$ is $\mathrm{col}_{\mathtt{T}}(G)$-convergent in $\mathcal{C}$. Specifically, this follows from the fact that $\Delta_{\mathtt{T}}$ commutes with hom-sets (see Example \ref{exam:emulation:cocontinuous:Delta}) and the following series of isomorphisms.
\begin{align*}
\mathcal{C}(\mathrm{col}_{\mathtt{T}}G,F(\kappa)) & \cong \mathcal{C}^{\mathtt{T}}(G,\Delta_{\mathtt{T}}(F(\kappa)))&\text{(Adjointness)}\\
 & \cong  \mathrm{col}_{\kappa}\mathcal{C}^{\mathtt{T}}(G,\Delta_{\mathtt{T}} \circ F\circ \iota_{\kappa})&\text{(Lemma \ref{lem:uniform_convergence_to_functorial_convergence})}\\
  & \cong  \mathrm{col}_{\kappa}\mathcal{C}(\mathrm{col}_{\mathtt{T}}G,F\circ \iota_{\kappa})&\text{(Adjointness)}
\end{align*}
\end{remark}

\section{Models for a Croquis}\label{sec:Category_of_models_for_a_croquis}

This section defines the notions of premodel and model for which we want to construct the localisation. We start with the type of theory on which the models are defined.

\subsection{Croquis}\label{sssec:Croquis}
Let $D$ be a small category. Recall that a cone in $D$ over a small category $A$ consists of two functors $d_0:\mathbf{1} \to D$ and $d_1:A \to D$
and a natural transformation $t:\Delta_A d_0 \Rightarrow d_1$.
When such a cone is called $c$, the functor $d_0$ will be denoted by $\mathtt{ou}(c)$, the functor $d_1$ will be denoted by $\mathtt{in}(c)$ and the small category $A$ will be referred to as the \emph{elementary shape} of $c$ and denoted by $\mathtt{Es}(c)$.

\begin{definition}
\emph{A {croquis category} (or {croquis}) in $D$ consists of a set $K$ of cones in $D$ and a functor $\mathtt{rou}:K \to D$ (where $K$ is seen as a discrete category) called the {regular output}.}
\end{definition}
A croquis as above will be denoted by a triple $(D,K,\mathtt{rou})$ and sometimes shortened to the pair $(K,\mathtt{rou})$ when the ambient category $D$ is obvious.

\begin{convention}
For every croquis $(D,K,\mathtt{rou})$, the operation $\mathtt{ou}(\_)$ induces a function from $K$ to $\mathrm{Obj}(D)$.
Alternatively, this may be seen as a functor $K \to D$. If the functor $\mathtt{rou}:K \to D$ is equal to $\mathtt{ou}:K \to D$, then the croquis will be denoted by $(D,K)$ or $K$ and the functor $\mathtt{rou}$ will be said to be \emph{trivial}.
\end{convention}

\begin{example}[Arrow categories]\label{exa:xroquis_arrow_categories}
Let $D$ be a small category, $D'$ be a subcategory of $D$ and $T:D' \to D$ be some given functor. The set $\mathrm{Mor}(D')$ of arrows of $D'$ defines an obvious set of cones of elementary shape $\mathbf{1}$ in $D'$. However, because $D'$ is a subcategory of $D$, we shall in fact see $\mathrm{Mor}(D')$ as a set of cones specifically in $D$. The croquis (in $D$) made of $\mathrm{Mor}(D')$ and the regular output $\mathrm{Mor}(D') \to D$ mapping any arrow $d \to c$ in $D'$ to the object $T(d)$ will later be denoted by $\mathbf{Cr}(D,T)$.
\end{example}

\begin{example}[Spectra]\label{exa:Croquis_for_spectra}
Let $\mathbb{N}$ denote the wide discrete subcategory of the ordinal category $\omega$ and $\mathbb{N}^*$ denote the full subcategory of $\mathbb{N}$ restricted to positive ordinals. Let $\mathtt{pred}:\mathbb{N}^* \to \mathbb{N}$ be the predecessor operation $n \mapsto n-1$. The croquis defined by $\mathbf{Cr}(\mathbb{N},\mathtt{pred})$ will later be used to characterise $\Omega$-spectra.
\end{example}

\begin{example}[Sketches]\label{exa:Croquis_for_a_sketch}
Any limit sketch $(D,K)$ defines an obvious croquis where $K$ stands for the set of chosen cones and where the associated regular output $K \to D$ is the trivial one.
\end{example}

\begin{example}[Grothendieck's pretopologies]
Let $J$ denote a Grothendieck pretopology on a small (opposite) category $D^{\mathrm{op}}$. A covering family $C=\{v_i \to u\}_{i \in A}$ in $J_u$ may be seen as a cone of the form $t:\Delta_{A}(u) \Rightarrow v(\_)$ in $D$ over $A$. If one denotes by $A' \to D^{\mathrm{op}}/d$ the stabilisation of $C$ (see Remark \ref{rem:Grothendieck_pretopologies_rem_stabilisation}), this cone gives rise to another cone $t':\Delta_{A'}(u) \Rightarrow v'(\_)$ over $A'$.
Equipping $D$ with the set of these latest cones, say $K_J$, gives rise to an obvious croquis $(D,K_J)$.
\end{example}

\begin{example}[Flabby pretopologies]\label{ex:Croquis_for_flabby_sheaves}
Let $J$ denote a Grothendieck pretopology on a small (opposite) category $D^{\mathrm{op}}$. The croquis that will later give 
rise to flabby sheaves and the Godement resolution is the union of the two croquis $(D,K_{J})$ and $\mathbf{Cr}(D,\mathrm{id}_D)$. Precisely, this croquis consists of the union of the two sets of cones $K_{J}$ and $\mathrm{Mor}(D)$ and the trivial regular output.
\end{example}

\begin{example}[Segal croquis]\label{ex:Croquis_reedy}
Let $\mathbf{\Delta}$ denote the category of non-zero finite ordinals and preserving-order functions, which is known as the \emph{simplex category}. Denote by $\mathbf{\Delta}^{+}$ the wide subcategory of $\mathbf{\Delta}$ whose arrows are injective functions and, for every object $\mathbf{r} \in \mathbf{\Delta}$, denote by $\partial_{\mathbf{r}}$ the composition of the functor $\partial:\mathbf{\Delta}/\mathbf{r}  \to \mathbf{\Delta}$ (see Section \ref{ssec:Overcategories}) with the obvious inclusion $\mathbf{\Delta}^{+}/\mathbf{r}  \hookrightarrow \mathbf{\Delta}/\mathbf{r} $. The \emph{Segal croquis} of $\mathbf{\Delta}^{\mathrm{op}}$ is of the form $(\mathbf{\Delta}^{\mathrm{op}},K_0 \cup K_1)$ (for a trivial regular output) where 
\begin{itemize}
\item[(i)] $K_0$ contains, for every object $\mathbf{r}  \in \mathbf{\Delta}^{\mathrm{op}}$, the cone $\Delta_{(\mathbf{\Delta}^{+}/\mathbf{r})^{\mathrm{op}}}(\mathbf{r}) \Rightarrow (\partial_{\mathbf{r}})^{\mathrm{op}}$, defined over $(\mathbf{\Delta}^{+}/\mathbf{r})^{\mathrm{op}}$, that stems from the dual transformation described in Remark \ref{rem:functor_in_overcats_give_nat_transf}  for the inclusion functor 
$\mathbf{\Delta}^{+}/\mathbf{r} \hookrightarrow \mathbf{\Delta}/\mathbf{r}$; 
\item[(ii)] $K_1$ contains, for every object $\mathbf{r} \in \mathbf{\Delta}^{\mathrm{op}}$, the cone given below (expressed in $\mathbf{\Delta}$ as a cocone), where, if one denotes $\mathbf{1}:=\{0\}$, $\mathbf{2}:=\{0,1\}$ and $\mathbf{r}:=\{0,\dots,r-1\}$
\begin{itemize}
\item[(1)] $a_i:\mathbf{2} \to \mathbf{r}$ is the function with the mapping rules $0 \mapsto i$ and $1 \mapsto i+1$;
\item[(2)] $s:\mathbf{1} \to \mathbf{2}$ is the function with the mapping rule $0 \mapsto 0$;
\item[(3)] $t:\mathbf{1} \to \mathbf{2}$ is the function with the mapping rule $0 \mapsto 1$.
\end{itemize}
\[
\xymatrix@R-2pc{
&&&\mathbf{r}&&&\\
&&&&&&\\
&&&&&&\\
\mathbf{2}\ar@{..>}[rrruuu]^{a_0}\ar@{<-}[rd]_t&&\mathbf{2}\ar@{..>}[ruuu]|{a_1}\ar@{<-}[ld]^s\ar@{<-}[rd]_t&&\mathbf{2}\ar@{..>}[luuu]|{a_{r-2}}\ar@{<-}[ld]^s\ar@{<-}[rd]_t&&\mathbf{2}\ar@{..>}[llluuu]_{a_{r-1}}\ar@{<-}[ld]^s\\
&\mathbf{1}&&\dots&&\mathbf{1}&\\
}
\]
\end{itemize}

This croquis will be denoted by $\mathbf{Seg}(\mathbf{\Delta}^{\mathrm{op}})$.
\end{example}

\begin{example}[Complete Segal croquis]\label{ex:Croquis_Complete_reedy}
Let $\mathbf{\Delta}$ be the simplex category. The \emph{complete Segal croquis} of $\mathbf{\Delta}^{\mathrm{op}}$ is given by its Segal croquis $(\mathbf{\Delta}^{\mathrm{op}},K)$ to which is added the unique cone 
whose peak is the ordinal $\mathbf{1}$ and whose diagram in $\mathbf{\Delta}$ is given, below, underlying the cocone of dotted arrows, where, if one denotes $\mathbf{2}:=\{0,1\}$ and $\mathbf{4}:=\{0,1,2,3\}$,
\begin{itemize}
\item[(1)] $l:\mathbf{2} \to \mathbf{4}$ is the function with the mapping rules $0 \mapsto 0$ and $1 \mapsto 2$;
\item[(2)] $r:\mathbf{2} \to \mathbf{4}$ is the function with the mapping rules $0 \mapsto 1$ and $1 \mapsto 3$;
\end{itemize}
\[
\xymatrix@R-2pc{
&&\mathbf{1}&&\\
&&&&\\
&&&&\\
\mathbf{1}\ar@{..>}[rruuu]^{!}&&\mathbf{4}\ar@{..>}[uuu]&&\mathbf{1}\ar@{..>}[lluuu]_{!}\\
&\mathbf{2}\ar[lu]^{!}\ar[ru]_{l}&&\mathbf{2}\ar[lu]^{r}\ar[ru]_{!}&\\
}
\]

The induced cone in $\mathbf{\Delta}^{\mathrm{op}}$ will be denoted by $c_{\mathrm{iso}}$ as it is meant to describe the set of isomorphism structures relative to the natural categorical (or nerval) structure of $\mathbf{\Delta}^{\mathrm{op}}$. The resulting croquis will be denoted by $\mathbf{Cseg}(\mathbf{\Delta}^{\mathrm{op}})$.
\end{example}

We shall speak of an \emph{elementary shape} of a croquis $(D,K,\mathtt{rou})$ to refer to the elementary shape of one of its cones.
Because $K$ is a small category, the class of elementary shapes of $(D,K,\mathtt{rou})$ is a set, which will be denoted by 
$\mathrm{Es}(K)$. The \emph{cardinality of a croquis $(D,K,\mathtt{rou})$} is then given by the cardinal of the coproduct of every small category in $\mathrm{Es}(K)$.
\[
|(K,\mathtt{rou})| := |\sum_{A \in \mathrm{Es}(K)}A|
\]

\subsection{Premodels}\label{ssec:premodels}
Let $(D,K,\mathtt{rou})$ be a croquis and $\mathcal{C}$ be a category. For any endofunctor $R:\mathcal{C} \to \mathcal{C}$, denote by $\mathbf{Pr}_{\mathcal{C}}(K,\mathtt{rou},R)$ the category whose objects are triples $(P,S,e)$ where (1) $P$ is a functor $D \to \mathcal{C}$, (2) $S$ is a functor \footnote{To not say a `function valued in a category'. Such a simplification will be common later on} $K \to \mathbf{Set}$ and (3) $e$ denotes a collection of arrows $e_{c,s}:P\mathtt{rou}(c) \to RP\mathtt{ou}(c)$ in $\mathcal{C}$ for every $c \in K$ and $s \in S(c)$ and whose morphisms, say of the form $(P,S,e) \Rightarrow (P',S',e')$, are pairs $(f,a)$ where $f$ and $a$ are two natural transformations of respective forms $P \Rightarrow P'$  and $S \Rightarrow S'$ making the following diagram commute for every $c \in K$ and $s \in S(c)$.
\[
\xymatrix{
*+!R(.7){P\mathtt{rou}(c)}\ar[r]^{f\mathtt{rou}(c)}\ar[d]_{e_{c,s}}&*+!L(.7){P'\mathtt{rou}(c)}\ar[d]^{e'_{c,a_c(s)}}\\
*+!R(.7){RP\mathtt{ou}(c)} \ar[r]_-{Rf \mathtt{ou}(c)}&*+!L(.7){RP'\mathtt{ou}(c)}
}
\]

The objects of $\mathbf{Pr}_{\mathcal{C}}(K,\mathtt{rou},R)$ will be called the \emph{$R$-premodels for $(K,\mathtt{rou})$}. For convenience, the category $\mathbf{Pr}_{\mathcal{C}}(K,\mathtt{rou},R)$ will sometimes be denoted as $\mathbf{Pr}_{\mathcal{C}}(K,R)$ when $\mathtt{rou}$ is trivial and as $\mathbf{Pr}_{\mathcal{C}}(K)$ when $R$ is also an identity.

\begin{example}[Premodels]\label{ex:functors_premodels}
The category of premodels for a sketch $(D,K)$ to a category $\mathcal{C}$ corresponds to the full subcategory of
$\mathbf{Pr}_{\mathcal{C}}(K)$ whose objects $(P,S,e)$ are such that the images of $S$ are equal to $\mathbf{1}$ and the morphism $e_{c}:P\mathtt{ou}(c) \to P\mathtt{ou}(c)$ is an identity for every $c \in Q$. This subcategory is isomorphic to $\mathcal{C}^{D}$.
\end{example}

\begin{example}[Presheaves]\label{ex:presheaves_premodels}
The category of presheaves over a site $(D^{\mathrm{op}},J)$ corresponds to the full subcategory of
$\mathbf{Pr}_{\mathbf{Set}}(K_J)$ whose objects $(P,S,e)$ are such that the images of $S$ are equal to $\mathbf{1}$ and the morphism $e_{c}:P\mathtt{ou}(c) \to P\mathtt{ou}(c)$ is an identity for every $c \in K_J$.  This subcategory is isomorphic to $\mathbf{Set}^D$.
\end{example}

\begin{example}[Prespectra]\label{ex:spectra_premodels}
If $\Omega:\mathbf{pTop} \to \mathbf{pTop}$ denotes the loop space functor on the category of pointed topological spaces and $\mathtt{pred}$ denotes the predecessor operation $n \mapsto n-1$ on $\mathbb{N}^*$, then the category of prespectra is the full subcategory of 
$\mathbf{Pr}_{\mathbf{Top}}(\mathbf{Cr}(\mathbb{N},\mathtt{pred}),\Omega)$ whose objects $(P,S,e)$ are such that the images of $S$ are equal to $\mathbf{1}$. This subcategory will be denoted by $\mathbf{PrSpc}$.
\end{example}

\begin{example}[Pre-localised rings]\label{ex:Prelicalised_ring}
Let $\mathbf{Set}$ denote the category of sets and $\mathtt{Rg}$ be the limit sketch defined in Example \ref{exa:rings_sketch_Rg}.
The category of `pre-localised rings' is defined as the full subcategory of the category $\mathbf{Pr}_{\mathbf{Set}}(\mathtt{Rg})$ whose objects $(P,S,e)$ are such that (1) $P:\mathtt{Rg} \to \mathbf{Set}$ is a model for $\mathtt{Rg}$; (2) the image of $S:K \to \mathbf{Set}$ above the cone $\mathrm{1}_{g_1}:g_1 \to g_1$ is equal to a subset of $P(g_1)$ while its images above all the other cones are equal to $\mathbf{1}$ and (3) the morphism $e_{c,s}:P(g_1) \to P(g_1)$ is given by 
\begin{itemize}
\item[-] the right multiplication map $x \mapsto P(\mu')(x,s)$ for every $s \in S(c)$ if $c = \mathrm{1}_{g_1}$;
\item[-] the identity morphism $e_{c}:P\mathtt{ou}(c) \to P\mathtt{ou}(c)$ otherwise.
\end{itemize}

This subcategory will be denoted by $\mathbf{PrLocRg}$.
\end{example}

\begin{example}[Pre-Segal spaces]\label{ex:PreSegal_spaces}
Let $\mathbf{Top}$ denote the category of topological spaces and continuous functions. The category of \emph{pre-Segal spaces} is the category of simplical topological spaces; it is given as the full subcategory of $\mathbf{Pr}_{\mathbf{Top}}(\mathbf{Seg}(\mathbf{\Delta}^{\mathrm{op}}))$ whose objects $(P,S,e)$ are such that the images of the functor $S$ are equal to $\mathbf{1}$ and the morphism $e_{c}:P\mathtt{ou}(c) \Rightarrow P\mathtt{ou}(c)$ is an identity for every $c \in \mathbf{Seg}(\mathbf{\Delta}^{\mathrm{op}})$. Thecategory of \emph{pre-complete Segal spaces} is defined similarly by replacing $\mathbf{Seg}(\mathbf{\Delta}^{\mathrm{op}})$ with $\mathbf{Cseg}(\mathbf{\Delta}^{\mathrm{op}})$.
\end{example}

\begin{definition}
Let $D$ be a small category and $\mathcal{C}$ be a category. For any given endofunctor $R:\mathcal{C} \to \mathcal{C}$, a \emph{category of $R$-premodels} is a subcategory of the category $\mathbf{Pr}_{\mathcal{C}}(K,\mathtt{rou},R)$.
\end{definition}

\begin{example}
Premodels for a sketch, presheaves on a site, prespectra, pre-localised rings and pre-Segal spaces are examples of such categories (see the previous examples).
\end{example}

\subsection{Models}\label{ssec:Models}
Let $D$ be a small category, $(K,\mathtt{rou})$ be a croquis in $D$ and $\mathcal{C}$ be a complete category over the elementary shapes of $K$. Suppose to be given a right adjoint $R:\mathcal{C} \to \mathcal{C}$. The first goal of this section is to define a functor $\mathcal{G}^K_c:\mathbf{Pr}_{\mathcal{C}}(K,\mathtt{rou},R) \to \mathbf{Fam}(\mathcal{C}^{\mathbf{2}})$ for every cone $c \in K$.
In this respect, for every cone $c$ in $K$ of the form $t:\Delta_A(d_0) \Rightarrow d_1$, for which we shorten the notation $\mathtt{rou}(c)$ to the symbol $r$, the functor $\mathcal{G}^K_c$ maps any premodel $(P,S,e)$ to the family taking any $s \in S(c)$ to the following composite arrow in $\mathcal{C}$.
\[
\xymatrix{
P(r) \ar[r]^-{\eta_{P(r)}}&\mathrm{lim}_{A}\Delta_AP(r) \ar[rr]^-{\mathrm{lim}_{A}\Delta_Ae_{c,s}}&&\mathrm{lim}_{A}\Delta_ARPd_0 \ar[rr]^-{\mathrm{lim}_{A}RPt}&& \mathrm{lim}_{A}RPd_1
}
\]

For every morphism of $R$-premodel of the form $(f,a):(P,S,e) \Rightarrow (P',S',e')$, the image morphism $\mathcal{G}^K_c(f,a):\mathcal{G}^K_c(P,S,e) \Rightarrow \mathcal{G}^K_c(P',S',e')$ is given, for every $s \in S(c)$, by the following morphism in $\mathcal{C}^{\mathbf{2}}$.
\[
\xymatrix{
P(r) \ar[d]_{f(r)}\ar[rr]^-{\eta_{P(r)}}&&\mathrm{lim}_{A}\Delta_AP(r)\ar[d]|{\mathrm{lim}_{A}\Delta_APf(r)} \ar[rr]^-{\mathrm{lim}_{A}\Delta_Ae_{c,s}}&&\mathrm{lim}_{A}\Delta_ARPd_0 \ar[rr]^-{\mathrm{lim}_{A}RPt} \ar[d]|{\mathrm{lim}_{A}f d_1} && \mathrm{lim}_{A}RPd_1 \ar[d]|{\mathrm{lim}_{A}R f d_1}\\
P'(r) \ar[rr]^-{\eta_{P'(r)}}&&\mathrm{lim}_{A}\Delta_AP'(r) \ar[rr]^-{\mathrm{lim}_{A}\Delta_Ae_{c,a_c(s)}'}&&\mathrm{lim}_{A}\Delta_ARP'd_0 \ar[rr]^-{\mathrm{lim}_{A}RP't}&& \mathrm{lim}_{A}RP'd_1
}
\]

\begin{definition}[System of premodels]\label{def:System:premodels}
A \emph{system of $R$-premodels} consists of (1) a croquis $(D,K,\mathtt{rou})$; \linebreak (2) a category $\mathcal{C}$ that is complete on the elementary shapes of $K$ and admits a terminal object; \linebreak (3) a category of $R$-premodels $\mathcal{P} \hookrightarrow \mathbf{Pr}_{\mathcal{C}}(K,\mathtt{rou},R)$ where $R$ is a right adjoint and (4), for every cone $c \in K$, a set $\mathtt{V}_c$ of commutative squares in $\mathcal{C}$, called the \emph{diskads} (see left diagram, below) equipped with a pushout in $\mathcal{C}$ (see right diagram, below). 
\begin{equation}\label{eq:definition:System_of_premodels}
\xymatrix{
\mathbb{S} \ar[r]^{\gamma_1}\ar[d]_{\gamma_2} &\mathbb{D}_1 \ar[d]^{\beta_1}&\\
\mathbb{D}_2 \ar[r]_{\beta_2} &\mathbb{D}'\\
}\quad\quad\quad\quad
\xymatrix{
\mathbb{S}\ar@{}[rd]|>>{\text{\huge{\rotatebox[origin=c]{-90}{$\llcorner$}}}} \ar[r]^{\gamma_1}\ar[d]_{\gamma_2} &\mathbb{D}_1 \ar[d]^{\delta_1}\\
\mathbb{D}_2 \ar[r]_{\delta_2} &\mathbb{S}'\\
}
\end{equation}
\end{definition}
The collection consisting of all the sets $\mathtt{V}_c$ will usually be denoted by $\mathtt{V}$. A system of $R$-premodels will be denoted as a 4-tuple $(K,\mathtt{rou},\mathcal{P},\mathtt{V})$ and said to be \emph{defined over $D$ in $\mathcal{C}$}. The diagrams used in Definition \ref{def:System:premodels} can more efficiently be described as a colimit sketch in $\mathcal{C}$ (i.e. diagram equipped with colimits) of the following form.
\[
\xymatrix{
\mathbb{S}\ar@{}[rd]|>>{\text{\huge{\rotatebox[origin=c]{-90}{$\llcorner$}}}} \ar[r]^{\gamma_1}\ar[d]_{\gamma_2} &\mathbb{D}_1 \ar[d]^{\delta_1}&\\
\mathbb{D}_2 \ar[r]_{\delta_2} &\mathbb{S}' \ar[r]^{\beta} &  \mathbb{D}'\\
}
\]

This type of colimit sketch will be called a \emph{vertebra} and denoted by the symbols $\|\gamma_2,\gamma_1\|\cdot \beta$. For such a vertebra, it will come in handy to refer to the arrows $\gamma_2$, $\gamma_1$, $\beta$ and $\beta \circ \delta_1$ as the \emph{seed}, \emph{coseed}, \emph{stem} and \emph{trivial stem}, respectively.
Finally, the left adjoint of $R:\mathcal{C} \to \mathcal{C}$ will conventionally be denoted by $L$.

\begin{definition}[Model]\label{def:models}
An $R$-premodel $(P,S,e)$ in a system of $R$-premodels $(K,\mathtt{rou},\mathcal{P},\mathtt{V})$ will be said to be an \emph{$R$-model} if, for every cone $c \in K$, every component of the arrow $\mathcal{G}^K_c(P,S,e) \Rightarrow \mathbf{1}$ in $\mathbf{Fam}(\mathcal{C}^{\mathbf{2}})$ has the right lifting property with respect to all the diskads of $\mathtt{V}_c$ when these are seen as arrows $\gamma_1 \Rightarrow \beta_1$ in $\mathcal{C}^{\mathbf{2}}$ with respect to the notations of Equation (\ref{eq:definition:System_of_premodels}).
\[
\xymatrix@C-1.25pc{
&*+!R(.3){P(r)}\ar[rr]^-{\mathcal{G}^K_c(P,S,e)_s}&&*+!L(.7){\mathrm{lim}_{d_1}RP}\\
\mathbb{S}\ar[ur]^x\ar[d]_{\gamma_2}\ar[rr]^{\gamma_1}&&\mathbb{D}_1\ar[ru]^y\ar[d]|{\beta\delta_1}&\\
\mathbb{D}_2\ar@{-->}[ruu]\ar[rr]|{\beta\delta_2}&&\mathbb{D}'\ar@{-->}[ruu]&
}
\]
\end{definition}

\begin{example}[Models for a sketch]\label{ex:Models_for_a_sketch_system}
For every limit sketch $(D,K)$, define the system of premodels consiting of the croquis $K$ (see Example \ref{exa:Croquis_for_a_sketch}); the associated category of premodels $\mathbf{Set}^{D} \hookrightarrow \mathbf{Pr}_{\mathbf{Set}}(K)$ and, for every cone $c$ in $K$, the set made of the following vertebrae in $\mathbf{Set}$.
\begin{equation}\label{eq:vertebrae:set->bijection}
\xymatrix{
\emptyset\ar@{}[rd]|>>>{\text{\huge{\rotatebox[origin=c]{-90}{$\llcorner$}}}} \ar[r]^{!}\ar[d]_{!} &\mathbf{1} \ar[d]^-{\delta_1}&\\
\mathbf{1} \ar[r]_-{\delta_2} &*+!L(.7){\mathbf{1}+\mathbf{1}} \ar[r]^-{!} &  \mathbf{1},\\
}
\quad\quad\quad\quad
\xymatrix{
*+!R(.7){\mathbf{1}+\mathbf{1}}\ar@{}[rd]|>>>{\text{\huge{\rotatebox[origin=c]{-90}{$\llcorner$}}}} \ar[r]^{!}\ar[d]_{!} &\mathbf{1} \ar@{=}[d]&\\
\mathbf{1} \ar@{=}[r] &\mathbf{1} \ar@{=}[r] &  \mathbf{1}\\
}
\end{equation}

The $\mathrm{id}_{\mathbf{Set}}$-models of such a system correspond to the models for the sketch $(D,K)$.
\end{example}

\begin{example}[Sheaves]\label{exa:system_of_models_sheaves}
For every site $(D^{\mathrm{op}},J)$, define the system of premodels consiting of the croquis $K_{J}$ (see Example \ref{ex:presheaves_premodels}); the associated category of premodels $\mathbf{Set}^{D} \hookrightarrow \mathbf{Pr}_{\mathbf{Set}}(K_{J})$ and, for every cone $c$ in $K_{J}$, the set made of the vertebrae given in Equation  (\ref{eq:vertebrae:set->bijection}).
The $\mathrm{id}_{\mathbf{Set}}$-models of such a system correspond to the sheaves over $(D^{\mathrm{op}},J)$.
\end{example}

\begin{example}[Flabby sheaves]\label{exa:system_of_models_sheaves_flabby}
For every site $(D^{\mathrm{op}},J)$, define the system of premodels consiting of the croquis $K_J\cup \mathrm{Mor}(D)$ defined in Example \ref{ex:Croquis_for_flabby_sheaves}; the functor category $\mathbf{Set}^{D} \hookrightarrow \mathbf{Pr}_{\mathbf{Set}}(K_J\cup \mathrm{Mor}(D))$ and 
\begin{itemize}
\item[(i)] for every cone $c$ in $K_J$, the set made of the vertebrae given in Equation (\ref{eq:vertebrae:set->bijection});
\item[(ii)] for every cone $c$ in $\mathrm{Mor}(D)$, the set made of the leftmost vertebra of Equation (\ref{eq:vertebrae:set->bijection}) only.
\end{itemize}

The $\mathrm{id}_{\mathbf{Set}}$-models $F:D \to \mathbf{Set}$ of such a system correspond to the sheaves over $(D^{\mathrm{op}},J)$ whose morphisms $F(U) \to F(V)$ over any arrow $U \to V$ in $D$ are surjective, namely the flabby sheaves over $(D^{\mathrm{op}},J)$.
\end{example}

\begin{example}[Sheaves in categories]\label{exa:system_of_models_sheaves_in Cat}
For every site $(D^{\mathrm{op}},J)$, define the system of premodels consiting of the croquis $K_{J}$ (see Example \ref{ex:presheaves_premodels}); the associated category of premodels $\mathbf{Cat}^{D} \hookrightarrow \mathbf{Pr}_{\mathbf{Cat}}(K_{J})$ and, for every cone $c$ in $K_{J}$, the set made of the following vertebrae for the obvious choices of morphisms, where
\begin{itemize}
\item[(1)] $\mathbf{1} $ is a terminal category;
\item[(2)] $\mathbf{iso}$ is the free living isomorphism category (i.e., two objects, one isomorphism);
\item[(3)] $\mathbf{2}$  is the free living arrow category (i.e., two objects, one arrow);
\item[(4)] $\mathbf{2}\oplus\mathbf{2}$  is category made of two objects and two parallel arrows between them.
\end{itemize}
\begin{equation}\label{eq:vertebrae:cat->equivalences}
\xymatrix{
\emptyset\ar@{}[rd]|>>>{\text{\huge{\rotatebox[origin=c]{-90}{$\llcorner$}}}} \ar[r]^{!}\ar[d]_{!} &\mathbf{1} \ar[d]&\\
\mathbf{1} \ar[r] &*+!L(.7){\mathbf{1}+\mathbf{1}} \ar[r]^-{\subset} &  \mathbf{iso}\\
}
\quad
\xymatrix{
*+!R(.7){\mathbf{1}+\mathbf{1}}\ar@{}[rd]|>>>{\text{\huge{\rotatebox[origin=c]{-90}{$\llcorner$}}}} \ar[r]^-{\subset}\ar[d]_-{\subset} &\mathbf{2} \ar[d]&\\
\mathbf{2} \ar[r] &\mathbf{2}\oplus\mathbf{2} \ar@{->>}[r] &  \mathbf{2}\\
}
\quad
\xymatrix{
*+!R(.7){\mathbf{2}\oplus\mathbf{2}}\ar@{}[rd]|>>>{\text{\huge{\rotatebox[origin=c]{-90}{$\llcorner$}}}} \ar@{->>}[r]\ar@{->>}[d] &\mathbf{2} \ar@{=}[d]&\\
\mathbf{2} \ar@{=}[r] &\mathbf{2}\ar@{=}[r] &  \mathbf{2}\\
}
\end{equation}

The $\mathrm{id}_{\mathbf{Cat}}$-models of such a system correspond to those `sheaves' $D \to \mathbf{Cat}$ for which the sheaf condition is not a bijection but an equivalence of categories.
\end{example}

\begin{example}[Strong stacks]\label{exa:system_of_models_Strong_stacks}
For every site $(D^{\mathrm{op}},J)$, define the system of premodels consiting of the croquis $K_J\cup \mathrm{Mor}(D)$ defined in Example \ref{ex:Croquis_for_flabby_sheaves}; the functor category $\mathbf{Cat}^{D} \hookrightarrow \mathbf{Pr}_{\mathbf{Cat}}(K_J\cup \mathrm{Mor}(D))$ and 
\begin{itemize}
\item[(i)] for every cone $c$ in $K_J$, the set made of the leftmost vertebra of Equation (\ref{eq:vertebrae:set->bijection}) when seen in $\mathbf{Cat}$ (instead of $\mathbf{Set}$) and the rightmost two vertebrae of Equation  (\ref{eq:vertebrae:cat->equivalences});
\item[(ii)] for every cone $c$ in $\mathrm{Mor}(D)$, the set made of the leftmost vertebra of Equation (\ref{eq:vertebrae:cat->equivalences}) only.
\end{itemize}

The $\mathrm{id}_{\mathbf{Cat}}$-models of such a system correspond to the strong stack (see \cite{Joyal_Tierney}). The strong stacks completion constructed in \emph{ibid} corresponds to a special case of the general construction given in this paper.
\end{example}

\begin{example}[Strong stacks up to homotopy]\label{exa:system_of_models_Strong_stacks_bis}
For every site $(D^{\mathrm{op}},J)$, define the system of premodels consiting of the croquis $K_J\cup \mathrm{Mor}(D)$ defined in Example \ref{ex:Croquis_for_flabby_sheaves}; the functor category $\mathbf{Cat}^{D} \hookrightarrow \mathbf{Pr}_{\mathbf{Cat}}(K_J\cup \mathrm{Mor}(D))$ and 
\begin{itemize}
\item[(i)] for every cone $c$ in $K_J$, the set made of the vertebrae given in Equation (\ref{eq:vertebrae:cat->equivalences});
\item[(ii)] for every cone $c$ in $\mathrm{Mor}(D)$, the set made of the leftmost vertebra of Equation (\ref{eq:vertebrae:cat->equivalences}) only.
\end{itemize}

The $\mathrm{id}_{\mathbf{Cat}}$-models of such a system may be identified to the strong stacks of \cite{Joyal_Tierney} up to the notion of homotopy defined thereof.
\end{example}

\begin{example}[Segal spaces]\label{ex:Segal_spaces}
Define the system of premodels consisting of the croquis $\mathbf{Seg}(\mathbf{\Delta}^{\mathrm{op}})$ defined in Example \ref{ex:Croquis_reedy};  the category of pre-Segal spaces $\mathbf{Top}^{\mathbf{\Delta}^{\mathrm{op}}}$, which is included in $\mathbf{Pr}_{\mathbf{Top}}(\mathbf{Seg}(\mathbf{\Delta}^{\mathrm{op}}))$ and
\begin{itemize}
\item[(i)] for every cone $c$ in 
$K_0 \subseteq \mathbf{Seg}(\mathbf{\Delta}^{\mathrm{op}})$, the set of obvious vertebrae induced by the diskads given in Equation (\ref{eq:diskad:top->homotopical_equivalence}), where
\begin{itemize}
\item[-] $n$ runs over the natural numbers;
\item[-] the object $\mathbb{D}^{n}$ is the topological $n$-disc;
\item[-] the map $\iota_n:\mathbb{D}^{n} \to \mathbb{D}^{n+1}$ is the obvious hemisphere inclusion;
\end{itemize}
\begin{equation}\label{eq:diskad:top->homotopical_equivalence}
\xymatrix{
\mathbb{D}^{n}\ar[r]^-{\iota_n}\ar[d]_-{\iota_n} &*+!L(0.7){\mathbb{D}^{n+1}}\ar@{=}[d]\\
\mathbb{D}^{n+1}\ar@{=}[r]&*+!L(0.7){\mathbb{D}^{n+1}}
}
\end{equation}
\item[(ii)]  for every cone $c$ in 
$K_1 \subseteq \mathbf{Seg}(\mathbf{\Delta}^{\mathrm{op}})$, the set of Vertebrae (\ref{eq:vertebra:top->homotopical_equivalence}), where $n$ runs over the positive integers and
\begin{itemize}
\item[-] the object $\mathbb{S}^{n-1}$ is the topological $(n-1)$-sphere;
\item[-] the maps between the different objects are induced by the obvious inclusions;
\end{itemize}
\begin{equation}\label{eq:vertebra:top->homotopical_equivalence}
\xymatrix{
\mathbb{S}^{n-1}\ar[r]^-{\gamma_n}\ar[d]_-{\gamma_n}\ar@{}[rd]|>>>{\text{\huge{\rotatebox[origin=c]{-90}{$\llcorner$}}}} &\mathbb{D}^n\ar[d]^{\delta_1^n}&\\
\mathbb{D}^n\ar[r]_-{\delta_2^n}&\mathbb{S}^{n}\ar[r]_-{\gamma_{n+1}}&\mathbb{D}^{n+1}
}
\end{equation}
\end{itemize}

The $\mathrm{id}_{\mathbf{Top}}$-models of such a system correspond to the Segal spaces in $\mathbf{Top}$ (see \cite{Rezk} for a definition enriched in simplicial sets).
\end{example}

\begin{example}[Complete Segal spaces]\label{ex:Complete_Segal_spaces}
Define the system of premodels consisting of the croquis $\mathbf{Cseg}(\mathbf{\Delta}^{\mathrm{op}})$ defined in Example \ref{ex:Croquis_Complete_reedy};  the category of pre-complete Segal spaces $\mathbf{Top}^{\mathbf{\Delta}^{\mathrm{op}}}$, which is included in $\mathbf{Pr}_{\mathbf{Top}}(\mathbf{Cseg}(\mathbf{\Delta}^{\mathrm{op}}))$, and
\begin{itemize}
\item[(1)] for every cone $c$ in 
$\mathbf{Cseg}(\mathbf{\Delta}^{\mathrm{op}})$ that is in fact in $\mathbf{Seg}(\mathbf{\Delta}^{\mathrm{op}})$, the same set of vertebrae defined in Example \ref{ex:Segal_spaces};
\item[(2)] for the cone $c_{\mathrm{iso}}$ (see Example \ref{ex:Croquis_Complete_reedy}), the set of vertebrae of the form (\ref{eq:vertebra:top->homotopical_equivalence}) for every positive integer $n$.
\end{itemize}

The $\mathrm{id}_{\mathbf{Top}}$-models of such a system correspond to the complete Segal spaces in $\mathbf{Top}$ (see \cite{Rezk} for a definition enriched in simplicial sets).
\end{example}

\begin{example}[Spectra]\label{ex:system_of_premodels_spectra}
For the loop space functor $\Omega:\mathbf{pTop} \to \mathbf{pTop}$, define the system of $\Omega$-premodels consiting of the croquis $\mathbf{Cr}(\mathbb{N},\mathtt{pred})$ defined in Example \ref{exa:Croquis_for_spectra}; the category of prespectra $\mathbf{PrSpc} \hookrightarrow \mathbf{Pr}_{\mathbf{pTop}}(\mathbf{Cr}(\mathbb{N},\mathtt{pred}),\Omega)$ and, for every cone $c$ in $\mathbf{Cr}(\mathbb{N},\mathtt{pred})$, the set of vertebrae of pointed spaces defined in Diagram (\ref{eq:vertebra:pointed_sets->homotopical_equivalence}), where $n$ is a positive number and
\begin{itemize}
\item[-] where the object $\mathbb{S}^{n-1}/\partial$ is the quotient of the $(n-1)$-sphere by itself (i.e., a point);
\item[-] where the object $\mathbb{D}^{n}/\partial$ is the quotient of the $n$-disc by its boundary; 
\item[-] where the object $\mathbb{S}^{n}/p$ is the quotient of the $n$-sphere by its equator;
\item[-] where the object $\mathbb{D}^{n+1}/p$ is the quotient of the $(n+1)$-disc by its equator;
\item[-] where the object $\mathbb{D}^{n+1}/h$ is the quotient of the $(n+1)$-disc by one of its hemispheres;
\item[-] where the object $\mathbb{D}^{n+1}/\partial$ is the quotient of the $(n+1)$-disc by its boundary;
\item[-] where the maps between the different objects are the obvious inclusions.
\end{itemize}
\begin{equation}\label{eq:vertebra:pointed_sets->homotopical_equivalence}
\begin{array}{l}
\xymatrix@C-1.5pc{
*+!R(.7){\mathbb{S}^{n-1}/\partial}\ar[r]^-{\gamma_n}\ar[d]_-{\gamma_n}\ar@{}[rd]|>>>{\text{\huge{\rotatebox[origin=c]{-90}{$\llcorner$}}}} &*+!L(.7){\mathbb{D}^n/\partial}\ar[d]^{\delta_1^n}&\\
*+!R(.7){\mathbb{D}^n/\partial}\ar[r]_-{\delta_2^n}&*+!L(.7){\mathbb{S}^{n}/p}\ar[r]_-{\beta_{n}}&*+!L(1){\mathbb{D}^{n+1}/p}
}\\
$\quad\quad$\includegraphics[width=4cm]{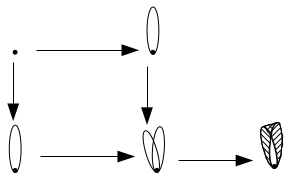}
\end{array}
\quad\quad
\begin{array}{l}
\xymatrix@C-1.5pc{
*+!R(.7){\mathbb{D}^n/\partial}\ar[r]^-{\gamma_n'}\ar[d]_-{\gamma_n'}\ar@{}[rd]|>>>{\text{\huge{\rotatebox[origin=c]{-90}{$\llcorner$}}}} &*+!L(.7){\mathbb{D}^{n+1}/h}\ar[d]^{\delta_1^n}&\\
*+!R(.7){\mathbb{D}^{n+1}/h}\ar[r]_-{\delta_2^n}&*+!L(.7){\mathbb{D}^{n+1}/\partial}\ar@{=}[r]&*+!L(1){\mathbb{D}^{n+1}/\partial}
}
\\
$\quad\quad$\includegraphics[width=4cm]{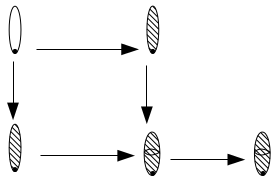}
\end{array}
\end{equation}

The $\Omega$-models of such a system correspond to the $\Omega$-spectra.
\end{example}

\begin{example}[Localisation of rings]\label{ex:system_of_premodels_Localisation_of_rings}
Consider the system of premodels consisting of the croquis $\mathtt{Rg}$ (see Example \ref{ex:Prelicalised_ring}), the subcategory $\mathbf{PrLocRg} \hookrightarrow \mathbf{Pr}_{\mathbf{Set}}(\mathtt{Rg})$ and, for the cone $c$ in $\mathtt{Rg}$, the set made of the vertebrae given in Equation (\ref{eq:vertebrae:set->bijection}).
The $\mathrm{id}_{\mathbf{Rg}}$-models $(P,S,e)$ of such a system correspond to the rings $P$ for which the map $x \mapsto P(\mu')(x,s)$ is invertible for every $s \in S(1_{g_1})$, or in other words those rings that are localised 
at their associated subset of elements $S$. Fields are particular examples.
\end{example}

\begin{remark}\label{rem:Homotopy_functors}
Many other examples could have been provided. Recall that it is common fact (see \cite[Lemma 7.5.1]{Simpson},  \cite{DugIsak} or \cite[Proposition 8]{LafMetWor}) that, in some nice model category $\mathcal{C}$, the notion of weak equivalence may be characterised via the type of right lifting property expressed in Example \ref{def:models}.
For instance, Examples \ref{ex:Segal_spaces} and  \ref{ex:Complete_Segal_spaces} on Segal spaces could have been extended to any nice cofibrantly generated model category, which need not be simplicial (contrary to usual practice). In fact, it is worth noting that the type of localisation described in the present article is an alternative to the usual simplicial Bousfield Localisation process (see \cite{Hirschhorn}). On could also look at the type of localisation discussed in \cite[Corollary 8.8]{B_Toen}, which could be comprised in a more technical generalisation of the present work. Future work will also aim at generalising Example \ref{exa:system_of_models_sheaves_in Cat} to weaker functors in order to charactise the notions of $(\infty,n)$-stack and strong $(\infty,n)$-stack.
\end{remark}

\section{Narratives and the Small Object Argument} \label{sec:From_narratives_to_combinatorial_cat}s

This section aims to introduce the small object argument that will be used for the construction of the localisation. The difference from that given below and the one defined by Quillen \cite{Quillen67} is the notion of `degree' coming along with the concept of narrative (see the table below). The degree is the key ingredient that allows us to obtain our so-called elimination of quotients.

\begin{center}
\begin{tabular}{p{1.7cm}|p{8.8cm}l}
\cellcolor[gray]{0.8}Notions&\multicolumn{2}{c}{\cellcolor[gray]{0.8} Descriptions}\\
\hline
\cellcolor[gray]{0.9}\multirow{4}{*}{Tome} & A collection of commutative squares whose rightmost vertical arrows are all equal: this can be visualised as a `book' whose pages are glued along a spine. The pages can satisfy certain compatibility relations. &
{$
\xymatrix@C-16pt@R-16pt{
\ar[dd]&&\ar@{<-}[rr]\ar@{<-}[ll]\ar[dd]\ar@{<-}[dr]\ar@{<-}[dl]&&\ar[dd]\\
&\ar[dd]&&\ar[dd]&\\
&&\ar@{<-}[rr]|\hole\ar@{<-}[ll]|\hole\ar@{<-}[dr]\ar@{<-}[dl]&&\\
&&&&\\
}
$}\\
\hline
\cellcolor[gray]{0.9}Morphisms & \multicolumn{2}{p{10.5cm}}{\emph{Regular}: relates the spine and the pages of two `books' together.} \\
\cellcolor[gray]{0.9}of tomes& \multicolumn{2}{p{10.5cm}}{\emph{Loose}: only relates the spines.} \\
\hline
\cellcolor[gray]{0.9}\multirow{2}{*}{Oeuvre} & \multicolumn{2}{p{11.3cm}}{An ordered collection of tomes related via loose morphisms; the \emph{theme} is the common object towards which the spines of the books go to.} \\
\hline
\cellcolor[gray]{0.9}\multirow{2}{*}{Narrative} \multirow{2}{*}{of degree $\delta$} & \multicolumn{2}{p{11.3cm}}{An oeuvre that is equipped with sub-diagrams of its tomes, called the \emph{events}, and choices of lifts for these sub-diagrams, called the \emph{viewpoints} These lifts only `commute' from the $k$-th book to the $(k+\delta)$-th book.}\\
\end{tabular}
\end{center}

\subsection{Numbered Categories and Compatibility}\label{sssec:Numbered_categories_and_compatibility}
In the sequel, the term \emph{numbered category} will denominate any pair $(\mathcal{C},\kappa)$
where $\mathcal{C}$ is a category and $\kappa$ is a limit ordinal. 
A small category $\mathtt{T}$ will be said to be \emph{compatible} with $(\mathcal{C},\kappa)$
if (1) the category $\mathcal{C}$ admits colimits over $\mathtt{T}$ and (2) the inequality $|\mathtt{T}| \leq \kappa$ holds.
By extension, a functor $i:\mathtt{T} \to \mathtt{A}$ will be said to be \emph{compatible}
with a numbered category $(\mathcal{C},\kappa)$ if its domain $\mathtt{T}$ is compatible with $(\mathcal{C},\kappa)$.

\subsection{Lifting Systems}
Let us now define in formal terms what will later be seen as a set of generating cofibrations for our small object argument.
Let $(\mathcal{C},\kappa)$ be an numbered category. A \emph{lifting system in $(\mathcal{C},\kappa)$} is a set $J$ of objects of $\mathbf{Cat}/\mathcal{C}^{\mathbf{2}}$ that are compatible with $(\mathcal{C},\kappa)$ as functors.

\subsection{Right Lifting Property}
Let $(\mathcal{C},\kappa)$ be an numbered category and $J$ be a lifting system in $(\mathcal{C},\kappa)$. For every functor $\upvarphi:\mathtt{T} \to \mathcal{C}^{\mathbf{2}}$ in $J$, the image of an object $s$ in $\mathtt{T}$ via $\upvarphi$ will usually be denoted by $\upvarphi(s):A(s) \to B(s)$. A morphism $f:X \to Y$ in $\mathcal{C}$ will be said to have the \emph{right lifting property with respect to the system $J$} if for any functor $i:\mathtt{T} \to \mathtt{S}$ in $J$, the morphism $f:X \to Y$ has the rlp with respect to the arrow $\mathrm{col}_{\mathtt{T}}\upvarphi:\mathrm{col}_{\mathtt{T}}A\to \mathrm{col}_{\mathtt{T}}B$ in $\mathcal{C}$. In the sequel, the class of morphisms of $\mathcal{C}$ that have the right lifting property with respect to
a lifting system $J$ will be denoted by $\mathbf{rlp}(J)$.

\begin{example}
If $J$ is a set of functors of the form $\mathbf{1} \to \mathcal{C}^{\mathbf{2}}$ picking out some objects of $\mathcal{C}^{\mathbf{2}}$, then the preceding right lifting property corresponds to the usual one.
\end{example}

\subsection{Tomes}\label{sssec:Tomes}
Let $\mathcal{C}$ be a category. A \emph{tome} in $\mathcal{C}$ is a triple consisting of a morhism $h:X \to Y$ in $\mathcal{C}$, a small category $\mathtt{S}$ on which $\mathcal{C}$ admits all colimits and a functor $\varphi:\mathtt{S} \to \mathcal{C}^{\mathbf{2}}/h$.
According to Remark \ref{rem:functor_in_overcats_give_nat_transf} applied to the arrow category $\mathcal{C}^{\mathbf{2}}$, a way of seeing a tome in $\mathcal{C}$ is in the form of a cocone $(u,v):\partial \varphi \Rightarrow \Delta_{\mathtt{S}}(h)$ in $\mathcal{C}^{\mathbf{2}}$ over the functor $\partial\varphi:\mathtt{S} \to \mathcal{C}^{\mathbf{2}}$.
Because $\mathcal{C}$ has all colimits over $\mathtt{S}$, the earlier cocone provides an arrow $\mathrm{col}_{\mathtt{S}} \partial \varphi \Rightarrow h$ in $\mathcal{C}^{\mathbf{2}}$ after applying the adjunction property of $\mathrm{col}_{\mathtt{S}} \dashv \Delta_{\mathtt{S}}$ on it. This latest arrow will be referred to as the \emph{content of $(\mathtt{S},\varphi,h)$}.
Note that for any functor $i:\mathtt{T} \to \mathtt{S}$, we may pre-compose the universal shifting induced by $i$ (see section \ref{sssec:Universal_shifting}) with the content of $(h,\mathtt{S},\varphi)$ as follows.
\[
\mathrm{col}_{\mathtt{T}} \partial \varphi i \Rightarrow \mathrm{col}_{\mathtt{S}} \partial \varphi \Rightarrow h
\]

The resulting arrow $\mathrm{col}_{i} \partial \varphi \Rightarrow h$ will later play a central role and be referred to as the \emph{content of $(f,\mathtt{S},\varphi)$ along $i:\mathtt{T} \to \mathtt{S}$}.

\subsection{Morphisms of Tomes}\label{ssec:Morphisms_of_tomes}
Let $\mathcal{C}$ be a category. A \emph{loose morphism of tomes} from $\mathbf{T}_0:=(h_0,\mathtt{S}_0,\varphi_0)$ to $\mathbf{T}_1:=(h_1,\mathtt{S}_1,\varphi_1)$ is given by a morphism $(x,y):h_0 \Rightarrow h_1$ in $\mathcal{C}^{\mathbf{2}}$. A \emph{regular morphism of tomes} $\mathbf{T}_0 \Rightarrow \mathbf{T}_1$ is given by a morphism $(x,y):h_0 \Rightarrow h_1$ in $\mathcal{C}^{\mathbf{2}}$ and a functor $\upsigma:\mathtt{S}_0 \to \mathtt{S}_1$ making the next right diagram commute.
\[
\xymatrix{
X_0\ar[d]_{h_0}\ar[r]^{x}&X_1\ar[d]^{h_1}\\
Y_0\ar[r]^{y}&Y_1
}
\quad\quad\quad\quad\quad\quad
\xymatrix{
\mathtt{S}_0\ar[d]_{\varphi_0}\ar[rr]^{\upsigma}&&\mathtt{S}_1\ar[d]^{\varphi_1}\\
\mathcal{C}^{\mathbf{2}}/h_0\ar[rr]_{\mathcal{C}^{\mathbf{2}}/(x,y)}&&\mathcal{C}^{\mathbf{2}}/h_1
}
\]

The arrow symbol associated with loose morphisms will be denoted as $\mathbf{T}_0\mathop{\Rightarrow}\limits^{\!\!\star}\mathbf{T}_1$. 
The category whose objects are tomes in $\mathcal{C}$ and whose arrows are regular (resp. loose) morphisms of tomes will be denoted by  $\mathbf{Tome}(\mathcal{C})$ (resp. $\mathbf{Ltom}(\mathcal{C})$). For a fixed object $Q$ in $\mathcal{C}$, the wide subcategory of $\mathbf{Ltom}(\mathcal{C})$ that is restricted to the loose morphisms $(x,y):\mathbf{T}_0\mathop{\Rightarrow}\limits^{\!\!\star}\mathbf{T}_1$ whose components $y:Y_0 \to Y_1$ are identities on $Q$ will be denoted by $\mathbf{Ltom}(Q,\mathcal{C})$.

\subsection{Oeuvres and Narratives}\label{sssec:Oeuvres_and_narratives}
Let $(\mathcal{C},\kappa)$ be a numbered category and $Q$ be an object in $\mathcal{C}$. An \emph{oeuvre of theme $Q$} in $(\mathcal{C},\kappa)$
is a functor $\mathfrak{O}:\kappa+1 \to \mathbf{Ltom}(\mathcal{C})$ lifting \footnote{This lifting is formal and is mostly justified by the definition of the morphisms given in Section \ref{sssec:Morphisms_of_oeuvres_and_narratives}} to $\mathbf{Ltom}(Q,\mathcal{C})$ along the obvious inclusion $\mathbf{Ltom}(Q,\mathcal{C}) \hookrightarrow \mathbf{Ltom}(\mathcal{C})$.

\begin{convention}\label{conv:notations_narratives_oeuvres}
In the sequel, the image of an inequality $k<l$ in $\kappa+1$ via an oeuvre $\mathfrak{O}$ will be denoted by $(\upchi_k^l,\mathrm{id}_Q):(h_k,\mathtt{S}_k,\varphi_k) \mathop{\Rightarrow}\limits^{\!\!\star} (h_{l},\mathtt{S}_{l},\varphi_{l})$.
For convenience, when $l$ is successor of $k$ in $\kappa+1$, the notations $\upchi_k^l$ will be shortened to 
$\upchi_k$. For every object $k$ in $\kappa+1$, the morphism $h_k$ will be denoted as an arrow $G_k \to Q$ while the image of the composite functor $\partial\varphi_k:\mathtt{S}_k \to \mathcal{C}^{\mathbf{2}}$ at an object $s$ in $\mathtt{S}_k$ will be denoted as $\partial\varphi_k(s):\mathbf{A}_k(s) \to \mathbf{B}_k(s)$. 
\end{convention}

For every finite ordinal $\delta \in \omega$, a \emph{narrative of theme $Q$ and degree $\delta$} in $(\mathcal{C},\kappa)$ is an oeuvre $\mathfrak{O}:\kappa+1 \to \mathbf{Ltom}(\mathcal{C})$ of theme $Q$ equipped with
\smallskip

(1) (\emph{events}) for every ordinal $k \in \kappa$, a set $J_k$, called the \emph{set of events at rank $k$}, consisting of objects of $\mathbf{Cat}/\mathtt{S}_k$ that are compatible with $(\mathcal{C},\kappa)$ as functors;
\smallskip

(2) (\emph{viewpoint}) for every functor $i:\mathtt{T} \to \mathtt{S}_k$ in the set $J_k$, a lift for the commutative square (living in $\mathcal{C}$) resulting from the pre-composition of the content of $(h_k,\mathtt{S}_k,\varphi_k)$ along $i:\mathtt{T} \to \mathtt{S}_k$ with the arrow $\chi_{k}^{k+\delta}:h_k \Rightarrow h_{k+\delta}$; the square is therefore of the form $\mathrm{col}_{\mathtt{T}} \partial\varphi_k \Rightarrow h_{k+\delta}$ in $\mathcal{C}^{\mathbf{2}}$. The lift will later be referred to as the \emph{viewpoint at rank $k$ along $i$}.

\begin{remark}
It follows from Convention \ref{conv:notations_narratives_oeuvres} that the viewpoint at rank $k$ along $i$ mentioned in item (2) must be of the form $\mathrm{col}_i\mathbf{B}_k \to G_{k+\delta}$.
\end{remark}

\begin{convention}
The functor $\kappa+1 \to \mathcal{C}$ induced by the sequence of arrows $\upchi_k^l:G_k \to G_l$ for every inequality $k < l$ in $\kappa+1$ will be denoted by $G$ and called the \emph{context functor}.
\end{convention}

Observe that any oeuvre and, \emph{a fortiori}, any narrative as defined above provides a factorisation in $\mathcal{C}$ as given below. This factorisation is that used for our small object argument.
\begin{equation}\label{eq:oeuvre_factorisation}
\xymatrix{
G_0\ar@/_1.5pc/[rr]_{h_0}\ar[r]^{\upchi_0^{\kappa}}&G_{\kappa}\ar[r]^{h_{\kappa}}&Q
}
\end{equation}

Also, notice that the set of events $J_k$ induces an obvious lifting system $\{\partial\varphi_k \circ i~|~i \in J_k\}$, which will be denoted by $E_k(\mathfrak{O})$.

\subsection{Small Object Argument} Let $(\mathcal{C},\kappa)$ be a numbered category, $Q$ be an object in $\mathcal{C}$ and $\mathfrak{O}:\kappa+1 \to \mathbf{Ltom}(\mathcal{C})$ be a narrative of theme $Q$ and degree $\delta$.
A lifting system $J$ in $(\mathcal{C},\kappa)$ will be said to \emph{agree with} the narrative $\mathfrak{O}$ if for every ordinal $k \in \kappa$ and functor $\upvarphi:\mathtt{T} \to \mathcal{C}^{\mathbf{2}}$ in $J$ admiting a lift $\psi:\mathtt{T} \to \mathcal{C}^{\mathbf{2}}/h_k$ of $\upvarphi$ along $\partial$ (see left diagram below), there exists a functor $i:\mathtt{T} \to \mathtt{S}_k$ in $J_k$ whose composite with $\varphi_k$ gives the lift $\psi$ (see right diagram below).
\begin{equation}\label{eq:agree_implication}
\xymatrix{
& \mathcal{C}^{\mathbf{2}}/h_k\ar[d]^{\partial}\\
\mathtt{T}\ar@{-->}[ru]^{\psi}\ar[r]_-{\upvarphi}&\mathcal{C}^{\mathbf{2}}
}
\quad\quad\quad\Rightarrow\quad\quad\quad
\xymatrix{
\mathtt{S}_k\ar[r]^-{\varphi_k}& \mathcal{C}^{\mathbf{2}}/h_k\ar[d]^{\partial}\\
\mathtt{T}\ar[u]^-{i}\ar@{-->}[ru]^{\psi}\ar[r]_-{\upvarphi}&\mathcal{C}^{\mathbf{2}}
}
\end{equation}

\begin{proposition}[Small Object Argument]\label{prop:small_object_argument_general}
Let $J$ be a lifting system in
$(\mathcal{C},\kappa)$ agreeing with the narrative $\mathfrak{O}$. If the context functor $G:\kappa+1 \to \mathcal{C}$ is uniformly $(\mathrm{dom} \circ \upvarphi)$-convergent in $\mathcal{C}$ for every $\upvarphi \in J$, then
the morphism $h_{\kappa}:G_{\kappa} \to Q$ appearing in Equation (\ref{eq:oeuvre_factorisation}) is in $\mathbf{rlp}(J)$.
\end{proposition}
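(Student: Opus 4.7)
The plan is to fix an arbitrary functor $\upvarphi: \mathtt{T} \to \mathcal{C}^{\mathbf{2}}$ in $J$ and a commutative square
\[
\xymatrix{
\mathrm{col}_{\mathtt{T}} A \ar[r]^-{x}\ar[d]_{\mathrm{col}_{\mathtt{T}} \upvarphi} & G_\kappa \ar[d]^{h_\kappa}\\
\mathrm{col}_{\mathtt{T}} B \ar[r]_-{y}& Q
}
\]
in $\mathcal{C}$ and produce a diagonal filler $\mathrm{col}_{\mathtt{T}} B \to G_\kappa$. First I would invoke Remark \ref{rem:kappa_smallness_colimit_of_uniform_convergent_functor} to upgrade the uniform $(\mathrm{dom} \circ \upvarphi)$-convergence of $G$ into $\mathrm{col}_{\mathtt{T}} A$-convergence, and then apply Remark \ref{rem:surjectiveness_property_of_convergence_for_SOA} to factor $x$ as $\upchi_k^\kappa \circ x_k$ for some ordinal $k<\kappa$ and some $x_k: \mathrm{col}_{\mathtt{T}} A \to G_k$. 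Since $h_\kappa \circ \upchi_k^\kappa = h_k$, the pair $(x_k, y)$ assembles into a commutative square over $h_k$, which by Remark \ref{rem:functor_in_overcats_give_nat_transf} corresponds to a functor $\psi_k: \mathtt{T} \to \mathcal{C}^{\mathbf{2}}/h_k$ with $\partial\psi_k = \upvarphi$.

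Second, the agreement hypothesis applied to $\psi_k$ yields a functor $i: \mathtt{T} \to \mathtt{S}_k$ in the set of events $J_k$ such that $\varphi_k \circ i = \psi_k$; in particular $\partial\varphi_k \circ i = \upvarphi$, so the content of $(h_k, \mathtt{S}_k, \varphi_k)$ along $i$ takes the form $\mathrm{col}_{\mathtt{T}} \upvarphi \Rightarrow h_k$, whose post-composition with $\upchi_k^{k+\delta}$ is the square $\mathrm{col}_{\mathtt{T}} \upvarphi \Rightarrow h_{k+\delta}$ in $\mathcal{C}^{\mathbf{2}}$. The viewpoint at rank $k$ along $i$ is then a diagonal filler $\ell: \mathrm{col}_{\mathtt{T}} B \to G_{k+\delta}$ of this square, using the identifications $\mathrm{col}_i \mathbf{A}_k = \mathrm{col}_{\mathtt{T}} A$ and $\mathrm{col}_i \mathbf{B}_k = \mathrm{col}_{\mathtt{T}} B$ coming from $\partial\varphi_k \circ i = \upvarphi$; the ordinal $k+\delta$ lies in $\kappa$ since $\delta$ is finite and $\kappa$ is a limit ordinal.

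Third, I would post-compose $\ell$ with $\upchi_{k+\delta}^\kappa$ to obtain the candidate $\upchi_{k+\delta}^\kappa \circ \ell: \mathrm{col}_{\mathtt{T}} B \to G_\kappa$ and verify the two triangle identities. The upper triangle combines the viewpoint identity with $\upchi_{k+\delta}^\kappa \circ \upchi_k^{k+\delta} = \upchi_k^\kappa$ and the factorisation $x = \upchi_k^\kappa \circ x_k$. The lower triangle combines the viewpoint identity with $h_\kappa \circ \upchi_{k+\delta}^\kappa = h_{k+\delta}$, which holds because the oeuvre is valued in $\mathbf{Ltom}(Q, \mathcal{C})$. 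The main obstacle is less computational than organisational: tracking the colimit identifications coherently and remembering that the viewpoint only commutes after advancing from rank $k$ to rank $k+\delta$, the further advance to $\kappa$ being harmless because it is performed along morphisms that are identities on $Q$ and therefore respect $h_\kappa$.
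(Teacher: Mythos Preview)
Your proof is correct and follows essentially the same route as the paper's: descend the top arrow to some rank $k$ via convergence (Remarks \ref{rem:kappa_smallness_colimit_of_uniform_convergent_functor} and \ref{rem:surjectiveness_property_of_convergence_for_SOA}), turn the resulting square into a lift $\psi$ of $\upvarphi$ along $\partial$, invoke agreement to obtain an event $i\in J_k$, use the viewpoint to get a filler at rank $k+\delta$, and push forward to $\kappa$. The only cosmetic difference is that the paper makes the $\mathrm{col}_{\mathtt{T}}\dashv\Delta_{\mathtt{T}}$ adjunction step explicit when producing $\psi$, while you fold it into the invocation of Remark \ref{rem:functor_in_overcats_give_nat_transf}; conversely, your verification of the two triangle identities at the end is spelled out more carefully than in the paper.
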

\begin{proof} 
The goal of the proof is to show that the morphism $h_{\kappa}:G_{\kappa} \to Q$ is in $\mathbf{rlp}(J)$. To do so, let $\upvarphi:\mathtt{T} \to \mathcal{C}^{\mathbf{2}}$ be a functor in $J$ and consider any arrow $(x,y):\mathrm{col}_{\mathtt{T}}\upvarphi \Rightarrow h_{\kappa}$. The proposition will be proven if the commutative square encoded by this arrow admits a lift.
By assumption, the functor $G:\kappa+1 \to \mathcal{C}$ is uniformly $(\mathrm{dom} \circ \upvarphi)$-convergent in $\mathcal{C}$. It follows from Remark \ref{rem:kappa_smallness_colimit_of_uniform_convergent_functor}, taken from the viewpoint of Remark \ref{rem:surjectiveness_property_of_convergence_for_SOA}, and the fact that $\kappa$ is limit \footnote{Recall that if $\kappa$ is limit, then for every ordinal $k \in \kappa$, the successor $k + \delta$ is also in $\kappa$ for every $\delta \in \omega$} that there exist an ordinal $k \in \kappa$ and an arrow $(x',y):\mathrm{col}_{\mathtt{T}}\upvarphi \Rightarrow h_{k}$ factorising $(x,y):\mathrm{col}_{\mathtt{T}}\upvarphi \Rightarrow h_{\kappa}$ as follows.
\begin{equation}\label{eq:smallness_gives_uprime}
\mathrm{col}_{\mathtt{T}}\upvarphi \Rightarrow h_{k}\Rightarrow h_{k+\delta}\Rightarrow h_{\kappa}
\end{equation}

Note that an application of the universal property of the adjunction $\mathrm{col}_{\mathtt{T}} \dashv \Delta_{\mathtt{T}}$ on the leftmost arrow of Equation (\ref{eq:smallness_gives_uprime}) provides an arrow in $\mathcal{C}^{\mathtt{T}}$ as follows (where the leftmost arrow, given below, is the unit of $\mathrm{col}_{\mathtt{T}} \dashv \Delta_{\mathtt{T}}$).
\begin{equation}\label{eq:rlp_smallness_pickout_diagram}
\upvarphi \Rightarrow \Delta_{\mathtt{T}}\mathrm{col}_{\mathtt{T}} \upvarphi \Rightarrow \Delta_{\mathtt{T}}h_{k}
\end{equation}

According to Remark \ref{rem:functor_in_overcats_give_nat_transf}, Arrow (\ref{eq:rlp_smallness_pickout_diagram}) induces a functor $\psi:\mathtt{T} \to \mathcal{C}^{\mathbf{2}}/h_k$, which makes the leftmost diagram of Equation (\ref{eq:agree_implication}) commute.
Because the lifting system $J$ agrees with the narrative $\mathfrak{O}$, there must exist a functor $i:\mathtt{T} \to \mathtt{S}_k$ making the right diagram of Equation (\ref{eq:agree_implication}) commute. This means, after re-applying the adjunction $\mathrm{col}_{\mathtt{T}} \dashv \Delta_{\mathtt{T}}$, that Equation  (\ref{eq:smallness_gives_uprime}) is in fact of the following form, where the leftmost arrow is precisely the content of the tome $\mathfrak{O}_k$ along $i:\mathtt{T} \to \mathtt{S}_k$.
\begin{equation}\label{eq:soa_diagram_giving_lift_content}
\mathrm{col}_{i}\partial\varphi_k \Rightarrow h_{k}\Rightarrow h_{k+\delta}\Rightarrow h_{\kappa}
\end{equation}

It follows from the viewpoint axiom (see Section \ref{sssec:Oeuvres_and_narratives}) satisfied by $\mathfrak{O}$ that the Composite $\mathrm{col}_{i}\partial\varphi_k \Rightarrow h_{k}\Rightarrow h_{k+\delta}$ admits a lift. This implies that the whole composite (\ref{eq:soa_diagram_giving_lift_content}) admits a lift, which, \emph{a fortiori}, implies that the arrow $(x,y):\mathrm{col}_{\mathtt{T}}\upvarphi \Rightarrow h_{\kappa}$ admits a lift.
\end{proof}

\subsection{Strict Narratives}\label{sssec:strict_narratives}
Let $(\mathcal{C},\kappa)$ be a numbered category and $Q$ be an object in $\mathcal{C}$. For any narrative $\mathfrak{O}:\kappa+1 \to \mathbf{Ltom}(\mathcal{C})$ of theme $Q$, recall that the set of events $J_k$ gives a collection of functors that induces a cocone under the category $S_k$ (see Section \ref{sssec:Oeuvres_and_narratives}).
A narrative $\mathfrak{O}:\kappa+1 \to \mathbf{Ltom}(\mathcal{C})$ of theme $Q$ and degree $\delta$ will be said to be \emph{strict} in $\mathcal{C}$ if 
\begin{itemize}
\item[(1)] for every ordinal $k \in \kappa$, the cocone induced by the elements of $J_k$ is universal in $\mathbf{Cat}$.
\item[(2)] it is equipped with a morphism $\uppi_k:\mathrm{col}_{\mathtt{S}_k} \mathbf{B}_k \to G_{k+\delta}$ factorising the content of $\mathfrak{O}_k$ into a pushout as follows;
\[
\xymatrix@R-1pc{
*+!R(.5){\mathrm{col}_{\mathtt{S}_k}\mathbf{A}_k}\ar@{}[rd]|>>>{\text{\huge{\rotatebox[origin=c]{-90}{$\llcorner$}}}}\ar[rr]\ar[dd]_{\mathrm{col}_{\mathtt{S}_k}\partial\varphi_k}&&G_{k}\ar[dd]^{h_{k}} \ar[ld]_-{\upchi_k}\\
&G_{k+\delta}\ar[rd]^{h_{k+\delta}}&\\
*+!R(.5){\mathrm{col}_{\mathtt{S}_k}\mathbf{B}_k}\ar[rr]\ar[ru]^-{\uppi_k}&&Q
}
\]
\item[(3)] for every functor $i:\mathtt{T} \to \mathtt{S}_k$ in $J_k$, the viewpoint $\uppi_k^i:\mathrm{col}_{i}\mathbf{B}_k \to N_{k}$ along $i$ is equal to the pre-composition of $\uppi_k$ with the universal shifting along $i$ as follows;
\[
\xymatrix{
\mathrm{col}_{i}\mathbf{B}_k\ar[r]^-{\xi_{i}(\mathbf{B}_k)}&\mathrm{col}_{\mathtt{S}_k}\mathbf{B}_k\ar[r]^-{\uppi_k}&G_{k+\delta}\\
}
\] 
\item[(4)] the context functor $G:\kappa+1 \to \mathcal{C}$ is sequential (see Section \ref{ssec:sequential_functor}).
\end{itemize}

\begin{proposition}\label{prop:llp_soa_with_respect_to_tome_event}
If a morphism $f:X \to Y$ is in $\mathbf{rlp}(E_k(\mathfrak{O}))$ (see end of Section \ref{sssec:Oeuvres_and_narratives}) for every $k \in \kappa$, then it has the rlp with respect to the arrow $\upchi_0^{\kappa}:G_0 \to G_{\kappa}$ (see Diagram (\ref{eq:oeuvre_factorisation})).
\end{proposition}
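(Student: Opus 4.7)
The plan is to first establish that, for every ordinal $k\in\kappa$, the morphism $f$ has the rlp with respect to the arrow $\upchi_k^{k+\delta}:G_k\to G_{k+\delta}$, and then invoke Proposition~\ref{prop:strictly_combinatorial_llp_fib_f} to pass from these step arrows to the full transfinite composite $\upchi_0^{\kappa}$.

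To establish the rlp at a fixed $k\in\kappa$, I would proceed as follows. By strict narrative axiom~(1), the family of events $\{i:\mathtt{T}_i\to\mathtt{S}_k\}_{i\in J_k}$ forms a universal cocone in $\mathbf{Cat}$; since $J_k$ is a set, this amounts to a coproduct decomposition $\mathtt{S}_k\cong\coprod_{i\in J_k}\mathtt{T}_i$. Because colimits over a coproduct category are computed as coproducts of colimits, one obtains an isomorphism in $\mathcal{C}^{\mathbf{2}}$
\[
\mathrm{col}_{\mathtt{S}_k}\partial\varphi_k\;\cong\;\coprod_{i\in J_k}\mathrm{col}_i\partial\varphi_k.
\]
The hypothesis $f\in\mathbf{rlp}(E_k(\mathfrak{O}))$ says precisely that $f$ has the rlp with respect to each arrow $\mathrm{col}_i\partial\varphi_k$. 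Since the class of arrows admitting $f$ on their right is closed under coproducts in the arrow category, $f$ then has the rlp with respect to the above coproduct. Strict narrative axiom~(2) exhibits $\upchi_k^{k+\delta}$ as a pushout of $\mathrm{col}_{\mathtt{S}_k}\partial\varphi_k$, and closure of the rlp class under pushouts delivers the rlp of $f$ with respect to $\upchi_k^{k+\delta}$.

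The final step is to pass from these rlp's to the rlp with respect to $\upchi_0^{\kappa}$. Axiom~(4) asserts that the context functor $G:\kappa+1\to\mathcal{C}$ is sequential, so Proposition~\ref{prop:strictly_combinatorial_llp_fib_f} applies directly when $\delta=1$. For $\delta>1$, the set of multiples $\{k\delta\,|\,k<\kappa\}$ is cofinal in $\kappa$ (using that $\kappa$ is a limit ordinal and $\delta$ is finite), so the restriction of $G$ to this cofinal subset (together with the top value $G_\kappa$) is again sequential of the same length; its successor arrows are of the form $\upchi_{k\delta}^{(k+1)\delta}$, which have the rlp by the preceding step. Proposition~\ref{prop:strictly_combinatorial_llp_fib_f} applied to this cofinal subfunctor then yields the rlp of $f$ with respect to $\upchi_0^{\kappa}$.

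The most delicate point is the identification of $\mathrm{col}_{\mathtt{S}_k}\partial\varphi_k$ with the coproduct $\coprod_i\mathrm{col}_i\partial\varphi_k$ in $\mathcal{C}^{\mathbf{2}}$: one must carefully verify that the decomposition of $\mathtt{S}_k$ in $\mathbf{Cat}$ is faithfully transported by $\partial\varphi_k$ into the arrow category and that all requisite colimits exist in $\mathcal{C}^{\mathbf{2}}$, which follows from the compatibility of every event with $(\mathcal{C},\kappa)$ that is built into the definition of a lifting system.
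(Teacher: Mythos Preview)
Your proposal is correct and follows essentially the same route as the paper: decompose $\mathrm{col}_{\mathtt{S}_k}\partial\varphi_k$ as a coproduct over the events using axiom~(1), use closure of the rlp class under coproducts and pushouts to obtain the rlp against $\upchi_k^{k+\delta}$ via axiom~(2), and then appeal to sequentiality (axiom~(4)) together with Proposition~\ref{prop:strictly_combinatorial_llp_fib_f}. Your treatment of the case $\delta>1$ via the cofinal subsequence of multiples of $\delta$ is in fact more careful than the paper, which simply invokes Proposition~\ref{prop:strictly_combinatorial_llp_fib_f} without comment; this is harmless in practice since the only strict narrative used later has degree~$1$.
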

\begin{proof}
Let $f:X \to Y$ be a morphism that has the rlp with respect to the lifting system $E_k(\mathfrak{O})$ for every $k \in \kappa$. For any $k \in \kappa$, this means that it has the rlp with respect to the following arrow in $\mathcal{C}$, for every functor $i:\mathtt{T} \to \mathtt{S}_k$ in $J_k$.
\[
\mathrm{col}_{\mathtt{S}_k}(\partial\varphi_k\circ i):\mathrm{col}_{\mathtt{S}_k}(\mathbf{A}_k\circ i) \to \mathrm{col}_{\mathtt{S}_k}(\mathbf{B}_k\circ i) 
\]

It directly follows that $f$ has the rlp with respect to the coproduct of these arrows over the set $J_k$ (seen as a discrete category), which may be identified to the arrow $\mathrm{col}_{\mathtt{S}_k}(\partial\varphi_k)$ up to isomorphism as shown below.
\begin{align*}
\coprod_{i \in J_k}\mathrm{col}_{\mathtt{S}_k}(\partial\varphi_k\circ i) &\cong \mathrm{col}_{\mathtt{S}_k}(\coprod_{i \in J_k}\partial\varphi_k\circ i)&\text{(colimits commute)}\\
&\cong \mathrm{col}_{\mathtt{S}_k}(\partial\varphi_k)&\text{(universality of $J_k$)}
\end{align*}

It follows from classical facts that, since $f$ has the rlp with respect to $\mathrm{col}_{\mathtt{S}_k}\partial\varphi_k$, it has the rlp with respect to any of its pushouts, and hence with respect to $\upchi_k:G_k \to G_{k+\delta}$ for any $k \in \kappa$. It finally follows from Proposition \ref{prop:strictly_combinatorial_llp_fib_f} and the fact that the context functor $G:\kappa+1 \to \mathcal{C}$ is sequential that $f$ has the rlp with respect to the arrow $\upchi_0^{\kappa}:G_0 \to G_{\kappa}$ in $\mathcal{C}$.
\end{proof}

\subsection{Morphisms of Oeuvres}\label{sssec:Morphisms_of_oeuvres_and_narratives}
Let $(\mathcal{C},\kappa)$ be a numbered category. For every pair of oeuvres $\mathfrak{O}:\kappa+1 \to \mathbf{Ltom}(\mathcal{C})$ and $\mathfrak{O}':\kappa+1 \to \mathbf{Ltom}(\mathcal{C})$, of respective themes $Q$ and $Q'$, a \emph{morphism of oeuvres} from $\mathfrak{O}$ to $\mathfrak{O}'$ consists, for every ordinal $k \in \kappa$, of a regular morphism of tomes
\[
(x_k,y_k,\upsigma_k):\mathfrak{O}_k \Rightarrow \mathfrak{O}_k'\quad\quad(\text{with }y_k:Q \to Q') 
\]
such that the underlying loose morphisms $(x_k,y_k):\mathfrak{O}_k \mathop{\Rightarrow}\limits^{\!\!\star} \mathfrak{O}_k'$ induce a morphism $\mathfrak{O} \Rightarrow \mathfrak{O}$ in the functor category $\mathbf{Ltom}(\mathcal{C})^{\kappa+1}$ (see Remark \ref{rem:morphisms:narratives_and_oeuvres}). The category whose objects are oeuvres for the numbered category $(\mathcal{C},\kappa)$ and whose arrows are morphisms of oeuvres will be denoted by $\mathbf{Oeuv}(\mathcal{C},\kappa)$.

\begin{remark}\label{rem:morphisms:narratives_and_oeuvres}
The previous definition implies that all the arrows $y_k$ are equal to the same morphism $y:Q \to Q'$ for every $k \in \kappa+1$. In addition, it forces the equality $\upchi_k' \circ x_k = x_{k+1} \circ \upchi_k$ to hold in $\mathcal{C}$ for every $k \in \kappa$.
\end{remark}

\section{Constructors and Their Tomes}\label{sec:Constructors}

This section introduces the notion of constructor that allows one to associate systems of premodels with tomes. Constructors contain all the necessary information that permits the `elimination of quotients'. We will see that their definition already brings out what is meant to be analytic (or structural) and what is meant to be  quotiented out. Even if they appear to comprise many components, the main goal of the items defined in Sections \ref{sssec:Pre-constructor} and  \ref{sssec:Constructor} is to be able to define two sums whose forms look like the following type.
\[
\sum_{\vartheta \in J} \mathrm{Hom}(e_0(\vartheta),\_) \times \Big(\sum_{s \in \ell(\vartheta)}\mathrm{Hom}\big(e_1(\vartheta),\Upphi(\vartheta,s)\big)\Big)
\]

The hom-sets $\mathrm{Hom}(e_0(\vartheta),\_)$ -- which are defined in Section \ref{sssec:Constructor}---are meant to ensure a certain functoriality (i.e., they are the monomials for a certain type of species \cite{Joyal06}) while the hom-sets $\mathrm{Hom}(e_1(\vartheta),\Upphi(\vartheta,s))$---which are defined in Section \ref{sssec:Pre-constructor}---are meant to contain the `squares' that will enable us to perform our small object argument. In the sequel, I shall therefore try to give evoking names to the different parameters used to define these sums. In particular, one sum is to encode the structural data of our elimination of quotients while the other one is to encode the quotient acting on this data. To make the reader more confident with the items of Sections \ref{sssec:Pre-constructor} and  \ref{sssec:Constructor}, here is a preluding summary of the different notations used therein.
\smallskip

\begin{center}
\begin{tabular}{ c|c|c|c|c|c|c|c }
 \cellcolor[gray]{0.8} & \cellcolor[gray]{0.8}$J$ & \cellcolor[gray]{0.8}1st $\mathrm{Hom}$ & \cellcolor[gray]{0.8}$e_0$  & \cellcolor[gray]{0.8}$\ell(\vartheta)$ & \cellcolor[gray]{0.8}2nd $\mathrm{Hom}$ & \cellcolor[gray]{0.8}$e_1$ & \cellcolor[gray]{0.8}$\Upphi(\vartheta,s)$\\
\hline
 \cellcolor[gray]{0.9} analytic sum & $J_A$ & \multirow{2}{*}{$D$} & $\upepsilon$  & $\uplambda_{\upiota(\vartheta)}(X)$ & $\mathcal{C}^{\mathbf{2} \times \mathbf{2}}$ & $\upalpha\upiota$ & $\Phi_{\theta}(f)_s$\\
\cellcolor[gray]{0.9} quotient sum & $J_Q$ & & $\upchi$  & $\uplambda_{\updelta(\vartheta)}(X)$ & $\mathcal{C}^{\mathbf{2}}$ & $\upomega\updelta$ & $\Phi_{\theta}^{\bullet}(f)_s$\\
\end{tabular}
\end{center}

\subsection{Some More Notations}
The following conventions are meant to ease the combinatorial description of a constructor and its associated tomes, which will be defined in Section \ref{ssec:Tomes_of_a_constructor}.

\begin{convention}[Vertebrae]\label{conv:some_more_notations_vertebrae}
The diskad of a vertebra $v:=\|\gamma_2,\gamma_1\|\cdot \beta$ will be denoted by $\mathbf{disk}(v)$ and seen as an arrow $\gamma_2 \Rightarrow \beta \delta_1$ in $\mathcal{C}^{\mathbf{2}}$. The other arrow $\gamma_1 \Rightarrow \beta \delta_2$ in $\mathcal{C}^{\mathbf{2}}$, which is induced by the `dual' vertebra $v^{\mathrm{rv}}:=\|\gamma_1,\gamma_2\|\cdot \beta$, will be denoted by $\mathbf{disk}(v^{\mathrm{rv}})$ and called the \emph{codiskad of $v$}. Finally, the stem $\beta$ and seed $\gamma_2$ of $v$ will be referred to by the notations $\mathbf{stem}(v)$ and $\mathbf{seed}(v)$, respectively.
\end{convention}

\begin{convention}[Domains and codomains]\label{conv:circ_bullet_source_target}
Let $\mathcal{A}$ and $\mathcal{C}$ be two categories and $F:\mathcal{A} \to \mathcal{C}^{\mathbf{2}}$ be a functor. In order to avoid too many notations in our reasonings, the image $F(X)$ of an object $X$ of $\mathcal{A}$ in the arrow category $\mathcal{C}^{\mathbf{2}}$ will be denoted as $F(X):F^{\circ}(X) \to F^{\bullet}(X)$. This implies that every morphism $f:X \to Y$ in $\mathcal{A}$ gives a commutative diagram as follows.
\[
\xymatrix{
F^{\circ}(X)\ar[d]_{F^{\circ}(f)}\ar[rr]^{F(X)}&&F^{\bullet}(X)\ar[d]^{F^{\bullet}(f)}\\
F^{\circ}(Y)\ar[rr]_{F(Y)}&&F^{\bullet}(Y)
}
\]

Similarly, for every functor $H:\mathcal{A} \to \mathcal{C}^{\mathbf{2} \times \mathbf{2}}$, we will denote by $H^{\circ}:\mathcal{A} \to \mathcal{C}^{\mathbf{2}}$ and $H^{\bullet}:\mathcal{A} \to \mathcal{C}^{\mathbf{2}}$ the `source' and `target' arrows of the squares involved in the image of $H$. 
\end{convention}

\begin{example}
For every vertebra $v$ in $\mathcal{C}$ as displayed in Equation (\ref{eq:definition:System_of_premodels}), the arrow $\mathbf{disk}(v)^{\circ}$ is equal to $\mathbf{seed}(v)$. Thus, when the reader reads $\upalpha^{\circ}(\_)$ in Section \ref{sssec:Constructor}, where $\upalpha$ is a functor $I \to \mathcal{C}^{\mathbf{2} \times \mathbf{2}}$ mapping any element in $I$ to the diskad of a certain vertebra in $\mathcal{C}$, they should think of the seed of the so-called vertebra.
\end{example}

\begin{convention}[Closedness]
Let $\mathcal{A}$, $\mathcal{B}$ and $\mathcal{C}$ be three categories. The image of any functor of the form $G:\mathcal{A} \times \mathcal{B} \to \mathcal{C}$ will later be denoted as $F_a(b)$ for any pair of objects $(a,b)$ in $\mathcal{A} \times \mathcal{B}$ -- instead of the usual notation $F(a,b)$.
\end{convention}

\begin{convention}[Families]
Let $\mathcal{C}$ be a category. In the sequel, we will denote by $\mathbb{I}$ the obvious functor $\mathbf{Set} \times \mathcal{C} \to  \mathbf{Fam}(\mathcal{C})$ mapping a pair $(S,X)$ to the functor $\Delta_S(X):S \to \mathcal{C}$. Also, mainly for convenience, the images of any object $F:S \to \mathcal{C}$ in $\mathbf{Fam}(\mathcal{C})$ at some $s \in S$ will be denoted by $F_s$. This means that the equation $\mathbb{I}_S(X)_s = X$ holds for every $s \in S$.
\end{convention}

\begin{convention}[Families of arrows]
Convention \ref{conv:circ_bullet_source_target} will be extended to $\mathbf{Fam}(\mathcal{C}^{\mathbf{2}})$ in the obvious way: for every functor $\Phi:\mathcal{A} \to \mathbf{Fam}(\mathcal{C}^{\mathbf{2}})$, we shall denote by $\Phi^{\circ}$ and $\Phi^{\bullet}$ the obvious functors $\mathcal{A} \to \mathbf{Fam}(\mathcal{C}^{\mathbf{2}})$ mapping any object $X \in \mathcal{A}$ to the families $s \mapsto \Phi(X)_s^{\circ}$ and $s \mapsto \Phi(X)_s^{\bullet}$, respectively.
\end{convention}

\begin{convention}
Later on, I shall often identify a set with a discrete category and identify many functions with functors. The reason for this is that we shall pre-compose these functions with functors going from discrete categories to non-trivial categories, which, for their parts, should really be seen as functors. This convention should thus ease the back and forth between set theory and category theory.
\end{convention}

\subsection{Preconstructors}\label{sssec:Pre-constructor}
This section introduces the concept of preconstructor. This notion tries to capture what it takes to specify the data of a localisation. 
For instance, in Modern Algebra, localising a ring $(R,+,\cdot)$ requires one to specify:
\begin{itemize}
\item[$\star$] the underlying set that one wants to act on, which is here the set $R$;
\item[$\star$] the subset $S \subseteq R$ by which one wants to localise the ring; 
\item[$\star$] the operation that one wants to inverse, which is here given by the $S$-indexed family of group morphisms $e_s:R \to R$ defined by the mappings $x \mapsto x \cdot s$; 
\item[$\star$] the type of inversion one wants to see happening on the maps $e_s$.
\end{itemize}

Regarding this last item, the inversion would, for instance, be expressed in terms of a bijection for the type of localisation used in Classical Algebraic Geometry, but it would be expressed in terms of a quasi-isomorphism in the category of unbounded chain complexes in Derived Algebraic Geometry. 

To pass from the earlier description to the formalism of preconstructors, one can try to describe what a preconstructor would be for the previous list of items, so that we could make the following associations (also, see the structure below): the data $\uprho$ would specify the object $R$ while the data $\uplambda$ would give the subset $S$; the data $\Phi$, $\Upsilon$ and $\Psi$ would enumerate the maps $e_s$ with theirs domains $R$ and codomains $R$ (which would be required to be independent of the indices in $S$); and the data $\upalpha$ and $\upomega$ would specify the type of inversion one wants to see happening. We now give a formal definition.
\vspace{3pt}

Let $\mathcal{B}$ and $\mathcal{C}$ be two categories and $D$ be a small category. A \emph{preconstructor of type $D[\mathcal{B},\mathcal{C}]$}, let us call it $\Upgamma$, consists of a discrete category $I$ together with 
\begin{itemize}
\item[(a)] two functors $\uprho:I \to D$ and $\uplambda:I\times \mathcal{B} \to \mathbf{Set}$, called the \emph{regulator} and the \emph{localisor};
\item[(b)] three functors as given below, which satisfy the string diagram axioms given underneath them (or the equations given just after).
\[
\Phi:I\times \mathcal{B} \to \mathbf{Fam}(\mathcal{C}^{\mathbf{2}})\quad\quad \Upsilon:D \times \mathcal{B} \to \mathcal{C}\quad\quad \Psi:I \times \mathcal{B} \to \mathcal{C}
\]
\[
\begin{array}{c}
\xymatrix@R-1pc@C-3pc{
\mathcal{B}\ar@{-}@/_/[dr]&&I\ar@{-}@/^/[dl]\\
&\fbox{$\Phi$}\ar@{-}[d]&\\
&\fbox{$(\_)^{\circ}$}\ar@{-}[d]&\\
&\mathbf{Fam}(\mathcal{C})&\\
}
\end{array}
\begin{array}{c}
=
\end{array}
\begin{array}{c}
\xymatrix@R-1pc@C-3pc{
&\mathcal{B}\ar@{-}[d]&&I\ar@{-}[d]&\\
&\ar@{-}[ddrrr]\ar@{-}@/_1pc/[ddl]&&\ar@{-}@/_/[ddlll]\ar@{-}@/^/[dr]&\\
&&&&\fbox{$\uprho$}\ar@{-}[d]\\
\fbox{$\uplambda$}\ar@{-}@/_/[drr]&&&&\fbox{$\Upsilon$}\ar@{-}@/^/[dll]\\
&&\fbox{$\mathbb{I}$}\ar@{-}[d]&&\\
&&\mathbf{Fam}(\mathcal{C})&&\\
}
\end{array}
\quad\quad\quad\quad
\begin{array}{c}
\xymatrix@R-1pc@C-3pc{
\mathcal{B}\ar@{-}@/_/[dr]&&I\ar@{-}@/^/[dl]\\
&\fbox{$\Phi$}\ar@{-}[d]&\\
&\fbox{$(\_)^{\bullet}$}\ar@{-}[d]&\\
&\mathbf{Fam}(\mathcal{C})&\\
}
\end{array}
\begin{array}{c}
=
\end{array}
\begin{array}{c}
\xymatrix@R-1pc@C-3pc{
&\mathcal{B}\ar@{-}[d]&&I\ar@{-}[d]&\\
&\ar@{-}[drrr]\ar@{-}@/_1pc/[dl]&&\ar@{-}@/_/[dlll]\ar@{-}@/^/[dr]&\\
\fbox{$\uplambda$}\ar@{-}@/_/[drr]&&&&\fbox{$\Psi$}\ar@{-}@/^/[dll]\\
&&\fbox{$\mathbb{I}$}\ar@{-}[d]&&\\
&&\mathbf{Fam}(\mathcal{C})&&\\
}
\end{array}
\]

The previous string diagrams amount to saying that the following equations hold in the functor `category' $[\mathcal{B},\mathbf{Fam}(\mathcal{C})]$ for every $\theta \in I$;
\[
\Phi^{\circ}_{\theta}(\_) = \mathbb{I}_{\uplambda_{\theta}(\_)}\Upsilon_{\uprho(\theta)}(\_)
\quad\quad\quad\quad\quad\quad
\Phi_{\theta}^{\bullet}(\_) = \mathbb{I}_{\uplambda_{\theta}(\_)}\Psi_{\theta}(\_)
\]
\item[(c)] two functors $\upalpha:I \to \mathcal{C}^{\mathbf{2} \times \mathbf{2}}$ and $\upomega:I \to \mathcal{C}^{\mathbf{2}}$, called the \emph{analysor} and the \emph{quotientor}, such that the image $\upalpha(\theta)$ encodes the diskad of a vertebra of stem $\upomega(\theta)$ for every $\theta \in I$;
\end{itemize}

As mentioned in the preamble of Section \ref{sec:Constructors}, a preconstructor contains all the information that is necessary to define the parametrising `squares' on which we will run the small-object-argument algorithm. These so-called parameters will be presented either as families (see Definition \ref{def:analytic_quotient_family}) or as formal sums (see Definition \ref{def:analytic_quotient_species}) -- both presentations being useful.

\begin{definition}[Families]\label{def:analytic_quotient_family}
For any preconstructor $\Upgamma$ as defined above, the \emph{analytic family} of $\Upgamma$ and the \emph{quotient family} of $\Upgamma$ are two functors $\Upgamma_A (\_)\{\_\}:\mathcal{B}^{\mathbf{2}} \times I \to \mathbf{Fam}(\mathbf{Set})$ and $\Upgamma_Q (\_)\{\_\}:\mathcal{B}^{\mathbf{2}} \times I \to \mathbf{Fam}(\mathbf{Set})$ whose images are determined, for every arrow $f:X \to Y$ in $\mathcal{B}$ and  object $\theta \in I$, by the following mappings (or families) over $\uplambda_{\theta}(X)$.
\[
\Upgamma_A(f)\{\theta\}:s \mapsto \mathcal{C}^{\mathbf{2} \times \mathbf{2}}(\upalpha(\theta),\Phi_{\theta}(f)_s)
\quad\quad\quad
\Upgamma_Q(f)\{\theta\}:s \mapsto \mathcal{C}^{\mathbf{2}}(\upomega(\theta),\Phi_{\theta}^{\bullet}(f)_s)
\]
\end{definition}

\begin{remark}[Concept of vertebra]
The relationship between the analytic family and the quotient family is established in item c) via the concept of vertebra. At this stage, this should suggest to the reader that the notion of vertebra subtly encompass both the idea of quotient---or coherence---via its stem and the idea of cellular structure---or \emph{ana}-\emph{lysis}---via its diskad.
\end{remark}

\begin{definition}[Species]\label{def:analytic_quotient_species}
For any preconstructor $\Upgamma$ as defined above, the \emph{analytic species} of $\Upgamma$ and the \emph{quotient species} of $\Upgamma$ are two functors $\Upgamma_A (\_)[\_]:\mathcal{B}^{\mathbf{2}} \times I \to \mathbf{Set}$ and $\Upgamma_Q (\_)[\_]:\mathcal{B}^{\mathbf{2}} \times I \to \mathbf{Set}$ defined as follows, for every arrow $f:X \to Y$ in $\mathcal{B}$ and  object $\theta \in I$.
\[
\Upgamma_A(f)[\theta]:=\sum_{s \in \uplambda_{\theta}(X)}\Upgamma_A(f)\{\theta\}_s
\quad\quad 
\Upgamma_Q(f)[\theta]:=\sum_{s \in \uplambda_{\theta}(X)}\Upgamma_Q(f)\{\theta\}_s
\]
\end{definition}

\subsection{Preconstructor of a System of Premodels}\label{sssec:Preconstructor_system_premodels}
Let $(K,\mathtt{rou},\mathcal{P},\mathtt{V})$ be a system of $R$-premodels over a small category $D$ in a category $\mathcal{C}$. The goal of this section is to associate any such system with a preconstructor of type $D[\mathcal{P},\mathcal{C}]$. In this respect, define the set $I$ to be the following leftmost disjoint sum.
\[
I:=\sum_{c_0 \in K}   \mathtt{V}_{c_0}
\]

\begin{remark}[Encoding]\label{rem:form_of_elt_I_preconstructor}
Any element $\theta$ in $I$ may be presented as a pair $(c_0,v)$ where $c_0$ is a cone in $K$ and $v$ is a vertebra in $\mathtt{V}_{c_0}$.
\end{remark}
By keeping the notational convention suggested by Remark \ref{rem:form_of_elt_I_preconstructor}, one defines the data of the preconstructor for the system of premodels $(K,\mathtt{rou},\mathcal{P},\mathtt{V})$ as follows:
\begin{itemize}
\item[(1)] the regulator is given by the mapping $\uprho:\theta \mapsto \mathtt{rou}(c_0)$;
\item[(2)] the localisor is given by the evaluation $\uplambda:(\theta,(P,S,e)) \mapsto S(c_0)$;
\item[(3)] the analysor is given by the mapping $\upalpha:\theta \mapsto \mathbf{disk}(v)$;
\item[(4)] the quotientor is given by the mapping $\upomega:\theta \mapsto \mathbf{stem}(v)$;

\end{itemize}
and because both equations
\[
\mathcal{G}^K_{c_0}(P,S,e)_s^{\circ} = P(\mathtt{rou}(c_0))\quad\text{ and }\quad\mathcal{G}^K_{c_0}(P,S,e)_s^{\bullet} = \mathrm{lim}RP\mathtt{in}(c_0)
\]
hold for every $s \in S(c_0)$, one may define the functor $\Phi:I\times \mathcal{P} \to \mathbf{Fam}(\mathcal{C}^{\mathbf{2}})$ as the obvious functor satisfying the mapping $(\theta,(P,S,e)) \mapsto \mathcal{G}_{c_0}^K(P,S,e)$ on objects, so that the two associated functors $\Upsilon$ and $\Psi$ are defined as follows.
\[
\Upsilon:
\left(
\begin{array}{ccc}
D \times \mathcal{P} & \to & \mathcal{C}\\
(d,(P,S,e)) &\mapsto & P(d)
\end{array}
\right)
\quad\quad
\Psi:
\left(
\begin{array}{ccc}
I \times \mathcal{P} & \to & \mathcal{C}\\
(\theta,(P,S,e)) &\mapsto & \mathrm{lim}RP\mathtt{in}(c_0)
\end{array}
\right)
\]

\begin{remark}[Encoding]
For every arrow $f:X \to Y$ in $\mathcal{B}$ and element $\theta$ in $I$, the image of the analytic species $\Upgamma^K_A(f)[\theta]$ contains the tuples \footnote{The symbol $s'$ is, here, preferred to the plain letter $s$ as it could be confused with the notation $\mathbf{s}$ (in bold) or thought to be related to the notation $\underline{s}$, which is not the case. I shall sometimes use $s$ instead of $s'$ when no confusion is possible} $(c_0,v,s',\mathbf{c})$ where: $c_0$ is a cone in $K$; $v$ is a vertebra in $\mathtt{V}_{c_0}$; $s'$ is an element in $S(c_0)$ and $\mathbf{c}$ is a commutative square in $\mathcal{C}^{\mathbf{2}}$ of the form given below, on the left, for the notation $\theta:=(c_0,v)$, which may also be seen as the right commutative cube in $\mathcal{C}$ when viewed from the bottom-left corner.
\begin{equation}\label{eq:commutative_cuboid_definition_playground}
\begin{array}{l}
\xymatrix@C-1pc{
\gamma_2\ar@{=>}[r]\ar@{=>}[d]_{\mathbf{disk}(v)}&*+!L(.5){\Phi_{\theta}^{\circ}(f)_{s'}}\ar@{=>}[d]^{\Phi_{\theta}(f)_{s'}}\\
*+!R(.5){\beta \circ \delta_1}\ar@{=>}[r]&*+!L(.5){\Phi_{\theta}^{\bullet}(f)_{s'}}
}
\end{array}
\quad\quad
\begin{array}{l}
\xymatrix@C-0.5pc@R-0.5pc{
&&\cdot\ar[rr]|{\Phi_{\theta}(X)_{s'}}\ar[dd]|\hole&&\cdot\ar[dd]\ar@{}[rdd]|>>>>>>>>>>{\longleftarrow\Phi_{\theta}(f)_{s'}}&\\
&\cdot\ar@{}[ddl]|>>>>>>>>>>{\mathbf{disk}(v)\longrightarrow}\ar[rr]^>>>>>>{\gamma_1}\ar[dd]^<<<<<{\gamma_2}\ar[ru]&&\cdot\ar[dd]|<<<<<{\beta \circ \delta_1}\ar[ru]&&\\
&&\cdot\ar[rr]|\hole&&\cdot&\\
&\cdot\ar[rr]|{\beta \circ \delta_2}\ar[ru]&&\cdot\ar[ru]&&\\
}
\end{array}
\end{equation}

Similarly, the image of the quotient functor $\Upgamma^K_Q(f)[\theta]$ contains the tuples $(c_0,v,s',\mathbf{s})$ where: $c_0$ is a cone in $K$; $v$ is a vertebra in $\mathtt{V}_{c_0}$; $s'$ is an element in $S(c_0)$ and $\mathbf{s}$ is an arrow $\mathbf{stem}(v) \Rightarrow \Psi_{\theta}(f)$ in $\mathcal{C}^{\mathbf{2}}$ for the notation $\theta:=(c_0,v)$.
\end{remark}

\subsection{Constructors}\label{sssec:Constructor}
This section introduces the concept of constructor. In comparison to the informal introduction of Section \ref{sssec:Pre-constructor}, a constructor should be seen as a structure giving all the data that we need to describe the localisation of the ring $R$ by a subset $S$ in terms of freely-added tuples and relations acting on these. 

Specifically, one usually constructs the localisation $S^{-1}R$ by freely adding tuples of the form $(x,s)$, for every $x \in R$ and $s \in S$, to the set $R$. These tuples are often denoted as quotients $x/s$. Because $S$ has not been supposed to be a multiplicative set, one would also need to specify tuples of the form
$(x,s_1,s_2,s_3,\dots,s_n)$
for every $x \in R$ and $s_i \in S$ where $1 \leq i \leq n$. The equivalence relations defined on the pairs $(x,s)$ are quite well-known: two pairs $(x,s)$ and $(x',s')$ are equivalent if there exists $u \in R$ for which the following relation holds.
\[
u\cdot(xs' - x's) = 0
\]

In the case of the elements of the form $(x,s_1,s_2,s_3,\dots,s_n)$, it is less obvious how this should be done. A constructor can help us with this as it contains all the required structure for this type of general description without involving the need of focusing on the encoding.

In terms of the notations given below, in the definition of constructor, the data $\upepsilon$ would specify the set of elements that are to be paired with elements in $S$; the data $\upchi$ would specify the set of elements that are to be subject to relations of the form given earlier; the data $\upiota$ and $\updelta$, which are used for coherence purposes, would be identities; the data $\upmu$ and $\upnu$ would specify the types of quotients one would like to see happening: they provide the seeds and the stems of the vertebrae given by the data $\upalpha$ coming from the preconstructor structure; the maps denoted by $\ell_{\vartheta}$ would map every element $x \in R$ to $x \cdot s$ (for the analytic links) and every pair $(x,x')$ where $x':=(x/s') \cdot s'$ to a pair $(x\cdot s,x'\cdot s)$ (for the quotient links); and the data $\textsc{\j}$ would specify how the set $R$ injects into the localisation $S^{-1}R$. With respect to the definition given below, all of these data would be associated with the canonical ring morphism $f:R \to \mathbf{1}$.
\vspace{3pt}

We now give the definition of constructor. Let $\mathcal{B}$ and $\mathcal{C}$ be two categories and $D$ be a small category. A \emph{constructor of type $D[\mathcal{B},\mathcal{C}]$} consists of a preconstructor $\Upgamma$ of type $D[\mathcal{B},\mathcal{C}]$, say $(I,\uprho, \uplambda, \dots, \upalpha, \upomega)$ as defined in Section \ref{sssec:Pre-constructor}, and a mapping $f \mapsto (J_A,J_Q,\upepsilon,\upchi,\upiota,\updelta,\upmu,\upnu,\textsc{\j})$ that equips every object $f \in \mathcal{B}^{\mathbf{2}}$ with a pair of sets $(J_A,J_Q)$ together with
\begin{itemize}
\item[(1)] two functors  $\upepsilon:J_A \to D$ and $\upchi:J_Q \to D$ called the \emph{analytic} and \emph{quotient exponents};
\item[(2)] two functors $\upiota:J_A \to I$ and $\updelta:J_Q \to I$ called the \emph{analytic} and \emph{quotient indicators};
\item[(3)] a functor $\upmu:J_A \to \mathcal{C}^{\mathbf{2}}$ called the \emph{transitive analysor} and, for every $\vartheta \in J_A$, a function $\ell_{\vartheta}$, called the \emph{analytic link}, of the following form;
\[
\ell_{\vartheta}:\mathcal{C}^{\mathbf{2}}(\upalpha^{\circ}\upiota(\vartheta),\Upsilon_{\uprho\upiota(\vartheta)}(f)) \to \mathcal{C}^{\mathbf{2}}(\upmu(\vartheta),\Upsilon_{\upepsilon(\vartheta)}(f))
\]
\item[(4)] a functor $\upnu:J_Q \to \mathcal{C}^{\mathbf{2}}$ called the \emph{transitive quotientor} and, for every $\vartheta \in J_Q$, a function $\ell_{\vartheta}$, called the \emph{quotient link}, of the following form;
\[
\ell_{\vartheta}:\mathcal{C}^{\mathbf{2}}(\upomega\updelta(\vartheta),\Psi_{\updelta(\vartheta)}(f)) \to \mathcal{C}^{\mathbf{2}}(\upnu(\vartheta),\Upsilon_{\upchi(\vartheta)}(f))
\]
\item[(5)] a functor $\textsc{\j}:I \to J_A$, called the \emph{analytic section}, satisfying the equalities $\upiota \circ \textsc{\j} = \mathrm{id}_I$, $\upepsilon \circ \textsc{\j} = \uprho$ and $\upmu \circ \textsc{\j} = \upalpha^{\circ}$ so that the analytic link $\ell_{\textsc{\j}(\theta)}$ is an identity for every $\theta \in I$;
\end{itemize}

For such a constructor, we define, for every object $f \in \mathcal{B}^{\mathbf{2}}$, an \emph{analytic functor} $\Upgamma_A(f): D \to \mathbf{Set}$ and a \emph{quotient functor} $\Upgamma_Q(f): D \to \mathbf{Set}$ whose images $\Upgamma_A(f)(d)$ and $\Upgamma_Q(f)(d)$ are given by the following formulae, respectively.
\[
\sum_{\vartheta \in J_A} D(\upepsilon(\vartheta),d) \times \Upgamma_A(f)[\upiota(\vartheta)]\quad\quad\quad \sum_{\vartheta \in J_Q} D(\upchi(\vartheta),d) \times \Upgamma_Q(f)[\updelta(\vartheta)]
\]

\subsection{Constructor of a System of Premodels}\label{sssec:Constructor_system_premodels}
Let $(K,\mathtt{rou},\mathcal{P},\mathtt{V})$ be a system of $R$-premodels over a small category $D$ in a category $\mathcal{C}$. The goal of this section is to associate any such system with a constructor of type $D[\mathcal{P},\mathcal{C}]$. We shall, of course, use the preconstructor structure defined in Section \ref{sssec:Preconstructor_system_premodels}. To define the supplementary structure, let us now define the following set (where $\mathrm{Obj}(\mathtt{Es}(c_0))$ denotes the set of objects of the elementary shape of $c_0 \in K$)
\[
I':=\sum_{c_0 \in K} \mathtt{V}_{c_0} \times \mathrm{Obj}(\mathtt{Es}(c_0))
\]
and let us associate every arrow $(f,a):(X,S,e) \Rightarrow (Y,S',e')$ in $\mathcal{P}$ with two sets $J_A$ and $J_Q$ as follows
\[
J_A:=I+\sum_{c_0 \in K}   \mathtt{V}_{c_0} \times \widetilde{J}\big(\mathtt{rou}(c_0)\big)\quad\quad J_Q:=I'+\sum_{c_0 \in K} \mathtt{V}_{c_0} \times \Big( \sum_{z \in \mathrm{Obj}(\mathtt{Es}(c_0))}   \widetilde{J}\big(\mathtt{in}(c_0)(z)\big) \Big)
\]
where the set $\widetilde{J}(d)$ is defined for every $d \in D$ as the following sum, in which $\underline{c}$ denotes a tuple of the form $(c_1, \dots,c_n)$ in $K^n$ and $S(\underline{c})$ stands for the products of sets $S(c_1) \times \dots \times S(c_{n})$.
\[
\sum_{n \geq 1}\sum_{\underline{c} \in K^n}\sum_{\underline{s} \in S(\underline{c})}  D(d,\mathtt{rou}(c_1)) \times \prod_{i=1}^{n-1} D(\mathtt{ou}(c_i),\mathtt{rou}(c_{i+1}))
\]

The initial section $\textsc{\j}:I \to J_A$ is taken to be the canonical monomorphism.

\begin{remark}[Encoding]\label{rem:form_of_elt_I_J}
It will turn out to be convenient to have conventional notations for any element $\theta \in I$, $\theta' \in I'$, $\vartheta_A \in J_A$ or $\vartheta_Q \in J_Q$. In this respect, if one denotes 
\begin{itemize}
\item[-] by $\underline{c} = (c_1, \dots,c_n)$ any tuple of cones in $K^n$, for some positive integer $n$; 
\item[-] by $\underline{s} = (s_1,\dots s_{n})$ any tuple in $S(\underline{c})$, for some tuple of cones $\underline{c}$ as above;
\item[-] by $\underline{t}=(t_1,\dots,t_{n})$ any tuple of morphisms living in $\widetilde{J}(d)$ for some object $d$ in $D$;
\end{itemize}
the elements of the sets $I$, $I'$, $J_A\backslash I$ and $J_Q \backslash I'$ will be described as tuples of the form
\[
\theta:=(c_0,v)\quad\quad\theta':=(c_0,v,z) \quad\quad\vartheta_A:=(c_0,v,n,\underline{c},\underline{s},\underline{t})\quad\text{and}\quad\vartheta_Q:=(c_0,v,z,n,\underline{c},\underline{s},\underline{t})
\]
respectively, where $c_0 \in K$, $v \in \mathtt{V}_{c_0}$, $z \in \mathrm{Obj}(\mathtt{Es}(c_0))$ and, obviously, $n \geq 1$.
\end{remark}
Now, if one denotes by $\theta$, $\theta'$, $\vartheta_A$ and $\vartheta_Q$ any tuple of $I$, $I'$, $J_A$ and $J_Q$ as displayed in Remark \ref{rem:form_of_elt_I_J}, one defines the mappings $\upepsilon$, $\upchi$, $\upiota$, $\updelta$, $\upmu$ and $\upnu$ associated with the constructor structure of $(K,\mathtt{rou},\mathcal{P},\mathtt{V})$ as follows:
\smallskip

\begin{center}
\begin{tabular}{|ll|}
\hline
\multicolumn{2}{|c|}{\cellcolor[gray]{0.8}analytic exponent $\upepsilon$}\\
\hline
$\theta \mapsto \mathtt{rou}(c_0)$&on $I$\\
$\vartheta_A \mapsto \mathtt{ou}(c_n)$&otherwise\\
\hline
\multicolumn{2}{c}{}\\
\hline
\multicolumn{2}{|c|}{\cellcolor[gray]{0.8}analytic indicator $\upiota$}\\
\hline
$\theta \mapsto \theta$&on $I$\\
$\vartheta_A \mapsto \theta$&otherwise\\
\hline
\multicolumn{2}{c}{}\\
\hline
\multicolumn{2}{|c|}{\cellcolor[gray]{0.8}transitive analysor $\upmu$}\\
\hline
$\theta \mapsto \upalpha^{\circ}\upiota(\theta)$&on $I$\\
$\vartheta_A\mapsto L^n\mathbf{seed}(v)$&otherwise\\
\hline
\end{tabular}
\quad\quad
\begin{tabular}{c}
\end{tabular}
\quad\quad
\begin{tabular}{|ll|}
\hline
\multicolumn{2}{|c|}{\cellcolor[gray]{0.8}quotient exponent $\upchi$}\\
\hline
$\theta' \mapsto \mathtt{in}(c_0)(z)$&on $I'$\\
$\vartheta_Q \mapsto \mathtt{ou}(c_n)$&otherwise\\
\hline
\multicolumn{2}{c}{}\\
\hline
\multicolumn{2}{|c|}{\cellcolor[gray]{0.8}quotient indicator $\updelta$}\\
\hline
$\theta' \mapsto \theta$&on $I'$\\
$\vartheta_Q \mapsto \theta$&otherwise\\
\hline
\multicolumn{2}{c}{}\\
\hline
\multicolumn{2}{|c|}{\cellcolor[gray]{0.8}transitive quotientor $\upnu$}\\
\hline
$\theta' \mapsto \upomega\updelta(\theta')$&on $I'$\\
$\vartheta_Q \mapsto L^{n+1}\mathbf{stem}(v)$&otherwise\\
\hline
\end{tabular}
\end{center}
\smallskip

Finally, one produces a constructor of type $D[\mathcal{P},\mathcal{C}]$ by defining the analytic link $\ell_{\vartheta_A}$ as an identity map when $\vartheta_A \in I$, and, otherwise, as a compositional iteration of the form

\begin{equation}\label{eq:iteration_composition_l_for_link_A}
l_{c_n,s_n,t_n} \circ l_{c_{n-1},s_{n-1},t_{n-1}} \circ \dots \circ l_{c_1,s_1,t_1} (\_)
\end{equation}

where the triples $(c_1,s_1,t_1)$, $\dots$, $(c_{n},s_{n},t_{n})$ are made out of the obvious components of $\vartheta_A$ and the functor $l_{c_i,s_i,t_i}$ maps any commutative square as given below, on the left, to the commutative trapezoid given on the right, where $\varepsilon$ denotes the counit of the adjunction $L \vdash R$ and the component $t_i$ is, here, seen as an arrow of the form $\mathtt{ou}[i-1] \to \mathtt{rou}(c_{i})$ with $\mathtt{ou}[0]= \mathtt{rou}(c_0)$ and $\mathtt{ou}[i] = \mathtt{ou}(c_0)$ otherwise.
\[
\xymatrix{
A\ar[r]^-x\ar[d]_{\delta}&*+!L(.7){X\mathtt{ou}[i-1]}\ar[d]^{f\mathtt{ou}[i-1]}\\
B\ar[r]_-y&*+!L(.7){Y\mathtt{ou}[i-1]}
}
\quad\mapsto\quad
\xymatrix@C-0.5pc{
LA\ar[d]_{L\delta}\ar[rrr]^-{\varepsilon \circ L(e_{c_i,s_i} \circ X(t_i) \circ x)}&&&*+!L(.7){X\mathtt{rou}(c_{i})}\ar[d]^{f\mathtt{rou}(c_{i})}\\
LB\ar[rrr]_-{\varepsilon \circ L(e_{c_i,a_{c_i}(s_i)}' \circ Y(t_i) \circ  y)}&&&*+!L(.7){Y\mathtt{rou}(c_{i})}
}
\]

For its part, the quotient link $\ell_{\vartheta_Q}$, which is defined for every $\vartheta_Q \in J_Q$, is given by a first application of the functor $l_z$ that maps any commutative square as given below, on the left, to the commutative trapezoid given on the right, where $\varsigma_z$ is the universal projection of the adjunction $\Delta \vdash \mathrm{lim}$ at $z$,
\[
\xymatrix@C-1pc{
A\ar[r]^-x\ar[d]_{\delta}&*+!L(.7){\mathrm{lim}_zRX\mathtt{in}(c_0)(z)}\ar[d]^{\mathrm{lim}Rf\mathtt{ou}(c_0)}\\
B\ar[r]_-y&*+!L(.7){\mathrm{lim}_zRX\mathtt{in}(c_0)(z)}
}
\quad\mapsto\quad
\xymatrix@C-1pc{
LA\ar[d]_{L\delta}\ar[rrr]^-{\varepsilon \circ L(\varsigma_z \circ x)}&&&*+!L(.7){X\mathtt{in}(c_0)(z)}\ar[d]^{f\mathtt{in}(c_{0})(z)}\\
LB\ar[rrr]_-{\varepsilon \circ L(\varsigma_z \circ y)}&&&*+!L(.7){Y\mathtt{in}(c_0)(z)}
}
\]
and, in the case where $\vartheta_Q$ is not in $I'$, followed by successive iterations of the functor $l_{c_i,s_i,t_i}$ over the triples $(c_i,s_i,t_i)$ made out of the obvious components of $\vartheta_Q$ (see Formula \ref{eq:iteration_composition_l_for_link_A}). It is easy to check that the initial section $\textsc{\j}:I \to J_A$ satisfies the axioms of item 5) of Section \ref{sssec:Constructor}. The constructor associated with $(K,\mathtt{rou},\mathcal{P},\mathtt{V})$ will later be referred to as $\Upgamma^K$.

\begin{remark}\label{rem:J_A_could_have_been_I}
In the case where the associated maps $P\mathtt{rou}(c) \to RP\mathtt{ou}(c)$ of our premodels are identities, the functors $R$ and $\mathtt{rou}$ are trivial and the associated sets $S$ are all equal to a fixed one, the set $\widetilde{J}(d)$ can be set empty for every $d \in\{\mathtt{rou}(c_0),\mathtt{in}(c_0)(z)\}$ and $c_0 \in K$ so that $\textsc{\j}$ can be defined as an identity. 
In this case, the validity of our results still holds for Examples \ref{ex:functors_premodels} and  \ref{ex:presheaves_premodels}, but not for Examples \ref{ex:spectra_premodels} and \ref{ex:Prelicalised_ring}, which require $\widetilde{J}(d)$ to be as above. See Remark \ref{rem:trivial_effectiveness_functor_categories} and the proof of Theorem \ref{th:admissible_quotiented_effective} for more insight.
\end{remark}

\begin{remark}[Encoding]\label{rem:encoding_of_analytic_functors}
For every arrow $f:X \to Y$ in $\mathcal{B}$ and object $d$ in $D$, the image of the analytic functor $\Upgamma^K_A(f)(d)$ contains the tuples $(c_0,v,t,s',\mathbf{c})$ and the tuples $(c_0,v,n,\underline{c},\underline{s},\underline{t},t,s',\mathbf{c})$ where: $c_0$ is a cone in $K$; $v$ is a vertebra in $\mathtt{V}_{c_0}$; $n$ is a positive integer; $\underline{c}$, $\underline{s}$ and $\underline{t}$ are the tuples defined in Remark \ref{rem:form_of_elt_I_J} and used to define the analytic link; $t$ is an arrow in $D$ of the form $\mathtt{rou}(c_0) \to d$ for the first type of tuple and an arrow $\mathtt{ou}(c_n) \to d$ otherwise; $s'$ is an element in $S(c_0)$ and $\mathbf{c}:\mathbf{disk}(v) \Rrightarrow \Phi_{\theta}(f)_{s'}$ is an arrow in $\mathcal{C}^{\mathbf{2}\times \mathbf{2}}$ as displayed in Equation (\ref{eq:commutative_cuboid_definition_playground}) for the notation $\theta:=(c_0,v)$.

Similarly, the image of the quotient functor $\Upgamma^K_Q(f)(d)$ contains the tuples $(c_0,v,z,t,s',\mathbf{s})$ and the tuples $(c_0,v,n,\underline{c},\underline{s},\underline{t},z,t,s',\mathbf{s})$ where: $c_0$ is a cone in $K$; $v$ is a vertebra in $\mathtt{V}_{c_0}$; $n$ is a natural number; $\underline{c}$, $\underline{s}$ and $\underline{t}$ are the tuples defined in Remark \ref{rem:form_of_elt_I_J} and used to define the quotient link; $z$ is an object of $\mathtt{Es}(c_0)$; $t$ is an arrow in $D$ of the form $\mathtt{in}(c_0)(z) \to d$ for the first type of tuple and an arrow $\mathtt{ou}(c_n) \to d$ otherwise; $s'$ is an element in $S(c_0)$ and $\mathbf{s}$ is an arrow $\mathbf{stem}(v) \Rightarrow \Psi_{\theta}(f)$ in $\mathcal{C}^{\mathbf{2}}$ for the notation $\theta:=(c_0,v)$.
\end{remark}

\begin{remark}[Encoding]\label{rem:encoding_of_analytic_functors_gen_shape}
It is not hard to see from Remark \ref{rem:encoding_of_analytic_functors} that any type of tuple in $\Upgamma^K_A(f)(d)$ may be written as a tuple of the form $(\vartheta, t,s',\mathbf{c})$ where the encoding of the parameter $\vartheta$ may vary.
Similarly, it follows from  Remark \ref{rem:encoding_of_analytic_functors} that any tuple in $\Upgamma^K_Q(f)(d)$ may be written as a tuple $(\vartheta, t,s',\mathbf{s})$ where the encoding of the parameter $\vartheta$ may vary.
\end{remark}

\subsection{Tomes of a Constructor}\label{ssec:Tomes_of_a_constructor}

Let $\Upgamma$ denote a constructor of type $D[\mathcal{B},\mathcal{C}]$ as defined in Section \ref{sssec:Constructor}. This section shows that $\Upgamma$ may be associated with a variety of canonical tomes, each of them being used for specific purposes. The first one, called the \emph{operadic tome}, is meant to be used in the small object argument (see Section \ref{sec:From_narratives_to_combinatorial_cat}) and is constructed out of the preconstructor structure of $\Upgamma$ as follows: For every object $\theta \in I$, arrow $f:X \to Y$ in $\mathcal{B}$ and $s \in \uplambda_{\theta}(X)$, it is given by the functor $\varrho_{\theta}^s:\Upgamma_A(f)\{\theta\}_s \to (\mathcal{C}^{\mathbf{2}})^{\mathbf{2}}/\Phi_{\theta}(f)_s$ defined by the following inclusion.
\[
\begin{array}{ccccc}
\mathcal{C}^{\mathbf{2} \times \mathbf{2}}(\upalpha(\theta),\Phi_{\theta}(f)_s) 
&\to&(\mathcal{C}^{\mathbf{2}})^{\mathbf{2}}/\Phi_{\theta}(f)_s
\\
\mathbf{c}&\mapsto&\mathbf{c}
\end{array}
\]

A second tome, called the \emph{analytic tome}, is given by a functor $\varphi_A:\Upgamma_A(f)(d) \to \mathcal{C}^{\mathbf{2}}/\Upsilon_{d}(f)$ and is defined on each term of $\Upgamma_A(f)(d)$---which denoted as $\mathtt{t}_{\vartheta,s}$ below---as follows.
\begin{align*}
\mathtt{t}_{\vartheta,s} & =  D(\upepsilon(\vartheta),d) \times \mathcal{C}^{\mathbf{2} \times \mathbf{2}}(\upalpha\upiota(\vartheta),\Phi_{\upiota(\vartheta)}(f)_s)&\text{(definition)}\\
& \to D(\upepsilon(\vartheta),d) \times\mathcal{C}^{\mathbf{2}}(\upalpha^{\circ}\upiota(\vartheta),\Phi_{\upiota(\vartheta)}^{\circ}(f)_s)&\text{(see Convention \ref{conv:circ_bullet_source_target})}\\
& \to D(\upepsilon(\vartheta),d) \times \mathcal{C}^{\mathbf{2}}(\upalpha^{\circ}\upiota(\vartheta),\Upsilon_{\uprho\upiota(\vartheta)}(f))&\text{(definition of $\Upgamma$)}\\
& \to D(\upepsilon(\vartheta),d) \times \mathcal{C}^{\mathbf{2}}(\upmu(\vartheta),\Upsilon_{\upepsilon(\vartheta)}(f))&\text{(analytic link)}\\
& \to \mathcal{C}^{\mathbf{2}}(\Upsilon_{\upepsilon(\vartheta)}(f),\Upsilon_{d}(f)) \times \mathcal{C}^{\mathbf{2}}(\upmu(\vartheta),\Upsilon_{\upepsilon(\vartheta)}(f))&\text{(func. $ \Upsilon_{\_}(f)$)}\\
& \to \mathcal{C}^{\mathbf{2}}(\upmu(\vartheta),\Upsilon_{d}(f))&\text{(comp. of $\mathcal{C}^{\mathbf{2}}$)}\\
& \to \mathcal{C}^{\mathbf{2}}/\Upsilon_{d}(f)&\text{(inclusion)}
\end{align*}

Explicitly, the functor maps any tuple $(\vartheta,t,s,\mathbf{c})$ in $\Upgamma^K_A(f)(d)$ (see Remark \ref{rem:encoding_of_analytic_functors_gen_shape}) to the composite arrow given, below, by Equation  (\ref{eq:analytic_tome_formula}) in $\mathcal{C}^{\mathbf{2}}$.
\begin{equation}\label{eq:analytic_tome_formula}
\Upsilon_t(f) \circ \ell_{\vartheta}(\mathbf{c}^{\circ}):\upmu(\vartheta) \Rightarrow \Upsilon_{d}(f)
\end{equation}

A third tome, called the \emph{quotient tome}, is given by a functor $\varphi_Q:\Upgamma_Q(f)(d) \to \mathcal{C}^{\mathbf{2}}/\Upsilon_{d}(f)$
and is defined on each term of $\Upgamma_Q(f)(d)$ -- which denoted as $\mathtt{t}_{\vartheta,s}$ below -- as follows.
\begin{align*}
\mathtt{t}_{\vartheta,s}& =  D(\upchi(\vartheta),d) \times \mathcal{C}^{\mathbf{2}}(\upomega\delta(\vartheta),\Phi_{\delta(\vartheta)}^{\bullet}(f)_s)&\text{(definition)}\\
& =  D(\upchi(\vartheta),d) \times \mathcal{C}^{\mathbf{2}}(\upomega\delta(\vartheta),\Psi_{\delta(\vartheta)}(f))&\text{(def. of $\Upgamma$)}\\
& \to   D(\upchi(\vartheta),d) \times \mathcal{C}^{\mathbf{2}}(\upnu(\vartheta), \Upsilon_{\upchi(\vartheta)}(f))&\text{(quotient link)}\\
& \to  \mathcal{C}^{\mathbf{2}}( \Upsilon_{\upchi(\vartheta)}(f), \Upsilon_{d}(f)) \times \mathcal{C}^{\mathbf{2}}(\upnu(\vartheta), \Upsilon_{\upchi(\vartheta)}(f))&\text{(func. $\Upsilon_{\_}(f)$)}\\
& \to  \mathcal{C}^{\mathbf{2}}(\upnu(\vartheta),\Upsilon_{d}(f))&\text{(comp of $\mathcal{C}^{\mathbf{2}}$)}\\
& \to  \mathcal{C}^{\mathbf{2}}/\Upsilon_{d}(f)&\text{(inclusion)}
\end{align*}

Explicitly, the quotient tome $\varphi_Q$ maps any tuple $(\vartheta,t,s,\mathbf{s})$ in $\Upgamma^K_Q(f)(d)$ (see Remark \ref{rem:encoding_of_analytic_functors_gen_shape}) to the composite arrow given, below, by Equation  (\ref{eq:quotient_tome_formula}) in $\mathcal{C}^{\mathbf{2}}$.
\begin{equation}\label{eq:quotient_tome_formula}
\Upsilon_t(f) \circ \ell_{\vartheta}(\mathbf{s}):\upnu(\theta) \Rightarrow \Upsilon_{d}(f)
\end{equation}

The proofs of the following propositions follow from the previous definitions.

\begin{proposition}\label{prop:operadic_tome_functorial}
The operadic tome $\varrho_{\theta}^s:\Upgamma_A(f)\{\theta\}_s \to \mathcal{C}^{\mathbf{2}\times \mathbf{2}}/\Phi_{\theta}(f)_s$ is natural in the variable $f \in \mathcal{B}^{\mathbf{2}}$.
This amounts to saying that the mapping $f \mapsto (\Phi_{\theta}(f)_s,\Upgamma_A(f)[\theta],\varrho_{\theta}^s)$ induces a functor $\mathbb{T}^{\mathrm{op}}_{\theta,s}:\mathcal{B}^{\mathbf{2}} \to \mathbf{Tome}(\mathcal{C})$.
\end{proposition}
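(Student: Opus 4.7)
The plan is to associate every morphism $u:f \to f'$ in $\mathcal{B}^{\mathbf{2}}$ (viewed as a commutative square in $\mathcal{B}$ with horizontal legs $u_0:X \to X'$ and $u_1:Y \to Y'$) with a regular morphism of tomes $\mathbb{T}^{\mathrm{op}}_{\theta,s}(f) \to \mathbb{T}^{\mathrm{op}}_{\theta,s}(f')$, and then to verify preservation of identities and composites directly from the functoriality of $\Phi_\theta$.

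For the loose-morphism component, applying $\Phi_\theta:\mathcal{B}\to\mathbf{Fam}(\mathcal{C}^{\mathbf{2}})$ to the square $u$ and taking the component at $s \in \uplambda_\theta(X)$ yields, in $\mathcal{C}^{\mathbf{2}}$, the arrow
\[
(x,y) := \bigl(\Phi_\theta(u_0)_s,\ \Phi_\theta(u_1)_{\uplambda_\theta(f)(s)}\bigr):\Phi_\theta(f)_s \Longrightarrow \Phi_\theta(f')_{s'},
\]
where $s' := \uplambda_\theta(u_0)(s)$. For the structural component, I define the functor $\upsigma$ between the underlying index sets as post-composition by $(x,y)$, that is, as the image of $(x,y)$ under the covariant representable $\mathcal{C}^{\mathbf{2}\times\mathbf{2}}(\upalpha(\theta),-)$. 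The commutative square required by the definition of a regular morphism of tomes (Section \ref{ssec:Morphisms_of_tomes}) then becomes tautological, because $\varrho_\theta^s$ is the identity-on-objects inclusion $\mathbf{c}\mapsto\mathbf{c}$, whence both composites $\varrho_\theta^{s'}\circ\upsigma$ and $(\mathcal{C}^{\mathbf{2}}/(x,y))\circ \varrho_\theta^s$ send a cube $\mathbf{c}$ to $(x,y)\circ\mathbf{c}$.

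Preservation of identities and composites in $\mathcal{B}^{\mathbf{2}}$ then reduces to the functoriality of $\Phi_\theta$, together with that of the representable $\mathcal{C}^{\mathbf{2}\times\mathbf{2}}(\upalpha(\theta),-)$. The one point requiring attention -- and hence the closest thing to an obstacle -- is to check that the index transport $s \mapsto \uplambda_\theta(u_0)(s)$ respects composition in $\mathcal{B}^{\mathbf{2}}$; but this is precisely the functoriality of $\uplambda_\theta$ as embedded in $\Phi_\theta$ via the preconstructor string-diagram identities $\Phi^{\circ}_\theta(-) = \mathbb{I}_{\uplambda_\theta(-)}\Upsilon_{\uprho(\theta)}(-)$ and $\Phi^{\bullet}_\theta(-) = \mathbb{I}_{\uplambda_\theta(-)}\Psi_{\theta}(-)$ of Section \ref{sssec:Pre-constructor}. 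Consequently, the bulk of the argument is routine book-keeping of indices through the Grothendieck-type structure implicit in $\mathbf{Fam}(\mathcal{C}^{\mathbf{2}})$.
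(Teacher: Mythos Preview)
Your proposal is correct and is essentially a detailed unpacking of the paper's own proof, which consists of the single sentence ``The proofs of the following propositions follow from the previous definitions.'' Your explicit identification of the regular-morphism data (the loose part $(x,y)=\Phi_\theta(u)_s$ and the re-indexing $\upsigma$ given by the representable $\mathcal{C}^{\mathbf{2}\times\mathbf{2}}(\upalpha(\theta),-)$), together with your remark that the only non-automatic point is the index transport $s\mapsto\uplambda_\theta(u_0)(s)$ governed by the functoriality of $\uplambda_\theta$ built into $\Phi_\theta$, is exactly the content the paper leaves implicit.
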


\begin{proposition}\label{prop:analytic_tome_functorial}
The analytic tome $\varphi_A:\Upgamma_A(f)(d) \to \mathcal{C}^{\mathbf{2}}/\Upsilon_{d}(f)$ is natural in the variable $d \in D$.
This amounts to saying that the mapping $d \mapsto (\Upsilon_{d}(f),\Upgamma_A(f)(d),\varphi_A)$ induces a functor $\mathbb{T}^{\mathrm{an}}_f:D \to \mathbf{Tome}(\mathcal{C})$.
\end{proposition}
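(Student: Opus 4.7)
The plan is to exhibit, for every arrow $t:d \to d'$ in $D$, a canonical regular morphism of tomes from $(\Upsilon_{d}(f), \Upgamma_A(f)(d), \varphi_A)$ to $(\Upsilon_{d'}(f), \Upgamma_A(f)(d'), \varphi_A)$, and then verify that the assignment $t \mapsto \mathbb{T}^{\mathrm{an}}_f(t)$ respects composition and identities in $D$. The loose component will be $\Upsilon_t(f):\Upsilon_{d}(f) \Rightarrow \Upsilon_{d'}(f)$, which is a well-defined arrow in $\mathcal{C}^{\mathbf{2}}$ by functoriality of $\Upsilon:D \times \mathcal{B} \to \mathcal{C}$ in its first variable.

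For the functor between indexing (discrete) categories, I would use that $\Upgamma_A(f):D \to \mathbf{Set}$ is a functor by construction: since $\upepsilon:J_A \to D$ is a functor and the factor $\Upgamma_A(f)[\upiota(\vartheta)]$ is independent of $d$, the hom-set functor $D(\upepsilon(\vartheta),\_)$ makes $\Upgamma_A(f)$ covariant in $d$. Writing a generic element of $\Upgamma_A(f)(d)$ in the form $(\vartheta, t_0, s, \mathbf{c})$ as in Remark \ref{rem:encoding_of_analytic_functors_gen_shape}, the induced map $\Upgamma_A(f)(t):\Upgamma_A(f)(d) \to \Upgamma_A(f)(d')$ acts as $(\vartheta, t_0, s, \mathbf{c}) \mapsto (\vartheta, t \circ t_0, s, \mathbf{c})$, and discreteness makes this automatically functorial.

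The only nontrivial step is the commutativity of the square
\[
\xymatrix{
\Upgamma_A(f)(d)\ar[d]_{\varphi_A}\ar[rr]^{\Upgamma_A(f)(t)}&&\Upgamma_A(f)(d')\ar[d]^{\varphi_A}\\
\mathcal{C}^{\mathbf{2}}/\Upsilon_{d}(f)\ar[rr]_{\mathcal{C}^{\mathbf{2}}/\Upsilon_t(f)}&&\mathcal{C}^{\mathbf{2}}/\Upsilon_{d'}(f)
}
\]
required by the definition of a regular morphism of tomes (Section \ref{ssec:Morphisms_of_tomes}). Using the explicit description given by Equation (\ref{eq:analytic_tome_formula}), evaluating the two composites at $(\vartheta,t_0,s,\mathbf{c})$ reduces the claim to the identity
\[
\Upsilon_{t \circ t_0}(f) \circ \ell_{\vartheta}(\mathbf{c}^{\circ}) \,=\, \Upsilon_{t}(f) \circ \bigl(\Upsilon_{t_0}(f) \circ \ell_{\vartheta}(\mathbf{c}^{\circ})\bigr),
\]
which follows at once from the functoriality of $\Upsilon_{\_}(f):D \to \mathcal{C}^{\mathbf{2}}$ and associativity of composition in $\mathcal{C}^{\mathbf{2}}$. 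Note that the analytic link $\ell_{\vartheta}$ does not interact with $t$, which is precisely why the naturality reduces to post-composition.

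Finally, functoriality of $t \mapsto \mathbb{T}^{\mathrm{an}}_f(t)$ is inherited componentwise: the loose part is functorial because $\Upsilon_{\_}(f)$ is, and the indexing-category part is functorial because $\Upgamma_A(f)$ is. Smallness of the discrete category $\Upgamma_A(f)(d)$, needed to view it as an admissible indexing category for a tome, is automatic from the smallness of $J_A$, of $D$, and of the relevant hom-sets and squares. The main obstacle, if any, is purely bookkeeping on the tuple encoding of Remark \ref{rem:encoding_of_analytic_functors_gen_shape}; there is no genuine mathematical content beyond functoriality of $\Upsilon$.
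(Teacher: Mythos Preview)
Your proposal is correct and is precisely the unpacking that the paper leaves implicit: the paper's own proof consists of the single sentence ``The proofs of the following propositions follow from the previous definitions,'' and what you have written is exactly the verification those definitions entail, using Formula~(\ref{eq:analytic_tome_formula}) and the functoriality of $\Upsilon_{\_}(f)$.
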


\begin{proposition}\label{prop:quotient_tome_functorial}
The quotient tome $\varphi_Q:\Upgamma_Q(f)(d) \to \mathcal{C}^{\mathbf{2}}/\Upsilon_{d}(f)$ is natural in the variable $d \in D$. This amounts to saying that the mapping $d \mapsto (\Upsilon_{d}(f),\Upgamma_Q(f)(d),\varphi_Q)$ induces a functor $\mathbb{T}^{\mathrm{qu}}_f:D \to \mathbf{Tome}(\mathcal{C})$. 
\end{proposition}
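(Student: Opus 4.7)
The plan is to directly construct the functor $\mathbb{T}^{\mathrm{qu}}_f \colon D \to \mathbf{Tome}(\mathcal{C})$ by specifying its action on objects and morphisms of $D$, and then verify the functoriality axioms. On objects, we set $\mathbb{T}^{\mathrm{qu}}_f(d) := (\Upsilon_d(f),\Upgamma_Q(f)(d),\varphi_Q)$; this is a tome because $\Upgamma_Q(f)(d)$ is a discrete category (so $\mathcal{C}$ trivially admits all colimits over it), and $\varphi_Q$ is a functor into $\mathcal{C}^{\mathbf{2}}/\Upsilon_d(f)$ by construction of Section \ref{ssec:Tomes_of_a_constructor}.

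For a morphism $g \colon d \to d'$ in $D$, I will define $\mathbb{T}^{\mathrm{qu}}_f(g)$ as the regular morphism of tomes whose loose part is the arrow $\Upsilon_g(f) \colon \Upsilon_d(f) \Rightarrow \Upsilon_{d'}(f)$ in $\mathcal{C}^{\mathbf{2}}$ obtained by functoriality of $\Upsilon \colon D \times \mathcal{B} \to \mathcal{C}$ in the first variable, and whose underlying functor $\upsigma_g \colon \Upgamma_Q(f)(d) \to \Upgamma_Q(f)(d')$ is induced by post-composition with $g$ on the factor $D(\upchi(\vartheta),\_)$; explicitly, using the encoding of Remark \ref{rem:encoding_of_analytic_functors_gen_shape}, it acts as $(\vartheta,t,s,\mathbf{s}) \mapsto (\vartheta, g \circ t, s, \mathbf{s})$.

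The main content is then verifying the defining commutativity of a regular morphism of tomes (see Section \ref{ssec:Morphisms_of_tomes}), namely that $\varphi_Q \circ \upsigma_g = (\mathcal{C}^{\mathbf{2}}/\Upsilon_g(f)) \circ \varphi_Q$. Evaluating at a tuple $(\vartheta,t,s,\mathbf{s}) \in \Upgamma_Q(f)(d)$ and applying the explicit formula from Equation (\ref{eq:quotient_tome_formula}), the left-hand side equals $\Upsilon_{g \circ t}(f) \circ \ell_{\vartheta}(\mathbf{s})$, while the right-hand side equals $\Upsilon_g(f) \circ \Upsilon_t(f) \circ \ell_{\vartheta}(\mathbf{s})$. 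The crucial observation is that the quotient link $\ell_{\vartheta}(\mathbf{s})$ does not depend on $t$, so the two expressions coincide by the functoriality of $\Upsilon_{\_}(f) \colon D \to \mathcal{C}$ applied to the composite $g \circ t$.

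Functoriality of $\mathbb{T}^{\mathrm{qu}}_f$ then reduces to two routine checks: on identities, $\Upsilon_{\mathrm{id}_d}(f)$ is the identity in $\mathcal{C}^{\mathbf{2}}$ and $\upsigma_{\mathrm{id}_d}$ is the identity functor because $\mathrm{id}_d \circ t = t$; on compositions $g' \circ g$, both the loose part (via functoriality of $\Upsilon$) and the discrete-category part (via associativity of composition in $D$) split as the corresponding composites. I anticipate no genuine obstacle here, only bookkeeping of the tuple components in Remark \ref{rem:encoding_of_analytic_functors_gen_shape}; the proof is essentially dictated by the fact that the quotient tome factorises through $\Upsilon_t(f)$ with a residual piece $\ell_{\vartheta}(\mathbf{s})$ that is independent of the variable $d$.
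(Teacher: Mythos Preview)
Your proposal is correct and matches the paper's approach: the paper simply states that the proof ``follows from the previous definitions,'' and what you have written is precisely the explicit unpacking of that claim, hinging on the fact that in Formula~(\ref{eq:quotient_tome_formula}) the quotient link $\ell_{\vartheta}(\mathbf{s})$ is independent of $t$ so that naturality in $d$ reduces to functoriality of $\Upsilon_{\_}(f)$. One small caveat: your remark that $\mathcal{C}$ ``trivially'' admits colimits over the discrete category $\Upgamma_Q(f)(d)$ presupposes that $\mathcal{C}$ has the relevant coproducts, which is an ambient assumption (made explicit e.g.\ in Section~\ref{ssec:Quotiented-arrows} onward) rather than a consequence of discreteness alone.
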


\subsection{Quotiented Arrows}\label{ssec:Quotiented-arrows}
Let $\Upgamma$ denote a constructor of type $D[\mathcal{B},\mathcal{C}]$ as defined in Section \ref{sssec:Constructor}. This section defines the concept of `quotient' whose essential idea is to restrict the quotient family of $\Upgamma$ to certain parametrising `squares' only.
In this respect, a \emph{$\Upgamma$-quotient} for a morphism  $f:X \to Y$ in $\mathcal{B}$ consists of a collection of discrete categories, as given below, on the left, as well as a collection of functors as given on the right
\[
\{E_s(\theta)\}_{\theta \in I,s \in \uplambda_{\theta}(X)}\quad\quad\quad\quad\mathfrak{q}=\{\mathfrak{q}_{\theta}^s\{\_\}:E_s(\theta) \to \mathbf{Set}\}_{\theta \in I,s \in \uplambda_{\theta}(X)}
\]
such that the inclusion $\mathfrak{q}_{\theta}^s\{\upsilon\} \hookrightarrow \Upgamma_Q(f)\{\theta\}_s$ holds for every element $\upsilon\in E_s(\theta)$. 
We may associate any such $\Upgamma$-quotient $\mathfrak{q}$ with a functor $\mathfrak{q}[\_]:I\to \mathbf{Set}$ defined as follows for every $\theta \in I$.
\[
\mathfrak{q}[\theta]:=\sum_{s \in \uplambda_{\theta}(X)}\sum_{\upsilon \in E_s(\theta)} \mathfrak{q}_{\theta}^s\{\upsilon\}
\]

This functor will be called the \emph{species of $\mathfrak{q}$}. In much the same fashion as the quotient species of $\Upgamma$ was used to define its quotient functor, we use the species of $\mathfrak{q}$ to define a third functor $\mathfrak{q}:D \to \mathbf{Set}$ given by the following formula.
\[
\mathfrak{q}(d):=\sum_{\vartheta \in J_Q} D(\upchi(\vartheta),d) \times \mathfrak{q}[\updelta(\vartheta)]
\]

This functor will be referred to as the \emph{quotienting functor of $\mathfrak{q}$}.

\begin{proposition}\label{prop:puotienting_functor_subfunctor}
The inclusions $\mathfrak{q}_{\theta}^s\{\upsilon\} \hookrightarrow \Upgamma_Q(f)\{\theta\}_s$ holding for every $\upsilon\in E_s(\theta)$ and $s \in \uplambda_{\theta}(X)$ induce functions of the form $\mathfrak{q}[\theta] \to \Upgamma_Q(f)[\theta]$ for every $\theta \in I$, which in turn induce a morphism
$\mathfrak{q}(\_) \Rightarrow \Upgamma_Q(f)(\_)$ in $\mathbf{Set}^D$.
\end{proposition}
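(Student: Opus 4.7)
The plan is to exhibit the required morphism componentwise by a double application of the universal property of coproducts, and then verify the naturality in the variable $d$ is automatic because the hom-factor $D(\upchi(\vartheta), d)$ is left untouched by the construction.

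First, I fix $\theta \in I$ and $s \in \uplambda_{\theta}(X)$. Since the target $\Upgamma_Q(f)\{\theta\}_s$ of the inclusions $\mathfrak{q}_{\theta}^s\{\upsilon\} \hookrightarrow \Upgamma_Q(f)\{\theta\}_s$ is independent of $\upsilon \in E_s(\theta)$, the universal property of coproducts in $\mathbf{Set}$ assembles them into a single map
\[
\sum_{\upsilon \in E_s(\theta)} \mathfrak{q}_{\theta}^s\{\upsilon\} \longrightarrow \Upgamma_Q(f)\{\theta\}_s .
\]
Summing these maps over $s \in \uplambda_{\theta}(X)$ and identifying the iterated coproduct with $\mathfrak{q}[\theta]$ on the left and with $\Upgamma_Q(f)[\theta]$ on the right yields the announced function $\iota_{\theta} \colon \mathfrak{q}[\theta] \to \Upgamma_Q(f)[\theta]$ for every $\theta \in I$.

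Next, I promote this $I$-indexed family $(\iota_{\theta})_{\theta \in I}$ to a morphism in $\mathbf{Set}^D$. For every object $d \in D$ and every index $\vartheta \in J_Q$, I take the cartesian product with the identity on $D(\upchi(\vartheta), d)$ and the map $\iota_{\updelta(\vartheta)}$, giving
\[
\mathrm{id} \times \iota_{\updelta(\vartheta)} \colon D(\upchi(\vartheta), d) \times \mathfrak{q}[\updelta(\vartheta)] \longrightarrow D(\upchi(\vartheta), d) \times \Upgamma_Q(f)[\updelta(\vartheta)] .
\]
Summing over $\vartheta \in J_Q$ and using the defining formulae of $\mathfrak{q}(d)$ and $\Upgamma_Q(f)(d)$ given in Section \ref{ssec:Quotiented-arrows} and Section \ref{sssec:Constructor}, I obtain a function $\mathfrak{q}(d) \to \Upgamma_Q(f)(d)$.

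Finally, I have to check naturality in $d$. For any morphism $t \colon d \to d'$ in $D$, both $\mathfrak{q}(\_)$ and $\Upgamma_Q(f)(\_)$ act on the first factor $D(\upchi(\vartheta), d)$ by post-composition with $t$ and leave the second factor $\mathfrak{q}[\updelta(\vartheta)]$ or $\Upgamma_Q(f)[\updelta(\vartheta)]$ untouched. Since the map constructed above is of the shape $\mathrm{id} \times \iota_{\updelta(\vartheta)}$ on each summand, it commutes strictly with the action of $t$ on the first factor, so the naturality square commutes summand-by-summand and therefore globally. I do not expect any obstacle in this proof: everything reduces to the universal property of coproducts and the fact that taking products with a fixed set is a functor. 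The only potential pitfall is bookkeeping the two layers of summation when defining $\iota_{\theta}$, but once written down both levels are formal.
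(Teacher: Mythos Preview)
Your proof is correct and is precisely a detailed unpacking of the paper's own one-line proof, which reads ``By universality of the coproducts.'' You have simply made explicit the two layers of coproduct universal property and the naturality check that the paper leaves implicit.
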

\begin{proof}
By universality of the coproducts.
\end{proof}

\begin{convention}\label{conv:use_q_tilde_instead_of_q}
The natural transformation of Proposition \ref{prop:puotienting_functor_subfunctor} may be composed with the quotient tome $\varphi_Q$ of $\Upgamma$ to give a natural transformation $\varphi_Q:\mathfrak{q} \Rightarrow \mathcal{C}^{\mathbf{2}}\rotatebox[origin=c]{-90}{$\multimap$}\Upsilon(f)$.
Because this arrow lives in the functor category $\mathbf{Set}^D$, it may be factorised into an epimonomorphism followed by a monomorphism as follows (this is an image factorisation).
\begin{equation}\label{eq:image_factorisation_quotient_of_q}
\xymatrix{
\mathfrak{q} \ar@{=>}[r]^-{\textit{epi.}}& \tilde{\mathfrak{q}}\ar@{=>}[r]^-{\tilde{\varphi}_Q}& \mathcal{C}^{\mathbf{2}}\rotatebox[origin=c]{-90}{$\multimap$}\Upsilon(f)
}
\end{equation}

For every object $d \in D$, the image $\tilde{\mathfrak{q}}(d)$ will be thought of as the set $\mathfrak{q}(d)$, but quotiented by the obvious binary relation. In any case, the elements of $\tilde{\mathfrak{q}}(d)$ and $\mathfrak{q}(d)$ will be denoted as tuples $(\vartheta,t,s,\upsilon,\mathbf{s})$ where $t$ is an arrow of the form $\upchi(\vartheta) \to d$; $s$ is an element in $\uplambda_{\updelta(\vartheta)}(X)$; $\upsilon$ is an element in $E_{s}(\updelta(\vartheta))$ and $\mathbf{s}$ is an element in $\mathfrak{q}_{\theta}^s\{\upsilon\} \hookrightarrow \Upgamma_Q(f)\{\theta\}_s$.
\end{convention}

\begin{remark}[In preparation for Theorem \ref{th:admissible_quotiented_effective}]\label{remark:quotient_are_always_admissible}
Let $f:X \to Y$ be a morphism in $\mathcal{B}$ as above. For every object $d\in D$, denote by $D(J_Q,d)$ the following sum of sets, which is defined with respect to the structure of $f$ provided by the constructor $\Upgamma$.
\[
\sum_{\vartheta \in J_Q}D(\upchi(\vartheta),d)
\]

The definition of $\Upgamma$-quotient for $f:X \to Y$ implies that any function of the form
$h:D(J_Q,d) \to D(J_Q,d')$ that maps a pair $(\vartheta,t)$ in $D(J_Q,d)$ to a pair $(\vartheta',t')$ in $D(J_Q,d')$ so that the equality $\updelta(\vartheta) = \updelta(\vartheta')$ is satisfied
lifts to a function $h:\mathfrak{q}(d) \to \mathfrak{q}(d')$ mapping any tuple
$(\vartheta,t,s,\upsilon,\mathbf{s})$ in $\mathfrak{q}(d)$ to the tuple $(h(\vartheta,t),s,\upsilon,\mathbf{s})$ in $\mathfrak{q}(d')$.
\end{remark}

\begin{example}[In preparation for Theorem \ref{th:admissible_quotiented_effective}]\label{example:quotient_are_always_admissible_h_c_s}
In the case of a constructor $\Upgamma^K$ associated with a system of $R$-premodels $(K,T,\mathcal{P},\mathtt{V})$ over a small category $D$ in a category $\mathcal{C}$, the disjoint sum
$D(J_Q,d)$ associated with a morphism $(f,a):(X,S,e) \Rightarrow (Y,S',e')$ in $\mathcal{P}$ contains two types of tuples, which are of the form $(c_0,v,z,t)$ and $(c_0,v,n,\underline{c},\underline{s},\underline{t},z,t)$ with respect to the same notations given in Remark \ref{rem:encoding_of_analytic_functors}. For every $c \in K$ and $s \in S(c)$, if one takes $r$ to be $\mathtt{rou}(c)$ and $d_0$ to be $\mathtt{ou}(c)$, then it is possible to define a function $h_{c,s}:D(J_Q,r) \to D(J_Q,d_0)$ with the following mapping rules, where $\underline{c}c$ stands for $(c_1,\dots,c_n,c)$, $\underline{s}s$ stands for $(s_0,\dots,s_{n-1},s)$ and $\underline{t}t$ stands for $(t_0,\dots,t_{n-1},t)$.
\[
\begin{array}{lcccr}
h_{c,s}:&D(J_Q,r) &\to& D(J_Q,d_0)\\
&(c_0,v,z,t) &\mapsto & (c_0,v,1,c,s,t,z,\mathrm{id}_{d_0})\\
&(c_0,v,n,\underline{c},\underline{s},\underline{t},z,t) &\mapsto & (c_0,v,n+1,\underline{c}c,\underline{s}s,\underline{t}t,z,\mathrm{id}_{d_0})
\end{array}
\]

Because the following equations hold, it follows from Remark \ref{remark:quotient_are_always_admissible} that the function $h_{c,s}:D(J_Q,r) \to D(J_Q,d_0)$ extends to a function $\mathfrak{q}(r) \to \mathfrak{q}(d_0)$.
\[
\begin{array}{c}
\updelta(c_0,v,z,t) = (c_0,v) = \updelta(c_0,v,1,c,s,t,z,\mathrm{id}_{d_0})\\
\updelta(c_0,v,n,\underline{c},\underline{s},\underline{t},z,t) = (c_0,v) = \updelta(c_0,v,n+1,\underline{c}c,\underline{s}s,\underline{t}t,z,\mathrm{id}_{d_0})
\end{array}
\]

In fact, the function $h_{c,s}:\mathfrak{q}(r) \to \mathfrak{q}(d_0)$ also restricts to a function $h_{c,s}:\tilde{\mathfrak{q}}(r) \to \tilde{\mathfrak{q}}(d_0)$. To see this, take two tuples
$\mathbf{x}_*:=(\vartheta_*,t_*,s'_*,\upsilon_*,\mathbf{s}_*)$ and $\mathbf{x}_{\dagger}:=(\vartheta_{\dagger},t_{\dagger},s'_{\dagger},\upsilon_{\dagger},\mathbf{s}_{\dagger})$ in $\mathfrak{q}(r)$ that are equivalent in $\tilde{\mathfrak{q}}(r)$, that is to say that have the same image under $\varphi_Q$ (see below, according to Formula (\ref{eq:quotient_tome_formula})).
\[
\Upsilon_{t_*}(f) \circ \ell_{\vartheta_*}(\mathbf{s}_*) = \Upsilon_{t_{\dagger}}(f) \circ \ell_{\vartheta_{\dagger}}(\mathbf{s}_{\dagger})
\]

It follows that their images via $h_{c,s}:\tilde{\mathfrak{q}}(r) \to \tilde{\mathfrak{q}}(d_0)$ are also equivalent in $\tilde{\mathfrak{q}}(d_0)$. This comes from the fact that the previous equation gives rise to the following one, after some obvious compositional operations on it (see the definitions for $l_{c,s,t_*}$ and $l_{c,s,t_{\dagger}}$ in Section \ref{sssec:Constructor_system_premodels}).
\[
\Upsilon_{\mathrm{id}}(f) \circ l_{c,s,t_*} \circ \ell_{\vartheta_*}(\mathbf{s}_*) = \Upsilon_{\mathrm{id}}(f) \circ l_{c,s,t_{\dagger}} \circ \ell_{\vartheta_{\dagger}}(\mathbf{s}_{\dagger})
\]

However, this last equation also amounts to saying that the images of $h_{c,s}(\mathbf{x}_*)$ and $h_{c,s}(\mathbf{x}_{\dagger})$ via $\varphi_Q$ are the same, and thus shows that $h_{c,s}$ restricts to a function $\tilde{\mathfrak{q}}(r) \to \tilde{\mathfrak{q}}(d_0)$.
\end{example}

\begin{definition}[Quotiented arrows]\label{def:quotiented_arrow}
From now on, we shall speak of a \emph{$\Upgamma$-quotiented arrow} in $\mathcal{B}$ to refer to any arrow $f:X \to Y$ in $\mathcal{B}$ that is equipped with a $\Upgamma$-quotient $\mathfrak{q}$ for $f$.
\end{definition}
A $\Upgamma$-quotiented arrow as defined above will be denoted either as a pair $(f,\mathfrak{q})$ or as a paired arrow $(f,\mathfrak{q}):X \to Y$. A \emph{morphism of $\Upgamma$-quotiented arrows}, denoted as an arrow $(f,\mathfrak{q}) \Rightarrow (g,\mathfrak{p})$, will be understood as a morphism $f \Rightarrow g$ in $\mathcal{B}^{\mathbf{2}}$. The category of $\Upgamma$-quotiented arrows and their morphisms will be denoted by $\Upgamma\mathcal{B}^{\mathbf{2}}$.

\subsection{Merolytic Functors and Their Tomes}
Let $\Upgamma$ denote a constructor of type $D[\mathcal{B},\mathcal{C}]$ as defined in Section \ref{sssec:Constructor} where $\mathcal{C}$ has coproducts.
For every $\Upgamma$-quotiented arrow $(f,\mathfrak{q}):X \to Y$, define the \emph{merolytic functor of $(f,\mathfrak{q})$} as the coproduct of functors given below.
\[
\Upgamma^{f,\mathfrak{q}}(d) := \Upgamma_A(f)(d) + \tilde{\mathfrak{q}}(d)
\]

Then, define the \emph{merolytic tome} of $(f,\mathfrak{q}):X \to Y$ as the coproduct $\varphi^{\mathfrak{q}}:\Upgamma^{f,\mathfrak{q}}(d) \to \mathcal{C}^{\mathbf{2}}/\Upsilon_d(f)$ of the following cospan whose right leg is given by the rightmost arrow of Equation (\ref{eq:image_factorisation_quotient_of_q}).
\[
\xymatrix{
\Upgamma_A(f)(d)\ar[rr]^-{\varphi_A}&& \mathcal{C}^{\mathbf{2}}/\Upsilon_d(f)&&\tilde{\mathfrak{q}}(d)\ar[ll]_-{\tilde{\varphi}_Q}
}
\]
\begin{proposition}\label{prop:merolytic_tome_functorial_D}
For every $(f,\mathfrak{q}) \in \Upgamma\mathcal{B}^{\mathbf{2}}$, the merolytic tome
$\varphi^{\mathfrak{q}}:\Upgamma^{f,\mathfrak{q}}(d) \to \mathcal{C}^{\mathbf{2}}/\Upsilon_d(f)$
is natural in the variable $d \in D$.
This amounts to saying that the mapping rule $d \mapsto (\Upsilon_{d}(f),\Upgamma^{f,\mathfrak{q}}(d),\varphi^{\mathfrak{q}})$ induces a functor $\mathbb{T}^{f,\mathfrak{q}}:D \to \mathbf{Tome}(\mathcal{C})$.
\end{proposition}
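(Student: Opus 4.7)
The plan is to reduce the statement to the two previous functoriality propositions (Propositions \ref{prop:analytic_tome_functorial} and \ref{prop:quotient_tome_functorial}) via the universal property of coproducts, since, by its very definition, the merolytic tome is the copairing of the analytic tome $\varphi_A$ with the restricted quotient tome $\tilde{\varphi}_Q$ along a common spine $\Upsilon_d(f)$.

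The first step I would carry out is to verify that $\tilde{\mathfrak{q}}$ and $\tilde{\varphi}_Q$ are themselves natural in $d \in D$. The key observation is that, as stated in Convention \ref{conv:use_q_tilde_instead_of_q}, the epi-mono factorisation $\mathfrak{q} \twoheadrightarrow \tilde{\mathfrak{q}} \hookrightarrow \mathcal{C}^{\mathbf{2}}\rotatebox[origin=c]{-90}{$\multimap$}\Upsilon(f)$ is taken inside the functor category $\mathbf{Set}^D$. The source $\mathfrak{q}$ is a functor $D \to \mathbf{Set}$ by construction (through the contravariant-like factor $D(\upchi(\vartheta),d)$ in its defining sum, exactly as for $\Upgamma_Q(f)$), and the target $\mathcal{C}^{\mathbf{2}}\rotatebox[origin=c]{-90}{$\multimap$}\Upsilon(f)$ is a functor $D \to \mathbf{Set}$ via the functoriality of $\Upsilon_{-}(f)$ and Proposition \ref{prop:quotient_tome_functorial}. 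Image factorisations in $\mathbf{Set}^D$ are computed pointwise; hence $\tilde{\mathfrak{q}}$ inherits a canonical functorial structure in $d$ and both legs of the factorisation automatically constitute natural transformations.

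The second step is to assemble the coproduct. For every morphism $t:d \to d'$ in $D$, one has induced maps $\Upgamma_A(f)(t):\Upgamma_A(f)(d) \to \Upgamma_A(f)(d')$ and $\tilde{\mathfrak{q}}(t):\tilde{\mathfrak{q}}(d) \to \tilde{\mathfrak{q}}(d')$, whose coproduct is precisely $\Upgamma^{f,\mathfrak{q}}(t)$. Likewise, $\Upsilon_t(f)$ yields a morphism in $\mathcal{C}^{\mathbf{2}}$ through which the overcategory $\mathcal{C}^{\mathbf{2}}/\Upsilon_d(f)$ maps into $\mathcal{C}^{\mathbf{2}}/\Upsilon_{d'}(f)$. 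By the universal property of the coproduct $\Upgamma^{f,\mathfrak{q}}(d) = \Upgamma_A(f)(d) + \tilde{\mathfrak{q}}(d)$, the naturality square for $\varphi^{\mathfrak{q}}$ with respect to $t$ commutes if and only if it commutes after precomposition with each of the two canonical coproduct injections. The first restriction gives exactly the naturality square for $\varphi_A$, which commutes by Proposition \ref{prop:analytic_tome_functorial}; the second gives the naturality square for $\tilde{\varphi}_Q$, which commutes by the first step.

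Finally, the preservation of identities and composition for the assignment $d \mapsto (\Upsilon_d(f),\Upgamma^{f,\mathfrak{q}}(d),\varphi^{\mathfrak{q}})$ is immediate from the fact that coproducts in $\mathbf{Set}$ preserve identities and composites, together with the functoriality of $\Upsilon_{-}(f)$ used to move between the overcategories $\mathcal{C}^{\mathbf{2}}/\Upsilon_d(f)$. The only genuinely non-formal point in the whole argument is the verification that $\tilde{\mathfrak{q}}$ is functorial in $d$; everything else is a routine invocation of the universal property of coproducts and of Propositions \ref{prop:analytic_tome_functorial} and \ref{prop:quotient_tome_functorial}.
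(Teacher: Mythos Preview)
Your proposal is correct and follows essentially the same approach as the paper: the paper's proof is a one-line reference to Propositions \ref{prop:analytic_tome_functorial}, \ref{prop:quotient_tome_functorial} and \ref{prop:puotienting_functor_subfunctor}, and you have simply unpacked that reference, with the third proposition entering implicitly through your appeal to Convention \ref{conv:use_q_tilde_instead_of_q} (which itself invokes Proposition \ref{prop:puotienting_functor_subfunctor} to ensure the composite $\mathfrak{q} \Rightarrow \mathcal{C}^{\mathbf{2}}\rotatebox[origin=c]{-90}{$\multimap$}\Upsilon(f)$ is a morphism in $\mathbf{Set}^D$ before factorising it there).
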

\begin{proof}
Follows from Propositions \ref{prop:analytic_tome_functorial},  \ref{prop:quotient_tome_functorial} and \ref{prop:puotienting_functor_subfunctor}.
\end{proof}

\begin{proposition}\label{prop:merolytic_tome_functorial}
For every $d \in D$, the mapping $(f,\mathfrak{q}) \mapsto (\Upsilon_{d}(f),\Upgamma^{f,\mathfrak{q}}(d),\varphi^{\mathfrak{q}})$ induces a functor $\mathbb{T}_d^{\mathrm{me}}:\Upgamma\mathcal{B}^{\mathbf{2}} \to \mathbf{LTom}(\mathcal{C})$.
\end{proposition}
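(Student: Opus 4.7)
The plan is to unwind definitions and observe that a loose morphism of tomes requires only a morphism of the underlying arrows in $\mathcal{C}^{\mathbf{2}}$, so that the construction reduces to applying the functor $\Upsilon_d(\_):\mathcal{B}\to\mathcal{C}$ to the given morphism in $\mathcal{B}^{\mathbf{2}}$.

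First, I would unpack what a morphism $(f,\mathfrak{q})\Rightarrow(g,\mathfrak{p})$ in $\Upgamma\mathcal{B}^{\mathbf{2}}$ consists of. By Definition \ref{def:quotiented_arrow}, such a morphism is nothing more than a morphism $(x,y):f\Rightarrow g$ in $\mathcal{B}^{\mathbf{2}}$, i.e.\ a commutative square in $\mathcal{B}$ whose vertical arrows are $f$ and $g$. In particular, no compatibility with the chosen $\Upgamma$-quotients $\mathfrak{q}$ and $\mathfrak{p}$ is required. Simultaneously, according to Section \ref{ssec:Morphisms_of_tomes}, a loose morphism of tomes $(\Upsilon_d(f),\Upgamma^{f,\mathfrak{q}}(d),\varphi^{\mathfrak{q}})\mathop{\Rightarrow}\limits^{\!\!\star}(\Upsilon_d(g),\Upgamma^{g,\mathfrak{p}}(d),\varphi^{\mathfrak{p}})$ is exactly an arrow $\Upsilon_d(f)\Rightarrow\Upsilon_d(g)$ in $\mathcal{C}^{\mathbf{2}}$, with no constraint on the page functors $\varphi^{\mathfrak{q}}$ and $\varphi^{\mathfrak{p}}$.

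Second, I would fix $d\in D$ and apply the second-variable functor $\Upsilon_d(\_):\mathcal{B}\to\mathcal{C}$ induced by $\Upsilon:D\times\mathcal{B}\to\mathcal{C}$. Functoriality of $\Upsilon_d$ sends any commutative square $(x,y):f\Rightarrow g$ in $\mathcal{B}^{\mathbf{2}}$ to a commutative square $(\Upsilon_d(x),\Upsilon_d(y)):\Upsilon_d(f)\Rightarrow\Upsilon_d(g)$ in $\mathcal{C}^{\mathbf{2}}$. I would take this square to be the value of $\mathbb{T}_d^{\mathrm{me}}$ on the given morphism of $\Upgamma$-quotiented arrows. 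By the previous paragraph, this is a legitimate loose morphism of tomes from $(\Upsilon_d(f),\Upgamma^{f,\mathfrak{q}}(d),\varphi^{\mathfrak{q}})$ to $(\Upsilon_d(g),\Upgamma^{g,\mathfrak{p}}(d),\varphi^{\mathfrak{p}})$.

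Third, I would verify that this assignment is functorial. Identities in $\Upgamma\mathcal{B}^{\mathbf{2}}$ correspond to identity squares $(\mathrm{id},\mathrm{id}):f\Rightarrow f$ in $\mathcal{B}^{\mathbf{2}}$, and their images under $\Upsilon_d$ are identity squares on $\Upsilon_d(f)$, which are precisely the identity loose morphisms of tomes in $\mathbf{LTom}(\mathcal{C})$. Similarly, composition of two morphisms in $\Upgamma\mathcal{B}^{\mathbf{2}}$ is just vertical composition of squares in $\mathcal{B}^{\mathbf{2}}$, which $\Upsilon_d$ preserves. Since the entire construction factors through the functoriality of $\Upsilon_d$, there is no real obstacle: the main thing to note is that loose morphisms intentionally forget the page data, so no naturality condition relating $\varphi^{\mathfrak{q}}$ and $\varphi^{\mathfrak{p}}$ through $\mathtt{S}_0 \to \mathtt{S}_1$ needs to be checked (that would be the content of a \emph{regular} morphism of tomes, not a loose one). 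The statement therefore follows immediately from the functoriality of $\Upsilon_d(\_):\mathcal{B}\to\mathcal{C}$.
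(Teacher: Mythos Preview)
Your proposal is correct and follows essentially the same route as the paper's own proof: assign to $(x,y):(f,\mathfrak{q})\Rightarrow(g,\mathfrak{p})$ the arrow $(\Upsilon_d(x),\Upsilon_d(y)):\Upsilon_d(f)\Rightarrow\Upsilon_d(g)$ in $\mathcal{C}^{\mathbf{2}}$, and observe that functoriality comes for free from that of $\Upsilon_d$. The paper's version is simply a terser statement of exactly this argument.
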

\begin{proof}
According to the definition of Section \ref{ssec:Morphisms_of_tomes}, it is sufficient to assign any arrow $(\eta_0,\eta_1):(f,\mathfrak{q}) \Rightarrow (f',\mathfrak{q}')$ in $\Upgamma\mathcal{B}^{\mathbf{2}}$ to the arrow $(\Upsilon_d(\eta_0),\Upsilon_d(\eta_1)):\Upsilon_d(f) \Rightarrow \Upsilon_d(f')$ in $\mathcal{C}^{\mathbf{2}}$. This mapping is functorial by functoriality of $\Upsilon_d:\mathcal{B} \to \mathcal{C}$.
\end{proof}

Because the tome $\mathbb{T}^{f,\mathfrak{q}}(d)$ is functorial in $d \in D$, so is its content $\mathrm{col}\partial \varphi^{\mathfrak{q}} \Rightarrow \Upsilon_{d}(f)$ (see Section \ref{sssec:Tomes}). In other words, the content gives us a commutative diagram in $\mathcal{C}^D$ as follows.
\[
\xymatrix{
*+!R(.5){\mathrm{col} \mathbf{A}}\ar@{=>}[r]^-{\mathrm{col}u}\ar@{=>}[d]_{\mathrm{col}\partial\varphi^{\mathfrak{q}}}&\Upsilon(X)
\ar@{=>}[d]^{\Upsilon(f)}\\
*+!R(.5){\mathrm{col} \mathbf{B}}\ar@{=>}[r]_-{\mathrm{col}v}&\Upsilon(Y)
}
\]

The previous diagram will be referred to as the \emph{functorial content of $\mathbb{T}^{f,\mathfrak{q}}$}.

\subsection{Effectiveness of Quotiented Arrows}\label{sssec:Combinatorial_constructors}
The goal of this section is to introduce what logicians could see as a concept of definability. The concept of effectiveness will allow us to designate those arrows that can be equipped with well-defined pushout factorisations in the category associated with a constructor. We prepare the notion of effectiveness by introducing the (almost-trivial) concept of realisability.
Let $\Upgamma$ denote a constructor of type $D[\mathcal{B},\mathcal{C}]$ as defined in Section \ref{sssec:Constructor} where $\mathcal{C}$ has coproducts.
A $\Upgamma$-quotiented arrow $(f,\mathfrak{q}):X \to Y$ in $\mathcal{B}$ will be said to be \emph{$\Upgamma$-realised} if one may form a componentwise pushout square inside the functorial content of its merolytic tome as shown below. 
\begin{equation}\label{eq:combinatorial_constructors_pushout_def}
\xymatrix{
*+!R(.5){\mathrm{col} \mathbf{A}}\ar@{}[rd]|->{\text{\huge{\rotatebox[origin=c]{-90}{$\llcorner$}}}}\ar@{=>}[r]^-{\mathrm{col}u}\ar@{=>}[d]_{\mathrm{col}\partial\varphi^{\mathfrak{q}}}&\Upsilon(X)\ar@/^1pc/@{=>}[rd]^{\Upsilon(f)}\ar@{=>}[d]^{p^{\mathfrak{q}}_f}&\\
*+!R(.5){\mathrm{col} \mathbf{B}}\ar@/_1.8pc/@{=>}[rr]_-{\mathrm{col}v}\ar@{=>}[r]_{\pi^{\mathfrak{q}}_f}&[f,\mathfrak{q}]\ar@{=>}[r]^{h^{\mathfrak{q}}_{f}}&\Upsilon(Y)
}
\end{equation}

The functor $d \mapsto [f,\mathfrak{q}](d)$ will then be called the \emph{$\Upgamma$-realisation of $(f,\mathfrak{q})$} while the pair of arrows $(p_f^{\mathfrak{q}},h_f^{\mathfrak{q}})$ will be referred to as the \emph{$\Upgamma$-prefactorisation} of $(f,\mathfrak{q})$. 

\begin{definition}[Effectiveness]\label{def:effectiveness_quotiented_models}
Let $\Upgamma$ denote a constructor of type $D[\mathcal{B},\mathcal{C}]$ as defined in Section \ref{sssec:Constructor}.
A $\Upgamma$-quotiented arrow $(f,\mathfrak{q}):X \to Y$ in $\mathcal{B}$ will be said to be \emph{effective} if it is $\Upgamma$-realised and its $\Upgamma$-prefactorisation in $\mathcal{C}^D$ lifts to a factorisation of $f:X \to Y$ in $\mathcal{B}$, as shown in Equation 
(\ref{eq:lifting_definition_combinatorial_constructor}), such that the arrow $\uplambda_{\theta}(\{f\}_{\mathfrak{q}}):\uplambda_{\theta}(X)  \to \uplambda_{\theta}([f/\mathfrak{q}])$ is an identity for every $\theta \in I$.
\begin{equation}\label{eq:lifting_definition_combinatorial_constructor}
\xymatrix{
X\ar@/^1.8pc/[rr]^-{f}\ar[r]_-{\{ f\}_{\mathfrak{q}}}&[f/\mathfrak{q}]\ar[r]_-{\lfloor f \rfloor_{\mathfrak{q}}}&Y
}
\quad\quad\mathop{\longmapsto}\limits^{\Upsilon}\quad\quad
\xymatrix{
\Upsilon(X)\ar@/^1.8pc/@{=>}[rr]^-{\Upsilon(f)}\ar@{=>}[r]_-{p^{\mathfrak{q}}_f}&[f,\mathfrak{q}]\ar@{=>}[r]_-{h^{\mathfrak{q}}_f}&\Upsilon(Y)
}
\end{equation}
\end{definition}

The leftmost factorisation of Equation (\ref{eq:lifting_definition_combinatorial_constructor}) will be called the \emph{$\Upgamma$-factorisation of $(f,\mathfrak{q})$}.

\begin{remark}\label{rem:trivial_effectiveness_functor_categories}
Let $S_0$ be a given set and $\Upgamma^K$ be the constructor of a system of $R$-premodels $(K,T,\mathcal{P},\mathtt{V})$ over a small category $D$ in a category $\mathcal{C}$ where every object $(X,S,e)$ in $\mathcal{P}$ is such that $S$ is equal to $S_{0}$ and $e$ is made of identities only. In this case, the underlying functor $\Upsilon:\mathcal{P} \to \mathcal{C}^D$ is fully faithful and it follows that if $\mathcal{C}$ has pushouts, then every $\Upgamma^K$-quotiented arrow in $\mathcal{P}$ is effective. This means that the theorem given below becomes trivial, which explains why the set $\widetilde{J}(d)$ mentioned in Remark \ref{rem:J_A_could_have_been_I} may be set empty since it is not really needed anywhere else in the paper except for Theorem \ref{th:admissible_quotiented_effective} (and Theorem \ref{th:admissible_quotiented_effective_model}, which is a copy of it). See Example \ref{exa:functor_category_fibered} in the case where $\widetilde{J}(d)$ is defined as in Section \ref{sssec:Constructor_system_premodels}. 
\end{remark}

\begin{theorem}\label{th:admissible_quotiented_effective}
Let $(K,\mathtt{rou},\mathcal{P},\mathtt{V})$ be a system of $R$-premodels over a small category $D$ in a category $\mathcal{C}$. If $\mathcal{C}$ has pushouts and the inclusion $\mathcal{P} \hookrightarrow \mathbf{Pr}_{\mathcal{C}}(K,\mathtt{rou},R)$ is an identity, then every $\Upgamma^K$-quotiented arrow in $\mathcal{P}$ is effective. 
\end{theorem}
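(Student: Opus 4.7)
The plan is to construct the factorisation $(X,S,e) \to [f/\mathfrak{q}] \to (Y,S',e')$ in $\mathcal{P}$ in three stages: build the underlying functor $D \to \mathcal{C}$ by a componentwise pushout, equip it with the missing premodel data $(\tilde{S},\tilde{e})$, and check that the canonical legs lift to morphisms of premodels. Since $\mathcal{C}$ has pushouts, so does $\mathcal{C}^D$ (Proposition \ref{prop:co_complete_category_then_functor_category_too}), so applying the componentwise pushout to the functorial content of $\mathbb{T}^{f,\mathfrak{q}}:D \to \mathbf{Tome}(\mathcal{C})$ (Proposition \ref{prop:merolytic_tome_functorial_D}) produces the $\Upgamma^K$-realisation $[f,\mathfrak{q}]:D \to \mathcal{C}$ together with its $\Upgamma^K$-prefactorisation $(p^{\mathfrak{q}}_f, h^{\mathfrak{q}}_f)$.

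To enforce that $\uplambda_{(c_0,v)}(\{f\}_{\mathfrak{q}})$ is an identity on $S(c_0)$ for every $\theta=(c_0,v) \in I$, we take $\tilde{S} := S$. The core of the argument is then to define, for every $c \in K$ and $s \in S(c)$, a structure map
\[
\tilde{e}_{c,s}:[f,\mathfrak{q}](\mathtt{rou}(c)) \to R[f,\mathfrak{q}](\mathtt{ou}(c))
\]
using the universal property of the pushout defining $[f,\mathfrak{q}](\mathtt{rou}(c))$. By adjointness $L \dashv R$ and cocontinuity of $L$, this amounts to producing a map out of the pushout $L[f,\mathfrak{q}](\mathtt{rou}(c))$. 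On the $X(\mathtt{rou}(c))$-leg we use $R(p^{\mathfrak{q}}_f(\mathtt{ou}(c))) \circ e_{c,s}$, i.e.\ the transport of the pre-existing structure map of $X$ along the canonical arrow $p^{\mathfrak{q}}_f$. On the $\mathrm{col}\mathbf{B}(\mathtt{rou}(c))$-leg we exploit the very reason the set $\widetilde{J}(\mathtt{rou}(c))$ was included in $J_A$ in Section \ref{sssec:Constructor_system_premodels}: prepending $(c,s,\mathrm{id})$ to any tuple $(n,\underline{c},\underline{s},\underline{t}) \in \widetilde{J}(\mathtt{rou}(c))$ produces a tuple in $\widetilde{J}(\mathtt{ou}(c))$ whose associated analytic link is the previous one precomposed by the functor $l_{c,s,\mathrm{id}}$, which records precisely the map $e_{c,s}$ through the adjunction. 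The same recipe applies to the quotient-side tuples coming from $\widetilde{J}(\mathtt{in}(c_0)(z))$, so every summand of $\mathbf{B}$ at $\mathtt{rou}(c)$ has a canonical image among the summands of $\mathbf{B}$ at $\mathtt{ou}(c)$.

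Compatibility on $\mathrm{col}\mathbf{A}(\mathtt{rou}(c))$ (needed for the universal property) is forced by the analytic section axiom $\upmu \circ \textsc{\j} = \upalpha^{\circ}$ of Section \ref{sssec:Constructor}, since the two candidate composites are the $n=0$ and $n=1$ instances of the same iterated link formula. Descent from the raw sum $\mathfrak{q}$ to the image-factored $\tilde{\mathfrak{q}}$ of Convention \ref{conv:use_q_tilde_instead_of_q} is exactly the content of Example \ref{example:quotient_are_always_admissible_h_c_s}: the function $h_{c,s}:\tilde{\mathfrak{q}}(\mathtt{rou}(c)) \to \tilde{\mathfrak{q}}(\mathtt{ou}(c))$ constructed there is precisely what is needed for $\tilde{e}_{c,s}$ to be well defined. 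Naturality of $\tilde{e}_{c,s}$ in $c \in K$ follows from the functoriality of $\mathbb{T}^{f,\mathfrak{q}}$, and the two legs $\{f\}_{\mathfrak{q}}$ and $\lfloor f \rfloor_{\mathfrak{q}}$ become morphisms in $\mathcal{P}$ (the former by construction, the latter by one more application of the universal property of the pushout applied to the coherence built into $(f,a)$).

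The main obstacle is purely bookkeeping: verifying that the inductive precomposition by the link functors $l_{c_i,s_i,t_i}$ embedded in the analytic link is compatible with the quotient species in a way that survives the image factorisation of Diagram (\ref{eq:image_factorisation_quotient_of_q}), so that $\tilde{e}_{c,s}$ really factors through the reduced pushout rather than only through the unquotiented coproduct $\mathfrak{q}$. Once this is in hand, the lifting of the prefactorisation to $\mathcal{P}$ is immediate and $(f,\mathfrak{q})$ is effective in the sense of Definition \ref{def:effectiveness_quotiented_models}.
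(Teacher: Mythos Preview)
Your proposal is correct and follows essentially the same approach as the paper. The paper packages your ``prepending $(c,s,\mathrm{id})$'' operation as an explicit reindexing functor $\upzeta_{c,s}:\Upgamma^{f,\mathfrak{q}}(\mathtt{rou}(c)) \to \Upgamma^{f,\mathfrak{q}}(\mathtt{ou}(c))$ and your use of the adjunction as a functor $\upxi_{c,s}$ on slice categories, then verifies the square $\upxi_{c,s}\circ\varphi^{\mathfrak{q}}_{r} = \varphi^{\mathfrak{q}}_{d_0}\circ\upzeta_{c,s}$ by the same four-case analysis you sketch; the structure map $e^{\mathfrak{q}}_{c,s}$ then drops out of the pushout universal property exactly as you describe. (One small slip: ``naturality of $\tilde{e}_{c,s}$ in $c\in K$'' is vacuous since $K$ is discrete, and is not needed.)
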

\begin{proof}
For convenience, the symbol $\Upgamma^K$ will be shortened to $\Upgamma$.
Since $\mathcal{C}$ has pushouts, every $\Upgamma$-quotiented arrow is $\Upgamma$-realised by definition. Let $(f,a,\mathfrak{q}):(X,S,e) \Rightarrow (Y,S',e')$ be an $\Upgamma$-quotiented arrow in $\mathcal{B}$. We are going to prove that the $\Upgamma$-realisation of $(f,\mathfrak{q})$ has an $R$-premodel structure of the form $([f,\mathfrak{q}],S,e^{\mathfrak{q}})$ and that this structure lifts the $\upgamma$-prefactorisation of $f:X \Rightarrow Y$ in $\mathcal{C}^D$ to another one in $\mathcal{P}$.
In this respect, fix $c \in K$ and $s \in S(c)$ and denote $\mathtt{rou}(c)$ and $\mathtt{ou}(c)$ by $r$ and $d_0$, respectively. For simplicity, we will denote by $e_{c,s}(f)$ the obvious morphism $f(r) \Rightarrow Rf(d_0)$ in $\mathcal{C}^{\mathbf{2}}$ whose components are given by the pair of arrows $e_{c,s}$ and $e_{c,a_c(s)}'$ in $\mathcal{C}$.

To prove the statement, we first need to define two functors. The first one is of the form 
$\upzeta_{c,s}:\Upgamma^{f,\mathfrak{q}}(r) \to \Upgamma^{f,\mathfrak{q}}(d_0)$ and is induced by the following mappings, where $\underline{c}c$ stands for $(c_1,\dots,c_n,c)$, $\underline{s}s$ stands for $(s_0,\dots,s_{n-1},s)$ and $\underline{t}t$ stands for $(t_0,\dots,t_{n-1},t)$.
\[
\begin{array}{lcccr}
\upzeta_{c,s}:&\Upgamma^{f,\mathfrak{q}}(r) &\to& \Upgamma^{f,\mathfrak{q}}(d_0)&\\
&(c_0,v,t,s',\mathbf{c}) &\mapsto & (c_0,v,1,c,s,t,\mathrm{id}_{d_0},s',\mathbf{c})&\text{on $\Upgamma_A(r)$}\\ 
&(c_0,v,z,t,s',\upsilon,\mathbf{s}) &\mapsto & (c_0,v,1,c,s,t,z,\mathrm{id}_{d_0},s',\upsilon,\mathbf{s})&\text{on $\tilde{\mathfrak{q}}(r)$}\\
&(c_0,v,n,\underline{c},\underline{s},\underline{t},t,s',\mathbf{c}) &\mapsto & (c_0,v,n+1,\underline{c}c,\underline{s}s,\underline{t}t,\mathrm{id}_{d_0},s',\mathbf{c})&\text{on $\Upgamma_A(r)$}\\ 
&(c_0,v,n,\underline{c},\underline{s},\underline{t},z,t,s',\upsilon,\mathbf{s}) &\mapsto & (c_0,v,n+1,\underline{c}c,\underline{s}s,\underline{t}t,z,\mathrm{id}_{d_0},s',\upsilon,\mathbf{s})&\text{on $\tilde{\mathfrak{q}}(r)$}
\end{array}
\]

Note that the mappings on $\tilde{\mathfrak{q}}(r)$ have already been given in Example \ref{example:quotient_are_always_admissible_h_c_s}.
The second functor is of the form $\upxi_{c,s}:\mathcal{C}^{\mathbf{2}}/\Upsilon_{r}(f) \to \mathcal{C}^{\mathbf{2}}/\Upsilon_{d_0}(f)$ and maps any arrow $x:\delta \Rightarrow f(r)$ to the map $\varepsilon \circ L(e_{c,s}(f) \circ x):L\delta \Rightarrow f(d_0)$, where $\varepsilon$ denotes the unit of the adjunction $L \vdash R$.

We are now going to show that the following diagram commutes.
\begin{equation}\label{eq:quotiented_arrows_are_effective}
\xymatrix{
\Upgamma^{f,\mathfrak{q}}(r)\ar[r]^{\upzeta_{c,s}}\ar[d]_{\varphi^{\mathfrak{q}}_{r}}&\ar[d]^{\varphi^{\mathfrak{q}}_{d_0}}\Upgamma^{f,\mathfrak{q}}(d_0)\\
\mathcal{C}^{\mathbf{2}}/\Upsilon_{r}(f) \ar[r]^-{\upxi_{c,s}}& \mathcal{C}^{\mathbf{2}}/\Upsilon_{d_0}(f)
}
\end{equation}

On the set $\Upgamma_A(r)$, the calculation on a tuple $\underline{x} = (c_0,v,t,s',\mathbf{c})$ goes as follows.
\begin{align*}
\upxi_{c,s}\varphi^{\mathfrak{q}}_{r}(\underline{x})
& = \varepsilon \circ L(e_{c,s}(f) \circ\Upsilon_{t}(f) \circ \mathbf{c}^{\circ}) &\text{(Formula (\ref{eq:analytic_tome_formula}))}\\
& = l_{(c,s,t)}(\mathbf{c}^{\circ}) &\text{(reformulation)}\\
& =\varphi^{\mathfrak{q}}_{d_0}\upzeta_{c,s} (\underline{x}) &\text{(definition of $\upzeta_{c,s}$)}
\end{align*}

On the set $\tilde{\mathfrak{q}}(r)$, the calculation for $\underline{x} = (c_0,v,z,t,\upsilon,s',\mathbf{s})$ goes as follows.
\begin{align*}
\upxi_{c,s}\varphi^{\mathfrak{q}}_{r}(\underline{x})
& = \varepsilon \circ L(e_{c,s}(f) \circ\Upsilon_{t}(f) \circ \ell_{\vartheta}(\mathbf{s})) &\text{(Formula (\ref{eq:analytic_tome_formula}))}\\
& = \varepsilon \circ L(e_{c,s}(f) \circ\Upsilon_{t}(f) \circ l_z(\mathbf{s})) &\text{(definition of the link)}\\
& = l_{(c,s,t)} \circ l_z(\mathbf{s}) &\text{(reformulation)}\\
& =\varphi^{\mathfrak{q}}_{d_0}\upzeta_{c,s} (\underline{x}) &\text{(definition of $\upzeta_{c,s}$)}
\end{align*}

On the set $\Upgamma_A(r)$, the calculation on a tuple $\underline{x} = (c_0,v,n,\underline{c},\underline{s},\underline{t},t,s',\mathbf{c})$ goes as follows.
\begin{align*}
\upxi_{c,s}\varphi^{\mathfrak{q}}_{r}(\underline{x})
& = \varepsilon \circ L(e_{c,s}(f) \circ\Upsilon_{t}(f) \circ \ell_{\vartheta}(\mathbf{c}^{\circ})) &\text{(Formula (\ref{eq:analytic_tome_formula}))}\\
& = \varepsilon \circ L(e_{c,s}(f) \circ\Upsilon_{t}(f) \circ l_{(c_{n},s_{n},t_{n})}\dots \circ l_{(c_1,s_1,t_1)}(\mathbf{c}^{\circ})) &\text{(definition of the link)}\\
& = l_{(c,s,t)} \circ l_{(c_{n},s_{n},t_{n})}\dots \circ l_{(c_1,s_1,t_1)}(\mathbf{c}^{\circ}) &\text{(reformulation)}\\
& =\varphi^{\mathfrak{q}}_{d_0}\upzeta_{c,s} (\underline{x}) &\text{(definition of $\upzeta_{c,s}$)}
\end{align*}

On the set $\tilde{\mathfrak{q}}(r)$, the calculation for $\underline{x} = (c_0,v,n,\underline{c},\underline{s},\underline{t},z,t,\upsilon,s',\mathbf{s})$ goes as follows.
\begin{align*}
\upxi_{c,s}\varphi^{\mathfrak{q}}_{r}(\underline{x})
& = \varepsilon \circ L(e_{c,s}(f) \circ\Upsilon_{t}(f) \circ \ell_{\vartheta}(\mathbf{s})) &\text{(Formula (\ref{eq:analytic_tome_formula}))}\\
& = \varepsilon \circ L(e_{c,s}(f) \circ\Upsilon_{t}(f) \circ l_{(c_{n},s_{n},t_{n})}\dots \circ l_{(c_1,s_1,t_1)}\circ l_z(\mathbf{s})) &\text{(definition of the link)}\\
& = l_{(c,s,t)} \circ l_{(c_{n},s_{n},t_{n})}\dots \circ l_{(c_1,s_1,t_1)}\circ l_z(\mathbf{s}) &\text{(reformulation)}\\
& =\varphi^{\mathfrak{q}}_{d_0}\upzeta_{c,s} (\underline{x}) &\text{(definition of $\upzeta_{c,s}$)}
\end{align*}

Now, the equation $\upxi_{c,s}\varphi^{\mathfrak{q}}_{r} = \varphi^{\mathfrak{q}}_{d_0}\upzeta_{c,s}$ tells us that the content of the tome $\mathbb{T}^{f,\mathfrak{q}}(d_0)$ along $\upzeta_{c,s}$ is equal to the content of $\mathbb{T}^{f,\mathfrak{q}}(r)$ after applying the functor $\upxi_{c,s}$ on it. More specifically, the equation means that the respective composites of Equations (\ref{eq:first_sequence_arrows_effectiveness}) and (\ref{eq:second_sequence_arrows_effectiveness}) are equal.
\begin{equation}\label{eq:first_sequence_arrows_effectiveness}
\xymatrix{
\mathrm{col}_{\upzeta_{c,s}}\partial\varphi^{\mathfrak{q}}_{r} \ar@{=>}[r]^-{\textit{shift.}}& \mathrm{col}\partial\varphi^{\mathfrak{q}}_{d_0} \ar@{=>}[r]^-{\varphi^{\mathfrak{q}}_{d_0}}& f(d_0)
}
\end{equation}
\begin{equation}\label{eq:second_sequence_arrows_effectiveness}
\xymatrix{
\mathrm{col}_{}L\partial\varphi^{\mathfrak{q}}_{r} \ar@{=>}[r]^{\cong}& L\mathrm{col}\partial\varphi^{\mathfrak{q}}_{r} \ar@{=>}[r]^-{L(\varphi^{\mathfrak{q}}_{r})}& Lf(r) \ar@{=>}[r]^-{Le_{c,s}(f)}& LRf(d_0) \ar@{=>}[r]^-{\varepsilon}& f(d_0)
}
\end{equation}

If one denotes by $\eta$ the unit of the adjunction $L \vdash R$, the definition of adjunction implies that the function $R(\_) \circ \eta$ is inverse of $\varepsilon \circ L(\_)$.
Since the content $\mathrm{col}\partial\varphi^{\mathfrak{q}}_{d_0} \Rightarrow f(d_0)$ appearing in Equation (\ref{eq:first_sequence_arrows_effectiveness}) may be factorised as in Diagram (\ref{eq:combinatorial_constructors_pushout_def}) on $d_0$, an application of the inverse function of $\varepsilon \circ L(\_)$ on the arrow represented by Equations (\ref{eq:first_sequence_arrows_effectiveness}) and (\ref{eq:second_sequence_arrows_effectiveness}) provides the following commutative diagram, where Equation  (\ref{eq:first_sequence_arrows_effectiveness}) provides the inside while Equation  (\ref{eq:second_sequence_arrows_effectiveness}) provides the outside.
\[
\xymatrix{
\mathrm{col} \mathbf{A}_{r}\ar@{=>}[r]^{\mathrm{col}u_{r}}\ar@{=>}[dd]_{\mathrm{col}\partial\varphi^{\mathfrak{q}}_{r}}&X(r)\ar@{=>}[r]^{e_{c,s}}&RX(d_0)\ar@{=>}[dd]^{Rf(d_0)}\ar@{=>}[ld]_{Rp^{\mathfrak{q}}_f(d_0)}\\
&R[f,\mathfrak{q}](d_0)\ar@{=>}[dr]^{Rh^{\mathfrak{q}}_f(d_0)}&\\
\mathrm{col}\mathbf{B}_{r}\ar@{=>}[r]_{\mathrm{col}v_{r}}\ar@{=>}[ru]|{}&Y(r)\ar@{=>}[r]_{e'_{c,a_c(s)}}&RY(d_0)
}
\]

Now, because the top left corner of the previous diagram corresponds to the top left corner of the commutative square defining the $\Upgamma$-realisation of $(f,a,\mathfrak{q})$ when evaluated at ${r}$, it follows that there exists a natural transformation $e^{\mathfrak{q}}_{c,s}:[f,\mathfrak{q}](r) \Rightarrow R[f,\mathfrak{q}](d_0)$ making the following diagram commute.
\[
\xymatrix{
\mathrm{col} \mathbf{A}_{r}\ar@{}[rd]|->>{\text{\huge{\rotatebox[origin=c]{-90}{$\llcorner$}}}}\ar@{=>}[rr]^{\mathrm{col}u_{r}}\ar@{=>}[dd]_{\mathrm{col}\partial\varphi^{\mathfrak{q}}_{r}}&&X(r)\ar@{=>}[r]^-{e_{c,s}}\ar@{=>}[ld]_{p^{\mathfrak{q}}_f(r)}&RX(d_0)\ar@{=>}[dd]\ar@{=>}[ld]_{Rp^{\mathfrak{q}}_f(d_0)}\\
&[f,\mathfrak{q}](r)\ar@{=>}[r]^-{e^{\mathfrak{q}}_{c,s}}&R[f,\mathfrak{q}](d_0)\ar@{=>}[dr]^{Rh^{\mathfrak{q}}_f(d_0)}&\\
\mathrm{col}\mathbf{B}_{r}\ar@{=>}[rr]_{\mathrm{col}v_{r}}\ar@{=>}[ru]^{\pi^{\mathfrak{q}}_f(r)}\ar@{=>}[rru]_{}&&Y(r)\ar@{=>}[r]_{e'_{c,a_c(s)}}&RY(d_0)
}
\]

The previous diagram provides a morphism $(p^{\mathfrak{q}}_f,\mathrm{id}):(X,S,e) \Rightarrow ([f,\mathfrak{q}],S,e^{\mathfrak{q}})$ in the category of $R$-premodels $\mathbf{Pr}_{\mathcal{C}}(K,\mathtt{rou},R)$. The universality of $[f,\mathfrak{q}]$ also provides a morphism 
$(h^{\mathfrak{q}}_f,a):([f,\mathfrak{q}],S,e^{\mathfrak{q}}) \Rightarrow (Y,S'e')$ in $\mathbf{Pr}_{\mathcal{C}}(K,\mathtt{rou},R)$.
These two morphisms obviously define a factorisation of the morphism $(f,a):(X,S,e) \Rightarrow (Y,S',e')$ in $\mathbf{Pr}_{\mathcal{C}}(K,\mathtt{rou},R)$.
Finally, since the second component of the morphism $(p^{\mathfrak{q}}_f,\mathrm{id})$ is the identity on $S$, its image via the functor $\uplambda_{\theta}$ is an identity for every $\theta \in I$ (see Section \ref{sssec:Constructor_system_premodels}). In other words, the arrow $\uplambda_{\theta}(X,S,e) \to \uplambda_{\theta}([f,\mathfrak{q}],S,e^{\mathfrak{q}})$ mentioned in Definition \ref{def:effectiveness_quotiented_models} is indeed an identity.
\end{proof}

\begin{definition}[Fibered]
A system of $R$-premodels $(K,\mathtt{rou},\mathcal{P},\mathtt{V})$ over a small category $D$ in a category $\mathcal{C}$ will be said to be \emph{fibered}
if the category $\mathcal{C}$ has pushouts and the $\Upgamma$-factorisation of any $\Upgamma$-quotiented arrow (obtained in Theorem \ref{th:admissible_quotiented_effective}) lifts to $\mathcal{P}$.
\end{definition}

\begin{example}
By Theorem \ref{th:admissible_quotiented_effective}, any system of $R$-premodels $(K,\mathtt{rou},\mathcal{P},\mathtt{V})$ where $\mathcal{C}$ has pushouts and $\mathcal{P}$ is identified with the category $\mathbf{Pr}_{\mathcal{C}}(K,T,R)$ is fibered.
\end{example}

\begin{example}\label{exa:functor_category_fibered}
In the proof of Theorem \ref{th:admissible_quotiented_effective}, note that if the objects $(X,S,e)$ and $(Y,S',e')$ are such that the associated arrows
$e_{c,s}$ and $e_{c,a_c(s)}'$ are identities, then so is $e^{\mathfrak{q}}_{c,s}$. This implies that any system of $R$-premodels $(K,\mathtt{rou},\mathcal{P},\mathtt{V})$ where $\mathcal{C}$ has pushouts and $\mathcal{P}$ may be identified with the functor category $\mathcal{C}^D$ is fibered (e.g., Examples \ref{ex:Models_for_a_sketch_system}--\ref{ex:Complete_Segal_spaces})
\end{example}

\begin{example}\label{ex:spectra_fibered}
In the proof of Theorem \ref{th:admissible_quotiented_effective}, note that if the objects $(X,S,e)$ and $(Y,S',e')$ are such that the images of $S$ and $S'$ are equal to $\mathbf{1}$, then so is the $\Upgamma^K$-realisation $([f,\mathfrak{q}],S,e^{\mathfrak{q}})$. This implies that the system of $\Omega$-premodels given in Example \ref{ex:system_of_premodels_spectra} is fibered.
\end{example}

\begin{remark}
A system of $R$-premodels $(K,\mathtt{rou},\mathcal{P},\mathtt{V})$ is not always fibered (e.g., Example \ref{ex:system_of_premodels_Localisation_of_rings}), which is often due to a too strong restriction of the premodels via the inclusion $\mathcal{P} \hookrightarrow \mathbf{Pr}_{\mathcal{C}}(K,\mathtt{rou},R)$. However, Theorem \ref{th:admissible_quotiented_effective} shows that if $\mathcal{P}$ is too strong, we might want to stay in $\mathbf{Pr}_{\mathcal{C}}(K,\mathtt{rou},R)$ to process most of our calculations. The idea would then be that it is possible to go back to $\mathcal{P}$ at the very end of a transfinite calculation.
\end{remark}

\begin{example}\label{exa:elimination_of_quotients_sketches} This example discusses the form that the $\Upgamma$-realisation takes when considering categories of models for a limit sketch.
Let $(D,Q)$ be a limit sketch seen as a croquis. Consider the system of premodels defined in Example \ref{ex:Models_for_a_sketch_system} for the category $\mathbf{Set}^{D}$. Recall that the vertebrae  associated with any cone $c \in Q$ were of the following form.
\[
v_0:=\left(
\begin{array}{l}
\xymatrix{
\emptyset\ar@{}[rd]|>>>{\text{\huge{\rotatebox[origin=c]{-90}{$\llcorner$}}}} \ar[r]^{!}\ar[d]_{!} &\mathbf{1} \ar[d]^-{\delta_1}&\\
\mathbf{1} \ar[r]_-{\delta_2} &*+!L(.7){\mathbf{1}+\mathbf{1}} \ar[r]^-{!} &  \mathbf{1},\\
}
\end{array}
\right)
\quad\quad\quad\quad
v_1:=\left(
\begin{array}{l}
\xymatrix{
*+!R(.7){\mathbf{1}+\mathbf{1}}\ar@{}[rd]|>>>{\text{\huge{\rotatebox[origin=c]{-90}{$\llcorner$}}}} \ar[r]^{!}\ar[d]_{!} &\mathbf{1} \ar@{=}[d]&\\
\mathbf{1} \ar@{=}[r] &\mathbf{1} \ar@{=}[r] &  \mathbf{1}\\
}
\end{array}
\right)
\]

It follows from the definition of the transitive analysor and quotientor that, for any $\Upgamma^K$-quotiented arrow $(f,\mathfrak{q}):X \to \mathbf{1}$, the $\Upgamma^K$-realisation of $(f,\mathfrak{q})$ evaluated at an object $d \in D$ is defined over the following types of span.
\[
\underbrace{
\xymatrix{
\emptyset\ar[r]\ar[d]&X(d)\\
\mathbf{1}&
}}_{\text{from }\Upgamma_A^K(f)(d)\text{ restricted to }v_0}
\quad\quad\quad
\underbrace{
\xymatrix{
\mathbf{1}+\mathbf{1}\ar[d]\ar[r]^{x,y}&X(d)\\
\mathbf{1}&
}}_{\text{from }\tilde{\mathfrak{q}}(d)\text{ and }\Upgamma_A^K(f)(d)\text{ restricted to }v_1}
\]

The contribution of the left span to the construction of the $\Upgamma^K$-realisation $[f,\mathfrak{q}](d)$ is to add an element to $X(d)$ while the contribution of the right span to the construction of the $\Upgamma^K$-realisation $[f,\mathfrak{q}](d)$ is to quotient a pair of elements in $X(d)$. After unravelling the indices that parametrise the two types of span, we may deduce that the colimit $[f,\mathfrak{q}](d)$ is of the following form, where $\Upgamma_{A,0}^{K}(f)(d)$ and $\Upgamma_{A,1}^{K}(f)(d)$ are the restrictions of $\Upgamma_{A}^{K}(f)(d)$ to the vertebrae $v = v_0$ and $v = v_1$, respectively.
\[
[f/\mathfrak{q}](d) =  X(d)/\Big(\Upgamma_{A,1}^{K}(f)(d)+\tilde{\mathfrak{q}}(d)\Big) + \Upgamma_{A,0}^{K}(f)(d)
\]

After further unravelling the parameterisation of the rightmost summand, we may show that the colimit $[f/\mathfrak{q}](d)$ may be expressed as follows, where $R$ is a binary relation on $X$ in $\mathbf{Set}^D$.
\begin{equation}\label{eq:first_expression_elimination_of_quotients}
[f/\mathfrak{q}](d) =  X(d)/R(d) + \sum_{\vartheta_A \in J_A\text{ for }v_0} D(\upepsilon(\vartheta_A),d) \times \mathcal{C}^{\mathbf{2} \times \mathbf{2}}(\mathbf{disk}(v_0),\Phi_{\upiota(\vartheta_A)}(f))
\end{equation}

Concretely, the set $\mathcal{C}^{\mathbf{2} \times \mathbf{2}}(\mathbf{disk}(v_0),\Phi_{\upiota(\vartheta_A)}(f))$ is nothing but the set $X[c_0] :=\mathrm{lim}X\mathtt{in}(c_0)$ with respect to the notations of $\vartheta_A$ given in Remark \ref{rem:form_of_elt_I_J} while the object $\upepsilon(\vartheta_A)$ is given by $\mathtt{ou}(c_n)$ for the same notations.

Recall that, according to Remarks \ref{rem:J_A_could_have_been_I} and  \ref{rem:trivial_effectiveness_functor_categories}, the set $J_A$ could in fact be given by the set $I$ itself in the present situation (i.e. premodels for a sketch). In this case, the expression of Equation  (\ref{eq:first_expression_elimination_of_quotients}) turns out to be as follows.
\[
[f/\mathfrak{q}](d) =  X(d)/R(d) + \sum_{c \in K} D(\mathtt{ou}(c),d) \times X[c]
\]
\end{example}

\subsection{Rectification of Effective Quotiented Arrows}\label{ssec:Rectification}
Let $\Upgamma$ denote a constructor of type $D[\mathcal{B},\mathcal{C}]$ as defined in Section \ref{sssec:Combinatorial_constructors}, with the usual notations, and~$(f,\mathfrak{q}):X \to Y$ be an effective $\Upgamma$-quotiented arrow in $\mathcal{B}$. Usually, effectiveness does not
mean that the quotiented arrow is as we would like it to be. It is in fact necessary to rectify its defaults via a second quotient. The goal of this section is to define the `rectification' of $(f,\mathfrak{q})$, which is nothing but a $\Upgamma$-quotient $\mathfrak{u}$ for the arrow $\lfloor f \rfloor_{\mathfrak{q}}:[f/\mathfrak{q}] \to Y$.

To do so, let us define, for every element $\theta \in I$ and $s \in \uplambda_{\theta}(X)=\uplambda_{\theta}(\lfloor f \rfloor_{\mathfrak{q}})$, the associated functor of the following form.
\[
\mathfrak{u}_{\theta,s}\{\_\}:E_s(\theta) \to \mathbf{Set}
\]

First, define the discrete category $E_s(\theta)$ to be the set $\Upgamma_A(f)\{\theta\}_s$.
By definition, an element $\upsilon \in E_s(\theta)$ may be identified with an element $\mathbf{c} \in \mathcal{C}^{\mathbf{2} \times \mathbf{2}}(\upalpha(\theta),\Phi_{\theta}(f)_s)$, which may be sent to the arrow 
\begin{equation}\label{eq:construction_obstruction_image_merolytic}
\mathbf{c}^{\circ}:\upalpha^{\circ}(\theta)\Rightarrow \Upsilon_{\uprho(\theta)}(f)
\end{equation}
via the domain restriction $(\_)^{\circ}:\mathcal{C}^{\mathbf{2} \times \mathbf{2}} \to \mathcal{C}^{\mathbf{2}}$. The arrow encoded by $\mathbf{c}^{\circ}$ may be identified with an element in the image of the analytic tome of 
$\varphi_A:\Upgamma_A(f)(\uprho(\theta)) \to \mathcal{C}^{\mathbf{2}}/\Upsilon_{\uprho(\theta)}(f)$ as follows (see Formula (\ref{eq:analytic_tome_formula}) and the assumption of the initial section $\textsc{\j}:I \to J_A$).
\[
\Upsilon_{\mathrm{id}_{\uprho(\theta)}}(f) \circ \ell_{\textsc{\j}(\theta)}(\mathbf{c}^{\circ}):\upmu\textsc{\j}(\theta) \Rightarrow \Upsilon_{\uprho(\theta)}(f)
\]

This therefore defines a function $i:E_s(\theta) \to \Upgamma_A(f)(\uprho(\theta))$ mapping any element $\mathbf{c} \in E_s(\theta)$
 to the tuple $(\textsc{\j}(\theta),\mathrm{id}_{\uprho(\theta)},s,\mathbf{c})$ whose image via the merolytic tome $\varphi^{\mathfrak{q}}:\Upgamma^{f,\mathfrak{q}}(\uprho(\theta)) \to \mathcal{C}^{\mathbf{2}}/\Upsilon_{\uprho(\theta)}(f)$ is the arrow encoded by $\mathbf{c}^{\circ}$. 
 
This being said, denote by $r$ the element $\uprho(\theta)$ and, for every $\mathbf{c} \in E_s(\theta)$, denote by $i_{\mathbf{c}}$ the function $\mathbf{1} \to \Upgamma_A(f)(r)$ that picks out the element $i(\mathbf{c})$. From the point of view of these notations, we have showed that the image of the composite $\varphi^{\mathfrak{q}} \circ i_{\mathbf{c}}$ corresponds to the commutative square $\mathbf{c}^{\circ}$. However, this also means that the content of the merolytic tome of $(f,\mathfrak{q})$ along $i_{\mathbf{c}}:\mathbf{1} \to E_s(\theta)$ is equal to the commutative Square (\ref{eq:construction_obstruction_image_merolytic}) in $\mathcal{C}$ as illustrated below.
\[
\mathbf{c}^{\circ}:\mathrm{col}_{i_{\mathbf{c}}}\partial \varphi^{\mathfrak{q}}_r \Rightarrow \mathrm{col}\partial \varphi^{\mathfrak{q}}_r \Rightarrow \Upsilon_{r}(f)
\]

Because the left arrow $\mathrm{col}\partial \varphi^{\mathfrak{q}}_r \Rightarrow \Upsilon_{r}(f)$ (i.e., the content) may be factorised as shown in Diagram (\ref{eq:combinatorial_constructors_pushout_def}), it follows that the commutative square encoding $\mathbf{c}^{\circ}$ factorises as shown below, on the left.
\begin{equation}\label{eq:point_of_view_structure_strict_narratives_premodels}
\xymatrix{
\mathbb{S}\ar[rr]^x\ar[dd]_{\upalpha^{\circ}(\theta)}&&\Upsilon_r(X)\ar[d]|{p_f^{\mathfrak{q}}(r)}\ar@/^2pc/[dd]^{\Upsilon_r(f)}\\
&&[f,\mathfrak{q}](r)\ar[d]|{h_f^{\mathfrak{q}}(r)}\\
\mathbb{D}_2\ar[rru]^{\uppi_0}\ar[rr]_{x'}&&\Upsilon_r(Y)
}
\quad\quad\quad
\xymatrix{
\Upsilon_r(X)\ar[d]_{p_f^{\mathfrak{q}}(r)}\ar[rr]^-{\Phi_{\theta}(X)_s}&&\Phi_{\theta}^{\bullet}(X)_s\ar[d]^{\Phi_{\theta}^{\bullet}(\{f\}_{\mathfrak{q}})_s}\\
[f,\mathfrak{q}](r)\ar[d]_{h_f^{\mathfrak{q}}(r)}\ar[rr]^-{\Phi_{\theta}([f/\mathfrak{q}])_s}&&\Phi_{\theta}^{\bullet}([f/\mathfrak{q}])_s\ar[d]^{\Phi_{\theta}^{\bullet}(\lfloor f \rfloor_{\mathfrak{q}})_s}\\
\Upsilon_r(X)\ar[rr]^-{\Phi_{\theta}(Y)_s}&&\Phi_{\theta}^{\bullet}(Y)_s
}
\end{equation}

The diagram displayed above, on the right, is for its part the image of the $\Upgamma$-factorisation of $(f,\mathfrak{q})$ in $\mathcal{B}$ via the functor $\Phi_{\theta}:\mathcal{B} \to \mathcal{C}$. The definitions of the diagrams involved in Equation (\ref{eq:point_of_view_structure_strict_narratives_premodels}) imply that the commutative square $\mathbf{c} \in 
\mathcal{C}^{\mathbf{2} \times \mathbf{2}}(\upalpha(\theta),\Phi_{\theta}(f)_s)$ factorises as follows, where the image $\upalpha(\theta)$ is replaced with the diskad of a vertebra $\|\gamma_2,\gamma_1\|\cdot \beta$ for which $\beta = \upomega(\theta)$ by definition.

\begin{equation}\label{eq:factorisation_combi_cat_premodel_step_k-1_cuboid}
\xymatrix@R-0.5pc{
\mathbb{S}\ar[ddd]_{\gamma_2}\ar[rr]^x\ar[rd]_{\gamma_1}&&\Upsilon_r(X)\ar[rd]|{\Phi_{\theta}(X)_s}\ar[dd]|\hole&\\
&\mathbb{D}_1\ar[ddd]_<<<<<<<<<<<{\beta\delta_2}\ar[rr]^<<<<<<<<<<y&&\Phi_{\theta}^{\bullet}(X)_s\ar[dd]^{\Phi_{\theta}^{\bullet}(\{ f\}_{\mathfrak{q}})_s}\\
&&[f,\mathfrak{q}](r)\ar[d]\ar[rd]|{\Phi_{\theta}([f/\mathfrak{q}])_s}&\\
\mathbb{D}_2\ar[rru]|<<<<<<<<<<<<<\hole|>>>>>>>{\uppi_0}\ar[rd]_{\beta\delta_2}\ar[rr]|<<<<<<<<<<<<\hole^>>>>>>>>{x'}&&\Upsilon_r(Y)\ar[rd]|<<<<<<<<{\Phi_{\theta}(Y)_s}&\Phi_{\theta}^{\bullet}([f/\mathfrak{q}])_s\ar[d]^{\Phi_{\theta}^{\bullet}(\lfloor f\rfloor_{\mathfrak{q}})_s}\\
&\mathbb{D}'\ar[rr]^{y'}&&\Phi_{\theta}^{\bullet}(Y)_s\\
}
\end{equation}

Notice that the previous commutative cube provides the following left commutative square. 
\[
\xymatrix{
\mathbb{S}\ar[d]_{\gamma_2}\ar[rr]^-{\gamma_1}&&\mathbb{D}_1\ar[d]^{\Phi_{\theta}^{\bullet}(\{f\}_{\mathfrak{q}})_s \circ y}\\
\mathbb{D}_2\ar[rr]_-{\Phi_{\theta}([f/\mathfrak{q}])_s \circ \uppi_0}&&\Phi_{\theta}^{\bullet}([f/\mathfrak{q}])_s
}\quad\Rightarrow\quad
\xymatrix{
\mathbb{S}\ar@{}[rd]|->>{\text{\huge{\rotatebox[origin=c]{-90}{$\llcorner$}}}}\ar[d]_{\gamma_2}\ar[r]^{\gamma_1}&\mathbb{D}_1\ar[d]^{\delta_1}\ar@/^1pc/[rrd]^>>>>>>>{\Phi_{\theta}^{\bullet}(\{f\}_{\mathfrak{q}})_s \circ y}&&\\
\mathbb{D}_2\ar[r]^{\delta_2}\ar@/_1.5pc/[rrr]|{\Phi_{\theta}([f/\mathfrak{q}])_s \circ \uppi_0}&\mathbb{S}'
\ar@{-->}[rr]^-{w}&&\Phi_{\theta}^{\bullet}([f/\mathfrak{q}])_s
}
\]

By using the structure of the vertebra $\|\gamma_2,\gamma_1\| \cdot \beta$, we may form a pushout $\mathbb{S}'$ inside so that we obtain a canonical arrow $w:\mathbb{S}' \to  \Phi_{\theta}^{\bullet}([f/\mathfrak{q}])_s$ making the preceding right diagram commute.
It is not hard to deduce from the universality of this pushout that both arrows 
\[
\Phi_{\theta}^{\bullet}(\lfloor f\rfloor_{\mathfrak{q}})_s \circ w:\mathbb{S}' \to \Phi_{\theta}^{\bullet}(Y)_s
\quad\text{and}\quad
y' \circ \beta:\mathbb{S}' \to \Phi_{\theta}^{\bullet}(Y)_s
\] 
are solutions for a same universal problem over $\mathbb{S}'$ (Diagram (\ref{eq:factorisation_combi_cat_premodel_step_k-1_cuboid}) might come in handy to visualise this fact). In particular, this means that the following diagram must commute.

\begin{equation}\label{eq:obstruction_square_define_quotient}
\xymatrix@C-.5pc{
\mathbb{S}'\ar[d]_{\beta}\ar[r]^-{w}&*+!L(.5){\Phi_{\theta}^{\bullet}([f/\mathfrak{q}])_s}\ar[d]^{\Phi_{\theta}^{\bullet}(\lfloor f\rfloor_{\mathfrak{q}})_s} \\
\mathbb{D}'\ar[r]_-{y'}& *+!L(.5){\Phi_{\theta}^{\bullet}(Y)_s}
}
\end{equation}

Because $\beta$ corresponds to the image $\upomega(\theta)$, we have defined a functor $\mathfrak{u}_{\theta,s}\{\_\}:E_s(\theta) \to \mathbf{Set}$ mapping a commutative cube $\mathbf{c} \in E_s(\theta)$ to the subset of $\mathcal{C}^{\mathbf{2}}(\upomega(\theta),\Phi_{\theta}^{\bullet}(\lfloor f\rfloor_{\mathfrak{q}})_s)$ consisting of Diagram (\ref{eq:obstruction_square_define_quotient}) only. Thus, the images of $\mathfrak{u}_{\theta,s}\{\_\}$ are sets (or singletons) included in $\Upgamma_Q(\lfloor f\rfloor_{\mathfrak{q}})\{\theta\}_s$ so that the collection of functors given below, denoted by $\mathfrak{u}$, defines a $\Upgamma$-quotient for the arrow $\lfloor f\rfloor_{\mathfrak{q}}:[f/\mathfrak{q}] \to Y$.
\[
\mathfrak{u}:=\{\mathfrak{u}_{\theta,s}\{\_\}:E_s(\theta) \to \mathbf{Set}\}_{\theta \in I, s \in \uplambda_{\theta}(X)}
\]
\begin{definition}[Rectification]
The \emph{$\Upgamma$-rectification} of the $\Upgamma$-quotiented arrow $(f,\mathfrak{q}):X \to Y$ is the $\Upgamma$-quotiented arrow
$(\lfloor f\rfloor_{\mathfrak{q}},\mathfrak{u})$, which will sometimes be denoted by $\mathrm{Rec}(f,\mathfrak{q})$.
\end{definition}

Later on, the diagram obtained in Equation (\ref{eq:obstruction_square_define_quotient}), which is entirely determined by the image of the $\Upgamma$-rectification of $(f,\mathfrak{q})$ above a cube $\mathbf{c} \in E_s(\theta)$ at the parameters $\theta \in I$ and $s \in \uplambda_{\theta}([f/\mathfrak{q}])$, will be referred to as the \emph{obstruction square of $(f,\mathfrak{q})$ for $\mathbf{c}$ at $(\theta,s)$}.

\begin{definition}[Ideal]
A $\Upgamma$-quotiented arrow $(f,\mathfrak{q}):X \to Y$ will be said to be \emph{ideal} if it is effective, its $\Upgamma$-rectification $(\lfloor f\rfloor_{\mathfrak{q}},\mathfrak{u})$ is effective and for every $\theta \in I$, $s \in \uplambda_{\theta}([f/\mathfrak{q}])$ and $\mathbf{c} \in E_s(\theta)$, there exists an arrow $\uppi_1(\theta,s): \mathbb{D}' \to \Phi_{\theta}^{\bullet}([\lfloor f \rfloor_{\mathfrak{q}}/\mathfrak{u}])_s$ factorising the obstruction square of $(f,\mathfrak{q})$ for $\mathbf{c}$ at $(\theta,s)$ as follows.
\begin{equation}\label{eq:factorisation_combi_cat_premodel_rectifying_modifier_square}
\xymatrix{
\mathbb{S}'\ar[dd]_{\beta}\ar[rr]^-{w}&&*+!L(.5){\Phi_{\theta}^{\bullet}([f/\mathfrak{q}])_s}\ar[dd]^{\Phi_{\theta}^{\bullet}(\lfloor f\rfloor_{\mathfrak{q}})_s} \ar[dl]|-{\Phi_{\theta}^{\bullet}(\{ \lfloor f\rfloor_{\mathfrak{q}}\}_{\mathfrak{u}})_s}\\
&\Phi_{\theta}^{\bullet}([\lfloor f \rfloor_{\mathfrak{q}}/\mathfrak{u}])_s
\ar[rd]|-{\Phi_{\theta}^{\bullet}(\lfloor \lfloor f\rfloor_{\mathfrak{q}}\rfloor_{\mathfrak{u}})_s}&\\
\mathbb{D}'\ar[rr]_-{y'}\ar@{-->}[ru]|-{\uppi_1(\theta,s)}&& *+!L(.5){\Phi_{\theta}^{\bullet}(Y)_s}
}
\end{equation}
\end{definition}

\begin{remark}[Structure of narrative of degree 2]\label{rem:structure_narrative_degree_2}
Consider an ideal $\Upgamma$-quotiented arrow $(f,\mathfrak{q}):X \to Y$ and a commutative cube $\mathbf{c}$ in $\mathcal{C}^{\mathbf{2} \times \mathbf{2}}(\upalpha(\theta),\Phi_{\theta}(f)_s)$. According to the previous discussion, this cube $\mathbf{c}$ may be factorised as in Diagram (\ref{eq:factorisation_combi_cat_premodel_step_k-1_cuboid}). Merging this factorisation of $\mathbf{c}$ with: (1) the factorisation of the obstruction square of $(f,\mathfrak{q})$ for $\mathbf{c}$ at $(\theta,s)$ on its front face and (2) the $\Upgamma$-factorisation of the $\Upgamma$-rectification of $(f,\mathfrak{q})$ on its back face leads to the following factorisation of $\mathbf{c}$ (where the top front corner has been forgotten and $r = \uprho(\theta)$).
\[
\xymatrix@R-0.4pc@C-0.4pc{
&\mathbb{S}\ar[ddd]_{\gamma_2}\ar[rr]^x&&\Upsilon_r(X)\ar[rd]|{\Phi_{\theta}(X)_s}\ar[d]_{p^{\mathfrak{q}}_{f}(r)}&&\\
&&&[f,\mathfrak{q}](r)\ar[d]_{p^{\mathfrak{u}}_{\lfloor f \rfloor_{\mathfrak{q}}}(r)}\ar@{..>}[rd]&*+!L(.7){\Phi_{\theta}^{\bullet}(X)_s}\ar[d]&\ar@{}@<+3.8ex>[d]|{\leftarrow~\Phi_{\theta}(\{f\}_{\mathfrak{q}})_s }\\
&&&[\lfloor f\rfloor_{\mathfrak{q}},\mathfrak{u}](r)\ar[d]_{h^{\mathfrak{u}}_{\lfloor f \rfloor_{\mathfrak{q}}}(r)}\ar[rd]&*+!L(.7){\Phi_{\theta}^{\bullet}([f /\mathfrak{q}])_s}\ar[d]&\ar@{}@<+3.8ex>[d]|{\leftarrow~\Phi_{\theta}(\{\lfloor f\rfloor_{\mathfrak{q}}\}_{\mathfrak{u}})_s }\\
&\mathbb{D}_2\ar@/^1.5pc/[rruu]^{\uppi_0}\ar[rd]_{\beta \circ \delta_2}\ar[rr]^-{x'}&&\Upsilon_r(Y)\ar[rd]|<<<<<\hole&*+!L(.7){\Phi_{\theta}^{\bullet}([\lfloor f \rfloor_{\mathfrak{q}}/\mathfrak{u}])_s}\ar[d]&\ar@{}@<+3.8ex>[d]|{\leftarrow~\Phi_{\theta}(\lfloor \lfloor f\rfloor_{\mathfrak{q}}\rfloor_{\mathfrak{u}})_s}\\
&&\mathbb{D}'\ar[rr]|{y'}\ar[rru]|{\uppi_1(\theta,s)}&&*+!L(.7){\Phi_{\theta}^{\bullet}(Y)_s}&\\
}
\]

This means that the composite arrow given in Equation (\ref{eq:composite_structure_for_narrative_degree_2}), whose the leftmost arrow is given by the content of the operadic tome $\varrho_{\theta}^s:\Upgamma_A(f)\{\theta\}_s \to \mathcal{C}^{\mathbf{2}\times \mathbf{2}}/\Phi_{\theta}(f)_s$, admits a lift in $\mathcal{C}^{\mathbf{2}}$
\begin{equation}\label{eq:composite_structure_for_narrative_degree_2}
\xymatrix{
\mathrm{col}\partial \varrho_{\theta}^s \ar@{=>}[r]^{\varrho_{\theta}^s}&\Phi_{\theta}(f)_s\ar@{=>}[rr]^-{\Phi_{\theta}(\{ f \}_{\mathfrak{q}})_s}&&\Phi_{\theta}(\lfloor f \rfloor_{\mathfrak{q}})_s\ar@{=>}[rr]^-{\Phi_{\theta}(\{ \lfloor f \rfloor_{\mathfrak{q}} \}_{\mathfrak{u}})_s}&&
\Phi_{\theta}(\lfloor \lfloor f \rfloor_{\mathfrak{q}} \rfloor_{\mathfrak{u}})_s
}
\end{equation}

This last fact will later imply that we may construct a narrative of degree 2 out of the operadic tome.
\end{remark}

\begin{remark}[About $\uppi_0$]\label{rem:about_uppi_0}
This section discusses the encoding of the arrow that we have denoted $\uppi_0$. We shall use the same notations as that introduced at the beginning of the section. Recall that we defined the element $i_{\mathbf{c}}= (\textsc{\j}(\theta),\mathrm{id}_{\textsc{\j}(\theta)},s,\mathbf{c})$, which we used to shift the merolytic tome of $(f,\mathfrak{q})$ and obtain the leftmost diagram of Equation  (\ref{eq:point_of_view_structure_strict_narratives_premodels}). Therefore, we have the following formula if we use the notation of Diagram (\ref{eq:combinatorial_constructors_pushout_def}).
\[
\uppi_0 = \pi_f^{\mathfrak{q}}(\textsc{\j}(\theta)) \circ \xi_{i_{\mathbf{c}}}(\mathbf{B}_{\textsc{\j}(\theta)})
\]

If we now denote $i'_{\mathbf{c}}= (\textsc{\j}(\theta),t,s,\mathbf{c})$ for some arrow $t:\textsc{\j}(\theta) \to d$, the functionality of $\pi_f^{\mathfrak{q}}$ and the construction of the merolytic tome of $(f,\mathfrak{q})$ gives the following formula.
\[
[f,\mathfrak{q}](t) \circ \uppi_0 = \pi_f^{\mathfrak{q}}(d) \circ \xi_{i_{\mathbf{c}}'}(\mathbf{B}_{d})
\]

This formula will later come in handy in the proof of Theorem \ref{th:admissible_quotiented_factorisable_model}.
\end{remark}

\begin{theorem}\label{th:admissible_quotiented_ideal}
Let $(K,\mathtt{rou},\mathcal{P},\mathtt{V})$ be a system of $R$-premodels over a small category $D$ in a category $\mathcal{C}$. If $\mathcal{C}$ admits pushouts and the inclusion $\mathcal{P} \hookrightarrow \mathbf{Pr}_{\mathcal{C}}(K,\mathtt{rou},R)$ is an identity, then every $\Upgamma^K$-quotiented arrow is ideal.
\end{theorem}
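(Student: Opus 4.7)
The plan is to establish the three requirements of idealness in sequence. First, by applying Theorem \ref{th:admissible_quotiented_effective} to $(f,\mathfrak{q})$, we obtain effectiveness of $(f,\mathfrak{q})$ under our hypotheses. Since the $\Upgamma^K$-rectification $(\lfloor f\rfloor_{\mathfrak{q}},\mathfrak{u})$ is itself a $\Upgamma^K$-quotiented arrow in $\mathcal{P} = \mathbf{Pr}_{\mathcal{C}}(K,\mathtt{rou},R)$, a second application of the same theorem delivers effectiveness of the rectification. Hence the bulk of the work lies in producing the factorising arrow $\uppi_1(\theta,s)$ of Diagram (\ref{eq:factorisation_combi_cat_premodel_rectifying_modifier_square}) for every $\theta = (c_0,v) \in I$, $s \in \uplambda_{\theta}([f/\mathfrak{q}])$ and $\mathbf{c} \in E_s(\theta)$.

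To construct $\uppi_1(\theta,s)$, I would exploit the universal property of the limit underlying
\[
\Phi_{\theta}^{\bullet}([\lfloor f\rfloor_{\mathfrak{q}}/\mathfrak{u}])_s \;=\; \mathrm{lim}\,R[\lfloor f\rfloor_{\mathfrak{q}}/\mathfrak{u}]\mathtt{in}(c_0).
\]
For each object $z$ in $\mathtt{Es}(c_0)$, the quotient $\mathfrak{u}_{\theta,s}\{\mathbf{c}\}$ provides, by its very definition via Diagram (\ref{eq:obstruction_square_define_quotient}), a distinguished element $\mathbf{s} \in \mathcal{C}^{\mathbf{2}}(\upomega(\theta),\Phi_{\theta}^{\bullet}(\lfloor f\rfloor_{\mathfrak{q}})_s)$, which in turn produces a tuple
\[
(c_0, v, z, \mathrm{id}_{\mathtt{in}(c_0)(z)}, s, \mathbf{c}, \mathbf{s}) \;\in\; \tilde{\mathfrak{u}}(\mathtt{in}(c_0)(z))
\]
in the merolytic functor of $(\lfloor f\rfloor_{\mathfrak{q}},\mathfrak{u})$ at the object $\mathtt{in}(c_0)(z)$. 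Under the merolytic tome $\varphi^{\mathfrak{u}}$, this tuple is sent (via the quotient link $l_z$) to the $z$-component of the arrow encoded by the obstruction square, composed with the universal projection $\varsigma_z$. Because $[\lfloor f\rfloor_{\mathfrak{q}}/\mathfrak{u}](\mathtt{in}(c_0)(z))$ is obtained (at each $d \in D$) as a pushout along the content of this merolytic tome, the corresponding component of the pushout injection $\pi^{\mathfrak{u}}_{\lfloor f\rfloor_{\mathfrak{q}}}(\mathtt{in}(c_0)(z))$ produces an arrow $L\mathbb{D}' \to [\lfloor f\rfloor_{\mathfrak{q}}/\mathfrak{u}](\mathtt{in}(c_0)(z))$ which, by the pushout equations and adjointness $L \dashv R$, transposes to an arrow $\mathbb{D}' \to R[\lfloor f\rfloor_{\mathfrak{q}}/\mathfrak{u}]\mathtt{in}(c_0)(z)$ making the two relevant triangles of Diagram (\ref{eq:factorisation_combi_cat_premodel_rectifying_modifier_square}) commute at the level of the $z$-component.

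The main obstacle is to verify that, as $z$ ranges over the shape $\mathtt{Es}(c_0)$, these $z$-components are compatible with the morphisms of $\mathtt{in}(c_0)$, so that they assemble into a cone over $R[\lfloor f\rfloor_{\mathfrak{q}}/\mathfrak{u}]\mathtt{in}(c_0)$ and hence, by the universal property of the limit, into a single arrow $\uppi_1(\theta,s)$. This compatibility should follow from the functoriality of the merolytic tome in the variable $d \in D$ (Proposition \ref{prop:merolytic_tome_functorial_D}) applied to the morphisms $\mathtt{in}(c_0)(z) \to \mathtt{in}(c_0)(z')$, combined with the fact that the tuples above are indexed by identities $\mathrm{id}_{\mathtt{in}(c_0)(z)}$ and share all other coordinates. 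A careful chase analogous to the one carried out in Example \ref{example:quotient_are_always_admissible_h_c_s} — but following a morphism of the diagram rather than a cone leg — shows that the shift along $D(\upchi(\vartheta),\mathtt{in}(c_0)(z)) \to D(\upchi(\vartheta),\mathtt{in}(c_0)(z'))$ interacts correctly with the quotient tome. Once the cone is obtained, the two commutativities demanded by Diagram (\ref{eq:factorisation_combi_cat_premodel_rectifying_modifier_square}) are checked by unpacking the pushouts defining both realisations and applying the factorisation of the obstruction square constructed in Section \ref{ssec:Rectification}.
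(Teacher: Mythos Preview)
Your proposal is correct and follows essentially the same route as the paper: apply Theorem \ref{th:admissible_quotiented_effective} twice for effectiveness, then for each $z\in\mathtt{Es}(c_0)$ encode the $z$-component of the obstruction square as a tuple in $\tilde{\mathfrak{u}}(\mathtt{in}(c_0)(z))$, use the pushout defining $[\lfloor f\rfloor_{\mathfrak{q}}/\mathfrak{u}]$ to obtain a lift, check naturality in $z$, and assemble via the limit and the adjunction $L\dashv R$. The paper's naturality argument is phrased slightly differently---it observes directly that the tuples $(\theta_{z'},\mathrm{id}_{d(z')},\upsilon,s,\mathbf{s})$ and $(\theta_z,d(t),\upsilon,s,\mathbf{s})$ are identified in $\tilde{\mathfrak{u}}$ because they have the same image under $\varphi_Q$ (this is where the image factorisation of Convention \ref{conv:use_q_tilde_instead_of_q} is used)---but this is exactly the content of your ``shift interacts correctly with the quotient tome'' remark.
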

\begin{proof}
For convenience, the symbol $\Upgamma^K$ will be shortened to $\Upgamma$. The present proof uses the construction made in the proof of Theorem \ref{th:admissible_quotiented_effective}. In particular, we shall use the notations defined thereof, such as $h_f^{\mathfrak{q}}$ and $p_f^{\mathfrak{q}}$.
Let $(f,a,\mathfrak{q}):(X,S,e) \Rightarrow (Y,S',e')$ be an $\Upgamma$-quotiented arrow in $\mathcal{B}$. By Theorem \ref{th:admissible_quotiented_effective}, it is effective and so is its $\Upgamma$-rectification $(h_f^{\mathfrak{q}},a,\mathfrak{u}):([f,\mathfrak{q}],S,e^{\mathfrak{q}}) \Rightarrow (Y,S',e')$. There now remains to show the existence of an arrow 
\[
\uppi_1(\theta,s): \mathbb{D}' \to \Phi_{\theta}^{\bullet}([\lfloor f \rfloor_{\mathfrak{q}}/\mathfrak{u}])_s
\]
factorising the obstruction square of $(f,\mathfrak{q})$ for any cube $\mathbf{c} \in E_s(\theta)$ at any parameter $\theta \in I$ and $s \in \uplambda_{\theta}([f/\mathfrak{q}])$ (see Diagram (\ref{eq:factorisation_combi_cat_premodel_rectifying_modifier_square})).

First, recall that, for every $\theta \in I$, $s \in \uplambda_{\theta}([f/\mathfrak{q}])$ and cube $\mathbf{c} \in E_s(\theta)$, the obstruction square of $(f,\mathfrak{q})$ for $\mathbf{c}$ at $(\theta,s)$ is given by an arrow in $\mathcal{C}^{\mathbf{2}}$ of the following form.
\[
\mathbf{s}:\upomega(\theta) \Rightarrow \Phi_{\theta}^{\bullet}(\lfloor f\rfloor_{\mathfrak{q}})_s\quad\quad=\quad\quad\mathbf{s}:\upomega(\theta) \Rightarrow \mathrm{lim}_z Rh^{\mathfrak{q}}_f\mathtt{in}(c_0)(z)
\]

By using the notations of Section \ref{sssec:Constructor_system_premodels} and the adjointness properties of $R$ and $\mathrm{lim}_z$, the preceding righthand arrow may be turned into the following arrow in $\mathcal{C}^{\mathbf{2}}$ for every $z \in \mathtt{Es}(c_0)$.
\begin{equation}\label{eq:obstruction_square_image_tome}
\varsigma_z \circ \varepsilon \circ L(\mathbf{s}):L\upomega(\theta) \Rightarrow h^{\mathfrak{q}}_f\mathtt{in}(c_0)(z)
\end{equation}

Now, observe from the definitions of Section \ref{sssec:Constructor_system_premodels} that, for every $\theta=(c_0,v) \in I$ and $z \in \mathtt{Es}(c_0)$, we may define an object $\theta_z :=(\theta,z)$ in $J_Q$, which precisely lands in the component $I'$ of $J_Q$. From the notations of Section \ref{sssec:Constructor_system_premodels}, the arrow given in Equation (\ref{eq:obstruction_square_image_tome}) may in fact be rewritten as follows \footnote{We have the identities $\upnu(\theta_z) = L\upomega \updelta(\theta_z) = L\upomega(\theta)$ and $\upchi(\theta_z) = \mathtt{in}(c_0)(z)$}.
\[
\Upsilon_{\mathrm{id}_{\upchi(\theta_z)}}(\lfloor f\rfloor_{\mathfrak{q}}) \circ \ell_{\theta_z}(\mathbf{s}):\upnu(\theta_z) \Rightarrow \Upsilon_{\upchi(\theta_z)}(\lfloor f\rfloor_{\mathfrak{q}})
\]

It therefore follows from Formula (\ref{eq:quotient_tome_formula}) that the arrow given in Equation  (\ref{eq:obstruction_square_image_tome}) may be identified with the image of Tuple (\ref{eq:tuple_factorisation_of_obstruction_square}) (see below) via the quotient tome
$\varphi_Q:\mathfrak{u}(\upchi(\theta_z))  \to \mathcal{C}^{\mathbf{2}}/\Upsilon_{\upchi(\theta_z)}(\lfloor f\rfloor_{\mathfrak{q}})$.
\begin{equation}\label{eq:tuple_factorisation_of_obstruction_square}
(\theta_z,\mathrm{id}_{\upchi(\theta_z)},\upsilon,s,\mathbf{s}) \in \mathfrak{u}(\upchi(\theta_z))
\end{equation}

In order to avoid overloading the next diagrams, denote by $d:\mathtt{Es}(c_0) \to D$ the functorial mapping $z \mapsto \upchi(\theta_z)$ and, for every $\mathbf{s} \in \mathfrak{u}_{\theta,s}\{\upsilon\}$, denote by $i_{\mathbf{s},z}$ the function $\mathbf{1} \to \Upgamma^{\lfloor f\rfloor_{\mathfrak{q}},\mathfrak{u}}(d(z))$ that picks out Tuple (\ref{eq:tuple_factorisation_of_obstruction_square}) in $\tilde{\mathfrak{u}}(d(z))$ for every $z \in \mathtt{Es}(c_0)$. 
Now, to resume, the previous discussion showed that the image of the composite $\varphi^{\mathfrak{u}}_{d(z)} \circ i_{\mathbf{s},z}$ corresponds to the arrow $\varsigma_z \circ \varepsilon \circ L(\mathbf{s})$. However, this is equivalent to saying that the content of the merolytic tome of $(\lfloor f\rfloor_{\mathfrak{q}},\mathfrak{u})$ along $i_{\mathbf{s},z}:\mathbf{1} \to \Upgamma^{\lfloor f\rfloor_{\mathfrak{q}},\mathfrak{u}}d(z)$ is equal to the arrow $\varsigma_z \circ \varepsilon \circ L(\mathbf{s})$ as illustrated below.
\[
\xymatrix{
\mathrm{col}_{i_{\mathbf{s},z}}\partial \varphi^{\mathfrak{u}}_{d(z)} \ar@{=>}[r] \ar@{=>}@/_1.2pc/[rr]_{\varsigma_z \circ \varepsilon \circ L(\mathbf{s})}& \mathrm{col}\partial \varphi^{\mathfrak{u}}_{d(z)}  \ar@{=>}[r] & \Upsilon_{d(z)}(\lfloor f\rfloor_{\mathfrak{q}})
}
\]

Because the rightmost arrow $\mathrm{col}\partial \varphi^{\mathfrak{u}}_{d(z)} \Rightarrow \Upsilon_{d(z)}(\lfloor f\rfloor_{\mathfrak{q}})$ may be factorised as shown in Diagram (\ref{eq:combinatorial_constructors_pushout_def}), it follows that the commutative square encoded by $\varsigma_z \circ \varepsilon \circ L(\mathbf{s})$ factorises as follows.
\[
\xymatrix{
L\mathbb{S}'\ar[rr]^w\ar[dd]_{L\upomega(\theta)}&&[f/\mathfrak{q}]d(z)\ar[dl]|{p_{\lfloor f\rfloor_{\mathfrak{q}}}^{\mathfrak{u}}(d(z))}\ar[dd]^{h_f^{\mathfrak{q}}d(z)}\\
&[\lfloor f\rfloor_{\mathfrak{q}},\mathfrak{u}]d(z)\ar[dr]|{h_{\lfloor f\rfloor_{\mathfrak{q}}}^{\mathfrak{u}}(d(z))}&\\
L\mathbb{D}'\ar[ru]^{\pi_1(z)}\ar[rr]_{y'}&&Yd(z)
}
\]

The idea is now to obtain a factorisation of the form given in Equation  (\ref{eq:factorisation_combi_cat_premodel_rectifying_modifier_square}) by reconstructing the obstruction square $\mathbf{s}$ (from which the previous diagram is derived) without losing the factorisation.

First, note that, by definition of the quotient acting on $\tilde{\mathfrak{u}}$ (see Convention \ref{conv:use_q_tilde_instead_of_q}), the collection of arrows $\{i_{\mathbf{s},z}\}_{z \in \mathtt{Es}(c_0)}$ is natural in $z \in\mathtt{Es}(c_0)$ since the following tuples have the same images via the functor $\varphi_Q$ for every arrow $t:z \to z'$ in $\mathtt{Es}(c_0)$.
\[
(\theta_{z'},\mathrm{id}_{d(z')},\upsilon,s,\mathbf{s})\quad\quad\quad(\theta_z,d(t),\upsilon,s,\mathbf{s})
\]

The functoriality of Diagram (\ref{eq:combinatorial_constructors_pushout_def}) over $D$
and the naturality of $i_{\mathbf{s},z}:\mathbf{1} \to \Upgamma^{\lfloor f\rfloor_{\mathfrak{q}},\mathfrak{u}}(d(z))$ in $z \in \mathtt{Es}(c_0)$ then
implies that the earlier commutative diagram is natural over $z \in \mathtt{Es}(c_0)$. Forming the limit of that diagram over $\mathtt{Es}(c_0)$ and then applying the inverse of the function $\varepsilon \circ L(\_)$ (which is given by the function $R(\_) \circ \eta$ if $\eta$ denotes the unit of $L \vdash R$) provides a factorisation of the original obstruction square $\mathbf{s}$ as follows.
\[
\xymatrix{
\mathbb{S}'\ar[dd]_{\beta}\ar[rr]^-{w}&&\ar[dd]^{\Phi_{\theta}^{\bullet}(\lfloor f\rfloor_{\mathfrak{q}})_s} \Phi_{\theta}^{\bullet}([f/\mathfrak{q}])_s\ar[dl]|{\Phi_{\theta}^{\bullet}(\{ \lfloor f\rfloor_{\mathfrak{q}}\}_{\mathfrak{u}})}\\
&\Phi_{\theta}^{\bullet}([\lfloor f \rfloor_{\mathfrak{q}}/\mathfrak{u}])_s
\ar[rd]|{\Phi_{\theta}^{\bullet}(\lfloor \lfloor f\rfloor_{\mathfrak{q}}\rfloor_{\mathfrak{u}})_s}&\\
\mathbb{D}'\ar[rr]_-{y'}\ar[ru]|-{\mathrm{lim}R(\pi_1(z)) \circ \eta}&& \Phi_{\theta}^{\bullet}(Y)_s
}
\]

This finally shows that the $\Upgamma$-quotiented arrow $(f,a,\mathfrak{q}):(X,S,e) \Rightarrow (Y,S',e')$ is ideal.
\end{proof}

\begin{example}\label{exa:elimination_of_quotients_sketches_the_binary_relations}
This example continues the discussion started in Example \ref{exa:elimination_of_quotients_sketches} (we shall use the same notations as those used thereof) in order to describe, in more details, the binary relation $R(d)$ acting on $X(d)$ (see Formula (\ref{eq:first_expression_elimination_of_quotients})) in the case where $f$ is taken to be the canonical map $!_X:X \to \mathbf{1}$. Recall that the quotient $X(d)/R(d)$ was meant to simplify the following expression.
\[
X(d)/(\Upgamma_{A,1}^{K}(!_X)(d)+\tilde{\mathfrak{q}}(d))
\]

Also, recall that, by definition, the binary relations contained in $\Upgamma_{A,1}^{K}(!_X)(d)$ (see Remark \ref{rem:encoding_of_analytic_functors} for the encoding of $\Upgamma_{A}^{K}$) are those pairs $(x,y):\mathbf{1}+\mathbf{1} \to X(d)$ that may be related to commutative diagrams as follows.
\[
\xymatrix @C-1pc @R-1pc{
\mathbf{1}+\mathbf{1}\ar@/^1pc/[rrrr]^{(x,y)}\ar@{-->}[dd]\ar[rd]\ar[rr] &&X\mathtt{ou}(c)\ar[rr]\ar[rd]\ar@{-->}[dd]&&X(d)\ar@{-->}[dd]\\
&\mathbf{1}\ar@{-->}[dd]\ar[rr]&&\mathrm{lim}X\mathtt{in}(c)\ar@{-->}[dd]&\\
\mathbf{1}\ar@{-->}[rd]\ar@{-->}[rr]&&\mathbf{1}\ar@{-->}[rr]\ar@{-->}[rd]&&\mathbf{1}\\
&\mathbf{1}\ar@{-->}[rr]&&\mathbf{1}&\\
}
\]

\emph{Precisely:} The above diagram says that two elements $x,y \in X(d)$ will be identified if there exist a cone $c \in K$, a morphism $t:\mathtt{ou}(c) \to d$ and two elements $x'$ and $y'$ in $X\mathtt{ou}(c)$
such that the identities $X(t)(x') = x $ and $X(t)(y') = y$ hold and the elements $x'$ and $y'$ have the same image via the canonical map $X\mathtt{ou}(c) \to \mathrm{lim}X\mathtt{in}(c)$.

On the other hand, the binary relations contained in $\tilde{\mathfrak{q}}(d)$ were given as part of our assumptions. However, in the sequel, the idea will be to define $\tilde{\mathfrak{q}}(d)$ either as the empty binary relation or as we defined the set $\tilde{\mathfrak{u}}(d)$ in Section \ref{ssec:Rectification}. In the latter case, in order to make sense of $\tilde{\mathfrak{q}}(d)$, we need to suppose that the image $X(d)$ takes the form given below for some functor $Y:D \to \mathbf{Set}$ and binary relation $R':D \to \mathbf{Set}$.
\[
X(d):=Y(d)/R'(d)+\sum_{c \in K}D(\mathtt{ou}(c),d)\times Y[c]
\]

The quotient $Y(d)/R'(d)$, which will later be shortened as $Y'(d)$, is supposed to identify pairs of elements coming from a previous $\Upgamma^K$-quotient $\tilde{\mathfrak{p}}(d)$. In this case, the pairs contained in the relation $\tilde{\mathfrak{q}}(d) = \mathrm{Rec}(!_X,\tilde{\mathfrak{p}})$ are those pairs $(x,y):\mathbf{1}+\mathbf{1} \to X(d)$ that are the top parts of commutative diagrams of the form displayed below, where the leftmost commutative square is one of those obstruction squares constructed in Section \ref{ssec:Rectification}.
\[
\xymatrix{
\mathbf{1}+\mathbf{1}\ar[d]\ar[r]_-{(x',y')}\ar@/^1.5pc/[rrr]^-{(x,y)}&\mathrm{lim}_zX(\mathtt{in}(c)(z))\ar[d]\ar[r]_-{\varsigma_z}&X(\mathtt{in}(c)(z))\ar[d]\ar[r]&X(d)\ar[d]\\
\mathbf{1}\ar[r]&\mathbf{1}\ar[r]&\mathbf{1}\ar[r]&\mathbf{1}
}
\]

\emph{Precisely:} After unravelling the details of the construction of the corresponding obstruction square, the above diagram says that two elements 
\[
x,y \in Y'(d)+\sum_{c \in K}D(\mathtt{ou}(c),d)\times Y[c]
\] will be identified if there exist a cone $c \in K$, say encoded by a natural transformation $\rho:\Delta\mathtt{ou}(c) \Rightarrow \mathtt{in}(c)$, an element $z \in \mathtt{Es}(c)$, a morphism $t:\mathtt{in}(c)(z) \to d$ and two elements $x'$ and $y'$ living in $X(\mathtt{in}(c)(z))$ of the form
\[
\left\{
\begin{array}{lll}
x' &= (\rho_z,(x_z)_{z \in \mathtt{Es}(c)}) &\in D(\mathtt{ou}(c),\mathtt{in}(c)(z))\times Y[c]\\
y' &= x_z &\in  Y'(\mathtt{in}(c)(z))
\end{array}
\right.
\]
such that the following relations hold.
\[
x =(t \circ \rho_z,(x_z)_{z \in \mathtt{Es}(c)}) \in D(\mathtt{ou}(c),d)\times Y[c]
\quad\quad\text{ and }\quad\quad
y = Y'(t)(y') \in  Y'(d)
\]

We can clearly see that the role of two binary relations $\tilde{\mathfrak{q}}(d)$ and $\Upgamma_{A,1}^{K}(!_X)(d)$ is to turn the canonical arrow $X(\mathtt{ou}(c)) \to X[c]$ into a surjection and an injection, respectively.
\end{example}

\begin{example}[Comparison with Kelly's construction]
\label{exa:clarifies_description_colimits}
Let us compare the quotients acting on the pushout object $[!_X/\mathfrak{q}]$, as described in Examples \ref{exa:elimination_of_quotients_sketches_the_binary_relations} and \ref{exa:elimination_of_quotients_sketches} (where $!_X$ denotes the canonical arrow $X \to \mathbf{1}$), with those acting on the pushout object of Kelly's construction \cite{Kelly}.
Recall that, for each cone $c \in K$, the latter is given by a well-pointed endofunctor $\mathrm{id} \Rightarrow P_c$ in $\mathbf{Set}^D$. More specifically, if we take $c$ to be a cone of the usual the form $$\rho:\Delta_{\mathtt{Es}(c)}(\mathtt{ou}(c)) \Rightarrow \mathtt{in}(c)$$ in $D$, then for every functor $X:D \to \mathbf{Set}$, the object $P_c(X)$ can be computed in $\mathbf{Set}^D$ as the pushout object of the following span \cite[diag. (10.1), p. 31]{Kelly}, whose components are further detailed below, while the natural transformation $\mathrm{id} \Rightarrow P_c$ is the bottom arrow of the resulting pushout square.
\[
\xymatrix{
D(\mathtt{ou}(c),\_)\times X(\mathtt{ou}(c)) + \mathrm{col}_{\mathtt{Es}(c)}D(\mathtt{in}(c),\_) \times X[c] \ar@{=>}[r]\ar@{=>}[d]& *+!L(.6){D(\mathtt{ou}(c),\_) \times X[c]}\\
X(\_)&
}
\]

For every object $d \in D$, we can decompose the previous span in four parts as follows:

(1) The arrow given below, part of the vertical leg, maps every pair $(t,x)$, where $t$ is an arrow $\mathtt{ou}(c) \to d$ and $x \in X(\mathtt{ou}(c))$, to the element $X(t)(x)$ in $X(d)$; 
\[
D(\mathtt{ou}(c),d) \times X(\mathtt{ou}(c)) \to X(d)
\]

(2) The arrow given below, also part of the vertical leg, maps every pair $(t,(x_z)_{z \in \mathtt{Es}(c)})$, where $t$ is an arrow $\mathtt{in}(c)(z) \to d$ in the colimit $\mathrm{col}_z\,D(\mathtt{in}(c)(z),d)$ and $(x_z)_{z \in \mathtt{Es}(c)}$ is a tuple in $X[c] = \mathrm{lim}\, X \circ \mathtt{in}(c)$, to the element $X(t)(x_z)$ in $X(d)$;
\[
\mathrm{col}_{\mathtt{Es}(c)}D(\mathtt{in}(c),d) \times X[c] \to X(d)
\]

(3) The arrow given below, part of the horizontal leg,  is induced by the canonical arrow $X(\mathtt{ou}(c)) \to X[c]$ and maps every pair $(t,x)$ to the pair $(t,x')$, where $x'$ is the tuple $(\rho_z(x))_{z \in \mathtt{Es}(c)}$ in the limit object $X[c]$; 
\[
D(\mathtt{ou}(c),d) \times X(\mathtt{ou}(c)) \to D(\mathtt{ou}(c),d) \times X[c]
\]

(4) The arrow given below, also part of the horizontal leg, is induced by the canonical arrow $\mathrm{col}_{z}D(\mathtt{in}(c)(z),d) \to D(\mathtt{ou}(c),d)$ and maps every pair $(t,x)$, where $t$ is an arrow $\mathtt{in}(c)(z) \to d$ for some object $z \in \mathtt{Es}(c)$ and $x \in X[c]$, to the pair $(t \circ \rho_z,x)$; 
\[
\mathrm{col}_{\mathtt{Es}(c)}D(\mathtt{in}(c),d) \times X[c] \to D(\mathtt{ou}(c),d) \times X[c]
\]

It takes a few lines of calculations to see that the pushout $P_c(X)(d)$ of the previous span evaluated at $d$ can be described as a quotiented sum of the form
\begin{equation}\label{eq:expression_P_c_Kelly}
\Big(X(d) + D(\mathtt{ou}(c),d) \times X[c]\Big)/(R_0+R_1)
\end{equation}
where:

$\triangleright$ $R_0$ identifies all pairs $(x,y)$, where $x \in D(\mathtt{ou}(c),d) \times X[c]$ and $y \in X(d)$, such that there exist $a \in X(\mathtt{ou}(c))$ and an arrow $t:\mathtt{ou}(c) \to d$ for which the following identities hold.
\[
x = (t,(\rho_z(a))_{z \in \mathtt{Es}(c)})\quad\quad\quad\quad\quad y = X(t)(a)
\]

$\triangleright$ $R_1$ identifies all pairs $(x,y)$, where $x \in D(\mathtt{ou}(c),d) \times X[c]$ and $y \in X(d)$, such that there exist $z \in \mathtt{Es}(c)$ and an arrow $t:\mathtt{in}(c)(z) \to d$ for which the following identities hold.
\[
x = (t \circ \rho_z,(x_z)_{z \in \mathtt{Es}(c)})\quad\quad\quad\quad\quad y = X(t)(x_z) 
\]

We can see that the definition of the relation $R_1$ exactly matches that of  the relation $\tilde{\mathfrak{q}}(d) = \mathrm{Rec}(!_X,\tilde{\mathfrak{p}})$ given in Example 
\ref{exa:elimination_of_quotients_sketches_the_binary_relations}. On the other hand, we can check that for every relation $(x_1,x_2) \in \Upgamma_{A,1}^{K}(!_X)(d)$, as described in Example \ref{exa:elimination_of_quotients_sketches_the_binary_relations}, there is an (obvious) element $y$ for which both relations $x_1R_0 y$ and $x_2R_0 y$ are satisfied. 

However, a relation of the form $x R_0 y$ cannot be retrieved from the union of the relations $\Upgamma_{A,1}^{K}(!_X)(d)$ and $\tilde{\mathfrak{q}}(d)$, given in Example \ref{exa:elimination_of_quotients_sketches_the_binary_relations}. It can only be retrieved if one allows a use of these relations up to quotients. Indeed, the reader can check that the identification of the second line, below, cannot be made unless the one given in the fist line has already occured.
\[
\begin{array}{lcccc}
\cellcolor[gray]{0.8}&\cellcolor[gray]{0.8}&\cellcolor[gray]{0.8}\text{elt.}&\cellcolor[gray]{0.8}\text{Relation}&\cellcolor[gray]{0.8}\text{elt.}\\
\vspace{-5pt}
&&&&\\
\text{first identify}&&(\rho_z,(\rho_z(a))_{z \in \mathtt{Es}(c)})& \tilde{\mathfrak{q}}(d)  & X(\rho_z(a))\\
\vspace{-9pt}
&&&&\\
\text{which then allows us to identify} &&(t,(\rho_z(a))_{z \in \mathtt{Es}(c)})& \Upgamma_{A,1}^{K}(!_X)(d) & X(t)(a)\\
\end{array}
\]

As mentioned in Section \ref{ssec:result_2}, Kelly's construction is pursued by pushing out all the maps $X \Rightarrow P_c(X)$ to give a natural transformation $X \Rightarrow P(X)$ where $P(X)$ identifies each component $X$ appearing in the expression of the objects $P_c(X)$ for every $c \in K$. We therefore obtain an expression as follows, for very object $d \in D$.
\[
P(X)(d) = \Big(\sum_{c \in K}\Big(X(d) + D(\mathtt{ou}(c),d) \times X[c]\Big)/(R_0+R_1) \Big)/X(d)
\]

This expression should be compared with the (similar) expression of the $\Upgamma^K$-realisation $[!_X/\emptyset]$ obtained in Example \ref{exa:elimination_of_quotients_sketches_the_binary_relations}, whose sum over $K$ is, here, quotient-free.
\[
[!_X/\emptyset](d) = X(d)/\Upgamma_{A,1}^{K}(!_X)(d)+\sum_{c \in K}D(\mathtt{ou}(c),d) \times X[c]
\]

Because the relations contained in $\Upgamma_{A,1}^{K}(!_X)(d)$ can be written as a zigzag of relations in $R_0$, we can construct an obvious arrow from $[!_X/\emptyset]$ to $P(X)$ matching all the components $D(\mathtt{ou}(c),d) \times X[c]$ together (here, the symbol $\sim$ stands for the obvious relation).
\[
\begin{array}{ccc}
[!_X/\emptyset] &\Rightarrow& P(X)\\
X/\sim + \sum_{c \in K} D(\mathtt{ou}(c),\_) \times X[c] &\Rightarrow& P(X)
\end{array}
\]

In fact, our earlier discussion showed that, if we denote $X_1=[!_X/\emptyset]$ and $X_{n+1}=[!_{X_n}/\mathfrak{u}_n]$ where $\mathfrak{u}_1=\mathrm{Rec}(!_{X},\emptyset)$ and $\mathfrak{u}_{n+1}=\mathrm{Rec}(!_{X_n},\mathfrak{u}_n)$, then we can continue this process iteratively, by matching the components of the sum over $K$, so that we have arrows as follows.
\[
\begin{array}{ccc}
[!_{X_1}/\mathfrak{u}_1] &\Rightarrow& P(P(X))\\
X_1/\sim + \sum_{c \in K} D(\mathtt{ou}(c),\_) \times X_1[c] &\Rightarrow& P(P(X))\vspace{3pt}\\
\vdots&\vdots&\vdots\vspace{3pt}\\
{[!_{X_n}/\mathfrak{u}_n]} &\Rightarrow& P^{n+1}(X)\\
X_n/\sim + \sum_{c \in K} D(\mathtt{ou}(c),\_) \times X_n[c] &\Rightarrow& P^{n+1}(X)
\end{array}
\]

One can check that all these arrows are compatible, in an obvious way, with the arrows $X_n \Rightarrow [!_{X_n}/\mathfrak{u}_n]$ and $P^{n}(X) \Rightarrow P^{n+1}(X)$. However, one of our previous remarks on the fact that $R_0$ can only be retrieved from the relations $\Upgamma_{A,1}^{K}(!_X)(d)$ and $\tilde{\mathfrak{q}}(d)$ up to quotients indicates us that if there exists a pair of dashed arrows making the following diagram commute

\[
\xymatrix@C-15pt@R-5pt{
&&{X_n(\_)}\ar[rr]\ar[d]&&*+!L(.7){[!_{X_n}/\mathfrak{u}_n](\_)}\ar[d]\\
&&P^{n}(X)(\_)\ar[rr]|\hole&&P^{n+1}(X)(\_)\\
P^{n}(X)(\_)\ar@{-->}[rruu]\ar@{=}[rru]\ar[rr]&&*+!L(.9){P_cP^{n}(X)(\_)}\ar@{-->}[rruu]\ar[rru]&&
}\quad\quad(\text{for $n \geq 1$})
\]
then the front arrow must factorise through the following canonical arrow (see the reason below).
\[
\xymatrix{
X_n(\_)/\sim \ar[r]& X_n(\_)/\sim + \sum_{c \in K} D(\mathtt{ou}(c),\_) \times X_n[c],
}
\]
Indeed, otherwise we could derive a contradiction from the elements of the form 
\[
(\rho_z,(x_z)_{z \in \mathtt{Es}(c)}) \in D(\mathtt{ou}(c),\_) \times X_n[c],
\]
which must be identified with the elements $x_z$ in $P_cP^{n}(X)(\_)$ via the relation $R_1$, but must be left free in the expression of $[!_{X_n}/\mathfrak{u}_n](\_)$. The empty case $X_n[c] = \emptyset$ obviously leads to the same conclusion. 

If we now look at Formula (\ref{eq:expression_P_c_Kelly}), this factorisation means that that all the elements in the component 
$
D(\mathtt{ou}(c),d) \times P^n(X)[c]
$ 
of $P_cP^n(X)(d)$ must be identified with elements in the other component $P^n(X)(d)$. From the point of view of the relation $R_0$ at $d=\mathtt{ou}(c)$ where $t$ is taken to be the identity on $\mathtt{ou}(c)$, this means that the canonical arrow $P^n(X)(\mathtt{ou}(c)) \to P^n(X)[c]$ must be a surjection. 

Finally, observe that, when $n>0$, the arrow $P^n(X)(\mathtt{ou}(c)) \to P^n(X)[c]$ is also an injection because the images of $P^n(X)$ are quotiented by the relations $R_0$ and hence the relation $\Upgamma_{A,1}^{K}(!_{P^{n-1}(X)})(d)$, which precisely characterises its injectiveness (see Example \ref{exa:elimination_of_quotients_sketches_the_binary_relations}). In other words, the canonical arrow $P^n(X)(\mathtt{ou}(c)) \to P^n(X)[c]$ is a bijection, which makes the object $P^n(X)$ a model for $(D,\{c\})$.
\end{example}

\section{Combinatorial Categories and Their Oeuvres}\label{sec:Combinatorial_cat}

The notion of combinatorial category encompasses all the assumptions that are necessary to the application of the small object argument in the case of systems of premodels.

\subsection{Numbered Constructor}
Let $\mathcal{B}$, $\mathcal{C}$ be two categories and $D$ be a small category. A \emph{numbered constructor of type $D[\mathcal{B},\mathcal{C}]$} consists of a constructor $\Upgamma$ of type $D[\mathcal{B},\mathcal{C}]$, where $\mathcal{C}$ has coproducts, together with a limit ordinal $\kappa$ such that the category $\mathcal{B}$ admits colimits over every limit ordinal 
$\lambda \in \kappa+1$ when seen as a preorder category. Such a structure will be denoted as a pair $(\Upgamma,\kappa)$ where $\Upgamma$ will be equipped with its usual notational conventions.

\subsection{Factorisable Morphisms}\label{ssec:Factorisable_morphisms}
Let $(\Upgamma,\kappa)$ be a numbered constructor of type $D[\mathcal{B},\mathcal{C}]$. A morphism $f:X \to Y$ in $\mathcal{B}$ will be said to be \emph{$(\Upgamma,\kappa)$-factorisable} if it is equipped with a sequence $(f_n,\mathfrak{u}_{n})_{n \in \kappa+1}$ of ideal $\Upgamma$-quotiented arrows in $\mathcal{B}$ satisfying the following conditions:
\begin{itemize}
\item[$\triangleright$] \textbf{initial case:} $f_0= f$;
\item[$\triangleright$] \textbf{successor cases:} $\mathrm{Rec}(f_n,\mathfrak{u}_{n}) = (f_{n+1},\mathfrak{u}_{n+1})$;
\item[$\triangleright$] \textbf{limit cases:} for any (infinite) limit ordinal $\lambda \in \kappa+1$, the arrow $f_{\lambda}$ is the colimit $\mathrm{col}_{n \in \lambda} f_n$ in $\mathcal{B}$ of the following diagram over the category $\lambda$.
\begin{equation}\label{eq:definition_f_n_constructor_end}
\xymatrix{
X\ar[d]_{f_0}\ar[r]^-{\{ f_0 \}_{\mathfrak{u}_0}}&[f_0/\mathfrak{u}_0]\ar[d]_{f_1}\ar[r]^-{\{ f_1 \}_{\mathfrak{u}_1}}&[f_1/\mathfrak{u}_1]\ar[d]_{f_2}\ar[r]^-{\{ f_2 \}_{\mathfrak{u}_2}}&\dots \ar[r]^-{\{ f_n \}_{\mathfrak{u}_n}}&[f_{n}/\mathfrak{u}_{n}]\ar[d]_{f_{n+1}}\ar[r]&\dots\\
Y\ar@{=}[r]&Y\ar@{=}[r]&Y\ar@{=}[r]&\dots\ar@{=}[r]&Y\ar@{=}[r]&\dots
}
\end{equation}
\end{itemize}

\begin{convention}\label{conv:construction_G_f_sequential_functor}
For every (infinite) limit ordinal $\lambda \in \kappa+1$, the domain of the arrow $f_{\lambda}$ will be denoted by $[f/\mathfrak{u}]_{\lambda}$.
The object $[f/\mathfrak{u}]_{\lambda}$ is by definition the colimit of the sequence of arrows $\{f_n \}_{\mathfrak{u}_n}$ where $n$ runs over $\lambda$ (see Diagram (\ref{eq:definition_f_n_constructor_end})). We will later denote by $\chi_n^{\lambda}(f)$ the associated canonical arrow $[f_{n}/\mathfrak{u}_{n}] \to [f/\mathfrak{u}]_{\lambda}$.
\end{convention}

By induction, we may show that the arrows $\chi_n^{\lambda}(f)$ and $\{f_n \}_{\mathfrak{u}_n}$ define a sequential functor $G(f):\kappa+1 \to \mathcal{B}$ with the following mapping rules.
\[
\begin{array}{llll}
n+1 &\mapsto & [f_{n}/\mathfrak{u}_{n}]&\text{(succ. objects)}\\
\lambda &\mapsto & X\text{ if }\lambda = 0\text{ and }[f/\mathfrak{u}]_{\lambda}\text{ otherwise.}&\text{(lim. objects)}\\
n+1 < n+2 & \mapsto & \{ f_n \}_{\mathfrak{u}_n}&\text{(succ. arrows)}\\
n+1 < \lambda &\mapsto &\chi_n^{\lambda}(f)&\text{(lim. arrows)}\\
\lambda < \lambda+1 &\mapsto &  \{ f_{\lambda} \}_{\mathfrak{u}_{\lambda}}&\text{(lim. arrows)}
\end{array}
\]
\begin{remark}\label{rem:spc_of_f_factorisable_functor}
The functor $G(f):\kappa+1 \to \mathcal{B}$ turns the mapping $n \mapsto f_n$ into an obvious functor $G'(f):\kappa+1 \to \mathcal{B}^{\mathbf{2}}$, which also lifts to the category $\Upgamma\mathcal{B}^{\mathbf{2}}$ via the mapping $n \mapsto (f_n,\mathfrak{u}_n)$ (see Diagram (\ref{eq:definition_f_n_constructor_end})).
\end{remark}

\begin{theorem}\label{th:admissible_quotiented_factorisable}
Let $\kappa$ denote a limit ordinal and $(K,T,\mathcal{P},\mathtt{V})$ be a system of $R$-premodels over a small category $D$ in a category $\mathcal{C}$. If $\mathcal{C}$ is cocomplete, $R$ preserves colimits over every limit ordinal $\lambda \in \kappa+1$ and the inclusion $\mathcal{P} \hookrightarrow \mathbf{Pr}_{\mathcal{C}}(K,T,R)$ is an identity, then every morphism in $\mathcal{P}$ may be equipped with the structure of a $(\Upgamma^K,\kappa)$-factorisable morphism. 
\end{theorem}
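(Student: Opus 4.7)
The plan is a transfinite recursion on $n \in \kappa+1$ producing the sequence $(f_n,\mathfrak{u}_n)$ of ideal $\Upgamma^K$-quotiented arrows demanded by Section \ref{ssec:Factorisable_morphisms}. For the initial step, set $f_0:=f$ and take $\mathfrak{u}_0$ to be the empty $\Upgamma^K$-quotient (i.e., each $E_s(\theta):=\emptyset$, which trivially lies in $\Upgamma^K_Q(f)\{\theta\}_s$). Because $\mathcal{C}$ is cocomplete it has in particular pushouts, and by assumption $\mathcal{P}=\mathbf{Pr}_{\mathcal{C}}(K,T,R)$, so Theorem \ref{th:admissible_quotiented_ideal} applies and $(f_0,\mathfrak{u}_0)$ is ideal.

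For the successor step, given an ideal $(f_n,\mathfrak{u}_n)$, one simply defines $(f_{n+1},\mathfrak{u}_{n+1}):=\mathrm{Rec}(f_n,\mathfrak{u}_n)$, as required by the definition of $(\Upgamma^K,\kappa)$-factorisability. The target arrow $\lfloor f_n\rfloor_{\mathfrak{u}_n}:[f_n/\mathfrak{u}_n]\to Y$ obtained via the $\Upgamma^K$-factorisation lives in $\mathcal{P}$ by the second half of Theorem \ref{th:admissible_quotiented_effective} (which gives the lift to $\mathcal{P}$ of the $\Upgamma^K$-prefactorisation), and so $\mathrm{Rec}(f_n,\mathfrak{u}_n)$ is a bona fide $\Upgamma^K$-quotiented arrow in $\mathcal{P}$; applying Theorem \ref{th:admissible_quotiented_ideal} once more delivers idealness.

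For the limit step, let $\lambda\in\kappa+1$ be an infinite limit ordinal and consider the sequence of arrows $\{f_n\}_{\mathfrak{u}_n}:[f_{n-1}/\mathfrak{u}_{n-1}]\to[f_n/\mathfrak{u}_n]$ over $\lambda$ (cf.\ Diagram (\ref{eq:definition_f_n_constructor_end})). I need the colimit $f_\lambda:=\mathrm{col}_{n\in\lambda} f_n$ to exist in $\mathcal{B}=\mathcal{P}$. Since an object of $\mathcal{P}$ is a triple $(P,S,e)$ with $P:D\to\mathcal{C}$, $S:K\to\mathbf{Set}$ and $e$ a family of structural arrows $P\mathtt{rou}(c)\to RP\mathtt{ou}(c)$, I form the colimits of the $P$-parts pointwise in $\mathcal{C}$ (using cocompleteness of $\mathcal{C}$ and Proposition \ref{prop:co_complete_category_then_functor_category_too}) and of the $S$-parts pointwise in $\mathbf{Set}$. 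The structural arrow $e_\lambda$ is obtained as the canonical map $\mathrm{col}_n P_n\mathtt{rou}(c)\to\mathrm{col}_n RP_n\mathtt{ou}(c)$ composed with the inverse of the comparison $R\mathrm{col}_n P_n\mathtt{ou}(c)\to\mathrm{col}_n RP_n\mathtt{ou}(c)$; this inverse exists precisely because $R$ is assumed to preserve colimits over $\lambda$. The universal property of these pointwise colimits then shows that the triple $([f/\mathfrak{u}]_\lambda,S_\lambda,e_\lambda)$ is a colimit in $\mathcal{P}$, so the arrow $f_\lambda$ to the constant cocone on $Y$ is well-defined. Finally, set $\mathfrak{u}_\lambda:=\emptyset$; Theorem \ref{th:admissible_quotiented_ideal} applied one last time shows $(f_\lambda,\mathfrak{u}_\lambda)$ is ideal.

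The main obstacle is the limit step, specifically the verification that the colimit really lives in $\mathcal{P}$ with a legitimate $R$-premodel structure: the structural data $e_\lambda$ is the one place where the right adjoint $R$ appears on the "wrong" side of a colimit, and this is precisely what the hypothesis on $R$ is designed to repair. Everything else is bookkeeping: transitions are natural by construction, the equalities $\mathtt{Rec}(f_n,\mathfrak{u}_n)=(f_{n+1},\mathfrak{u}_{n+1})$ hold by definition, and the three bullet points in Section \ref{ssec:Factorisable_morphisms} are then satisfied.
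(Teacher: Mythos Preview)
Your proof is correct and follows essentially the same transfinite induction as the paper's own argument: empty quotient at stage $0$ and at limit stages, $\mathrm{Rec}$ at successor stages, with Theorem~\ref{th:admissible_quotiented_ideal} supplying idealness throughout. You actually spell out more than the paper does at the limit step---the paper simply asserts in one line that the hypotheses force $\mathbf{Pr}_{\mathcal{C}}(K,T,R)$ to admit the required sequential colimits, whereas you unpack how $R$'s preservation of $\lambda$-colimits is used to build $e_\lambda$ (a minor slip: the canonical comparison map already goes $\mathrm{col}_n RP_n\mathtt{ou}(c)\to R\,\mathrm{col}_n P_n\mathtt{ou}(c)$, so no inverse is needed).
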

\begin{proof}
First, the assumption that $\mathcal{C}$ is cocomplete and $R$ preserves colimits over every limit ordinal $\lambda \in \kappa+1$ implies that $\mathbf{Pr}_{\mathcal{C}}(K,T,R)$ admits colimits over every limit ordinal $\lambda \in \kappa+1$.
We are now going to show that every morphism $(f,a):(X,S,e) \Rightarrow (Y,S',e')$ of the category $\mathbf{Pr}_{\mathcal{C}}(K,T,R)$ may be equipped with the structure of a $(\Upgamma^K,\kappa)$-factorisable morphism by induction. 
Let us define the sequence of $\Upgamma^K$-quotiented arrow $(f_n,a_n,\mathfrak{u}_{n})_{n \in \kappa+1}$ as follows: 
\begin{itemize}
\item[$\triangleright$] For the initial case, take $(f_0,a_0)$ to be the morphism $(f,a):(X,S,e) \Rightarrow (Y,S',e')$ and $\mathfrak{u}_{0}$ to be given by the collection of empty functors $\{\emptyset: \mathbf{1} \to \mathbf{Set}\}_{\theta \in I,s \in \uplambda_{\theta}(X)}$;
\item[$\triangleright$] By Theorem \ref{th:admissible_quotiented_ideal}, the $\Upgamma^K$-quotiented arrow $(f_n,a_n,\mathfrak{u}_n)$ is ideal and we can take the next $\Upgamma^K$-quotiented arrow $(f_{n+1},a_{n+1},\mathfrak{u}_{n+1})$ to be $\mathrm{Rec}(f_n,a_n,\mathfrak{u}_{n})$;
\item[$\triangleright$] For any (infinite) limit ordinal $\lambda \in \kappa+1$, the arrow $(f_{\lambda},a_{\lambda})$ is given by the colimit $\mathrm{col}_{n \in \lambda} (f_{n},a_{n})$ in $\mathbf{Pr}_{\mathcal{C}}(K,T,R)$ of  Diagram (\ref{eq:definition_f_n_constructor_end_prop}) over the category $\lambda$ while $\mathfrak{u}_{\lambda}$ is given by the collection of empty functors $\{\emptyset: \mathbf{1} \to \mathbf{Set}\}_{\theta \in I,s \in \uplambda_{\theta}(X)}$
\begin{equation}\label{eq:definition_f_n_constructor_end_prop}
\xymatrix{
(X,S,e)\ar[d]_{(f_0,a_0)}\ar[r]^-{\{ f_0 \}_{\mathfrak{u}_0}}&[f_0/\mathfrak{u}_0]\ar[d]_{(f_1,a_1)}\ar[r]^-{\{ f_1 \}_{\mathfrak{u}_1}}&[f_1/\mathfrak{u}_1]\ar[d]_{(f_2,a_2)}\ar[r]^-{\{ f_2 \}_{\mathfrak{u}_2}}&\dots \ar[r]^-{\{ f_n \}_{\mathfrak{u}_n}}&[f_{n}/\mathfrak{u}_{n}]\ar[d]_{(f_{n+1},a_{n+1})}\ar[r]&\dots\\
(Y,S',e')\ar@{=}[r]&(Y,S',e')\ar@{=}[r]&(Y,S',e')\ar@{=}[r]&\dots\ar@{=}[r]&(Y,S',e')\ar@{=}[r]&\dots
}
\end{equation}
\end{itemize}

By Principle of Transfinite Induction, the preceding construction equip the morphism $(f,a):(X,S,e) \Rightarrow (Y,S',e')$ with the structure of a $(\Upgamma^K,\kappa)$-factorisable morphism.
\end{proof}

\begin{corollary}\label{cor:admissible_quotiented_factorisable}
Let $\kappa$ denote a limit ordinal and $(K,T,\mathcal{P},\mathtt{V})$ be a fibered system of $R$-premodels over a small category $D$ in a category $\mathcal{C}$. If $\mathcal{C}$ is cocomplete and $R$ preserves colimits over every limit ordinal $\lambda \in \kappa+1$, then~every morphism in $\mathcal{P}$ may be equipped with the structure of a $(\Upgamma^K,\kappa)$-factorisable morphism. 
\end{corollary}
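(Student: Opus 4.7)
The plan is to mimic the transfinite construction appearing in the proof of Theorem~\ref{th:admissible_quotiented_factorisable}, replacing the working category $\mathbf{Pr}_{\mathcal{C}}(K,T,R)$ by $\mathcal{P}$ and using the fiberedness hypothesis at each successor stage to keep the construction inside $\mathcal{P}$. Essentially, this corollary should be pure bookkeeping once the right lifting principles are invoked.

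Concretely, I would proceed by transfinite induction on $n \in \kappa+1$ to construct a sequence $(f_n,\mathfrak{u}_n)$ of ideal $\Upgamma^K$-quotiented arrows living in $\mathcal{P}$. The initial case is trivial: set $(f_0,\mathfrak{u}_0) := (f,\emptyset)$ with $\emptyset$ the collection of empty functors, since $f \in \mathcal{P}$ by hypothesis. For the successor step, assume $(f_n,\mathfrak{u}_n)$ is an ideal $\Upgamma^K$-quotiented arrow in $\mathcal{P}$; viewing it in the larger category $\mathbf{Pr}_{\mathcal{C}}(K,T,R)$, Theorem~\ref{th:admissible_quotiented_ideal} guarantees that it is ideal and produces an $\Upgamma^K$-factorisation there. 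Because the system is fibered, this $\Upgamma^K$-factorisation lifts to a factorisation in $\mathcal{P}$, and one then takes $(f_{n+1},\mathfrak{u}_{n+1}) := \mathrm{Rec}(f_n,\mathfrak{u}_n)$, which again lies in $\mathcal{P}$. Applying Theorem~\ref{th:admissible_quotiented_ideal} once more, the new quotiented arrow is itself ideal, so the induction propagates.

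For the limit step at an (infinite) limit ordinal $\lambda \in \kappa+1$, the cocompleteness of $\mathcal{C}$ together with the assumption that $R$ preserves colimits over $\lambda$ ensures the existence of the colimit $\mathrm{col}_{n \in \lambda} f_n$ in $\mathbf{Pr}_{\mathcal{C}}(K,T,R)$, computed componentwise, exactly as in the proof of Theorem~\ref{th:admissible_quotiented_factorisable}. One then assigns $f_\lambda := \mathrm{col}_{n \in \lambda} f_n$ and $\mathfrak{u}_\lambda$ the empty collection of functors. By the Principle of Transfinite Induction this yields the required $(\Upgamma^K,\kappa)$-factorisable structure on $f$.

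The main obstacle will be verifying that the transfinite colimits at limit stages are genuinely in $\mathcal{P}$ and not just in the ambient category $\mathbf{Pr}_{\mathcal{C}}(K,T,R)$; the fiberedness hypothesis, as stated, only guarantees lifting of pushout-style $\Upgamma^K$-factorisations, not of arbitrary transfinite composites of these. Resolving this point amounts to observing that each $f_\lambda$ is assembled from the $\Upgamma^K$-prefactorisations of the $f_n$ via the universal properties of the underlying pushouts, so that an inductive application of fiberedness---together with a universality argument analogous to the final paragraph of the proof of Theorem~\ref{th:admissible_quotiented_effective}, which equips the colimit with the premodel structure required to remain in $\mathcal{P}$---should suffice to conclude.
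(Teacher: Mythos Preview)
Your approach is essentially the paper's own: the paper's proof is a single line, ``Follows from fiberedness and Theorem~\ref{th:admissible_quotiented_factorisable}'', and what you have written is exactly the unpacking of that line --- rerun the transfinite induction of Theorem~\ref{th:admissible_quotiented_factorisable} and invoke fiberedness at each successor step so that the $\Upgamma^K$-factorisation, and hence the rectification, stays in $\mathcal{P}$.

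Your concern about limit stages is legitimate, but your proposed resolution is off target. Fiberedness, as defined just after Theorem~\ref{th:admissible_quotiented_effective}, only asserts that the pushout-type $\Upgamma$-factorisations lift to $\mathcal{P}$; it says nothing about transfinite composites, and the universality argument at the end of Theorem~\ref{th:admissible_quotiented_effective} produces premodel structures on \emph{pushouts}, not on sequential colimits. So an ``inductive application of fiberedness'' will not by itself place $f_\lambda$ in $\mathcal{P}$. The cleaner way to close the gap is to notice that the very phrase ``$(\Upgamma^K,\kappa)$-factorisable'' presupposes that $(\Upgamma^K,\kappa)$ is a numbered constructor of type $D[\mathcal{P},\mathcal{C}]$, and by the definition in Section~\ref{sec:Combinatorial_cat} this already requires $\mathcal{P}$ to admit colimits over every limit ordinal $\lambda\in\kappa+1$. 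The limit-stage colimit is therefore simply taken in $\mathcal{P}$, with no further argument needed. The paper leaves this implicit, and in all the examples it treats (functor categories, prespectra) the subcategory $\mathcal{P}$ is visibly closed under such colimits.
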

\begin{proof}
Follows from fiberedness and Theorem \ref{th:admissible_quotiented_factorisable}.
\end{proof}

\begin{example}[Systems of premodels]\label{exa:Categories_of_premodels_factorisable_morphism_subcategories}
Let $\kappa$ denote a limit ordinal and $(K,T,\mathcal{P},\mathtt{V})$ be a system of $R$-premodels over a small category $D$ in a category $\mathcal{C}$ where $\mathcal{P}$ may be identified with the category of $R$-premodels $\mathcal{C}^D \hookrightarrow \mathbf{Pr}_{\mathcal{C}}(K,T,R)$ -- hence $R$ is an identity. It follows from Example \ref{exa:functor_category_fibered} and Corollary \ref{cor:admissible_quotiented_factorisable} that the morphisms of $\mathcal{P}$ are all $(\Upgamma^K,\kappa)$-factorisable.
\end{example}

\begin{proposition}\label{prop:narrative_constructor_mathfrak_I_strict}
Let $f:X \to Y$ be a $(\Upgamma,\kappa)$-factorisable morphism. For every object $d$ in $D$, the mapping $n \mapsto \mathbb{T}^{f_n,\mathfrak{u}_n}(d)$ induces an oeuvre $\mathfrak{O}[f](d):\kappa+1 \to \mathbf{Ltom}(\mathcal{C})$ of theme $\Upsilon_d(Y)$. This induces a functor $\mathfrak{O}[f]:D \to \mathbf{Oeuv}(\mathcal{C},\kappa)$ whose images are strict narratives of degree 1.
\end{proposition}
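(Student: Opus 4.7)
The plan is to build the oeuvre by composing the two structural functorialities established earlier, and then to verify the four clauses of Section~\ref{sssec:strict_narratives} one at a time. The factorisable data of $f$ is already packaged by Remark~\ref{rem:spc_of_f_factorisable_functor} into a functor $G'(f):\kappa+1\to \Upgamma\mathcal{B}^{\mathbf{2}}$ sending $n$ to $(f_n,\mathfrak{u}_n)$, whose transition morphisms are of the form $(\{f_n\}_{\mathfrak{u}_n},\mathrm{id}_Y)$. Post-composing with the functor $\mathbb{T}^{\mathrm{me}}_d:\Upgamma\mathcal{B}^{\mathbf{2}}\to \mathbf{LTom}(\mathcal{C})$ of Proposition~\ref{prop:merolytic_tome_functorial} yields a functor $\kappa+1\to \mathbf{LTom}(\mathcal{C})$; because every transition morphism has identity second component, the induced loose morphisms of tomes have second component $\mathrm{id}_{\Upsilon_d(Y)}$, so the composite lifts to $\mathbf{LTom}(\Upsilon_d(Y),\mathcal{C})$ and constitutes an oeuvre of theme $\Upsilon_d(Y)$. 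Functoriality in $d$ then follows by combining this with Proposition~\ref{prop:merolytic_tome_functorial_D} and the coherence axioms of Section~\ref{sssec:Morphisms_of_oeuvres_and_narratives} (cf.\ Remark~\ref{rem:morphisms:narratives_and_oeuvres}).

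To equip $\mathfrak{O}[f](d)$ with the data of a strict narrative of degree $1$, I would exploit the discreteness of each category $\mathtt{S}_k = \Upgamma^{f_k,\mathfrak{u}_k}(d)$: take the events $J_k$ to be the collection of functors $i_s:\mathbf{1}\to \mathtt{S}_k$ picking out the individual points $s\in\mathtt{S}_k$. These are trivially compatible with $(\mathcal{C},\kappa)$, and the induced cocone exhibits $\mathtt{S}_k$ as the coproduct of its points in $\mathbf{Cat}$, verifying clause~(1). Clause~(2) is the substance of effectiveness: the $\Upgamma$-prefactorisation from Diagram~(\ref{eq:combinatorial_constructors_pushout_def}) lifts through $\Upsilon_d$ to the factorisation $\{f_k\}_{\mathfrak{u}_k}:[f/\mathfrak{u}]_k \to [f_k/\mathfrak{u}_k]$, so $G_{k+1}=\Upsilon_d([f_k/\mathfrak{u}_k])=[f_k,\mathfrak{u}_k](d)$ and one takes $\uppi_k$ to be $\pi_{f_k}^{\mathfrak{u}_k}(d)$. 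Clause~(3) then reduces to a definition check, since discreteness of $\mathtt{S}_k$ yields $\mathrm{col}_{i_s}\mathbf{B}_k = \mathbf{B}_k(s)$ and the required viewpoint at $i_s$ is the coprojection of that pushout restricted to the component $s$. The degree is exactly $1$ (and not $0$) because the required lift only exists after passing into $G_{k+1}$ through the pushout; it does not already commute at rank $k$.

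Clause~(4), sequentiality of the context functor $k\mapsto \Upsilon_d(G(f)(k))$, is where I expect the main obstacle. For successor ordinals there is nothing to check, while at a limit $\lambda\in\kappa+1$ the $(\Upgamma,\kappa)$-factorisable structure guarantees $G(f)(\lambda) = \mathrm{col}_{n<\lambda}G(f)(n)$ in $\mathcal{B}$ (Convention~\ref{conv:construction_G_f_sequential_functor}), so the question is whether $\Upsilon_d$ preserves this colimit. In the constructor $\Upgamma^K$ associated with a system of premodels (Section~\ref{sssec:Constructor_system_premodels}), $\Upsilon$ is essentially the evaluation functor $(P,S,e)\mapsto P(d)$, colimits in $\mathbf{Pr}_{\mathcal{C}}(K,\mathtt{rou},R)$ are computed objectwise, and the hypothesis of Theorem~\ref{th:admissible_quotiented_factorisable} that $R$ preserves $\lambda$-colimits delivers the required preservation. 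Under this preservation property of $\Upsilon_d$ --- automatic in all examples of interest --- sequentiality follows, and the four clauses together yield the desired strict narrative of degree $1$; the morphism of oeuvres induced by an arrow $d\to d'$ inherits strictness because the pushouts of clause~(2) are preserved componentwise by $\Upsilon_{(\_)}$.
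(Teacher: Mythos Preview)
Your argument follows essentially the same route as the paper's: both construct the oeuvre by composing the functor $G'(f):\kappa+1\to\Upgamma\mathcal{B}^{\mathbf{2}}$ of Remark~\ref{rem:spc_of_f_factorisable_functor} with the merolytic-tome functors of Propositions~\ref{prop:merolytic_tome_functorial} and~\ref{prop:merolytic_tome_functorial_D}, take the events to be the point functors $\mathbf{1}\to\Upgamma^{f_k,\mathfrak{u}_k}(d)$, and read off the viewpoints and the pushout factorisation of clause~(2) from the $\Upgamma$-realisation~(\ref{eq:combinatorial_constructors_pushout_def}).

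Where you differ is in rigour rather than method. The paper concludes with the single sentence ``By definition, the previous narrative is strict'' and does not separately verify the four clauses of Section~\ref{sssec:strict_narratives}. You walk through all four and correctly isolate the only non-formal point: clause~(4) requires the context functor $\Upsilon_d\circ G(f):\kappa+1\to\mathcal{C}$ to be sequential, which in turn needs $\Upsilon_d$ to preserve the transfinite colimits defining $G(f)(\lambda)$ at limit ordinals~$\lambda$. This hypothesis is not part of the abstract definition of constructor, and the paper's proof silently assumes it. Your observation that it holds automatically in the intended application---where $\Upsilon_d$ is evaluation at $d$ and colimits in $\mathcal{P}\subseteq\mathbf{Pr}_{\mathcal{C}}(K,\mathtt{rou},R)$ are computed objectwise in $\mathcal{C}$ (cf.\ the setting of Theorem~\ref{th:admissible_quotiented_factorisable})---is exactly the justification the paper leaves implicit. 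So your proof is correct, and your caveat on clause~(4) is a genuine refinement of the paper's exposition rather than a deviation from its strategy.
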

\begin{proof}
The fact that the mapping $n \mapsto \mathbb{T}^{f_n,\mathfrak{u}_n}(d)$ induces an oeuvre
follows from Proposition \ref{prop:merolytic_tome_functorial} and Remark \ref{rem:spc_of_f_factorisable_functor}. 
One thus obtains an oeuvre $\mathfrak{O}[f](d):\kappa+1 \to \mathbf{Ltom}(\mathcal{C})$ of theme $\Upsilon_d(Y)$. It follows from Proposition \ref{prop:merolytic_tome_functorial_D} that the mapping $d \mapsto \mathfrak{O}[f](d)$ defines a functor 
$D \to \mathbf{Oeuv}(\mathcal{C},\kappa)$.
The narrative structure is defined as follows:
\begin{itemize}
\item[(1)] for every $n \in \kappa+1$, the set of events $J_d^n$ contains all the functors $\mathbf{1} \to \Upgamma^{f_n,\mathfrak{u}_n}(d)$;
\item[(2)] for every $n \in \kappa+1$ and functor $i:\mathbf{1} \to \Upgamma^{f_n,\mathfrak{u}_n}(d)$ in $J_d^n$, the viewpoint associated with the arrow
\[
\xymatrix{
\mathrm{col}_{i}\partial \varphi^{\mathfrak{u}_n}_d \ar@{=>}[r]\ar@{}@<+0.1ex>[r]^-{\textit{shift}}& \mathrm{col}\partial \varphi^{\mathfrak{u}_n}_d 
\ar@{=>}[r]\ar@{}@<+0.3ex>[r]^-{\textit{content}}&  \Upsilon_{d}(f_n)  \ar@{=>}[rr]^-{\Upsilon_{d}(\{f_{n}\}_{\mathfrak{u}_n})}&&  \Upsilon_{d}(f_{n+1})
}
\]
is given by the $\Upgamma$-realisation of $(f_n,\mathfrak{u}_n)$ (see Diagram (\ref{eq:combinatorial_constructors_pushout_def})) that may be inserted in the content $ \mathrm{col}\partial \varphi^{\mathfrak{u}_n}_d \Rightarrow \Upsilon_{d}(f_n)$, so that we obtain a lift $\uppi_0$ for the previous composite that makes the following diagram commute.
\[
\xymatrix@R-0.5pc@C-0.5pc{
*+!R(.5){\mathrm{col}_{\mathbf{1}}(\mathbf{A}_k \circ i)}\ar[rr]\ar[dd]_{\mathrm{col}_{\mathbf{1}}(\partial\varphi^{\mathfrak{u}_n} \circ i)}&&*+!L(.4){[f_{n-1},\mathfrak{u}_{n-1}](d)}\ar[dl]|{p_{f_{n}}^{\mathfrak{u}_{n}}(d)}\ar[dd]^{\Upsilon_d(f_{n})}\\
&[f_{n},\mathfrak{u}_{n}](d)\ar[dr]|{\Upsilon_d(f_{n+1})}&\\
*+!R(.5){\mathrm{col}_{\mathbf{1}}(\mathbf{B}_k \circ i)}\ar[ru]^{\uppi_0}\ar[rr]&&\Upsilon_d(Y)
}
\]

Note that the object $[f_{n-1}/\mathfrak{u}_{n-1}]$ must stand for $X$ when $n=0$.
\end{itemize}
By definition (see Section \ref{sssec:strict_narratives}), the previous narrative is strict.
\end{proof}

\begin{proposition}
Let $f:X \to Y$ be a $(\Upgamma,\kappa)$-factorisable morphism. For every object $\theta \in I$ and $s \in \uplambda_{\theta}(X)$, the mapping $n \mapsto \mathbb{T}^{\mathrm{op}}_{\theta,s}(f_n)$
induces an oeuvre $\mathfrak{O}^{\star}_{\theta,s}[f]:\kappa+1 \to \mathbf{Ltom}(\mathcal{C}^{{\mathbf{2}}})$ of theme $\Phi_{\theta}(Y)$ that is equipped with the structure of a narrative of degree 2.
\end{proposition}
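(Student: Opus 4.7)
\medskip

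\noindent\emph{Proof plan.} The plan is to obtain the oeuvre by composing known functors and then to extract the narrative structure directly from Remark~\ref{rem:structure_narrative_degree_2}. First, from the factorisable structure of $f:X \to Y$, Remark~\ref{rem:spc_of_f_factorisable_functor} supplies a functor $G'(f):\kappa+1 \to \mathcal{B}^{\mathbf{2}}$ with mapping rule $n \mapsto f_n$; crucially, the codomain component of every arrow $f_n \Rightarrow f_{n+1}$ in $G'(f)$ is the identity on $Y$, as visible in the bottom row of Diagram~(\ref{eq:definition_f_n_constructor_end}). By Proposition~\ref{prop:operadic_tome_functorial}, post-composition with $\mathbb{T}^{\mathrm{op}}_{\theta,s}:\mathcal{B}^{\mathbf{2}} \to \mathbf{Tome}(\mathcal{C}^{\mathbf{2}})$ gives a functor $\kappa+1 \to \mathbf{Tome}(\mathcal{C}^{\mathbf{2}})$, and since every regular morphism of tomes forgets to a loose one, one obtains a functor $\mathfrak{O}^{\star}_{\theta,s}[f]:\kappa+1 \to \mathbf{Ltom}(\mathcal{C}^{\mathbf{2}})$. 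Applying $\Phi_{\theta}(\_)_s$ to the identities on $Y$ yields identities on $\Phi_{\theta}(Y)_s$, so each loose morphism in the image has its codomain component fixed at $\Phi_{\theta}(Y)_s$; this lifts the functor to $\mathbf{Ltom}(\Phi_{\theta}(Y)_s,\mathcal{C}^{\mathbf{2}})$ and identifies $\Phi_{\theta}(Y)_s$ as the theme, matching the statement (modulo the usual slight abuse of notation between $\Phi_{\theta}(Y)$ and $\Phi_{\theta}(Y)_s$).

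Next, I define the events and the viewpoints. For each $n \in \kappa$, the tome at rank $n$ has domain $\mathtt{S}_n = \Upgamma_A(f_n)\{\theta\}_s = \mathcal{C}^{\mathbf{2}\times \mathbf{2}}(\upalpha(\theta),\Phi_{\theta}(f_n)_s)$, which I view as a discrete category. I declare $J_n^{\theta,s}$ to be the set of singleton functors $i_{\mathbf{c}}:\mathbf{1} \to \mathtt{S}_n$, one for each $\mathbf{c} \in \mathtt{S}_n$. Each such $i_{\mathbf{c}}$ is compatible with the numbered category $(\mathcal{C}^{\mathbf{2}},\kappa)$ since $\mathbf{1}$ trivially admits colimits and $|\mathbf{1}|=1\leq\kappa$. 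For the viewpoint along $i_{\mathbf{c}}$, I must exhibit a lift for the content of $\mathfrak{O}^{\star}_{\theta,s}[f]_n$ along $i_{\mathbf{c}}$ composed with $\upchi_n^{n+2}$, namely for the arrow
\[
\mathrm{col}_{i_{\mathbf{c}}}\partial\varrho_{\theta}^s \Rightarrow \Phi_{\theta}(f_n)_s \Rightarrow \Phi_{\theta}(f_{n+1})_s \Rightarrow \Phi_{\theta}(f_{n+2})_s
\]
in $(\mathcal{C}^{\mathbf{2}})^{\mathbf{2}}$. Since the factorisable structure guarantees that $(f_n,\mathfrak{u}_n)$ is ideal and $(f_{n+1},\mathfrak{u}_{n+1})=\mathrm{Rec}(f_n,\mathfrak{u}_n)$, Remark~\ref{rem:structure_narrative_degree_2} applied to the triple $(f_n,\mathfrak{u}_n,\mathbf{c})$ produces exactly such a lift, namely the diagonal arrow $\mathbb{D}_2 \to \Phi_{\theta}^{\bullet}([\lfloor f_n\rfloor_{\mathfrak{u}_n}/\mathfrak{u}_{n+1}])_s$ assembled from $\uppi_0$ and $\uppi_1(\theta,s)$, together with the obvious left leg on the domain part. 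This gives the required viewpoint.

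Finally, the fact that one cannot in general take degree $1$ here is the content of Remark~\ref{rem:structure_narrative_degree_2}: the obstruction square~(\ref{eq:obstruction_square_define_quotient}) genuinely requires the $\Upgamma$-rectification to be itself $\Upgamma$-realised before the front face of the factorising cube commutes, so one extra step in the oeuvre is needed to absorb it. The main obstacle in writing out the proof is therefore purely bookkeeping: one must patiently translate the formal language of narratives (content, universal shifting $\xi_{i_{\mathbf{c}}}$, the morphism $\upchi_n^{n+2}$ given by the composite $\mathbb{T}^{\mathrm{op}}_{\theta,s}(G'(f)(n < n+2))$) back into the concrete cube and rectification diagrams of Section~\ref{ssec:Rectification}, and check that the lift furnished by Remark~\ref{rem:structure_narrative_degree_2} coincides, after this dictionary, with a viewpoint in the sense of Section~\ref{sssec:Oeuvres_and_narratives}. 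The remarks on $\uppi_0$ (Remark~\ref{rem:about_uppi_0}) and on the degree-$2$ factorisation (Remark~\ref{rem:structure_narrative_degree_2}) were placed in the text precisely to make this verification routine.
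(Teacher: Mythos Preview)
Your proposal is correct and follows essentially the same approach as the paper: obtain the oeuvre by composing $G'(f)$ from Remark~\ref{rem:spc_of_f_factorisable_functor} with $\mathbb{T}^{\mathrm{op}}_{\theta,s}$ from Proposition~\ref{prop:operadic_tome_functorial}, take the events to be all singleton functors $\mathbf{1}\to\Upgamma_A(f_n)\{\theta\}_s$, and read off the viewpoints from the pair $(\uppi_0,\uppi_1(\theta,s))$ via Remark~\ref{rem:structure_narrative_degree_2}. The paper's own proof is terser but substantively identical.
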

\begin{proof}
The fact that the mapping $n \mapsto \mathbb{T}^{\mathrm{op}}_{\theta,s}(f_n)$ induces an oeuvre
follows from Proposition \ref{prop:operadic_tome_functorial} and Remark \ref{rem:spc_of_f_factorisable_functor}. 
One thus obtains an oeuvre $\mathfrak{O}^{\star}_f(\theta):\kappa+1 \to \mathbf{Ltom}(\mathcal{C}^{{\mathbf{2}}})$ of theme $\Phi_{\theta}(Y)$. 
The narrative structure is defined as follows:
\begin{itemize}
\item[(1)] for every $n \in \kappa+1$, the set of events $J_{\theta,s}^n$ contains all the functors $\mathbf{1} \to \Upgamma_A(f_n)\{\theta\}_s$;
\item[(2)] for every $n \in \kappa+1$ and functor $i:\mathbf{1} \to \Upgamma_A(f_n)\{\theta\}_s$ in $J_n^{\star \theta}$, the viewpoint is given by the pair $(\uppi_0,\uppi_1(\theta,s))$ defined in Section \ref{ssec:Rectification} if one replaces the functor $i_{\mathbf{c}}$ with $i$ and the $\Upgamma$-quotiented arrow $(f,\mathfrak{q})$ with $(f_n,\mathfrak{u}_n)$. As noticed in Remark \ref{rem:structure_narrative_degree_2}, the version of Diagram (\ref{eq:factorisation_combi_cat_premodel_step_k-1_cuboid}) for these parameters provides the wanted lift. 
\end{itemize}

This finishes the proof.
\end{proof}

\subsection{Combinatorial Categories}\label{sssec:Combinatorial_categories}
\smallskip

\begin{convention}
Let $\Upgamma$ be a constructor as in Section \ref{sssec:Constructor}. Recall that for every $\theta \in I$, the image $\upalpha(\theta) \in \mathcal{C}^{\mathbf{2} \times \mathbf{2}}$ encodes the diskad of a vertebra whose stem is given by $\upomega(\theta) \in \mathcal{C}^{\mathbf{2}}$. According to the conventions set in Section \ref{conv:some_more_notations_vertebrae}, if this vertebra is denoted by $v_{\upalpha}(\theta):=\|\gamma_2,\gamma_1\| \cdot \beta$, the diskad $\upalpha(\theta)$ is seen as an arrow $\gamma_2 \Rightarrow \beta\delta_1$. We shall let $\mathcal{G}\mathbf{en}(\Upgamma)$ denote the set consisting of the domain and codomain of the coseed of the vertebra $v_{\upalpha}(\theta)$ (i.e., the domain and codomain of $\gamma_1$) for every object $\theta  \in I$. Similarly, we shall let $\mathcal{C}\mathbf{osd}(\Upgamma)$ denote the set consisting of the coseeds of every vertebra $v_{\upalpha}(\theta)$ for every object $\theta \in I$. 
\end{convention}

\begin{remark}
For every object $\theta  \in  I$, the set $\mathcal{C}\mathbf{osd}(\Upgamma)$ may alternatively be seen as the set of domains of every codiskad $\mathbf{disk}(v_{\upalpha}(\theta)^{\mathrm{rv}}):\gamma_1 \Rightarrow \beta \circ \delta_2$ for every object $\theta \in I$.
\end{remark}

For every limit ordinal $\kappa$, a category $\mathcal{B}$ will be said to be \emph{$\kappa$-combinatorial} in a category $\mathcal{C}$ if it is equipped with a numbered constructor $(\Upgamma,\kappa)$ of type $D[\mathcal{B},\mathcal{C}]$ such that
\begin{itemize}
\item[(1)] every morphism in $\mathcal{B}$ is $(\Upgamma, \kappa)$-factorisable;
\item[(2)] for every object $f$ in $\mathcal{B}^{\mathbf{2}}$ and object $\theta$ in $I$, the functor 
$\Phi_{\theta,s} \circ G(f):\kappa+1 \to \mathcal{C}^{\mathbf{2}}$, which is the context functor of the oeuvre $\mathfrak{O}_{\theta,s}^{\star}[f]$, is $\mathcal{C}\mathbf{osd}(\Upgamma)$-convergent.
\end{itemize}

\begin{remark}\label{rem:context_functor_usually_convergent}
In practice, it is easy to prove that for every morphism $f:X \to Y$ in $\mathcal{B}$ and object $d$ in $D$, the context functor 
\[
\Upsilon_d \circ G(f):\kappa+1 \to \mathcal{C}
\]
of the oeuvre $\mathfrak{O}[f](d)$ is $\mathcal{G}\mathbf{en}(\Upgamma)$-convergent. This is generally due to the fact that the context functor $\Upsilon_d \circ G(f)$ is sequential and the vertebrae $\{v_{\upalpha}(\theta)\}_{\theta \in I}$ are rather `small'.
\end{remark}

\begin{example}\label{exa:convergence_functor_limit_sketch}
The following discussion continues the discussion began in Examples \ref{exa:elimination_of_quotients_sketches} and  \ref{exa:elimination_of_quotients_sketches_the_binary_relations}. In this respect, let $(D,K)$ be a limit sketch seen as a croquis and consider the system of premodels defined in Example \ref{ex:Models_for_a_sketch_system} for the category $\mathbf{Set}^{D}$. If one numbers the constructor $\Upgamma^K$ with an ordinal $\kappa \geq \omega$, then for every morphism $f:X \to Y$ in $\mathbf{Set}^{D}$ and object $d$ in $D$, the context functor 
\[
\Upsilon_d \circ G(f):\kappa+1 \to \mathbf{Set}
\]
of the oeuvre $\mathfrak{O}[f](d)$ is $U$-convergent for any finite set $U$. This comes from the fact that any sequential functor of the form $\kappa+1 \to \mathbf{Set}$ where $\kappa \geq \omega$ is convergent with respect to finite sets. Now, in the case of the constructor $\Upgamma^K$, the set $\mathcal{G}\mathbf{en}(\Upgamma^K)$ is made of the finite sets $\emptyset$, $\mathbf{1}$ and $\mathbf{2}=\mathbf{1}+\mathbf{1}$, so the functor $\Upsilon_d \circ G(f):\kappa+1 \to \mathbf{Set}$ is $\mathcal{G}\mathbf{en}(\Upgamma^K)$-convergent.
\end{example}

\begin{example}\label{exa:convergence_functor_topological_spaces}
Let $\mathbf{CW}$ denote the wide subcategory of $\mathbf{Top}$ restricted to inclusions $A \hookrightarrow B$ defining relative CW-complex structures (see \cite{Hatcher}). It is well-known that any sequential functor of the form $\kappa+1 \to \mathbf{CW}$, where $\kappa \geq \omega$, is convergent with respect to compact topological spaces (see Appendix of \cite{Hatcher}). Since topological spheres and discs are compact, it follows that the functor $\Upsilon_d \circ G(f):\kappa+1 \to \mathbf{Top}$ associated with the constructors of the systems of $\Omega$-premodels defined in Examples \ref{ex:Segal_spaces} and \ref{ex:Complete_Segal_spaces} is $\mathcal{G}\mathbf{en}(\Upgamma^K)$-convergent when $K$ is taken to be equal to $\mathbf{Seg}(\mathbf{\Delta}^{\mathrm{op}})$ and $\mathbf{Cseg}(\mathbf{\Delta}^{\mathrm{op}})$, respectively.
\end{example}

\begin{example}[Systems of premodels]\label{exa:category_of_models_are_combinatorial_in_nice_cases}
Let $(K,T,\mathcal{P},\mathtt{V})$ be a fibered system of $R$-premodels over a small category $D$ in a category $\mathcal{C}$. In addition, suppose that $\mathcal{C}$ is cocomplete and $R$ preserves colimits over every limit ordinal $\lambda \in \kappa+1$. Corollary \ref{cor:admissible_quotiented_factorisable} shows that every morphism in $\mathcal{P}$ is $(\Upgamma^K, \kappa)$-factorisable for any limit ordinal $\kappa$. Let us prove that the category $\mathcal{P}$ becomes $\kappa$-combinatorial if 
\begin{itemize} 
\item[-] $\kappa$ is a well-chosen limit ordinal;
\item[-] the statement of Remark \ref{rem:context_functor_usually_convergent} holds.
\end{itemize}

As specified by Remark \ref{rem:context_functor_usually_convergent}, for every morphism $f:(X,S,e) \Rightarrow (Y,S',e')$ in $\mathcal{P}$ and object $d$ in $D$, the context functor $G(f)(d):\kappa+1 \to \mathcal{C}$ of the oeuvre $\mathfrak{O}[f](d)$ is generally $\mathcal{G}\mathbf{en}(\Upgamma^K)$-convergent. Recall that this functor lifts to a functor landing in $\mathcal{P}$ as follows.
\[
G(f):
\left\{
\begin{array}{ccc}
\kappa+1 &\to& \mathcal{P}\\
0 &\mapsto& (X,S,e)\\
n &\mapsto& ([f_{n-1},\mathfrak{u}_{n-1}],S,e^{\mathfrak{u}_{n-1}})\\
\lambda &\mapsto& (\mathrm{col}_{n \in \lambda}[f_{n},\mathfrak{u}_{n}],S,\mathrm{col}_{n \in \lambda} e^{\mathfrak{u}_{n}})
\end{array}
\right.
\]

Let $c$ denote a cone of the form $t:\Delta_A(d) \Rightarrow d_{1}$ in $K$ where $d_{1}$ is a functor $A \to D$. 
Let also $g$ denote the functor $(\mathcal{C}^D)^{\mathbf{2}} \to \mathcal{C}^{\mathbf{2}}$ defined in Example \ref{exam:emulation:gluing_functor} where the cone `$r$' used thereof is replaced with the natural transformation $t:\Delta_{A}(d) \Rightarrow d_{1}$. By definition, the following equations hold for every ordinal $n \in \kappa+1$, cone $c \in K$, vertebra $v \in \mathtt{V}$ and element $s \in S(c)$.
\[
\Phi_{(c,v),s}\circ G(f)(n) = \mathcal{G}^K_c(G(f)(n))_s = 
\left\{
\begin{array}{ll}
g (e_{c,s})&\text{ if }n=0\\
g (e^{\mathfrak{u}_{n-1}}_{c,s})&\text{ if }n\text{ is succ.}\\
g(\mathrm{col}_{n \in \lambda} e^{\mathfrak{u}_{n}}_{c,s})&\text{ if }n\text{ is limit}.
\end{array}
\right.
\]

In the case where the inequalities $\mathbf{2} \leq \kappa$ and $|A| \leq \kappa$ hold, Example \ref{exam:lemma:convegence_of_functors:gluing_functor} says that the composite of the functor $G(f):\kappa+1 \to \mathcal{P}$ with the functor $\Phi_{(c,v),s}:\mathcal{P} \to \mathcal{C}^{\mathbf{2}}$ is $\mathcal{C}\mathbf{osd}(\mathtt{V})$-convergent.
In other words, this shows that if $\kappa$ is greater than or equal to the cardinality $|(K,T)|$ and $\mathbf{2}$, then the context functor of the oeuvre $\mathfrak{O}_f^{\star}(\theta)$ is $\mathcal{C}\mathbf{osd}(\mathtt{V})$-convergent.
\end{example}

\begin{definition}[Lifting system]
Let $\mathcal{B}$ be a combinatorial category as defined above. For every morphism $f:X \to Y$ in $\mathcal{B}$, every $\theta \in K$ and $s \in \uplambda_{\theta}(X)$, denote by $J_{\theta,s}^{\mathrm{soa}}$ the lifting system consisting of the functors $\mathbf{1} \to (\mathcal{C}^{\mathbf{2}})^{\mathbf{2}}$ picking out the codiskad $\mathbf{disk}(v_{\upalpha}(\theta)^{\mathrm{rv}}):\gamma_1 \Rightarrow \beta \circ \delta_2$.
\end{definition}

\begin{proposition}\label{prop:J_soa_agrees_with_tome}
For every morphism $f:X \to Y$ in $\mathcal{B}$, every $\theta \in K$ and $s \in \uplambda_{\theta}(X)$, the lifting system $J_{\theta,s}^{\mathrm{soa}}$ agrees with the narrative $\mathfrak{O}^{\star}_{\theta,s}[f]:\kappa+1 \to \mathbf{Ltom}(\mathcal{C}^{\mathbf{2}})$ in the numbered category $(\mathcal{C},\kappa)$.
\end{proposition}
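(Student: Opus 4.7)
My plan is to verify the agreement condition by directly unwinding definitions. Fix $k \in \kappa$; since $J_{\theta,s}^{\mathrm{soa}}$ consists of functors $\upvarphi : \mathbf{1} \to (\mathcal{C}^{\mathbf{2}})^{\mathbf{2}}$ with singleton domain, each picking out the codiskad $\mathbf{disk}(v_{\upalpha}(\theta)^{\mathrm{rv}})$, the condition to verify reduces to the following pointwise statement: any object $\psi$ of $(\mathcal{C}^{\mathbf{2}})^{\mathbf{2}}/\Phi_{\theta}(f_k)_s$ that lies above $\mathbf{disk}(v_{\upalpha}(\theta)^{\mathrm{rv}})$ under $\partial$ must arise as the image of some element of $\Upgamma_A(f_k)\{\theta\}_s$ under the operadic tome $\varrho_\theta^s$.

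First, I would note that such a $\psi$ is precisely a morphism $\mathbf{disk}(v_{\upalpha}(\theta)^{\mathrm{rv}}) \Rightarrow \Phi_{\theta}(f_k)_s$ in $(\mathcal{C}^{\mathbf{2}})^{\mathbf{2}}$, i.e.\ a commutative cube in $\mathcal{C}$ whose two opposite square-faces are (i) the underlying commutative square of the vertebra $v_{\upalpha}(\theta)$ and (ii) the square $\Phi_{\theta}(f_k)_s$. The key step is to recognise that this very cube encodes an element $\mathbf{c} \in \mathcal{C}^{\mathbf{2} \times \mathbf{2}}(\upalpha(\theta), \Phi_{\theta}(f_k)_s)$ via the canonical transposition isomorphism $(\mathcal{C}^{\mathbf{2}})^{\mathbf{2}} \cong \mathcal{C}^{\mathbf{2} \times \mathbf{2}}$: indeed, by Convention \ref{conv:some_more_notations_vertebrae}, the diskad $\upalpha(\theta)$ and the codiskad $\mathbf{disk}(v_{\upalpha}(\theta)^{\mathrm{rv}})$ encode the same underlying square of $\mathcal{C}$ and differ only in the choice of which pair of parallel edges is declared ``horizontal''.

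The functor $i : \mathbf{1} \to \Upgamma_A(f_k)\{\theta\}_s$ picking out $\mathbf{c}$ will then automatically lie in the event set $J_{\theta,s}^k$, and the equality $\varrho_\theta^s \circ i = \psi$ will follow from the fact that $\varrho_\theta^s$ is defined as the tautological inclusion $\mathbf{c} \mapsto \mathbf{c}$: both $\varrho_\theta^s \circ i$ and $\psi$ pick out the same underlying commutative cube in $\mathcal{C}$. The main (mild) obstacle will be bookkeeping: making precise which of the two natural isomorphisms $\mathcal{C}^{\mathbf{2} \times \mathbf{2}} \cong (\mathcal{C}^{\mathbf{2}})^{\mathbf{2}}$ is implicitly used in the inclusion defining $\varrho_\theta^s$, and checking that it is exactly the one which views $\upalpha(\theta)$ in $(\mathcal{C}^{\mathbf{2}})^{\mathbf{2}}$ as the codiskad, so that the condition $\partial \psi = \upvarphi$ is correctly matched. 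Once that identification is pinned down the verification is immediate, and the statement follows for every $k \in \kappa$ uniformly.
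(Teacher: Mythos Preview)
Your proposal is correct and follows essentially the same approach as the paper's own proof: both argue that a lift $\psi$ of the codiskad along $\partial$ is precisely an element of $\mathcal{C}^{\mathbf{2}\times\mathbf{2}}(\upalpha(\theta),\Phi_{\theta}(f_k)_s)=\Upgamma_A(f_k)\{\theta\}_s$, hence lies in the image of the tautological operadic tome $\varrho_\theta^s$, and the corresponding $i:\mathbf{1}\to\Upgamma_A(f_k)\{\theta\}_s$ belongs to the event set $J_{\theta,s}^k$. Your added remark about the transposition bookkeeping between diskad and codiskad is a welcome clarification of a point the paper leaves implicit.
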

\begin{proof}
To show that the lifting system $J_{\theta,s}^{\mathrm{soa}}$ in $(\mathcal{C},\kappa)$ agrees with the narrative $\mathfrak{O}^{\star}_{\theta,s}[f]$, which is generated by the operadic tomes $\varrho_{\theta}^s:\Upgamma_A(f_n)\{\theta\}_s \to (\mathcal{C}^{\mathbf{2}})^{\mathbf{2}}/\Phi_{\theta}(f_n)_s$ for $n$ running over $\kappa+1$, consider an ordinal $n \in \kappa+1$ and suppose that the functor $\upvarphi:\mathbf{1} \to (\mathcal{C}^{\mathbf{2}})^{\mathbf{2}}$ in $J_{\theta,s}^{\mathrm{soa}}$ that admits a lift $\psi:\mathbf{1} \to (\mathcal{C}^{\mathbf{2}})^{\mathbf{2}}/\Phi_{\theta}(f_n)_s$ along $\partial$ as follows.
\[
\xymatrix{
& (\mathcal{C}^{\mathbf{2}})^{\mathbf{2}}/\Phi_{\theta}(f_n)_s\ar[d]^{\partial}\\
\mathbf{1}\ar@{-->}[ru]^{\psi}\ar[r]_-{\upvarphi}&(\mathcal{C}^{\mathbf{2}})^{\mathbf{2}}
}
\]

By definition, the functor $\psi$ picks out an element in $\mathcal{C}^{\mathbf{2} \times \mathbf{2}}(\upalpha(\theta),\Phi_{\theta}(f_n)_s)$ which is therefore an element of $\Upgamma_A(f_n)\{\theta\}_s$. This means that we found a functor $i:\mathbf{1} \to \Upgamma_A(f_n)\{\theta\}_s$ in the set of events $J_{\theta,s}^n$ whose composite with $\varrho_{\theta}^s$ gives the lift $\psi$ as follows.
\[
\xymatrix{
\Upgamma_A(f_n)\{\theta\}_s\ar[r]^-{\varrho_{\theta}^s}& (\mathcal{C}^{\mathbf{2}})^{\mathbf{2}}/\Phi_{\theta}(f_n)_s\ar[d]^{\partial}\\
\mathbf{1}\ar[u]^-{i}\ar@{-->}[ru]^{\psi}\ar[r]_-{\upvarphi}&(\mathcal{C}^{\mathbf{2}})^{\mathbf{2}}
}
\]

This exactly shows the statement of the proposition.
\end{proof}

\begin{theorem}\label{theo:factorisation_SOA}
Let $\kappa$ be a limit ordinal and $\mathcal{B}$ be a $\kappa$-combinatorial category as defined above. Every morphism $f:X \to Y$ may be factorised into two arrows
\[
\xymatrix{
X\ar[r]^-{\chi_0^{\kappa}(f)}&G(f)(\kappa)\ar[r]^-{f_{\kappa}}&Y
}
\]
such that, for every $\theta \in I$ and $s \in \uplambda_{\theta}(X)$, the arrow $\Phi_{\theta}(f_{\kappa})_s:\Phi_{\theta}(G(f)(\kappa))_s \to \Phi_{\theta}(Y)_s$ in $\mathcal{C}^{\mathbf{2}}$ has the rlp with respect to the codiskad of $v_{\upalpha}(\theta)$ and, for every object $d$ in $D$, the arrow $\Upsilon_d(\chi_0^{\kappa}(f)):\Upsilon_d(X) \to \Upsilon_d(G(f)(\kappa))$ has the llp with respect to every morphism in $\mathbf{rlp}_{\kappa}(E_n(\mathfrak{O}[f](d)))$ (see end of Section \ref{sssec:Oeuvres_and_narratives}) for every $n \in \kappa+1$.
\end{theorem}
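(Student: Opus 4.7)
The factorisation itself comes for free: by $\kappa$-combinatoriality every morphism $f:X \to Y$ is $(\Upgamma,\kappa)$-factorisable, which equips $f$ with a transfinite sequence $(f_n,\mathfrak{u}_n)_{n \in \kappa+1}$ of ideal $\Upgamma$-quotiented arrows and an associated sequential context functor $G(f):\kappa+1 \to \mathcal{B}$ (Convention \ref{conv:construction_G_f_sequential_functor} and Remark \ref{rem:spc_of_f_factorisable_functor}). The factorisation $X \xrightarrow{\chi_0^\kappa(f)} G(f)(\kappa) \xrightarrow{f_\kappa} Y$ is then precisely the one induced by this transfinite composition at the limit stage $\kappa$.

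The first lifting property I would derive by applying our general small object argument (Proposition \ref{prop:small_object_argument_general}) to the narrative of degree $2$ obtained from the operadic tome. Fix $\theta \in I$ and $s \in \uplambda_\theta(X)$; the operadic tomes $\mathbb{T}^{\mathrm{op}}_{\theta,s}(f_n)$ assemble (via Proposition \ref{prop:operadic_tome_functorial}, Remark \ref{rem:spc_of_f_factorisable_functor} and Remark \ref{rem:structure_narrative_degree_2}) into a narrative $\mathfrak{O}^\star_{\theta,s}[f]:\kappa+1 \to \mathbf{Ltom}(\mathcal{C}^{\mathbf{2}})$ of theme $\Phi_\theta(Y)_s$ and degree~$2$, whose context functor is $\Phi_\theta \circ G(f):\kappa+1 \to \mathcal{C}^{\mathbf{2}}$ and whose terminal arrow is $\Phi_\theta(f_\kappa)_s$. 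By Proposition \ref{prop:J_soa_agrees_with_tome} the lifting system $J^{\mathrm{soa}}_{\theta,s}$ agrees with this narrative, and since each $\upvarphi \in J^{\mathrm{soa}}_{\theta,s}$ is a functor $\mathbf{1} \to (\mathcal{C}^{\mathbf{2}})^{\mathbf{2}}$ picking out the codiskad $\mathbf{disk}(v_\upalpha(\theta)^{\mathrm{rv}})$ whose domain in $\mathcal{C}^{\mathbf{2}}$ is the coseed $\gamma_1$, the required (uniform) $(\mathrm{dom}\circ\upvarphi)$-convergence is exactly the $\mathcal{C}\mathbf{osd}(\Upgamma)$-convergence asserted in condition (2) of $\kappa$-combinatoriality. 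Proposition \ref{prop:small_object_argument_general} then yields $\Phi_\theta(f_\kappa)_s \in \mathbf{rlp}(J^{\mathrm{soa}}_{\theta,s})$, which unpacks precisely as the rlp of $\Phi_\theta(f_\kappa)_s$ against the codiskad of $v_\upalpha(\theta)$.

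The second lifting property is more direct: for each object $d \in D$, Proposition \ref{prop:narrative_constructor_mathfrak_I_strict} provides a \emph{strict} narrative $\mathfrak{O}[f](d):\kappa+1 \to \mathbf{Ltom}(\mathcal{C})$ of theme $\Upsilon_d(Y)$ and degree $1$ whose context functor is $\Upsilon_d \circ G(f)$ and for which the canonical arrow $\upchi_0^\kappa:G_0 \to G_\kappa$ from Equation~(\ref{eq:oeuvre_factorisation}) identifies with $\Upsilon_d(\chi_0^\kappa(f))$. Applying Proposition \ref{prop:llp_soa_with_respect_to_tome_event} to this strict narrative immediately gives that every morphism having rlp against $E_n(\mathfrak{O}[f](d))$ for each $n \in \kappa$ has rlp against $\Upsilon_d(\chi_0^\kappa(f))$, which is the desired llp statement.

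The main obstacle I anticipate is not any one step but the careful dictionary between the abstract SOA framework of Section \ref{sec:From_narratives_to_combinatorial_cat} and the concrete data carried by a factorisable morphism: verifying that the operadic narrative is well-defined with degree exactly $2$ (relying on the ideality of each $(f_n,\mathfrak{u}_n)$ so that the lift of Equation~(\ref{eq:composite_structure_for_narrative_degree_2}) exists), that the domains arising from $J^{\mathrm{soa}}_{\theta,s}$ match the $\mathcal{C}\mathbf{osd}(\Upgamma)$-convergence hypothesis, and that the terminal object of the oeuvre genuinely coincides with $\Phi_\theta(f_\kappa)_s$, respectively $\Upsilon_d(G(f)(\kappa))$. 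Once this bookkeeping is discharged, both conclusions reduce to single invocations of Propositions \ref{prop:small_object_argument_general} and \ref{prop:llp_soa_with_respect_to_tome_event}.
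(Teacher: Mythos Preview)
Your proposal is correct and follows essentially the same route as the paper's own proof: the factorisation is read off from the functor $G'(f)$ of Remark~\ref{rem:spc_of_f_factorisable_functor}; the rlp statement for $\Phi_\theta(f_\kappa)_s$ is obtained by combining Proposition~\ref{prop:J_soa_agrees_with_tome} with Proposition~\ref{prop:small_object_argument_general} using the $\mathcal{C}\mathbf{osd}(\Upgamma)$-convergence hypothesis; and the llp statement for $\Upsilon_d(\chi_0^\kappa(f))$ is obtained from Propositions~\ref{prop:narrative_constructor_mathfrak_I_strict} and~\ref{prop:llp_soa_with_respect_to_tome_event}. Your additional paragraph on the bookkeeping required to match the abstract SOA framework to the operadic narrative is accurate and a fair reflection of where the work lies.
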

\begin{proof}
The factorisation is given by the image of the arrow $0 \to \kappa$ via the functor $G'(f):\kappa+1 \to \mathcal{B}^{\mathbf{2}}$ defined in Remark \ref{rem:spc_of_f_factorisable_functor}. The statement on the arrow $\Phi_{\theta}(f_{\kappa})_s:\Phi_{\theta}(G(f)(\kappa))_s \to \Phi_{\theta}(Y)_s$ follows from Propositions \ref{prop:small_object_argument_general}  and  \ref{prop:J_soa_agrees_with_tome}  since the context functor 
$$\Phi_{\theta} \circ G(f):\kappa+1 \to \mathcal{C}^{\mathbf{2}}$$
of the oeuvre $\mathfrak{O}_{\theta,s}^{\star}[f]$ is $\mathcal{C}\mathbf{osd}(\mathtt{V})$-convergent (and hence $(\mathrm{dom} \circ \upvarphi)$-convergent for every $\upvarphi \in J^{\mathrm{soa}}_{\theta,s}$). The~statement on the arrow $\Upsilon_d(\chi_0^{\kappa}(f)):\Upsilon_d(X) \to \Upsilon_d(G(f)(\kappa))$ follows from Propositions \ref{prop:llp_soa_with_respect_to_tome_event} and  \ref{prop:narrative_constructor_mathfrak_I_strict}, which ensures that $\mathfrak{O}[f](d)$ is a strict narrative for every object $d \in D$.
\end{proof}

Let $\kappa$ be a limit ordinal. A category $\mathcal{C}$ will be said to be \emph{trivially $\kappa$-combinatorial} over a set $\mathtt{G} \subseteq \mathrm{Obj}(\mathcal{C}^{\mathbf{2}})$ if it is $\kappa$-combinatorial when equipped with the numbered constructor $(\Upgamma^{\mathbf{1}},\kappa)$ associated with the obvious category of $\mathrm{id}_{\mathcal{C}}$-premodels $\mathcal{C}^{\mathbf{1}} \hookrightarrow \mathbf{Pr}(\mathbf{1},\mathrm{id},\mathrm{id})$ whose set of vertebrae consists of the following degenerate vertebrae for every arrow $\delta \in \mathtt{G}$.
\[
\xymatrix{
\mathbb{S}\ar@{}[rd]|->>{\text{\huge{\rotatebox[origin=c]{-90}{$\llcorner$}}}}\ar[d]_{\delta}\ar@{=}[r]&\mathbb{S}\ar[d]^{\delta}&\\
\mathbb{D}\ar@{=}[r]&\mathbb{D}\ar@{=}[r]&\mathbb{D}
}
\] 

\begin{corollary}[Quillen's small object argument]\label{cor:factorisation_SOA}
Let $\mathcal{C}$ be a trivially $\kappa$-combinatorial category over a set of arrows $\mathtt{G}$ in $\mathcal{C}$.
Every morphism $f:X \to Y$ in $\mathcal{C}$ may be factorised into two arrows $\chi_0^{\kappa}(f):X \to G(f)(\kappa)$ and
$f_{\kappa}:G(f)(\kappa) \to Y$ where the arrow $f_{\kappa}$ is in the class
$\mathbf{rlp}(\mathtt{G})$ and the arrow $\chi_0^{\kappa}(f)$ is in the class $\mathbf{llp}(\mathbf{rlp}(\mathtt{G}))$.
\end{corollary}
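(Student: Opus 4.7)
The plan is to deduce the corollary directly from Theorem \ref{theo:factorisation_SOA} by unpacking its conclusions in the trivial case. Since $\mathcal{C}$ is trivially $\kappa$-combinatorial over $\mathtt{G}$, the theorem applies to the constructor $(\Upgamma^{\mathbf{1}},\kappa)$ and produces the factorisation $X \to G(f)(\kappa) \to Y$ of any morphism $f:X \to Y$ in $\mathcal{C}$. What remains is to translate conditions (a) and (b) of the theorem into the two membership statements $f_\kappa \in \mathbf{rlp}(\mathtt{G})$ and $\chi_0^\kappa(f) \in \mathbf{llp}(\mathbf{rlp}(\mathtt{G}))$.

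First I would identify $\Phi_\theta(f_\kappa)_s$ in the trivial case. Since $D = \mathbf{1}$, the functor $\Upsilon_d$ becomes the identity on $\mathcal{C}$; since $R = \mathrm{id}$ and the associated maps $e_{c,s}$ are identities, the arrow $\mathcal{G}^K_c(P)_s$ reduces to $\mathrm{id}_{P(\ast)}$, so $\Phi_\theta(f_\kappa)_s$ unfolds to the square in $\mathcal{C}^{\mathbf{2}}$ whose horizontal edges are both identities and whose vertical edges are both $f_\kappa$. A short computation of the pushout defining the degenerate vertebra $v_{\upalpha}(\theta) = \|\delta,\mathrm{id}\|\cdot \mathrm{id}$ shows that its codiskad is, dually, the square whose horizontal edges are both $\delta$ and whose vertical edges are both identities. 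Unwinding the naturality conditions then exhibits a bijection between morphisms in $\mathcal{C}^{\mathbf{2}}$ from the codiskad to $\Phi_\theta(f_\kappa)_s$ and commutative squares from $\delta$ to $f_\kappa$ in $\mathcal{C}$, and under this bijection a diagonal lift in $\mathcal{C}^{\mathbf{2}}$ for the former corresponds to a diagonal lift in $\mathcal{C}$ for the latter. Condition (a) of Theorem \ref{theo:factorisation_SOA} therefore says precisely that $f_\kappa \in \mathbf{rlp}(\mathtt{G})$.

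Next I would verify that the lifting system $E_n(\mathfrak{O}[f](d))$ codes the same lifting problems as $\mathtt{G}$. By Proposition \ref{prop:narrative_constructor_mathfrak_I_strict}, the events at rank $n$ are all functors $i:\mathbf{1} \to \Upgamma^{f_n,\mathfrak{u}_n}(d)$, and the induced lifting system consists of the composites $\partial \varphi^{\mathfrak{u}_n} \circ i$. Because $\mathcal{P} = \mathcal{C}^{\mathbf{1}}$ with $R$, $e$, $S$ all trivial, Remark \ref{rem:J_A_could_have_been_I} applies and we may take $J_A = I$. A direct computation using Formula (\ref{eq:analytic_tome_formula}) then shows that the analytic tome sends each tuple $(\vartheta,t,s,\mathbf{c})$ to the commutative square $\mathbf{c}^\circ:\delta \Rightarrow f_n$, whose domain (under $\partial$) is $\delta$ itself. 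The quotient tome, on the other hand, sends each of its tuples to a morphism in $\mathcal{C}^{\mathbf{2}}$ whose domain is $\upnu(\vartheta) = \mathbf{stem}(v_{\upalpha}(\theta)) = \mathrm{id}_{\mathbb{D}}$, against which the rlp is vacuous. It follows that $\mathbf{rlp}(E_n(\mathfrak{O}[f](d))) = \mathbf{rlp}(\mathtt{G})$, and condition (b) of Theorem \ref{theo:factorisation_SOA} yields $\chi_0^\kappa(f) \in \mathbf{llp}(\mathbf{rlp}(\mathtt{G}))$.

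The main obstacle will be the careful bookkeeping in these degenerate identifications, especially the translation between morphisms in $\mathcal{C}^{\mathbf{2} \times \mathbf{2}}$ between two highly degenerate squares and plain commutative squares from $\delta$ to $f_\kappa$ in $\mathcal{C}$, and the verification that the analytic and quotient links reduce to the identity in this setting. Once these identifications are secured, both conclusions of Theorem \ref{theo:factorisation_SOA} specialise cleanly to the two Quillen-style statements of the corollary.
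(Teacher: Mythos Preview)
Your approach is correct and coincides with the paper's: both apply Theorem~\ref{theo:factorisation_SOA} and then observe that, in the trivially $\kappa$-combinatorial setting, the right-lifting condition on $\Phi_\theta(f_\kappa)_s$ against the codiskad unwinds to $f_\kappa \in \mathbf{rlp}(\mathtt{G})$, while the equality $\mathbf{rlp}(E_n(\mathfrak{O}[f](d))) = \mathbf{rlp}(\mathtt{G})$ gives $\chi_0^\kappa(f) \in \mathbf{llp}(\mathbf{rlp}(\mathtt{G}))$. The paper's proof is terser---it asserts the equality ``because of the triviality of our data'' without elaboration---whereas you spell out the identifications (codiskad as $(\delta,\delta):\mathrm{id}_{\mathbb{S}} \Rightarrow \mathrm{id}_{\mathbb{D}}$, $\Phi_\theta(f_\kappa)_s$ as $(f_\kappa,f_\kappa):\mathrm{id} \Rightarrow \mathrm{id}$, the analytic-tome arrows having domain $\upmu(\vartheta)=\delta$, and the quotient-tome arrows having domain $\upnu(\vartheta)=\mathrm{id}_{\mathbb{D}}$), which is exactly the bookkeeping the paper leaves implicit.
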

\begin{proof}
Theorem \ref{theo:factorisation_SOA} implies that every morphism $f:X \to Y$ in $\mathcal{C}$ may be factorised into two arrows $\chi_0^{\kappa}(f):X \to G(f)(\kappa)$ and
$f_{\kappa}:G(f)(\kappa) \to Y$ where the arrow $f_{\kappa}$ is in the class $\mathbf{rlp}(\mathtt{G})$
and the arrow $\chi_0^{\kappa}(f)$ has the llp with respect to every morphism in $\mathbf{rlp}(E_n(\mathfrak{O}[f](d)))$ for every $n \in \kappa+1$. However, because of the triviality of our data, it follows that the equality $\mathbf{rlp}(E_n(\mathfrak{O}[f](d))) = \mathbf{rlp}(\mathtt{G})$ holds for every $n \in \kappa+1$.
\end{proof}

\begin{remark}\label{rem:localisation}
For every system of $R$-premodels $(K,T,\mathcal{P},\mathtt{V})$ where: $\mathcal{C}$ is cocomplete; $R:\mathcal{C} \to \mathcal{C}$ preserves colimits over every limit ordinal $\lambda \in \kappa+1$ and $\mathcal{P} \hookrightarrow \mathbf{Pr}_{\mathcal{C}}(K,T,R)$ is combinatorial (see Example \ref{exa:category_of_models_are_combinatorial_in_nice_cases}), Theorem \ref{theo:factorisation_SOA} provides every arrow $!:(X,S,e) \Rightarrow \mathbf{1}$ in $\mathcal{P}$ with a factorisation
\[
\xymatrix{
(X,S,e)\ar@{=>}[r]^-{\chi(f)}&G(X,S,e)\ar@{=>}[r]^-{!}&\mathbf{1}
}
\]
where $G(X,S,e)$ is an $R$-model and the arrow $\chi(f)$ satisfies nice lifting properties. In the case of a category of premodels for a sketch,
Example \ref{exa:elimination_of_quotients_sketches} shows that the `localisation' $(X,S,e) \Rightarrow G(X,S,e)$ admits a presentation as given in Theorem \ref{theo:Elimination_quotient_intro}. There now remains to show that the arrow $(X,S,e) \Rightarrow G(X,S,e)$ is universal. This is the goal of the next and last section.
\end{remark}

\section{Universal Property}\label{sec:Universal_property}

This section discusses the universal properties of the factorisations provided by Theorem \ref{theo:factorisation_SOA}. To do so, we shall require our constructor to be `normal' (see Section \ref{ssec:Normal_constructor}). An existential resut is given in Theorem \ref{th:admissible_quotiented_factorisable_model} while a universal one is given in Theorem \ref{th:localisation_universal}.

\subsection{Normal Constructors}\label{ssec:Normal_constructor}
A constructor $\Upgamma$ of type $D[\mathcal{B},\mathcal{C}]$ will be said to be \emph{normal} if 
\begin{itemize}
\item[(1)] the categories $\mathcal{B}$ and $\mathcal{C}$ possess terminal objects (denoted by $\mathbf{1}$);
\item[(2)] for every $\theta \in I$ and $s \in \uplambda_{\theta}(X)$, the functor $\Phi_{\theta,s}:\mathcal{B} \to \mathcal{C}^{\mathbf{2}}$ preserves $\mathbf{1}$.
\item[(3)] the mappings $f \mapsto J_A$ and $f \mapsto J_Q$ (see Section \ref{sssec:Constructor}) induce functors from $\mathcal{B}^{\mathbf{2}}$ to $\mathbf{Set}$ that extend the mappings  $f \mapsto \upepsilon$, $f \mapsto \upchi$, $f \mapsto \upiota$ and $f \mapsto \updelta$ into obvious functors from $\mathcal{B}^{\mathbf{2}}$  to $\mathbf{Cat}/D$, $\mathbf{Cat}/D$, $\mathbf{Cat}/I$ and $\mathbf{Cat}/I$, respectively;
\end{itemize}

\begin{example}\label{exa:Constructor_normal_for_systems}
The constructor associated with a system of $R$-premodels $(K,T,\mathcal{P},\mathtt{V})$ over a small category $D$ in a category $\mathcal{C}$ that possesses a terminal object $\mathbf{1}$ is normal. Item (1) is straightforward and item (2) follows from the fact that $R:\mathcal{C} \to \mathcal{C}$ preserves any terminal object by adjointness. The functoriality of the sets $J_A$ and $J_Q$ is induced by the action of a morphism $(f,a):(X,S,e) \Rightarrow (X,S',e')$ on the sets $S$ and $S'$ (see Section \ref{sssec:Constructor_system_premodels}) while the functoriality of the functors $\upepsilon$, $\upchi$, $\upiota$ and $\updelta$ is straightforward .
\end{example}

\begin{remark}\label{rem:analytic_quotient_functors_functorial_in_B}
A consequence of item (3) is that the mappings $(f,d) \mapsto \Upgamma_A(f)(d)$ and $(f,d) \mapsto \Upgamma_Q(f)(d)$
now induce functors $\Upgamma_A(\_)(\_):\mathcal{B}^{\mathbf{2}} \times D \to \mathbf{Set}$ and $\Upgamma_A(\_)(\_):\mathcal{B}^{\mathbf{2}} \times D \to \mathbf{Set}$.
\end{remark}

\subsection{Quasi-Models and Models}
Let $\Upgamma$ be a normal constructor of type $D[\mathcal{B},\mathcal{C}]$. 
For every object $X$ in $\mathcal{B}$, there is an obvious morphism in $\mathcal{B}^{\mathbf{2}}$ given by the following commutative square.
\begin{equation}\label{eq:id_X:shrink_X:in_arrow_B}
\xymatrix@R-0.6pc@C-0.6pc{
X\ar@{=}[rr]\ar@{=}[d]&&X\ar[d]^{!_X}\\
X\ar[rr]^{!_X}&&\mathbf{1}
}
\end{equation}

Applying the functor $\Upgamma_A(\_)(\_):\mathcal{B}^{\mathbf{2}} \times D \to \mathbf{Set}$ (see Remark \ref{rem:analytic_quotient_functors_functorial_in_B}) on this morphism provides the following natural transformation in $\mathbf{Set}$ over $D$, which is natural in $X \in \mathcal{B}$.
\[
\wp_A(X,\_):\Upgamma_A(\mathrm{id}_X)(\_) \Rightarrow \Upgamma_A(!_X)(\_)
\]

An object $X$ in $\mathcal{B}$ will be said to be a \emph{quasi-model of $\Upgamma$} if for every object $d$ in $D$, the function
$\wp_A(X,d):\Upgamma_A(\mathrm{id}_X)(d) \Rightarrow \Upgamma_A(!_X)(d)$ is surjective. A \emph{model of $\Upgamma$} is a
quasi-model $X$ of $\Upgamma$ that is equipped with a natural section $\sigma(\_):\Upgamma_A(!_X)(\_) \to \Upgamma_A(\mathrm{id}_X)(\_)$ of the natural surjection $\wp_A(X,\_):\Upgamma_A(\mathrm{id}_X)(\_) \to \Upgamma_A(!_X)(\_)$. Such a structure will be denoted as a pair $(X,\sigma)$.

\begin{remark}\label{rem:quasi_model_no_choice}
It follows from the definition of a surjection that an object $X \in \mathcal{B}$ is a quasi-model of $\Upgamma$ if and only if for every object $d \in D$ and tuple $(\vartheta,t,s,\mathbf{c})$ in $\Upgamma_A(!_X)(d)$, every commutative cube $\mathbf{c} \in \Upgamma_A(!_X)\{\iota(\vartheta)\}_s$ admits a lift as follows (where $\theta$ stands for $\iota(\vartheta)$).
\[
\xymatrix@R-0.5pc@C-0.5pc{
&&\cdot\ar[rr]|{\Phi_{\theta}(X)_s}\ar[dd]|\hole&&\cdot\ar[dd]\ar@<+1.5ex>@{}[rdd]|>>>>>>>>>>{\leftarrow\Phi_{\theta}(!_X)_s}&\\
&\cdot\ar@<-1ex>@{}[ddl]|>>>>>>>>>>{\!\!\!\mathbf{disk}(v)\rightarrow}\ar[rr]^>>>>>>{\gamma_1}\ar[dd]\ar[ru]&&\cdot\ar[dd]|<<<<<<<{\beta \circ \delta_1}\ar[ru]&&\\
&&\mathbf{1}\ar[rr]|\hole|>>>>>\hole&&\mathbf{1}&\\
&\cdot\ar[uuur]_{h}|>>>>>>>>>\hole\ar[rr]|{\beta \circ \delta_2}\ar[ru]&&\cdot\ar[uuur]_{\overline{h}}\ar[ru]&&\\
}
\]
\end{remark}

\begin{remark}\label{rem:algebraic_model_choice}
The difference between a quasi-model and a model is that the lifts are chosen. Specifically, any model $(X,\sigma)$ of $\Upgamma$ is determined by a collection of lifts $(h,\overline{h})$ chosen for every object $\vartheta \in J_A$, element $s \in \uplambda_{\iota(\vartheta)}(X)$ as follows.
\[
\xymatrix@R-0.5pc@C-0.5pc{
&&\cdot\ar[rr]|{\Phi_{\theta}(X)}\ar[dd]|\hole&&\cdot\ar[dd]\ar@<+1.5ex>@{}[rdd]|>>>>>>>>>>{\leftarrow\Phi_{\theta}(!_X)}&\\
&\cdot\ar[rr]^>>>>>>{\gamma_1}\ar[dd]_<<<<<<<{\gamma_2}\ar[ru]&&\cdot\ar[dd]|<<<<<<<{\beta \circ \delta_1}\ar[ru]&&\\
&&\mathbf{1}\ar[rr]|\hole|>>>>>\hole&&\mathbf{1}&\\
&\cdot\ar[uuur]_{h}|>>>>>>>>>\hole\ar[rr]|{\beta \circ \delta_2}\ar[ru]&&\cdot\ar[uuur]_{\overline{h}}\ar[ru]&&\\
}
\]

Indeed, if one denotes the previous commutative cube by $\mathbf{c}$ and its upper commutative part seen as a degenerate commutative cube by $\mathtt{lift}(\vartheta,s,\mathbf{c})$, the section $\sigma(\_):\Upgamma_A(!_X)(\_) \to \Upgamma_A(\mathrm{id}_X)(\_)$ is determined by the following mapping rules. 
\[
(\vartheta,t,s,\mathbf{c}) \mapsto (\vartheta,t,s,\mathtt{lift}(\vartheta,s,\mathbf{c}))
\]

The fact that such a mapping defines a natural section of the natural surjection $\wp_A(X,\_):\Upgamma_A(\mathrm{id}_X)(\_) \Rightarrow \Upgamma_A(!_X)(\_)$ is straightforward. Conversely, if a natural section $\wp_A(X,\_)$ was not of this form, we could find two arrows $t:\upepsilon(\theta) \to d$ and $t':\upepsilon(\theta) \to d'$ such that the elements $(\vartheta,t,s,\mathbf{c})$ and $(\vartheta,t',s,\mathbf{c})$ would be sent to elements of the following form via the section $\sigma$.
\[
(\vartheta,t,s,\mathtt{lift}(\vartheta,t,s,\mathbf{c}))\quad\quad\quad(\vartheta,t',s,\mathbf{c},\mathtt{lift}(\vartheta,t',s,\mathbf{c}))
\]

However, the naturality of $\sigma(\_)$ above the arrows $t$ and $t'$ also implies the equalities 
\[
\mathtt{lift}(\vartheta,t,s,\mathbf{c}) = \mathtt{lift}(\theta,v,\mathrm{id}_{\upepsilon(\vartheta)},\mathbf{c}) = \mathtt{lift}(\vartheta,t',s,\mathbf{c}),
\]
which show that the section has to be of the form previously given in the remark.
\end{remark}

\begin{example}[System of premodels]\label{exam:models_are_models}
Let $(K,T,\mathcal{P},\mathtt{V})$ be a system of $R$-premodels as in Example \ref{exa:Constructor_normal_for_systems}.
The $R$-models are exactly given by the quasi-models of $\Upgamma^K$.
By Remark \ref{rem:algebraic_model_choice}, it is always possible to turn a quasi-model $X$ into a model $(X,\sigma)$ by using the axiom of choice on the different possible lifts. 
\smallskip
\end{example}

Let now $A$ be an object in $\mathcal{B}$. An \emph{$A$-quasi-model} for the constructor $\Upgamma$ consists of a morphism $f:A \to X$ in $\mathcal{B}$ where $X$ is equipped with the structure of a quasi-model $X$. Similarly, an \emph{$A$-model} for the constructor $\Upgamma$ consists of a morphism $f:A \to X$ in $\mathcal{B}$ where $X$ is equipped with the structure of a model $(X,\sigma)$. The latter structure will be denoted as a triple $(f,X,\sigma)$.

\subsection{Quotiented Models}
Let $\Upgamma$ be a normal constructor of type $D[\mathcal{B},\mathcal{C}]$ and $A$ be an object in $\mathcal{B}$. A \emph{$\Upgamma$-quotiented $A$-quasi-model} consists of an $\Upgamma$-quotiented arrow $(!_A,\mathfrak{u}):A \to \mathbf{1}$ in $\mathcal{B}$ together with an $A$-quasi-model $f:A \to X$. Such a structure will be denoted as an arrow $f:(A,\mathfrak{u}) \to X$.

\begin{remark}[In preparation of Definition \ref{def:quotiented_models_associated_image_quotient}]
In Definition \ref{def:quotiented_models_associated_image_quotient}, we define two new quotients that relies on the definition of $\mathfrak{u}$. There is the  quotient denoted by $f[\mathfrak{u}]$, which should be thought of as the collections of all commutative squares contained in $\mathfrak{u}$ (when viewed  in $\mathcal{C}^{\mathbf{2}}$) whose top horizontal arrows are post-composed with the morphism $\Phi_{\theta}(f):\Phi_{\theta}(A) \to \Phi_{\theta}(X)$ (in $\mathcal{C}^{\mathbf{2}}$) while the bottom horizontal arrows consist of identities on the terminal object $\mathbf{1}$. The other quotient, denoted by $(f|\mathfrak{u})$, should be thought of as the collections of commutative squares contained in $f[\mathfrak{u}]$ that admit lifts.
\end{remark}

\begin{definition}\label{def:quotiented_models_associated_image_quotient}
For every $\Upgamma$-quotiented quasi-model $f:(A,\mathfrak{u}) \to X$ where $\mathfrak{u}$ is given by a collection of functors $\{\mathfrak{u}_{\theta}^s\{\_\}:E_s(\theta) \to \mathbf{Set}\}_{\theta \in I, s \uplambda_{\theta}(A)}$, we may define a collection of functors
\[
f[\mathfrak{u}]:=\{f[\mathfrak{u}]_{\theta}^s\{\_\}:E_s(\theta) \to \mathbf{Set}\}_{\theta \in I, s \in \uplambda_{\theta}(A)}
\]
whose component at the parameters $\theta \in I$ and $s \in \uplambda_{\theta}(A)$ is given by the following image factorisation for every $\upsilon \in E_s(\theta)$.
\[
\xymatrix@C+1pc{
&f[\mathfrak{u}]_{\theta}^s\{\upsilon\}\ar[rd]^-{\subseteq}&\\
\mathfrak{u}_{\theta}^s\{\upsilon\} \ar[r]_-{\subseteq} \ar[ru]^{f^{*}}&\Upgamma_Q(!_A)\{\theta\}_s\ar[r]_-{\Upgamma_Q(f)\{\theta\}_s}&\Upgamma_Q(!_X)\{\theta\}_{\uplambda_{\theta}(f)(s)}
}
\]

Then, we may define another collection of functors of the form
\[
(f|\mathfrak{u}):=\{(f|\mathfrak{u})_{\theta}^s\{\_\}:E_s(\theta) \to \mathbf{Set}\}_{\theta \in I, s \in \uplambda_{\theta}(A)}
\]
whose component at the parameters $\theta \in I$ and $s \in \uplambda_{\theta}(A)$ is obtained by pulling back the inclusion $f[\mathfrak{u}]_{\theta}^s\{\upsilon\} \hookrightarrow \Upgamma_Q(!_X)\{\theta\}_{\uplambda_{\theta}(f)(s)}$ along the image of the morphism given in Equation (\ref{eq:id_X:shrink_X:in_arrow_B}) via the functor $\Upgamma_Q(\_)\{\theta\}_{\uplambda_{\theta}(f)(s)}$ (see diagram below).
\[
\xymatrix{
*+!R(0.7){(f|\mathfrak{u})_{\theta}^s\{\upsilon\}}\ar[d]_{\subseteq}\ar[rr]^{\wp_{\mathfrak{u},f}}\ar@{}[rd]|<<{\text{\huge{\rotatebox[origin=c]{90}{$\llcorner$}}}}&&
*+!L(0.7){f[\mathfrak{u}]_{\theta}^s\{\upsilon\}}\ar[d]^{\subseteq}\\
*+!R(0.7){\Upgamma_Q(\mathrm{id}_X)\{\theta\}_{\uplambda_{\theta}(f)(s)}}\ar[rr]^-{\Upgamma_Q(\mathrm{id}_X)\{\theta\}_{\uplambda_{\theta}(f)(s)}}&&*+!L(0.7){\Upgamma_Q(!_X)\{\theta\}_{\uplambda_{\theta}(f)(s)}}
}
\]
\end{definition}

\begin{definition}[Quotiented model]
A \emph{$\Upgamma$-quotiented $A$-model} consists of a $\Upgamma$-quotiented arrow $(!_A,\mathfrak{u}):A \to \mathbf{1}$ in $\mathcal{B}$ together with an $A$-model $(f:A \to X,\sigma)$ such that for every element $\theta \in I$ and $s \in \uplambda_{\theta}(A)$, the transformation $\wp_{\mathfrak{u},f}:(f|\mathfrak{u})_{\theta}^s\{\_\} \to f[\mathfrak{u}]_{\theta}^s\{\_\}$ has a section $\textit{\ss}_{\theta}^s:f[\mathfrak{u}]_{\theta}^s\{\_\} \to (f|\mathfrak{u})_{\theta}^s\{\_\}$. Such a structure will be denoted as an arrow $f:(A,\mathfrak{u}) \to (X,\sigma,\textit{\ss})$.
\end{definition}

For every $\Upgamma$-quotiented $A$-model $f:(A,\mathfrak{u}) \to (X,\sigma,\textit{\ss})$, we may define two functors $f[\mathfrak{u}][\_]:I \to \mathbf{Set}$ and $(f|\mathfrak{u})[\_]:I \to \mathbf{Set}$ given by the following sums for every $\theta \in I$.
\[
f[\mathfrak{u}][\theta]:=\sum_{s \in \uplambda_{\theta}(A)}\sum_{\upsilon \in E_s(\theta)}f[\mathfrak{u}]_{\theta}^s\{\upsilon\} 
\quad\quad\quad
(f|\mathfrak{u})[\theta]:=\sum_{s \in \uplambda_{\theta}(A)}\sum_{\upsilon \in E_s(\theta)}(f|\mathfrak{u})_{\theta}^s\{\upsilon\}
\]

These two functors give rise to two others $f[\mathfrak{u}](\_):I \to \mathbf{Set}$ and $(f|\mathfrak{u})(\_):I \to \mathbf{Set}$ defined as the following sums over the set $J_Q$ associated with $!_A:A \to \mathbf{1}$.
\[
f[\mathfrak{u}](d):=\sum_{\vartheta \in J_Q}f[\mathfrak{u}][\theta]
\quad\quad\quad
(f|\mathfrak{u})(d):=\sum_{\vartheta \in J_Q}(f|\mathfrak{u})[\theta]
\]

It follows from the structure of $f:(A,\mathfrak{u}) \to (X,\sigma,\textit{\ss})$ that the functions $f^*:\mathfrak{u}_{\theta}^s\{\_\}  \to f[\mathfrak{u}]_{\theta}^s\{\_\}$, $\textit{\ss}_{\theta}^s:f[\mathfrak{u}]_{\theta}^s\{\_\} \to (f|\mathfrak{u})_{\theta}^s\{\_\}$ and $(f|\mathfrak{u})_{\theta}^s\{\_\} \hookrightarrow \Upgamma_Q(\mathrm{id}_X)\{\theta\}_s$ induce an obvious sequence of natural transformations as follows.
\[
\xymatrix{
\mathfrak{u}_{\theta}(\_) \ar@{=>}[r]^{f^*}&f[\mathfrak{u}](\_)\ar@{=>}[r]^{\textit{\ss}_{\mathfrak{u},f}}& (f|\mathfrak{u})(\_)\ar@{=>}[r]&\Upgamma_Q(\mathrm{id}_X)(\_)
}
\]

Applying an image factorisation on the three composite arrows of codomain $\Upgamma_Q(\mathrm{id}_X)$ that results from the previous sequence leads to a new sequence of arrows
as follows.
\[
\xymatrix{
\tilde{\mathfrak{u}}_{\theta}(\_) \ar@{=>}[r]^{f^*}&\tilde{f[\mathfrak{u}]}(\_)\ar@{=>}[r]^{\textit{\ss}_{\mathfrak{u},f}}& \tilde{(f|\mathfrak{u})}(\_)\ar@{=>}[r]^-{\tilde{\varphi}_Q}& \mathcal{C}^{\mathbf{2}}\rotatebox[origin=c]{-90}{\text{$\multimap$}}\Upsilon(\mathrm{id}_X)
}
\]

\subsection{Tomes for Quotiented Models}
Let $\Upgamma$ be a normal constructor of type $D[\mathcal{B},\mathcal{C}]$ where $\mathcal{C}$ has coproducts, $A$ be an object in $\mathcal{B}$ and $f:(A,\mathfrak{u}) \to (X,\sigma,\textit{\ss})$ be a $\Upgamma$-quotiented $A$-model. The \emph{merolytic tome of $f:(A,\mathfrak{u}) \to (X,\sigma,\textit{\ss})$} is the functor $\psi_{d}^{\mathfrak{u}}:\Upgamma^{!_A,\mathfrak{u}}(d) \to \mathcal{C}^{\mathbf{2}}/\Upsilon_{d}(\mathrm{id}_X)$ resulting from the coproduct of the following two functors for every $d \in D$.
\begin{equation}\label{eq:playground_not_modified_tome_for_modified_model}
\xymatrix{
\Upgamma_A(!_A)(d) \ar[rr]^-{\Upgamma_A(f)(d)}&&\Upgamma_A(!_X)(d) \ar[r]^-{\sigma(d)}&\Upgamma_A(\mathrm{id}_X)(d) \ar[r]^-{\varphi_{\Upgamma,d}}& \mathcal{C}^{\mathbf{2}}/\Upsilon_{d}(\mathrm{id}_X)
}
\end{equation}
\begin{equation}\label{eq:playground_modifier_for_modified_model}
\xymatrix{
\tilde{\mathfrak{u}}(d) \ar[r]^-{f^*(d)}& \tilde{f[\mathfrak{u}]}(d)\ar[r]^-{\textit{\ss}(d)} & \tilde{(f|\mathfrak{u})}(d)\ar[r]^-{\tilde{\varphi}_{Q}} & \mathcal{C}^{\mathbf{2}}/\Upsilon_{d}(\mathrm{id}_X)
}
\end{equation}

This functor is therefore equipped with the following mapping rules.
\begin{equation}\label{eq:formula_quotiented_model_tome}
\left\{
\begin{array}{lllll}
(\vartheta,t,s,\mathbf{c})&\mapsto &
\Upsilon_{t}(\mathrm{id}_X) \circ \ell_{\vartheta}(\mathtt{lift}(\vartheta,s,\Phi_{\upiota(\vartheta)}(f)_s \circ \mathbf{c})^{\circ})&\text{ on }\Upgamma_A(!_A)(d)\\
(\vartheta,t,s,\mathbf{s}) &\mapsto &
\Upsilon_{t}(\mathrm{id}_X) \circ \ell_{\vartheta}(\mathtt{lift}(\vartheta,s,\Phi_{\updelta(\vartheta)}^{\bullet}(f)_s \circ \mathbf{s}))&\text{ on }\tilde{\mathfrak{u}}(d)
\end{array}
\right.
\end{equation}

\begin{proposition}\label{prop:modified_tome_natural_in_D}
The merolytic tome $\psi_{d}^{\mathfrak{u}}:\Upgamma^{!_A,\mathfrak{u}}(d) \to \mathcal{C}^{\mathbf{2}}/\Upsilon_{d}(\mathrm{id}_X)$ is natural in the variable $d \in D$. This amounts to saying that the mapping $d \mapsto (\Upsilon_{d}(\mathrm{id}_X),\Upgamma^{!_A,\mathfrak{u}}(d),\psi_{d}^{\mathfrak{u}})$ induces a functor
$\mathbb{T}^{\mathrm{mod}}_{f,\mathfrak{u}}:D \to \mathbf{Tome}(\mathcal{C})$.
\end{proposition}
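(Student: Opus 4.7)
The plan is to exhibit $\psi_d^{\mathfrak{u}}$ as a coproduct of two componentwise natural transformations and verify that each piece is natural in $d \in D$. Because the forgetful functor $\mathcal{C}^{\mathbf{2}}/\Upsilon_d(\mathrm{id}_X) \to \mathcal{C}^{\mathbf{2}}$ reflects coproducts over $\Upsilon_d(\mathrm{id}_X)$, it suffices to check the naturality of the two summands of Equation (\ref{eq:playground_not_modified_tome_for_modified_model}) and Equation (\ref{eq:playground_modifier_for_modified_model}) separately, and then close up by universality of the coproduct. The desired functor $\mathbb{T}^{\mathrm{mod}}_{f,\mathfrak{u}}$ will be assembled exactly as for the merolytic tome $\mathbb{T}^{f,\mathfrak{q}}$ of Proposition \ref{prop:merolytic_tome_functorial_D}, whose proof provides the template for this one.

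For the analytic summand, I would write it as a composite of three $D$-natural pieces. The arrow $\Upgamma_A(f)(d):\Upgamma_A(!_A)(d) \to \Upgamma_A(!_X)(d)$ is natural in $d$ because normality of $\Upgamma$ upgrades $\Upgamma_A(\_)(\_)$ to a functor $\mathcal{B}^{\mathbf{2}} \times D \to \mathbf{Set}$ (see Remark \ref{rem:analytic_quotient_functors_functorial_in_B}). The arrow $\sigma(d):\Upgamma_A(!_X)(d) \to \Upgamma_A(\mathrm{id}_X)(d)$ is natural in $d$ by definition of a model, and the explicit mapping rule recorded in Remark \ref{rem:algebraic_model_choice} shows that $\sigma$ acts only on the triple $(\vartheta,s,\mathbf{c})$ while leaving $t$ untouched, so post-composition with any arrow $d \to d'$ (which acts on the $D(\upepsilon(\vartheta),d)$-factor) commutes with $\sigma$ on the nose. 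The final leg is the analytic tome $\varphi_{\Upgamma,d}$, whose naturality in $d$ is exactly Proposition \ref{prop:analytic_tome_functorial}. Composing the three yields $D$-naturality of Equation (\ref{eq:playground_not_modified_tome_for_modified_model}), and this is reflected in the first line of the formula (\ref{eq:formula_quotiented_model_tome}) since $\Upsilon_t(\mathrm{id}_X)$ absorbs the action of $t$ through the functoriality of $\Upsilon_{\_}(\mathrm{id}_X)$.

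The quotient summand requires a bit more care because of the two image factorisations. The arrow $f^*(d)$ arises from applying the functor $\Upgamma_Q(\_)(d)$ to the morphism of Equation (\ref{eq:id_X:shrink_X:in_arrow_B}) and restricting along the subfunctor $\tilde{\mathfrak{u}}(d) \hookrightarrow \Upgamma_Q(\mathrm{id}_X)(d)$, so it is natural in $d$ by functoriality of image factorisations in $\mathbf{Set}^D$ (together with the obvious extension of Proposition \ref{prop:puotienting_functor_subfunctor} to $f[\mathfrak{u}]$). The arrow $\textit{\ss}(d)$ is defined componentwise for each $(\theta,s,\upsilon)$ and the sum encoding $f[\mathfrak{u}](d)$ and $(f|\mathfrak{u})(d)$ only touches $d$ through the $D(\upchi(\vartheta),d)$-factor, so $\textit{\ss}$ is manifestly $D$-natural. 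Finally, $\tilde{\varphi}_Q$ inherits $D$-naturality from Proposition \ref{prop:quotient_tome_functorial} via Convention \ref{conv:use_q_tilde_instead_of_q}. Chaining these three yields $D$-naturality of Equation (\ref{eq:playground_modifier_for_modified_model}), which matches the second line of formula (\ref{eq:formula_quotiented_model_tome}).

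The only genuine subtlety---and the point I would write out in detail---is the verification that $\sigma$ and $\textit{\ss}$ commute with the $D$-action, since both are sections chosen once and for all at the level of $I$-indexed families. Remark \ref{rem:algebraic_model_choice} already carries out this verification for $\sigma$: the naturality over two arrows $t,t'$ from $\upepsilon(\vartheta)$ forces $\sigma$ to factor through a lift of the form $\mathtt{lift}(\vartheta,s,\mathbf{c})$ that is independent of $t$. The analogous argument for $\textit{\ss}$ reduces to observing that the image-factorisation data $f[\mathfrak{u}]_{\theta}^s\{\upsilon\}$ and $(f|\mathfrak{u})_{\theta}^s\{\upsilon\}$ depend only on $(\theta,s,\upsilon)$, so the $D$-action on the full sums factors trivially through the $D(\upchi(\vartheta),d)$-factor, and any section chosen fibrewise is automatically $D$-natural. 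With these two points in hand, the coproduct of Equations (\ref{eq:playground_not_modified_tome_for_modified_model}) and (\ref{eq:playground_modifier_for_modified_model}) lifts to a functor $\mathbb{T}^{\mathrm{mod}}_{f,\mathfrak{u}}:D \to \mathbf{Tome}(\mathcal{C})$, as claimed.
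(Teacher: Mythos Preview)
Your proposal is correct and follows essentially the same approach as the paper, which simply states that the result ``follows from the naturality of the arrows given in Equations (\ref{eq:playground_not_modified_tome_for_modified_model}) and (\ref{eq:playground_modifier_for_modified_model}).'' You have carefully unpacked what that one-line proof leaves implicit---the naturality of $\sigma$, $\textit{\ss}$, and the image-factorisation pieces---though your description of $f^*(d)$ as arising from Equation (\ref{eq:id_X:shrink_X:in_arrow_B}) is slightly off (it comes from applying $\Upgamma_Q(\_)$ to the arrow $f:A\to X$ viewed as a morphism $!_A\Rightarrow !_X$ in $\mathcal{B}^{\mathbf{2}}$, cf.\ Definition \ref{def:quotiented_models_associated_image_quotient}), but this does not affect the naturality conclusion.
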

\begin{proof}
Follows from the naturality of the arrows given in Equations (\ref{eq:playground_not_modified_tome_for_modified_model}) and (\ref{eq:playground_modifier_for_modified_model}).
\end{proof}

\begin{remark}\label{rem:tome:modified_models}
The naturality of $\mathbb{T}^{\mathrm{mod}}_{f,\mathfrak{u}}$ over $D$ extends to its content. In particular, it takes a few lines of straightforward calculations to see from the definitions of the functor $\psi_d^{\mathfrak{u}}$ and the functor $\varphi^{\mathfrak{u}}_d$ that the top-left corner of the content of $\mathbb{T}^{\mathrm{mod}}_{f,\mathfrak{u}}$ is equal to the top-left corner of the content of the tome $\mathbb{T}^{!_A,\mathfrak{u}}$.
\[
\underbrace{
\xymatrix{
*+!R(.5){\mathrm{col} \mathbf{A}_{d}}\ar[r]^-{\mathrm{col}u}\ar[d]_{\mathrm{col}\partial\varphi_{d}^{\mathfrak{u}}}&\Upsilon_{d}(A)\ar[r]^-{\Upsilon_{d}(f)}&\Upsilon_{d}(X)\ar[d]^{\Upsilon_{d}(\mathrm{id}_X)}\\
*+!R(.5){\mathrm{col} \mathbf{B}_{d}}\ar[rr]_-{\mathrm{col}h}&&\Upsilon_{d}(X)
}}_{\text{content of }\mathbb{T}^{\mathrm{mod}}_{f,\mathfrak{u}}}
\quad\quad\quad
\underbrace{
\xymatrix{
*+!R(.5){\mathrm{col} \mathbf{A}_{d}}\ar[r]^-{\mathrm{col}u}\ar[d]_{\mathrm{col}\partial\varphi_{d}^{\mathfrak{u}}}&\Upsilon_{d}(A)\ar[d]^{\Upsilon_{d}(!_A)}\\
*+!R(.5){\mathrm{col} \mathbf{B}_{d}}\ar[r]_-{\mathrm{col}v}&\Upsilon_{d}(\mathbf{1})
}}_{\text{content of }\mathbb{T}^{!_A,\mathfrak{u}}}
\]
\end{remark}
The diagram given on the left of Remark \ref{rem:tome:modified_models} induces a commutative diagram in $\mathcal{C}^D$ of the form given below. This diagram will be referred to as the  \emph{functorial content of $\mathbb{T}^{\mathrm{mod}}_{f,\mathfrak{u}}$}.
\[
\xymatrix{
*+!R(.5){\mathrm{col} \mathbf{A}}\ar@{=>}[r]^-{\mathrm{col}u}\ar@{=>}[d]_{\mathrm{col}\partial\varphi^{\mathfrak{u}}}&\Upsilon(A)\ar@{=>}[r]^-{\Upsilon(f)}&\Upsilon(X)\ar@{=>}[d]^{\Upsilon(\mathrm{id}_X)}\\
*+!R(.5){\mathrm{col} \mathbf{B}}\ar@{=>}[rr]_-{\mathrm{col}h}&&\Upsilon(X)
}
\]

\subsection{Effectiveness of Quotiented Models}
Let $\Upgamma$ be a normal constructor of type $D[\mathcal{B},\mathcal{C}]$ where $\mathcal{C}$ has coproducts and $A$ be an object in $\mathcal{B}$. A $\Upgamma$-quotiented $A$-model $f:(A,\mathfrak{u}) \to (X,\sigma,\textit{\ss})$ will be said to be \emph{$\Upgamma$-realised} if one may form a pushout square inside the functorial content of its merolytic tome as shown below. 
\begin{equation}\label{eq:Effectiveness_of_quotiented_models:realisation}
\xymatrix{
*+!R(.5){\mathrm{col} \mathbf{A}}\ar@{}[rd]|->{\text{\huge{\rotatebox[origin=c]{-90}{$\llcorner$}}}}\ar@{=>}[r]^-{\mathrm{col}u}\ar@{=>}[d]_{\mathrm{col}\partial\varphi^{\mathfrak{u}}}&\Upsilon(A)\ar@{=>}[r]^{\Upsilon(f)}\ar@{=>}[d]^{p^{\mathfrak{u}}_f}&\Upsilon(X)\ar@{=>}[d]^{\Upsilon(\mathrm{id}_X)}\\
*+!R(.5){\mathrm{col} \mathbf{B}}\ar@/_1.8pc/@{=>}[rr]_-{\mathrm{col}h}\ar@{=>}[r]_{\pi^{\mathfrak{u}}_f}&[!_A,\mathfrak{u}]\ar@{=>}[r]^{b^{\mathfrak{u}}_{f}}&\Upsilon(X)
}
\end{equation}

By Remark \ref{rem:tome:modified_models}, the pushout square may be supposed to be exactly the same as that defined for the $\Upgamma$-realisation of $(!_A,\mathfrak{u})$. In particular, the following result holds.

\begin{proposition}
A $\Upgamma$-quotiented $A$-model $f:(A,\mathfrak{u}) \to (X,\sigma,\textit{\ss})$ is $\Upgamma$-realised if and only if so is the $\Upgamma$-quotiented arrow $(!_A,\mathfrak{u}):A \to \mathbf{1}$.
\end{proposition}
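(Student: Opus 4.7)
The plan is to read the proposition as a statement about a single pushout. By definition, the $\Upgamma$-realisation of the quotiented arrow $(!_A,\mathfrak{u}):A \to \mathbf{1}$ (Diagram (\ref{eq:combinatorial_constructors_pushout_def})) and the $\Upgamma$-realisation of the quotiented model $f:(A,\mathfrak{u}) \to (X,\sigma,\textit{\ss})$ (Diagram (\ref{eq:Effectiveness_of_quotiented_models:realisation})) are both conditions asking for the existence, in $\mathcal{C}^D$, of a pushout of the same span, namely
\[
\xymatrix{
\mathrm{col}\mathbf{A}\ar@{=>}[r]^-{\mathrm{col}u}\ar@{=>}[d]_{\mathrm{col}\partial\varphi^{\mathfrak{u}}}&\Upsilon(A)\\
\mathrm{col}\mathbf{B}&
}
\]
together with the additional data of a morphism out of this span into the appropriate codomain (into $\Upsilon(\mathbf{1})$ in one case, and into $\Upsilon(X)$ via $\Upsilon(f)$ and $\Upsilon(\mathrm{id}_X)$ in the other). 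Since the existence of a pushout of a span does not depend on which cocone one subsequently factorises, the two conditions become logically equivalent once we know the two spans coincide.

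First, I would invoke Remark \ref{rem:tome:modified_models}, which already records that the top-left corner of the content of $\mathbb{T}^{\mathrm{mod}}_{f,\mathfrak{u}}$ equals the top-left corner of the content of $\mathbb{T}^{!_A,\mathfrak{u}}$. This is the essential input: the arrow $\mathrm{col}\partial\varphi^{\mathfrak{u}} : \mathrm{col}\mathbf{A} \Rightarrow \mathrm{col}\mathbf{B}$ and the arrow $\mathrm{col}u : \mathrm{col}\mathbf{A} \Rightarrow \Upsilon(A)$ are common to both contents, because the analytic and quotient parts of the merolytic tome $\psi_d^{\mathfrak{u}}$ of $f:(A,\mathfrak{u}) \to (X,\sigma,\textit{\ss})$ (Equations (\ref{eq:playground_not_modified_tome_for_modified_model}) and (\ref{eq:playground_modifier_for_modified_model})) are defined by post-composing the tuples of $\Upgamma^{!_A,\mathfrak{u}}(d)$ with morphisms induced by $f$; in particular they leave the domains of the underlying arrows untouched. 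A one-line check of Formula (\ref{eq:formula_quotiented_model_tome}) against Formulas (\ref{eq:analytic_tome_formula}) and (\ref{eq:quotient_tome_formula}) confirms that the domains match, so $\mathrm{col}\partial\varphi^{\mathfrak{u}}$ and $\mathrm{col}u$ have the same value in both cases.

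Given this, the proof is a direct equivalence: a pushout of the common span in $\mathcal{C}^D$ exists if and only if it exists, and this is exactly the $\Upgamma$-realisation condition on either side. For the ``only if'' direction, if $f:(A,\mathfrak{u}) \to (X,\sigma,\textit{\ss})$ is $\Upgamma$-realised then the pushout object $[!_A,\mathfrak{u}]$ of Diagram (\ref{eq:Effectiveness_of_quotiented_models:realisation}) serves, together with the canonical map $\Upsilon(!_A):\Upsilon(A)\Rightarrow\Upsilon(\mathbf{1})$ and $\mathrm{col}v$, as the required pushout factorisation for $(!_A,\mathfrak{u})$ in Diagram (\ref{eq:combinatorial_constructors_pushout_def}); conversely, the pushout object $[!_A,\mathfrak{u}]$ obtained from the realisation of $(!_A,\mathfrak{u})$ receives, via its universal property, the cocone with legs $\Upsilon(f)\circ(\text{left leg})$ and $\mathrm{col}h$, producing the arrow $b_f^{\mathfrak{u}}$ of Diagram (\ref{eq:Effectiveness_of_quotiented_models:realisation}). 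No step is genuinely hard here; the only subtlety, and the one worth stating carefully, is the verification that the left column of the merolytic tome of the quotiented model coincides with that of the underlying quotiented arrow, which is exactly the content of Remark \ref{rem:tome:modified_models}.
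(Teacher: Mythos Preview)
Your proposal is correct and follows exactly the route the paper takes: the paper does not give a separate proof but simply observes, immediately before the proposition, that by Remark \ref{rem:tome:modified_models} the pushout square in Diagram (\ref{eq:Effectiveness_of_quotiented_models:realisation}) may be taken to be the very same one as in the $\Upgamma$-realisation of $(!_A,\mathfrak{u})$, whence the equivalence is immediate. Your write-up makes explicit the (trivial) fact that the existence of a pushout depends only on the span and not on the surrounding cocone, which is precisely the point the paper leaves implicit.
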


\begin{definition}[Effectiveness]\label{def:effectiveness_relative_quotiented_models}
Let $\Upgamma$ denote a constructor of type $D[\mathcal{B},\mathcal{C}]$ as defined in Section \ref{sssec:Constructor}.
A $\Upgamma$-quotiented $A$-model $f:(A,\mathfrak{u}) \to (X,\sigma,\textit{\ss})$  will be said to be \emph{effective} if it is $\Upgamma$-realised and
it is equipped with a factorisation of  $f:A \to Y$ in $\mathcal{B}$, as given on the left of Equation 
(\ref{eq:lifting_definition_effective_constructor_model}), that lifts the factorisation of $\Upsilon(f)$ through the $\Upgamma$-realisation of $(f,\mathfrak{q})$ along $\Upsilon:\mathcal{B} \to \mathcal{C}^D$.
\begin{equation}\label{eq:lifting_definition_effective_constructor_model}
\xymatrix{
A\ar@/^1.8pc/[rr]^-{f}\ar[r]_-{\{ !_A\}_{\mathfrak{u}}}&[!_A/\mathfrak{u}]\ar[r]_-{\langle f \rangle_{\mathfrak{u}}}&X
}
\quad\quad\mathop{\longmapsto}\limits^{\Upsilon}\quad\quad
\xymatrix{
\Upsilon(A)\ar@/^1.8pc/@{=>}[rr]^-{\Upsilon(f)}\ar@{=>}[r]_-{p^{\mathfrak{u}}_f}&[!_A,\mathfrak{u}]\ar@{=>}[r]_-{b^{\mathfrak{u}}_f}&\Upsilon(X)
}
\end{equation}
\end{definition}
The leftmost factorisation of Equation (\ref{eq:lifting_definition_combinatorial_constructor}) will be called the \emph{$\Upgamma$-factorisation of $f:(A,\mathfrak{u}) \to (X,\sigma,\textit{\ss})$}.

\begin{theorem}\label{th:admissible_quotiented_effective_model}
Let $(K,\mathtt{rou},\mathcal{P},\mathtt{V})$ be a system of $R$-premodels over a small category $D$ in a category $\mathcal{C}$. If $\mathcal{C}$ has pushouts and the inclusion $\mathcal{P} \hookrightarrow \mathbf{Pr}_{\mathcal{C}}(K,\mathtt{rou},R)$ is an identity, then every $\Upgamma$-quotiented relative model is effective.
\end{theorem}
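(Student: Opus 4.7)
The plan is to mimic the proof of Theorem \ref{th:admissible_quotiented_effective} and leverage it directly. For convenience I shall write $\Upgamma$ instead of $\Upgamma^K$ and take $A = (A_0,S,e)$, $X = (X_0,S',e')$ with $f = (f_0,a):A \to X$ a morphism of $R$-premodels. Since $\mathcal{C}$ has pushouts, every $\Upgamma$-quotiented arrow is $\Upgamma$-realised, and in particular so is the quotiented $A$-model $f:(A,\mathfrak{u}) \to (X,\sigma,\textit{\ss})$ (and its realisation object coincides with that of the underlying $\Upgamma$-quotiented arrow $(!_A,\mathfrak{u}):A \to \mathbf{1}$, by the observation in Remark \ref{rem:tome:modified_models}). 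Theorem \ref{th:admissible_quotiented_effective} already equips the $\Upgamma$-realisation $[!_A/\mathfrak{u}]$ with an $R$-premodel structure $([!_A/\mathfrak{u}],S,e^{\mathfrak{u}})$ and provides the factorisation $\{!_A\}_{\mathfrak{u}}:A \to [!_A/\mathfrak{u}]$ in $\mathcal{P}$. So what is left is to produce a morphism $\langle f\rangle_{\mathfrak{u}}:([!_A/\mathfrak{u}],S,e^{\mathfrak{u}}) \to (X_0,S',e')$ in $\mathcal{P}$ whose composition with $\{!_A\}_{\mathfrak{u}}$ agrees with $f$.

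The underlying arrow in $\mathcal{C}^D$ is immediate from the pushout universal property applied to the outer rectangle of Diagram (\ref{eq:Effectiveness_of_quotiented_models:realisation}): the right-hand face $\Upsilon(f):\Upsilon(A) \Rightarrow \Upsilon(X)$ and the bottom edge $\mathrm{col}\,h$ of the merolytic tome for the quotiented $A$-model make the same cocone under $\mathrm{col}\mathbf{A}\Rightarrow\mathrm{col}\mathbf{B}$, hence factorise through $p_f^{\mathfrak{u}}$, producing the desired arrow $b_f^{\mathfrak{u}}:[!_A,\mathfrak{u}]\Rightarrow \Upsilon(X)$. For the $S$-component I simply take $a:S \Rightarrow S'$, which is forced by the factorisation requirement (the $S$-component of $\{!_A\}_{\mathfrak{u}}$ being the identity on $S$ by Theorem \ref{th:admissible_quotiented_effective}).

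The core of the work is to verify that $(b_f^{\mathfrak{u}},a)$ is a morphism of $R$-premodels, that is, for every $c \in K$ and $s \in S(c)$ (with $r=\mathtt{rou}(c)$, $d_0=\mathtt{ou}(c)$) that the square
\[
\xymatrix{
[!_A/\mathfrak{u}](r) \ar[r]^-{b_f^{\mathfrak{u}}(r)} \ar[d]_{e^{\mathfrak{u}}_{c,s}} & X_0(r) \ar[d]^{e'_{c,a_c(s)}} \\
R[!_A/\mathfrak{u}](d_0) \ar[r]_-{Rb_f^{\mathfrak{u}}(d_0)} & RX_0(d_0)
}
\]
commutes. The argument mirrors the end of the proof of Theorem \ref{th:admissible_quotiented_effective}: one establishes an analogue of the equality $\xi_{c,s}\varphi^{\mathfrak{q}}_r = \varphi^{\mathfrak{q}}_{d_0}\zeta_{c,s}$ now for the merolytic tome $\psi^{\mathfrak{u}}_d:\Upgamma^{!_A,\mathfrak{u}}(d) \to \mathcal{C}^{\mathbf{2}}/\Upsilon_d(\mathrm{id}_X)$ defined in Equations (\ref{eq:playground_not_modified_tome_for_modified_model})--(\ref{eq:formula_quotiented_model_tome}). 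Term by term in the two summands of $\Upgamma^{!_A,\mathfrak{u}}(r)$, the same iterated link $l_{(c,s,\mathrm{id}_{d_0})}$ applied after the lifts selected by $\sigma$ and $\textit{\ss}$ reproduces the image of $\zeta_{c,s}$ composed with $\psi^{\mathfrak{u}}_{d_0}$, exactly because $\sigma$ and $\textit{\ss}$ are \emph{natural} sections (their compatibility under $D$-arrows and under the transitive links $\ell_{\vartheta}$ being precisely what is needed). Shifting the content of $\mathbb{T}^{\mathrm{mod}}_{f,\mathfrak{u}}(r)$ along $\zeta_{c,s}$ and transferring through the adjunction $L \dashv R$ (again via the inverse pair $\varepsilon \circ L(\_)$ and $R(\_) \circ \eta$) then yields the commutative hexagon that produces, from the pushout defining $[!_A/\mathfrak{u}](r)$, a unique factorisation furnishing the desired compatibility with the structure maps $e'_{c,a_c(s)}$ of $X$.

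The main obstacle is this last commutativity check, which requires carefully unpacking how the model choices $\sigma$ and $\textit{\ss}$ interact with the transitive analysor and quotientor in the sums parametrising $\Upgamma^{!_A,\mathfrak{u}}$; as in the proof of Theorem \ref{th:admissible_quotiented_effective}, the calculation decomposes into the four cases indexed by $I \hookrightarrow J_A$, $J_A \setminus I$, $I' \hookrightarrow J_Q$ and $J_Q \setminus I'$, each handled by the same iterated composition of the link functors $l_{(c_i,s_i,t_i)}$ and $l_z$ prepended with the lift operation. Once the square commutes for every $c$ and $s$, $(b_f^{\mathfrak{u}},a)$ is a morphism of $R$-premodels, and the factorisation $A \to [!_A/\mathfrak{u}] \to X$ in $\mathcal{P}$ is the required $\Upgamma$-factorisation, proving effectiveness.
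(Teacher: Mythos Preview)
Your proposal is correct and follows essentially the same route as the paper: reduce to the factorisation already produced by Theorem~\ref{th:admissible_quotiented_effective}, then establish the analogue of Diagram~(\ref{eq:quotiented_arrows_are_effective}) with the merolytic tome $\psi^{\mathfrak{u}}$ of the quotiented model in place of $\varphi^{\mathfrak{q}}$, and read off from the resulting compatibility that $b_f^{\mathfrak{u}}$ is a morphism of $R$-premodels. One small notational point: the horizontal functor at the bottom of that square should be $\upxi_{c,a_c(s)}$ rather than $\upxi_{c,s}$, since the target tome lives over $\Upsilon_{\_}(\mathrm{id}_X)$ and the structure map on $X$ at parameter $s\in S(c)$ is $e'_{c,a_c(s)}$; this is exactly what the paper writes in Diagram~(\ref{eq:quotiented_relative_models_are_effective}), and it matches the square you display for $(b_f^{\mathfrak{u}},a)$.
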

\begin{proof}
Consider a relative model given by a morphism $(f,a):(A,S,e) \Rightarrow (X,S',e')$. The goal is to show that this morphism satisfies to the lifting conditions expressed in Equation (\ref{eq:lifting_definition_effective_constructor_model}) where the arrow $\{ !_A\}_{\mathfrak{u}}:(A,S,e) \Rightarrow ([!_A,\mathfrak{u}],S,e^{\mathfrak{u}})$ is already constructed in Theorem \ref{th:admissible_quotiented_effective}. In fact, the proof of the present theorem is very similar to that of Theorem \ref{th:admissible_quotiented_effective}, except that it uses Diagram (\ref{eq:quotiented_relative_models_are_effective}) instead of Diagram (\ref{eq:quotiented_arrows_are_effective}) for every $c \in K$ and $s \in S(c)$. As in the proof of Theorem \ref{th:admissible_quotiented_effective}, the symbols $r$ and $d_0$ stand for the objects $\mathtt{rou}(c)$ and $\mathtt{ou}(c)$ in $D$, respectively.
\begin{equation}\label{eq:quotiented_relative_models_are_effective}
\xymatrix{
\Upgamma^{!_A,\mathfrak{u}}(r)\ar[r]^{\upzeta_{c,s}}\ar[d]_{\psi^{\mathfrak{u}}_{r}}&\ar[d]^{\psi^{\mathfrak{u}}_{d_0}}\Upgamma^{!_A,\mathfrak{u}}(d_0)\\
\mathcal{C}^{\mathbf{2}}/\Upsilon_{r}(\mathrm{id}_X) \ar[r]^-{\upxi_{c,a_c(s)}}& \mathcal{C}^{\mathbf{2}}/\Upsilon_{d_0}(\mathrm{id}_X)
}
\end{equation}

The proof that Diagram (\ref{eq:quotiented_relative_models_are_effective}) commutes goes as in the proof of Theorem \ref{th:admissible_quotiented_effective} by using Formula (\ref{eq:formula_quotiented_model_tome}).
Then, Diagram (\ref{eq:quotiented_relative_models_are_effective}) may be used to show that the following diagram commutes.
\[
\xymatrix{
\mathrm{col} \mathbf{A}_{r}\ar@{}[rd]|->>{\text{\huge{\rotatebox[origin=c]{-90}{$\llcorner$}}}}\ar@{=>}[r]^{\mathrm{col}u_{r}}\ar@{=>}[dd]_{\mathrm{col}\partial\varphi^{\mathfrak{u}}_{r}}&A(r)\ar@{=>}[r]^{f(r)}\ar@{=>}[d]&X(r)\ar@{=>}[r]^-{e_{c,a_c(s)}'}&RX(d_0)\ar@{=>}[dd]^{\mathrm{id}}\\
&[!_A,\mathfrak{u}](r)\ar@{=>}[rd]^{b^{\mathfrak{u}}_f}\ar@{=>}[r]^-{e^{\mathfrak{u}}_{c,s}}&R[!_A,\mathfrak{u}](d_0)\ar@{=>}[dr]^{Rb^{\mathfrak{u}}_f(d_0)}&\\
\mathrm{col}\mathbf{B}_{r}\ar@{=>}[rr]_{\mathrm{col}v_{r}}\ar@{=>}[ru]^{\pi^{\mathfrak{u}}_f(r)}&&X(r)\ar@{=>}[r]_{e_{c,a_c(s)}'}&RX(d_0)
}
\]

The substantial information given by the previous diagram is the inner bottom commutative trapezoid, which shows that the lift $([!_A,\mathfrak{u}],S,e^{\mathfrak{u}}) \Rightarrow (X,S',e')$ exists; the desired factorisation is  deduced by universality.
\end{proof}

\begin{definition}[Strongly Fibered]\label{def:strongly_fibered_relative_quotiented_model}
A system of $R$-premodels $(K,\mathtt{rou},\mathcal{P},\mathtt{V})$ over a small category $D$ in a category $\mathcal{C}$ will be said to be \emph{strongly fibered}
if it is fibered and, for every $\Upgamma$-quotiented arrow $(!_A,\mathfrak{u}):(A,S,e) \to \mathbf{1}$, the $\Upgamma$-factorisation of any corresponding $\Upgamma$-quotiented $(A,S,e)$-model  (obtained in Theorem \ref{th:admissible_quotiented_effective}) lifts to $\mathcal{P}$.
\end{definition}

\begin{remark}\label{rem:Many_examples_are_strongly_fibered}
Let $\mathcal{C}$ be a category with all pushouts. By Definitions \ref{def:effectiveness_quotiented_models} and  \ref{def:effectiveness_relative_quotiented_models}, any subcategory $\mathcal{P}$ of $\mathbf{Pr}_{\mathcal{C}}(K,\mathtt{rou},R)$ whose associated functor $\Upsilon:\mathcal{P} \to \mathcal{C}^D$ is fully faithful is necessarily strongly fibered. As a result, the category $\mathcal{C}^D$ is strongly fibered. Thus, examples of strongly fibered systems of premodels are : Example \ref{ex:Models_for_a_sketch_system}--\ref{ex:Complete_Segal_spaces}.
\end{remark}

\begin{example}
For the same reasons put forward in Example \ref{ex:spectra_fibered}, the system of $\Omega$-premodels defined in Example \ref{ex:system_of_premodels_spectra} is strongly fibered.
\end{example}

\begin{proposition}\label{prop:factorisability_provides_new_model}
Let $\Upgamma$ be a normal constructor of type $D[\mathcal{B},\mathcal{C}]$ where $\mathcal{C}$ has coproducts. Let $(!_A,\mathfrak{u}):A \to \mathbf{1}$ be an effective $\Upgamma$-quotiented arrow in $\mathcal{B}$ and $f:(A,\mathfrak{u}) \to (X,\sigma,\textit{\ss})$ be some effective $A$-model of $\Upgamma$. There exists a section $\textit{\ss}_{\dagger}$ turning the $\Upgamma$-quotiented $[A/\mathfrak{u}]$-quasi-model $\langle f \rangle_{\mathfrak{u}}:\mathrm{Rec}(!_A,\mathfrak{u}) \to (X,\sigma)$ into a $\Upgamma$-quotiented $[A/\mathfrak{u}]$-model.
\end{proposition}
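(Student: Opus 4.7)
The plan is to build $\textit{\ss}_{\dagger}$ pointwise, using the model section $\sigma$ together with the pushout defining the stem $\beta: \mathbb{S}' \to \mathbb{D}'$. Write $\mathfrak{u}_{\dagger}$ for the quotient part of $\mathrm{Rec}(!_A, \mathfrak{u})$. By the rectification construction in Section \ref{ssec:Rectification}, for each $\theta \in I$, $s \in \uplambda_{\theta}(A)$ and cube $\mathbf{c} \in \Upgamma_A(!_A)\{\theta\}_s$, the set $\mathfrak{u}_{\dagger}{}^s_{\theta}\{\mathbf{c}\}$ is the singleton containing the obstruction square $w_{\mathbf{c}}: \mathbb{S}' \to \Phi^{\bullet}_{\theta}([A/\mathfrak{u}])_s$, so $\langle f \rangle_{\mathfrak{u}}[\mathfrak{u}_{\dagger}]^s_{\theta}\{\mathbf{c}\}$ is the singleton $\{\Phi^{\bullet}_{\theta}(\langle f \rangle_{\mathfrak{u}}) \circ w_{\mathbf{c}}\}$. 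Constructing $\textit{\ss}_{\dagger}$ amounts to producing, for each $\mathbf{c}$, an arrow $b: \mathbb{D}' \to \Phi^{\bullet}_{\theta}(X)$ with $b \circ \beta = \Phi^{\bullet}_{\theta}(\langle f \rangle_{\mathfrak{u}}) \circ w_{\mathbf{c}}$.

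The proposal is as follows. First I would push $\mathbf{c}$ along $f$ to form $\mathbf{c}_X := \Phi_{\theta}(f)_s \circ \mathbf{c}$ in $\Upgamma_A(!_X)\{\theta\}_{\uplambda_{\theta}(f)(s)}$ and invoke the model section $\sigma$ to obtain the degenerate cube $\mathtt{lift}(\textsc{\j}(\theta), \uplambda_{\theta}(f)(s), \mathbf{c}_X)$. Following Remark \ref{rem:algebraic_model_choice}, this yields diagonals $h: \mathbb{D}_2 \to \Phi^{\circ}_{\theta}(X)$ and $\overline{h}: \mathbb{D}' \to \Phi^{\bullet}_{\theta}(X)$ with $h \circ \gamma_2 = x_X$, $\overline{h} \circ \beta\delta_1 = y_X$ and $\Phi_{\theta}(X)_{\uplambda_{\theta}(f)(s)} \circ h = \overline{h} \circ \beta\delta_2$, where $x_X, y_X$ are the back-left and back-right edges of $\mathbf{c}_X$. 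The proposed section sends $\mathbf{c}$ to $\overline{h}$, viewed as the $\Upgamma_Q(\mathrm{id}_X)$-component of the required pair in $(\langle f \rangle_{\mathfrak{u}} \,|\, \mathfrak{u}_{\dagger})^s_{\theta}\{\mathbf{c}\}$.

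The verification $\overline{h} \circ \beta = \Phi^{\bullet}_{\theta}(\langle f \rangle_{\mathfrak{u}}) \circ w_{\mathbf{c}}$ splits, by the pushout $\mathbb{S}' = \mathbb{D}_1 \cup_{\mathbb{S}} \mathbb{D}_2$, into the two conditions obtained after precomposition with $\delta_1$ and $\delta_2$. The $\delta_1$-leg is immediate from functoriality of $\Phi^{\bullet}_{\theta}$ and $\langle f \rangle_{\mathfrak{u}} \circ \{!_A\}_{\mathfrak{u}} = f$, both sides reducing to $y_X$. The $\delta_2$-leg, after using the lift identity and naturality of $\Phi_{\theta}(-)_{\uplambda_{\theta}(f)(s)}$ at $\langle f \rangle_{\mathfrak{u}}$, reduces to the equation $\Phi_{\theta}(X)_{\uplambda_{\theta}(f)(s)} \circ h = \Phi_{\theta}(X)_{\uplambda_{\theta}(f)(s)} \circ \Upsilon_{\uprho(\theta)}(\langle f \rangle_{\mathfrak{u}}) \circ \uppi_0$, in which $\uppi_0$ is the canonical arrow from the effective factorisation of $(!_A, \mathfrak{u})$ in Diagram~(\ref{eq:combinatorial_constructors_pushout_def}) used to build $w_{\mathbf{c}}$.

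The hard part will be this last identity, since an arbitrary choice of $\sigma$ need not return $h$ equal to $\Upsilon_{\uprho(\theta)}(\langle f \rangle_{\mathfrak{u}}) \circ \uppi_0$, even though both arrows extend $x_X$ along $\gamma_2$. To bypass it I would forgo $\sigma$'s $h$: set $h^{\star} := \Upsilon_{\uprho(\theta)}(\langle f \rangle_{\mathfrak{u}}) \circ \uppi_0$, check $h^{\star} \circ \gamma_2 = x_X$ via Remark \ref{rem:about_uppi_0} combined with $\langle f \rangle_{\mathfrak{u}} \circ \{!_A\}_{\mathfrak{u}} = f$, and then invoke the quasi-model property of $X$ applied to the cube $\mathbf{c}_X$ with back-left diagonal already prescribed as $h^{\star}$ to produce a companion $\overline{h}^{\star}: \mathbb{D}' \to \Phi^{\bullet}_{\theta}(X)$ closing the cube. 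The verification of the previous paragraph then forces $b := \overline{h}^{\star}$ to satisfy $b \circ \beta = \Phi^{\bullet}_{\theta}(\langle f \rangle_{\mathfrak{u}}) \circ w_{\mathbf{c}}$, and setting $\textit{\ss}_{\dagger}(\theta, s, \mathbf{c}) := \overline{h}^{\star}$ provides the required section; no further naturality in $(\theta, s)$ is imposed by Definition \ref{def:quotiented_models_associated_image_quotient}.
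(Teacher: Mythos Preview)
Your overall strategy coincides with the paper's: both reduce the construction of $\textit{\ss}_{\dagger}$ to exhibiting, for each obstruction square of the rectification, an arrow $b:\mathbb{D}'\to\Phi^{\bullet}_{\theta}(X)_s$ with $b\circ\beta=\Phi^{\bullet}_{\theta}(\langle f\rangle_{\mathfrak{u}})_s\circ w_{\mathbf{c}}$, and both reach for the pair $(h,\overline{h})$ supplied by $\sigma$ on the pushed-forward cube $\mathbf{c}_X$. Your splitting of the verification into the $\delta_1$- and $\delta_2$-legs via the pushout $\mathbb{S}'=\mathbb{D}_1\cup_{\mathbb{S}}\mathbb{D}_2$ is exactly what the paper compresses into the phrase ``by universality of $\mathbb{S}'$'': the paper simply sets $b:=\overline{h}$ and asserts the required equality. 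So your first attempt \emph{is} the paper's argument, and your observation that the $\delta_2$-leg reduces to $\Phi_{\theta}(X)_{s'}\circ h = \Phi_{\theta}(X)_{s'}\circ \Upsilon_{\uprho(\theta)}(\langle f\rangle_{\mathfrak{u}})\circ\uppi_0$ pinpoints precisely the step the paper does not spell out.

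There is, however, a genuine gap in your bypass. Neither the quasi-model property (surjectivity of $\wp_A(X,d)$, Remark~\ref{rem:quasi_model_no_choice}) nor the model section $\sigma$ (Remark~\ref{rem:algebraic_model_choice}) allows you to prescribe one component of the lift in advance: they produce a pair $(h,\overline{h})$ for the cube $\mathbf{c}_X$, determined solely by the top data $(x_X,y_X)$, not a companion $\overline{h}^{\star}$ to a preassigned $h^{\star}:\mathbb{D}_2\to\Upsilon_{\uprho(\theta)}(X)$. What your last step actually requires is an extension of the map $\mathbb{S}'\to\Phi^{\bullet}_{\theta}(X)_s$ (with $\delta_1$-leg $y_X$ and $\delta_2$-leg $\Phi_{\theta}(X)_{s'}\circ h^{\star}$) along the \emph{stem} $\beta=\upomega(\theta)$; that is a lifting problem against $\upomega(\theta)$, not against the diskad $\upalpha(\theta)$, and the (quasi-)model hypothesis on $X$ says nothing about such extensions. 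So the fix, as written, does not close the argument --- you have correctly isolated the delicate point, but the tool you invoke is not the one that resolves it.
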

\begin{proof}
Let us define the section $\textit{\ss}_{\dagger}$, which must be a function of the following form for every $\theta \in I$ and $s \in \uplambda_{\theta}(A)$.
\begin{equation}\label{eq:form_of_ss_dagger}
\langle f \rangle_{\mathfrak{u}}[\mathrm{Rec}(!_A,\mathfrak{u})]_{\theta}^s\{\_\} \to \Big(\langle f \rangle_{\mathfrak{u}}\big|\mathrm{Rec}(!_A,\mathfrak{u})\Big)_{\theta}^s\{\_\}
\end{equation}

The idea is that the section $\textit{\ss}_{\dagger}$ is induced by the action of the section $\sigma$ on the obstruction squares contained in the domain of Equation (\ref{eq:form_of_ss_dagger}). On the other hand, the other section $\textit{\ss}$ mentioned in the statement does play any role here. 

First, recall that an obstruction square in 
$[\mathrm{Rec}(!_A,\mathfrak{u})]_{\theta}^s\{\upsilon\}$ is given by the lower front commutative square of a commutative cuboid as follows.
\[
\xymatrix@C-0.5pc@R-0.5pc{
\mathbb{S}\ar[ddd]_{\gamma_2}\ar[rr]^x\ar[rd]_{\gamma_1}&&\Upsilon_d(A)\ar[rd]|{\Phi_{\theta}(A)_s}\ar[dd]|\hole&\\
&\mathbb{D}_1\ar[dd]|<<<<<<<<<<<{\delta_1}\ar[rr]^<<<<<<<<<<y&&\Phi_{\theta}^{\bullet}(A)_s\ar[dd]^{\Phi_{\theta}^{\bullet}(\{ !_A\}_{\mathfrak{u}})_s}\\
&&[!_A,\mathfrak{u}](d)\ar[rd]|{\Phi_{\theta}([!_A/\mathfrak{u}])_s}&\\
\mathbb{D}_2\ar@/_/[r]|{\delta_2}\ar[rru]|<<<<<<<<<<\hole|>>>>>>>{\uppi_0}\ar[rd]_{\beta \circ \delta_2}&\mathbb{S}'\ar[d]^{\beta}\ar[rr]^{w}&&\Phi_{\theta}^{\bullet}([!_A,\mathfrak{u}])_s\ar[d]^{\Phi_{\theta}^{\bullet}(\lfloor !_A\rfloor_{\mathfrak{u}})_s}\\
&\mathbb{D}'\ar[rr]&&\mathbf{1}\\
}
\]

By using the factorisation $f = \langle f \rangle_{\mathfrak{u}} \circ \{ !_A \}_{\mathfrak{u}}$, we can also obtain Diagram (\ref{eq:big_diagram_pre_universal_property_existential}), whose lower trapezoid going from the arrow $\beta:\mathbb{S}' \to \mathbb{D}'$ to the arrow $\Phi_{\theta}^{\bullet}(\lfloor !_A\rfloor_{\mathfrak{u}})_s$, on the front face, is the image of our previous obstruction square via the canonical map
\[
\Upgamma_{Q}(\mathrm{id}_X)\{\theta\}_{\uplambda_{\theta}(f)(s)} \longrightarrow \Upgamma_{Q}(!_X)\{\theta\}_{\uplambda_{\theta}(f)(s)}
\]
induced by Diagram (\ref{eq:id_X:shrink_X:in_arrow_B}).
By definition, this lower trapeziod is an element in the quotient $\langle f \rangle_{\mathfrak{u}}[\mathrm{Rec}(!_A,\mathfrak{u})]_{\theta}^s\{\upsilon\}$.
\begin{equation}\label{eq:big_diagram_pre_universal_property_existential}
\xymatrix@C-0.5pc@R-0.5pc{
\mathbb{S}\ar[ddd]_{\gamma_2}\ar[rr]^x\ar[rd]_{\gamma_1}&&\Upsilon_d(A)\ar[rd]|{\Phi_{\theta}(A)_s}\ar[dd]|\hole\ar[rr]&&\Upsilon_d(X)\ar@{=}[dd]|\hole\ar[rd]^{\Phi_{\theta}(X)_s}&\\
&\mathbb{D}_1\ar[dd]|<<<<<<<<<<<{\delta_1}\ar[rr]^<<<<<<<<<<y&&\Phi_{\theta}^{\bullet}(A)\ar[dd]|<<<<<<{\Phi_{\theta}^{\bullet}(\{ !_A\}_{\mathfrak{u}})_s}\ar[rr]^<<<<<<<<<{\Phi_{\theta}^{\bullet}(f)_s}&&\Phi_{\theta}^{\bullet}(X)_s\ar@{=}[dd]\\
&&[!_A,\mathfrak{u}](d)\ar[rr]|<<<<<<<<<<<<<<<<\hole\ar[rd]|{\Phi_{\theta}([!_A/\mathfrak{u}])_s}&&\Upsilon_d(X)\ar[rd]^{\Phi_{\theta}(X)_s}&\\
\mathbb{D}_2\ar@/_/[r]|{\delta_2}\ar[rru]|<<<<<<<<<<\hole|>>>>>>>{\uppi_0}\ar[rd]_{\beta \circ \delta_2}&\mathbb{S}'\ar[d]^{\beta}\ar[rr]^{w}&&\Phi_{\theta}^{\bullet}([!_A/\mathfrak{u}])_s\ar@{..>}[d]\ar[rr]^{\Phi_{\theta}^{\bullet}(\langle f \rangle_{\mathfrak{u}})_s}&&\Phi_{\theta}^{\bullet}(X)_s\ar[d]^{\Phi_{\theta}^{\bullet}(\lfloor !_A\rfloor_{\mathfrak{u}})_s}\\
&\mathbb{D}'\ar[rr]&&\mathbf{1}\ar@{=}[rr]&&\mathbf{1}\\
}
\end{equation}

To define our section $\textit{\ss}_{\dagger}$, we simply need to explain how the lower trapezoid going from the arrow $\beta:\mathbb{S}' \to \mathbb{D}'$ to the arrow $\Phi_{\theta}^{\bullet}(\lfloor !_A\rfloor_{\mathfrak{u}})_s$, on the front face of the previous diagram, is mapped to an element in the following quotient.
\[
\Big(\langle f \rangle_{\mathfrak{u}}\big|\mathrm{Rec}(!_A,\mathfrak{u})\Big)_{\theta}^s\{\upsilon\}
\]

We will do so by simply showing that this lower trapezoid admits a lift.

By assumption on the $A$-model $f:A \to (X,\sigma)$, the outer cuboid of Diagram (\ref{eq:big_diagram_pre_universal_property_existential}) admits a lift $(h,\overline{h})$ (see Remark \ref{rem:algebraic_model_choice}). By universality of $\mathbb{S}'$, this implies that the commutative diagram given below, on the left, must commute. The corresponding square given on the right then encodes an element in $(\langle f \rangle_{\mathfrak{u}}|\mathrm{Rec}(!_A,\mathfrak{u}))_{\theta}^s\{\upsilon\}$.
\[
\xymatrix{
\mathbb{S}'\ar[d]_{\beta}\ar[rrr]^-{\Phi_{\theta}^{\bullet}(\langle f \rangle_{\mathfrak{u}})_s \circ w}&&&\Phi_{\theta}^{\bullet}(X)_s\ar[d]\\
\mathbb{D}'\ar[rrr]\ar[rrru]_{\overline{h}}&&&\mathbf{1}
}
\quad\Rightarrow\quad
\xymatrix{
\mathbb{S}'\ar[d]_{\beta}\ar[rrr]^-{\Phi_{\theta}^{\bullet}(\langle f \rangle_{\mathfrak{u}})_s \circ w}&&&\Phi_{\theta}^{\bullet}(X)_s\ar@{=}[d]\\
\mathbb{D}'\ar[rrr]_{\overline{h}}&&&\Phi_{\theta}^{\bullet}(X)_s
}
\]

This finishes the description of the section $\textit{\ss}_{\dagger}$ for the parameters $\theta \in I$ and $s \in \uplambda_{\theta}(A)$.
\end{proof}

The following theorem provides a universal property that only makes sense in the case of Examples \ref{ex:Models_for_a_sketch_system} and  \ref{exa:system_of_models_sheaves}. However, possible extensions of its assumptions (to a homotopical context) may be discussed so that the examples that were provided in Section \ref{ssec:Models} may be equipped with universal properties too; this will be discussed in a future work.

\begin{theorem}[Universality]\label{th:localisation_universal}
Let $f:(A,\mathfrak{u}) \to (X,\sigma,\textit{\ss})$ be an effective $\Upgamma$-quotiented $A$-model such that the $\Upgamma$-quotiented arrow $\langle f \rangle_{\mathfrak{u}}:([!_A/\mathfrak{u}],\mathfrak{u}') \to (X,\sigma,\textit{\ss}_{\dagger})$ is also effective. If 
\begin{itemize}
\item[(i)] the transitive quotientor $\upnu(\vartheta)$ is a epimorphism in $\mathcal{C}$  for every $\vartheta \in J_Q$;
\item[(ii)] the arrow $\Phi_{\theta}(X):\Phi_{\theta}^{\circ}(X) \to \Phi_{\theta}^{\bullet}(X)$ is a monomorphism is $\mathcal{C}$;
\item[(iii)] the trivial stem of $v_{\upalpha}(\vartheta)$ is an epimorphism for every $\vartheta \in J_A$; 
\item[(iv)] the functor $\Upsilon:\mathcal{P} \to \mathcal{C}^D$ is faithful;
\item[(v)] the initial section $\textsc{\j}:I \to J_A$ is an isomorphism,
\end{itemize}
then every arrow $g:[!_A/\mathfrak{u}] \to X$ in $\mathcal{P}$ that factorises as shown below, on the left, and makes the succeeding diagram, on the right, commute must be equal to $\langle f \rangle_{\mathfrak{u}}:[!_A/\mathfrak{u}] \to X$.
\[
\xymatrix{
[!_A/\mathfrak{u}]\ar[r]^g\ar[d]_{\{\lfloor !_A \rfloor_{\mathfrak{u}}\}_{\mathfrak{u}'}}&X\\
[\lfloor !_A \rfloor_{\mathfrak{u}}/\mathfrak{u}']\ar[ru]_{g'}&
}
\quad\quad\quad\quad\quad\quad
\xymatrix{
A\ar[r]^f\ar[d]_{\{!_A\}_{\mathfrak{u}}}&X\\
[!_A/\mathfrak{u}]\ar[ru]_{g}&
}
\]
\end{theorem}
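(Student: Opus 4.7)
The plan is to reduce to agreement in $\mathcal{C}^D$ via faithfulness of $\Upsilon$ from (iv) and then to unravel the pushout defining $[!_A,\mathfrak{u}]$. At each $d \in D$, we compare $\Upsilon_d(g)$ with $\Upsilon_d(\langle f\rangle_{\mathfrak{u}})$ on the two cocone legs $p_f^{\mathfrak{u}}(d) = \Upsilon_d(\{!_A\}_{\mathfrak{u}})$ and $\pi_f^{\mathfrak{u}}(d):\mathrm{col}\mathbf{B}_d \to [!_A,\mathfrak{u}](d)$ of the effective $\Upgamma$-realisation of Equation (\ref{eq:Effectiveness_of_quotiented_models:realisation}). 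Agreement on the $A$-leg is immediate, since both composites equal $\Upsilon_d(f)$ by the commuting triangle hypothesis on $g$ and by the defining factorisation of $\langle f\rangle_{\mathfrak{u}}$ in Equation (\ref{eq:lifting_definition_effective_constructor_model}).

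For the cocone leg, $\mathrm{col}\mathbf{B}_d$ is a coproduct indexed by $\Upgamma^{!_A,\mathfrak{u}}(d) = \Upgamma_A(!_A)(d) + \tilde{\mathfrak{u}}(d)$, so it is enough to check agreement on each summand. Fix an analytic index $(\vartheta, t, s, \mathbf{c}) \in \Upgamma_A(!_A)(d)$. Assumption (v) forces $\vartheta = \textsc{\j}(\theta)$ for a unique $\theta \in I$, so $\upmu(\vartheta) = \gamma_2$ and the analytic link $\ell_\vartheta$ is trivial; the corresponding cocone arrow, by Remark \ref{rem:about_uppi_0}, factors as $[!_A,\mathfrak{u}](t) \circ \uppi_0$, where $\uppi_0: \mathbb{D}_2 \to [!_A,\mathfrak{u}](\uprho(\theta))$ is the diagonal arrow coming from the obstruction factorisation of the cube $\mathbf{c}$ (see Equation (\ref{eq:point_of_view_structure_strict_narratives_premodels})).

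To compare the two composites, I push the analysis up one level to the rectification $(\lfloor !_A\rfloor_{\mathfrak{u}}, \mathfrak{u}')$, through which both $g$ and $\langle f\rangle_{\mathfrak{u}}$ factor by hypothesis (with $\langle f\rangle_{\mathfrak{u}}$ factoring by Proposition \ref{prop:factorisability_provides_new_model}). Effectiveness of $(\lfloor !_A\rfloor_{\mathfrak{u}}, \mathfrak{u}')$ together with Equation (\ref{eq:factorisation_combi_cat_premodel_rectifying_modifier_square}) supplies the lift $\uppi_1(\theta, s): \mathbb{D}' \to \Phi_\theta^\bullet([\lfloor !_A\rfloor_{\mathfrak{u}}/\mathfrak{u}'])_s$ filling the obstruction square at $(\theta, s)$ for $\mathbf{c}$. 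Composing with the induced maps $[\lfloor !_A\rfloor_{\mathfrak{u}}/\mathfrak{u}'] \to X$ yields two arrows $\mathbb{D}' \to \Phi_\theta^\bullet(X)_s$ that must both make the cube's front face commute after restriction along $\beta$. Using (iii) --- the trivial stem $\beta \circ \delta_1$ is an epi --- together with the agreement on $\mathbb{D}_1$ inherited from the $A$-leg analysis (via $\gamma_1$ and the equation $g \circ \{!_A\}_{\mathfrak{u}} = f = \langle f\rangle_{\mathfrak{u}} \circ \{!_A\}_{\mathfrak{u}}$), the two arrows out of $\mathbb{D}'$ coincide. Monicity of $\Phi_\theta(X)$ from (ii) then transfers this equality back to $\Upsilon_{\uprho(\theta)}(X)$, yielding the desired agreement on the analytic component after reindexing by $t$. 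The quotient summand $\tilde{\mathfrak{u}}(d)$ is handled by an analogous argument in which (i) --- $\upnu(\vartheta)$ is an epi --- replaces the role played by (iii), and the quotient tome replaces the analytic one.

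The main obstacle will be the diagram-chase weaving together the two successive pushouts (for $[!_A,\mathfrak{u}]$ and for $[\lfloor !_A\rfloor_{\mathfrak{u}}, \mathfrak{u}']$) with the cube $\mathbf{c}$ and its obstruction factorisation, in order to verify that (i)--(iii) genuinely force the desired uniqueness on the nose, not merely up to higher data. Condition (v) is what keeps this tractable: it kills the need to separately analyse indices $\vartheta \in J_A \setminus I$ with nontrivial compositional analytic links, which would otherwise require an induction on link length and a reduction of each intermediate $\ell_{(c_i,s_i,t_i)}$ to the already-treated identity case.
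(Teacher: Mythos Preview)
Your proposal is correct and follows essentially the same route as the paper's proof: reduce via faithfulness of $\Upsilon$ and the pushout universality of $[!_A,\mathfrak{u}]$ to checking agreement of $\Upsilon_d(g)\circ\pi^{\mathfrak{u}}_{!_A}(d)$ with the canonical leg on each summand of $\Upgamma^{!_A,\mathfrak{u}}(d)$, then handle the analytic indices via the rectification diagram and conditions (iii), (ii), and the quotient indices via condition (i).

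One small calibration: the quotient summand $\tilde{\mathfrak{u}}(d)$ is not ``analogous'' in the sense of requiring the two-level rectification machinery --- it is strictly simpler, and the paper treats it first. For a quotient index the shifted content is a square whose left vertical is $\upnu(\vartheta)$ and whose top horizontal already agrees for $g$ and $\langle f\rangle_{\mathfrak{u}}$ (both equal $\Upsilon_d(f)$ on the $A$-leg); condition (i) alone then forces the bottom arrows to coincide, with no appeal to the rectification, to $\uppi_1$, or to condition (ii). Your phrasing risks suggesting more work than is needed there. Also, the factorisation of $\langle f\rangle_{\mathfrak{u}}$ through $[\lfloor !_A\rfloor_{\mathfrak{u}}/\mathfrak{u}']$ comes directly from the effectiveness hypothesis on $\langle f\rangle_{\mathfrak{u}}$, not from Proposition~\ref{prop:factorisability_provides_new_model} (which only supplies the section $\textit{\ss}_\dagger$ used in that hypothesis).
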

\begin{proof}
Consider some arrow $g:[f/\mathfrak{u}] \to X$ making the right diagram of the statement commute. Let $d$ be an object in $D$. After application of $\Upsilon_d$ on it and using the definition of the $\Upgamma$-realisation of $(!_A,\mathfrak{u})$, we obtain the following commutative diagram.
\[
\xymatrix{
*+!R(.5){\mathrm{col} \mathbf{A}_d}\ar@{}[rd]|->{\text{\huge{\rotatebox[origin=c]{-90}{$\llcorner$}}}} \ar[r]^-{\mathrm{col}u_d}\ar[d]_{\mathrm{col}\partial\varphi_d^{\mathfrak{u}}}&\Upsilon_d(A)\ar[r]^{\Upsilon_d(f)}\ar[d]^{p^{\mathfrak{u}}_f(d)}&\Upsilon_d(X)\ar[d]^{\Upsilon_d(\mathrm{id}_X)}\\
*+!R(.5){\mathrm{col} \mathbf{B}_d}\ar@/_1.8pc/[rr]_{\Upsilon_d(g) \circ \pi^{\mathfrak{u}}_{!_A}(d)}\ar[r]_{\pi^{\mathfrak{u}}_{!_A}(d)}&*+!L(0.4){[!_A,\mathfrak{u}](d)}\ar[r]^-{\Upsilon_d(g)}&\Upsilon_d(X)
}
\]

By universality of the $\Upgamma$-realisation $[!_A,\mathfrak{u}]$ and faithfulness of $\Upsilon:\mathcal{P} \to \mathcal{C}^D$, the statement is proven if we show that the composite $\Upsilon_d(g) \circ \pi^{\mathfrak{u}}_{!_A}(d)$ is equal to the arrow $\mathrm{col}h_d$ of Diagram (\ref{eq:Effectiveness_of_quotiented_models:realisation}) for every object $d \in D$. 
Equivalently, we need to show that, for every functor $i:\mathbf{1} \to \Upgamma^{!_A,\mathfrak{u}}(d)$, the universal shifting 
of $\Upsilon_d(g) \circ \pi^{\mathfrak{u}}_{!_A}(d)$ along $i$ is equal to the composite $h_d \circ i$.
By definition, the universal shifting of the preceding diagram along any functor $i:\mathbf{1} \to \tilde{\mathfrak{u}}(d)$ provides a diagram as given below, on the left. On the right is given the shifting of Diagram (\ref{eq:Effectiveness_of_quotiented_models:realisation}) along that same functor $i$.
\begin{equation}\label{eq:universal_property_decomposition_h_i_g}
\xymatrix@C-0.05pc{
\mathbb{S}'\ar@{-->}[r]\ar[d]_{\upnu(\vartheta)}&*+!R(.5){\mathrm{col} \mathbf{A}_d}\ar[rr]^-{\Upsilon_d(f) \circ \mathrm{col}u_d}\ar[d]_{\mathrm{col}\partial\varphi_d^{\mathfrak{u}}}&&\Upsilon_d(X)\ar@{=}[d]\\
\mathbb{D}'\ar@{-->}[r]&*+!R(.5){\mathrm{col} \mathbf{B}_d}\ar[rr]^-{\Upsilon_d(g) \circ \pi^{\mathfrak{u}}_{!_A}(d)}&&\Upsilon_d(X)
}\quad\quad
\xymatrix@C-0.05pc{
\mathbb{S}'\ar@{-->}[r]\ar[d]_{\upnu(\vartheta)}&*+!R(.5){\mathrm{col} \mathbf{A}_d}\ar[rr]^-{\Upsilon_d(f) \circ \mathrm{col}u_d}\ar[d]_{\mathrm{col}\partial\varphi_d^{\mathfrak{u}}}&&\Upsilon_d(X)\ar@{=}[d]\\
\mathbb{D}'\ar@{-->}[r]\ar@/_1.5pc/[rrr]^-{h_d \circ i}&*+!R(.5){\mathrm{col} \mathbf{B}_d}\ar[rr]^-{\Upsilon_d(\langle f \rangle_{\mathfrak{u}}) \circ \pi^{\mathfrak{u}}_{!_A}(d)}&&\Upsilon_d(X)
}
\end{equation}

Because the top vertical arrows of the previous two diagrams are the same and the transitive quotientor $\upnu(\vartheta)$ is an epimorphism, the shifting of $\Upsilon_d(g) \circ \pi^{\mathfrak{u}}_{!_A}$ along $i:\mathbf{1} \to \tilde{\mathfrak{u}}(d)$ must be equal to $h_d \circ i$.
There only remains to check the same property for functors of the form $\mathbf{1} \to \Upgamma_A(f)(d)$. 

Consider a functor $i:\mathbf{1} \to \Upgamma_A(f)(d)$. By assumption (v), the image of this functor is of the form $(\textsc{\j}(\theta),t,s,\mathbf{c})$ where $t:\uprho(\theta) \to d$ (since $\uprho = \upepsilon\circ \textsc{\j}$) and $s \in \uplambda_{\theta}(A)$. For this particular parameter $\theta \in I$, apply the functor $\Phi_{\theta}$ on the two factorisations given in the statement. With these two factorisations, the diagram obtained in Remark \ref{rem:structure_narrative_degree_2} for the parameters $(\theta,s,\mathbf{c})$ gives a commutative diagram as follows (where $s'=\uplambda_{\theta}(g)(s)$).
\begin{equation}\label{eq:universality_big_diagram_analytic_part}
\xymatrix@R-0.4pc@C-0.4pc{
&\mathbb{S}\ar[rd]\ar[ddd]_{\gamma_2}\ar[rr]^x&&\Upsilon_{\uprho(\theta)}(A)\ar[rd]^{\Phi_{\theta}(A)_s}\ar[d]|>>>\hole\ar[r]&\Upsilon_{\uprho(\theta)}(X)\ar[rrd]&&\\
&&\mathbb{D}_1\ar@/^1.2pc/[rr]^<<<<y\ar[ddd]|{\beta\delta_1}&[!_A,\mathfrak{u}](\uprho(\theta))\ar[d]\ar@{..>}[rd]&*+!L(.7){\Phi_{\theta}^{\bullet}(A)_s}\ar[d]\ar[rr]|{\Phi_{\theta}^{\bullet}(f)}&&\Phi_{\theta}^{\bullet}(X)_{s'}\\
&&&[\lfloor f\rfloor_{\mathfrak{u}},\mathfrak{u}'](\uprho(\theta))\ar[d]\ar[rd]&*+!L(.7){\Phi_{\theta}^{\bullet}([!_A /\mathfrak{u}])_s}\ar[d]\ar@/_/[rru]_{\Phi_{\theta}^{\bullet}(g)}&&\\
&\mathbb{D}_2\ar@/^1.5pc/[rruu]|<<<<<<<<<<{\uppi_0}|<<<<<<<<<<<<<<<\hole\ar[rd]_{\beta \circ \delta_2}\ar[rr]|<<<<<<<<<<\hole&&\mathbf{1}\ar[rd]|<<<<<<<<<<<\hole&*+!L(.7){\Phi_{\theta}^{\bullet}(
[\lfloor !_A \rfloor_{\mathfrak{u}}/\mathfrak{u}'])_s}\ar[d]\ar@/_2pc/[rruu]_{\Phi_{\theta}^{\bullet}(g')}&&\\
&&\mathbb{D}'\ar[rr]\ar[rru]|{\uppi_1(\theta,s)}&&\mathbf{1}&&\\
}
\end{equation}

As shown in the diagram above, the composite $\Phi_{\theta}^{\bullet}(g') \circ \uppi_1(\theta,s) \circ \beta\delta_1$ is equal to $\Phi_{\theta}^{\bullet}(f) \circ y$. Because the two factorisations of the statement are also true when replacing $g$ and $g'$ with $\langle f \rangle_{\mathfrak{u}}$ and $\langle \langle f \rangle_{\mathfrak{u}}  \rangle_{\mathfrak{u}'}$, we similarly deduce that the composite $\Phi_{\theta}^{\bullet}(\langle \langle f \rangle_{\mathfrak{u}}\rangle_{\mathfrak{u}'}) \circ \uppi_1(\theta,s) \circ \beta\delta_1$ is equal to $\Phi_{\theta}^{\bullet}(f) \circ y$. Because the trivial stem $\beta\delta_1$ of $v_{\upalpha}(\vartheta)$ is an epimorphism, the following equality must hold.
\[
\Phi_{\theta}^{\bullet}(g') \circ \uppi_1(\theta,s) = \Phi_{\theta}^{\bullet}(\langle \langle f \rangle_{\mathfrak{u}}\rangle_{\mathfrak{u}'}) \circ \uppi_1(\theta,s)
\]

It is not hard to see that this equality implies the next one (Diagram (\ref{eq:universality_big_diagram_analytic_part}) might help visualise this point if one imagines the arrows that were forgotten in the background).
\[
\Phi_{\theta}(X) \circ \Upsilon_{d}(g) \circ \uppi_0 = \Phi_{\theta}(X)\circ \Upsilon_{\uprho(\theta)}(\langle f \rangle_{\mathfrak{u}}) \circ \uppi_0
\]

Because $\Phi_{\theta}(X)$ is a monomorphism, we obtain the equation $\Upsilon_{\uprho(\theta)}(g) \circ \uppi_0 = \Upsilon_{\uprho(\theta)}(\langle f \rangle_{\mathfrak{u}}) \circ \uppi_0$, which leads to the following one after post-composing with the arrow $\Upsilon_{t}(\mathrm{id}_X):\Upsilon_{\uprho(\theta)}(\mathrm{id}_X) \to \Upsilon_d(\mathrm{id}_X)$ and using the bifunctoriality of $\Upsilon$.
\[
\Upsilon_{d}(g) \circ [!_A,\mathfrak{u}](t) \circ \uppi_0 = \Upsilon_{d}(\langle f \rangle_{\mathfrak{u}}) \circ [!_A,\mathfrak{u}](t) \circ \uppi_0
\] 

Now, by Remark \ref{rem:about_uppi_0}, we know that the composite $[!_A,\mathfrak{u}](t) \circ \uppi_0$ is equal to the composite $\pi^{\mathfrak{u}}_{!_A}(d) \circ \xi_i(\mathbf{B}_d)$. According to the bottom part of the rightmost diagram of Equation (\ref{eq:universal_property_decomposition_h_i_g}), this means that the right-hand side of the previous equation corresponds to the component of the natural transformation $h$ evaluated above the element picked out by the functor $i:\mathbf{1} \to \Upgamma_A(f)(d)$. This therefore concludes the proof of the statement.
\end{proof}

\begin{example}\label{exa:universality_models_and_sheaves}
The vertebrae of Examples \ref{ex:Models_for_a_sketch_system} and  \ref{exa:system_of_models_sheaves} satisfy assumptions (i), (iii) and (iv) of Theorem \ref{th:localisation_universal}. Similarly, the $\mathrm{id}_{\mathbf{Set}}$-models generated by these examples, which are quasi-models of the associated constructor by Remark \ref{rem:quasi_model_no_choice}, or, in fact, actuals models, by Remark \ref{rem:algebraic_model_choice} and the axiom of choice, satisfy condition (ii). Finally, it follows from Remark \ref{rem:J_A_could_have_been_I} that condition (v) can also be satisfied in the case of these examples.
\end{example}

\subsection{Factorisable Models}
Let $(\Upgamma,\kappa)$ be a normal numbered constructor of type $D[\mathcal{B},\mathcal{C}]$ where $\mathcal{C}$ has coproducts, $A$ be an object in $\mathcal{B}$ and $(X,\sigma)$ be a model of $\Upgamma$. An $A$-model $f:A \to (X,\sigma)$ will be said to be \emph{$(\Upgamma,\kappa)$-factorisable} if the morphism $!_A:A \to \mathbf{1}$ is equipped with the structure of a $(\Upgamma,\kappa)$-factorisable arrow, say $(!_{A_n},\mathfrak{u}_n)$, together with a sequence $\{f_n:(A_n,\mathfrak{u}_n) \to (X,\sigma,\textit{\ss}_n)\}_{n \in \kappa+1}$ of effective $\Upgamma$-quotiented  relative models satisfying the following conditions:
\begin{itemize}
\item[$\triangleright$] \textbf{initial case:} $f=f_0$;
\item[$\triangleright$] \textbf{successor cases:} $f_{n+1}$ is given by the arrow $\langle f_n \rangle_{\mathfrak{u}_n}:[!_{A_n}/\mathfrak{u}_n] \to X$;
\item[$\triangleright$] \textbf{limit cases:} for any (infinite) limit ordinal $\lambda \in \kappa+1$, the arrow $f_{\lambda}$ is the colimit $\mathrm{col}_{n \in \lambda} f_n$ in $\mathcal{B}$ of the following diagram over the category $\lambda$.
\[
\xymatrix{
A\ar[d]_{f_0}\ar[r]^-{\{ !_{A_0} \}_{\mathfrak{u}_0}}&[!_{A_0}/\mathfrak{u}_0]\ar[d]_{f_1}\ar[r]^-{\{ !_{A_1} \}_{\mathfrak{u}_1}}&[!_{A_1}/\mathfrak{u}_1]\ar[d]_{f_2}\ar[r]^-{\{ !_{A_2} \}_{\mathfrak{u}_2}}&\dots \ar[r]^-{\{ !_{A_n} \}_{\mathfrak{u}_n}}&[!_{A_n}/\mathfrak{u}_{n}]\ar[d]_{f_{n+1}}\ar[r]&\dots\\
X\ar@{=}[r]&X\ar@{=}[r]&X\ar@{=}[r]&\dots\ar@{=}[r]&X\ar@{=}[r]&\dots
}
\]
\end{itemize}

\begin{remark}\label{rem:elimination_of_quotients_localisation}
Every $(\Upgamma,\kappa)$-factorisable $A$-model $f:A \to (X,\sigma)$ is equipped with a factorisation of the form given in (\ref{eq:elimination_of_quotients_localisation}), where the arrow $\chi_0^{\kappa}(!_A):A \to G(!_A)(\kappa)$ is the image of the sequential functor $G(!_A):\kappa+1 \to \mathcal{B}$ (defined after Convention \ref{conv:construction_G_f_sequential_functor}) above the arrow $0 \to \kappa$.
\begin{equation}\label{eq:elimination_of_quotients_localisation}
\xymatrix{
A\ar[r]^f\ar[d]_{\chi_0^{\kappa}(!_A)}&X\\
G(!_A)(\kappa)\ar[ru]_{f_{\kappa}}&
}
\end{equation}

Later on, the arrow $\chi_0^{\kappa}(!_A):A \to G(!_A)(\kappa)$ will be denoted as $\rho_A:A \to G(A)$ and called the \emph{localisation of $f:A \to (X,\sigma)$}.
According to Theorem \ref{theo:factorisation_SOA} and Remark \ref{rem:quasi_model_no_choice}, if the category $\mathcal{B}$ is equipped with the structure of a $\kappa$-combinatorial category for the constructor $\Upgamma$, then the object $G(A)$ must be a quasi-model of $\Upgamma$.
\end{remark}

\begin{theorem}[Weak localisation]\label{th:admissible_quotiented_factorisable_model}
Let $\kappa$ denote a limit ordinal and $(K,\mathtt{rou},\mathcal{P},\mathtt{V})$ be a system of $R$-premodels over a small category $D$ in a category $\mathcal{C}$. If $\mathcal{C}$ is cocomplete, $R$ preserves colimits over every limit ordinal $\lambda \in \kappa+1$ and the inclusion $\mathcal{P} \hookrightarrow \mathbf{Pr}_{\mathcal{C}}(K,\mathtt{rou},R)$ is an identity, then every relative model of $\Upgamma^K$ may be equipped with the structure of a $(\Upgamma^K,\kappa)$-factorisable relative model.
\end{theorem}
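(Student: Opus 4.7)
The plan is to run a transfinite induction parallel to the one used in Theorem \ref{th:admissible_quotiented_factorisable}, but now equipping each stage with the extra section data needed to form a $\Upgamma^K$-quotiented $A$-model. Concretely, given a relative model $(f,a):(A,S,e) \Rightarrow (X,S',e')$ in $\mathcal{P}$ with $(X,S',e')$ carrying the structure $(X,\sigma)$ of a model of $\Upgamma^K$ (see Example \ref{exam:models_are_models}), I first apply Theorem \ref{th:admissible_quotiented_factorisable} to the arrow $!_A:A \to \mathbf{1}$. This yields the underlying $(\Upgamma^K,\kappa)$-factorisable morphism $(!_{A_n},\mathfrak{u}_n)_{n\in\kappa+1}$ whose successor steps are rectifications $\mathrm{Rec}(!_{A_n},\mathfrak{u}_n)$ (ideal by Theorem \ref{th:admissible_quotiented_ideal}) and whose limit steps are computed as colimits in $\mathbf{Pr}_{\mathcal{C}}(K,\mathtt{rou},R)$ with $\mathfrak{u}_\lambda$ taken to be the empty collection of functors.

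Next, I build the sequence $\{f_n:(A_n,\mathfrak{u}_n) \to (X,\sigma,\textit{\ss}_n)\}_{n\in\kappa+1}$ inductively. For the initial step, set $f_0:=f$; since $\mathfrak{u}_0$ is the empty $\Upgamma^K$-quotient, the required section $\textit{\ss}_0$ is forced and trivially exists, and effectiveness of $f_0$ follows from Theorem \ref{th:admissible_quotiented_effective_model} (whose hypotheses are in force by assumption). For successor steps, assume $f_n:(A_n,\mathfrak{u}_n) \to (X,\sigma,\textit{\ss}_n)$ is an effective $\Upgamma^K$-quotiented $A_n$-model. Proposition \ref{prop:factorisability_provides_new_model} applied to $f_n$ then produces a section $\textit{\ss}_{n,\dagger}$ turning $\langle f_n\rangle_{\mathfrak{u}_n}:\mathrm{Rec}(!_{A_n},\mathfrak{u}_n) \to (X,\sigma)$ into a $\Upgamma^K$-quotiented $[!_{A_n}/\mathfrak{u}_n]$-model. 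I set $f_{n+1}:=\langle f_n\rangle_{\mathfrak{u}_n}$ and $\textit{\ss}_{n+1}:=\textit{\ss}_{n,\dagger}$, and invoke Theorem \ref{th:admissible_quotiented_effective_model} again to obtain effectiveness.

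For limit ordinals $\lambda \in \kappa+1$, the arrow $f_\lambda$ is defined as the colimit $\mathrm{col}_{n\in\lambda}f_n$ in $\mathcal{P}$, which exists by cocompleteness of $\mathcal{C}$ and the fact that $R$ preserves the relevant colimits; this agrees with the limit construction of $(!_{A_\lambda},\mathfrak{u}_\lambda)$ in Theorem \ref{th:admissible_quotiented_factorisable} since $\mathfrak{u}_\lambda$ is empty, so that the section $\textit{\ss}_\lambda$ is again automatic. Effectiveness at limit stages again follows from Theorem \ref{th:admissible_quotiented_effective_model}. Invoking the Principle of Transfinite Induction then yields the desired $(\Upgamma^K,\kappa)$-factorisable $A$-model structure on $f$.

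The main obstacle will be to verify that the successor step genuinely produces a \emph{$\Upgamma^K$-quotiented} $A_{n+1}$-model over $(X,\sigma)$---that is, that Proposition \ref{prop:factorisability_provides_new_model} applies at each rank. This requires checking that the hypothesis of effectiveness of $\langle f_n\rangle_{\mathfrak{u}_n}:([!_{A_n}/\mathfrak{u}_n],\mathfrak{u}_{n+1}) \to (X,\sigma,\textit{\ss}_n)$ (needed by Proposition \ref{prop:factorisability_provides_new_model}, and independently here) is available; this is precisely what Theorem \ref{th:admissible_quotiented_effective_model} gives under the identity inclusion $\mathcal{P} \hookrightarrow \mathbf{Pr}_{\mathcal{C}}(K,\mathtt{rou},R)$. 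The remaining bookkeeping---naturality of $\textit{\ss}_n$ in the variables $\theta \in I$, $s \in \uplambda_\theta(A_n)$, and compatibility of the section $\textit{\ss}_\lambda$ with the colimit structure at limit ordinals---is routine, since $\mathfrak{u}_\lambda$ is declared to be the empty quotient and the resulting sections are forced.
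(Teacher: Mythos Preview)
Your proposal is correct and follows essentially the same approach as the paper: apply Theorem \ref{th:admissible_quotiented_factorisable} to $!_A$ to obtain the underlying factorisable arrow with empty quotients at stage $0$ and at limit ordinals, then use Proposition \ref{prop:factorisability_provides_new_model} at successor stages and Theorem \ref{th:admissible_quotiented_effective_model} for effectiveness throughout, concluding by transfinite induction. One small expository point: the effectiveness hypothesis needed to apply Proposition \ref{prop:factorisability_provides_new_model} at rank $n$ is that of $f_n:(A_n,\mathfrak{u}_n)\to(X,\sigma,\textit{\ss}_n)$ itself (together with effectiveness of $(!_{A_n},\mathfrak{u}_n)$), not of $\langle f_n\rangle_{\mathfrak{u}_n}$; the latter is what you need for the \emph{next} step, so your remark about it being ``needed by Proposition \ref{prop:factorisability_provides_new_model}'' is slightly misplaced, though the logic of the induction is sound.
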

\begin{proof}
Let $(f,a):(A,S,e) \Rightarrow (X,S',e',\sigma)$ be a relative model of $\Upgamma^K$.
By Theorem \ref{th:admissible_quotiented_factorisable}, the morphism $!_A:A \to \mathbf{1}$ may be equipped with the structure of a $(\Upgamma,\kappa)$-factorisable arrow
$(!_{A_n},\mathrm{id},\mathfrak{u}_{n})_{n \in \kappa+1}$ where
\begin{itemize}
\item[$\triangleright$] $\mathfrak{u}_0$ is a collection of empty functors $\emptyset: \mathbf{1} \to \mathbf{Set}$;
\item[$\triangleright$] the object $A_{n+1}$ is given by $[A_{n}/\mathfrak{u}_n]$;
\item[$\triangleright$] For any (infinite) limit ordinal $\lambda \in \kappa+1$, the object $A_{\lambda}$ is the colimit $\mathrm{col}_{n \in \lambda}\{ !_{A_n} \}_{\mathfrak{u}_n}$ in $\mathbf{Pr}_{\mathcal{C}}(K,\mathtt{rou},R)$ of Diagram (\ref{eq:definition_f_n_constructor_end_prop_model}) over the category $\lambda$ while $\mathfrak{u}_{\lambda}$ is given by the collection of empty functors $\{\emptyset: \mathbf{1} \to \mathbf{Set}\}_{\theta \in I,s \in \uplambda_{\theta}(X)}$
\begin{equation}\label{eq:definition_f_n_constructor_end_prop_model}
\xymatrix{
(A,S,e)\ar[r]^-{\{ !_{A_0} \}_{\mathfrak{u}_0}}&[!_{A_0}/\mathfrak{u}_0]\ar[r]^-{\{ !_{A_1} \}_{\mathfrak{u}_1}}&[!_{A_1}/\mathfrak{u}_1]\ar[r]^-{\{ !_{A_2} \}_{\mathfrak{u}_2}}&\dots \ar[r]^-{\{ !_{A_n} \}_{\mathfrak{u}_n}}&[!_{A_n}/\mathfrak{u}_{n}]\ar[r]&\dots
}
\end{equation}
\end{itemize}

Because $\mathfrak{u}_0$ is made of empty functors, the identity on the empty set provides a section $\textit{\ss}_0$ that turns $(f,a):(A,S,e) \Rightarrow (X,S',e',\sigma)$ into an obvious $\Upgamma^K$-quotiented $(A,S,e)$-model. By Proposition \ref{prop:factorisability_provides_new_model}, this model structure generates new model structures $(f_n,a_n):(A_n,S,e_n) \Rightarrow (X,S',e',\sigma,\textit{\ss}_n)$ for all the finite successor ordinals of $\kappa+1$. These structures of relative model give rise to an $(A_{\omega},S,e_{\omega})$-model by forming the colimit of the previous ones along the arrows $\{ !_{A_n} \}_{\mathfrak{u}_n}:A_n \to [!_{A_{n}}/\mathfrak{u}_{n}]$. The same argument can be repeated for all ordinals of $\kappa+1$, since, for every infinite limit ordinal $\lambda \in \kappa+1$, the $\Upgamma$-quotient $\mathfrak{u}_{\lambda}$ is made of empty functors. By Principle of Transfinite Induction, this shows that we can define a sequence of $\Upgamma$-quotiented relative models $(f_n,a_n):(A_n,S_n,e_n) \Rightarrow (X,S',e',\sigma,\textit{\ss}_n)$, which must be effective by Theorem \ref{th:admissible_quotiented_effective_model} and where $(f_0,a_0)$ is given by $(f,a)$. This concludes the proof by definition of a factorisable $(A,S,e)$-model.
\end{proof}

\begin{corollary}\label{cor:admissible_quotiented_factorisable_model}
Let $\kappa$ denote a limit ordinal and $(K,\mathtt{rou},\mathcal{P},\mathtt{V})$ be a strongly fibered system of $R$-premodels over a small category $D$ in a category $\mathcal{C}$. If $\mathcal{C}$ is cocomplete and $R$ preserves colimits over every limit ordinal $\lambda \in \kappa+1$, then every relative model of $\Upgamma^K$ may be equipped with the structure of a $(\Upgamma^K,\kappa)$-factorisable relative model. 
\end{corollary}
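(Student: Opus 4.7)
The plan is to reduce to Theorem \ref{th:admissible_quotiented_factorisable_model} by working inside the ambient category $\mathbf{Pr}_{\mathcal{C}}(K,\mathtt{rou},R)$ and then to transport the resulting factorisable structure back to $\mathcal{P}$ using the two clauses of ``strongly fibered''. The argument is the natural model-theoretic upgrade of the proof of Corollary \ref{cor:admissible_quotiented_factorisable}, which performed exactly this kind of lifting for morphisms.

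First, given a relative model $(f,a):(A,S,e) \Rightarrow (X,S',e',\sigma)$ in $\mathcal{P}$, view it as an object of $\mathbf{Pr}_{\mathcal{C}}(K,\mathtt{rou},R)$ via the inclusion $\mathcal{P} \hookrightarrow \mathbf{Pr}_{\mathcal{C}}(K,\mathtt{rou},R)$. Since $\mathcal{C}$ is cocomplete and $R$ preserves colimits over every limit ordinal $\lambda \in \kappa+1$, the hypotheses of Theorem \ref{th:admissible_quotiented_factorisable_model} are satisfied in $\mathbf{Pr}_{\mathcal{C}}(K,\mathtt{rou},R)$. It therefore produces a $(\Upgamma^K,\kappa)$-factorisable structure on $(f,a)$, consisting of an underlying $(\Upgamma^K,\kappa)$-factorisable arrow $(!_{A_n},\mathrm{id},\mathfrak{u}_n)_{n\in\kappa+1}$ and a sequence of effective $\Upgamma^K$-quotiented $(A_n,S_n,e_n)$-models $(f_n,a_n):(A_n,S_n,e_n) \Rightarrow (X,S',e',\sigma,\textit{\ss}_n)$, all a priori living in $\mathbf{Pr}_{\mathcal{C}}(K,\mathtt{rou},R)$.

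Next, I would show by transfinite induction on $n \in \kappa+1$ that each stage of this sequence can be realised inside $\mathcal{P}$. The base case $n=0$ is given, since $(f_0,a_0)=(f,a)$ lies in $\mathcal{P}$ by assumption. At a successor ordinal $n+1$, assuming $(f_n,a_n)$ together with its ambient objects belongs to $\mathcal{P}$, the ``fibered'' part of Definition \ref{def:strongly_fibered_relative_quotiented_model} guarantees that the $\Upgamma^K$-factorisation of the quotiented arrow $(!_{A_n},\mathfrak{u}_n)$ lifts to $\mathcal{P}$, producing an object $(A_{n+1},S_{n+1},e_{n+1}) := ([!_{A_n}/\mathfrak{u}_n],S,e^{\mathfrak{u}_n})$ in $\mathcal{P}$ along with the canonical maps $\{!_{A_n}\}_{\mathfrak{u}_n}$ and $\lfloor !_{A_n}\rfloor_{\mathfrak{u}_n}$ in $\mathcal{P}$. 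The stronger ``strongly fibered'' clause then lifts the $\Upgamma^K$-factorisation of the associated $(A_n,S_n,e_n)$-model to $\mathcal{P}$, providing the map $\langle f_n \rangle_{\mathfrak{u}_n}:[!_{A_n}/\mathfrak{u}_n] \to X$ in $\mathcal{P}$; set $f_{n+1} := \langle f_n \rangle_{\mathfrak{u}_n}$. At a (nonzero) limit ordinal $\lambda$, the quotient $\mathfrak{u}_\lambda$ is degenerate, and the object $(A_\lambda,S_\lambda,e_\lambda)$ is computed as a colimit of the preceding sequence along arrows already in $\mathcal{P}$; the fibered hypothesis again ensures this colimit lifts to $\mathcal{P}$, and the cocone determined by the $f_n$'s into $X$ induces the required map $f_\lambda$.

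Finally, the three bullet conditions in the definition of $(\Upgamma^K,\kappa)$-factorisable relative model (initial, successor, limit) hold in $\mathcal{P}$ by construction, since the inclusion $\mathcal{P} \hookrightarrow \mathbf{Pr}_{\mathcal{C}}(K,\mathtt{rou},R)$ only needs to reflect the pushouts and sequential colimits that have now been exhibited inside $\mathcal{P}$ itself. The only delicate point is ensuring that the ``model'' structures $\textit{\ss}_n$ transport coherently across the lifts; this is automatic because they are defined from $\sigma$ by the recipe of Proposition \ref{prop:factorisability_provides_new_model}, which depends only on lifts in $\mathcal{C}$ that are unchanged by passing between $\mathcal{P}$ and $\mathbf{Pr}_{\mathcal{C}}(K,\mathtt{rou},R)$. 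I expect the main subtlety to be precisely this bookkeeping of the sections $\textit{\ss}_n$ across the fibered lifts at successor stages, but no new construction is required beyond what ``strongly fibered'' already packages.
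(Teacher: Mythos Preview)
Your proposal is correct and follows the same approach as the paper, which simply states that the corollary ``is an obvious generalisation of Theorem \ref{th:admissible_quotiented_factorisable_model} that takes advantage of the notion of strong fiberedness.'' You have merely unpacked this one-liner into the explicit transfinite induction it abbreviates, mirroring how Corollary \ref{cor:admissible_quotiented_factorisable} upgrades Theorem \ref{th:admissible_quotiented_factorisable}.

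One small comment: at limit ordinals you invoke ``the fibered hypothesis'' to lift the colimit to $\mathcal{P}$, but strictly speaking the definition of \emph{fibered} concerns only the lifting of $\Upgamma$-factorisations, not sequential colimits. The paper is equally loose on this point (Corollary \ref{cor:admissible_quotiented_factorisable} glosses over it entirely), and in practice the intended subcategories $\mathcal{P}$ are closed under the relevant colimits, so this is a presentational rather than mathematical gap.
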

\begin{proof}
This corollary is an obvious generalisation of Theorem \ref{th:admissible_quotiented_factorisable_model} that takes advantage of the notion of strong fiberedness (see Definition \ref{def:strongly_fibered_relative_quotiented_model}).
\end{proof}

\subsection{Elimination of Quotients}\label{ssec:Elimination_of_quotients}
A normal numbered constructor $(\Upgamma,\kappa)$ of type $D[\mathcal{B},\mathcal{C}]$ will be said to 
\emph{eliminate quotients} if the category $\mathcal{B}$ is a $\kappa$-combinatorial category for the constructor $\Upgamma$ and every canonical arrow $A \to \mathbf{1}$ is equipped with the structure of a $(\Upgamma,\kappa)$-factorisable morphism such that every $A$-model
$f:A \to (X,\sigma)$ is $(\Upgamma,\kappa)$-factorisable for this structure.

\begin{remark}
For every object $A$ in $\mathcal{B}$, all $A$-models $f:A \to (X,\sigma)$ are equipped with the same localisation $\rho_A:A \to G(A)$ where $G(A)$ is a quasi-model (see Remark \ref{rem:elimination_of_quotients_localisation}). The way in which this arrow has been defined from the data of $\Upgamma$ is the key of the so-called `elimination of quotients'.
\end{remark}

\begin{theorem}\label{theorem:final}
Let $(\Upgamma,\kappa)$ be a normal numbered constructor of type $D[\mathcal{B},\mathcal{C}]$ that eliminates quotients. For every object $A$ in $\mathcal{B}$, every quasi-model $X$ and arrow $f:A \to X$ in $\mathcal{B}$, there exists an arrow $f':G(A) \to X$ in $\mathcal{B}$ making the following diagram commute.
\[
\xymatrix{
A\ar[r]^f\ar[d]_{\rho_A}&X\\
G(A)\ar[ru]_{f'}&
}
\]
\end{theorem}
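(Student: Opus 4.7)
The plan is to deduce the statement almost directly from the hypothesis that $(\Upgamma,\kappa)$ eliminates quotients, together with the observation, formalised in Remark \ref{rem:algebraic_model_choice}, that any quasi-model can be promoted to a model once a system of lifts has been chosen. Indeed, Remark \ref{rem:quasi_model_no_choice} tells us that being a quasi-model amounts to the mere existence of a lift for each commutative cube parametrised by $(\vartheta,s)\in J_A\times \uplambda_{\iota(\vartheta)}(X)$, while Remark \ref{rem:algebraic_model_choice} tells us that a model structure is precisely a coherent choice of such lifts. Given a quasi-model $X$ as in the statement, the first step will therefore be to select, for each pair $(\vartheta,s)$, some lift $(h,\overline{h})$ via the axiom of choice. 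By the characterisation in Remark \ref{rem:algebraic_model_choice}, the induced mapping $(\vartheta,t,s,\mathbf{c})\mapsto (\vartheta,t,s,\mathtt{lift}(\vartheta,s,\mathbf{c}))$ defines a natural section $\sigma$ of the natural surjection $\wp_A(X,\_)$; this turns $X$ into a model $(X,\sigma)$ of $\Upgamma$.

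Next, I would view the given morphism $f\colon A\to X$ as an $A$-model $f\colon A\to (X,\sigma)$ in the sense of the definition preceding Section 7.3. Since by hypothesis the numbered constructor $(\Upgamma,\kappa)$ eliminates quotients, the canonical arrow $!_A\colon A\to \mathbf{1}$ comes equipped with a fixed $(\Upgamma,\kappa)$-factorisable structure, and every $A$-model $f\colon A\to (X,\sigma)$ is $(\Upgamma,\kappa)$-factorisable with respect to this structure. Applying this to our $f$, we obtain a sequence $\{f_n\colon (A_n,\mathfrak{u}_n)\to (X,\sigma,\textit{\ss}_n)\}_{n\in\kappa+1}$ of effective $\Upgamma$-quotiented $A_n$-models in the sense of Section 7.6.

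Finally, I would invoke Remark \ref{rem:elimination_of_quotients_localisation}, which records the fact that any $(\Upgamma,\kappa)$-factorisable $A$-model automatically comes with a canonical triangular factorisation
\[
\xymatrix@R-8pt{
A\ar[r]^{f}\ar[d]_{\rho_A}&X\\
G(A)\ar[ru]_{f_{\kappa}}&
}
\]
where the leftmost arrow $\rho_A=\chi_0^{\kappa}(!_A)$ is, by definition, the localisation obtained from the $(\Upgamma,\kappa)$-factorisable structure on $!_A$ (and is therefore independent of the particular $A$-model chosen). Setting $f':=f_{\kappa}$ then yields the required factorisation. The only genuine content here is the first step, that is, producing the section $\sigma$; the rest is simply a matter of unpacking the definitions of Sections 7.5--7.7. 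The main conceptual subtlety, if any, is to note that even though $\rho_A$ is fixed once and for all by the elimination-of-quotients structure on $!_A$, the definition of factorisable $A$-model guarantees the compatibility of $\rho_A$ with every possible choice of $A$-model $f\colon A\to (X,\sigma)$, which is exactly what makes the universal-looking triangle above commute regardless of which lifts were selected in the first step.
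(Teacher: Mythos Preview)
Your proposal is correct and follows essentially the same approach as the paper's proof: promote the quasi-model $X$ to a model $(X,\sigma)$ via the axiom of choice and Remark \ref{rem:algebraic_model_choice}, then invoke the elimination-of-quotients hypothesis together with Remark \ref{rem:elimination_of_quotients_localisation} to obtain the factorisation $f' = f_{\kappa}$. The paper's proof is simply a two-sentence compression of exactly these steps.
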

\begin{proof}
By Remark \ref{rem:algebraic_model_choice} and the axiom of choice, every quasi-model $X$ may be equipped with the structure of a model $(X,\sigma)$. It follows from Remark \ref{rem:elimination_of_quotients_localisation} that the diagram of the statement commutes.
\end{proof}

\begin{example}\label{exa:elimination_of_quotients_final}
Save for Example \ref{ex:system_of_premodels_Localisation_of_rings}, all the examples of Section \ref{ssec:Models} satisfy Corollary \ref{cor:admissible_quotiented_factorisable_model} (see Remark \ref{rem:Many_examples_are_strongly_fibered}). Following Examples \ref{exa:convergence_functor_limit_sketch} and  \ref{exa:convergence_functor_topological_spaces} for premodels valued in $\mathbf{Set}$ and $\mathbf{Top}$ and considering similar arguments for premodels valued in $\mathbf{Cat}$ and $\mathbf{pTop}$, one can deduce from Example \ref{exa:category_of_models_are_combinatorial_in_nice_cases} that these examples are $\kappa$-combinatorial for some well-chosen ordinal $\kappa$. This means that these examples eliminate quotients and are equipped with a localisation of the form given in Theorem \ref{theorem:final}. In particular, this localisation tends to organise the different sorts of data appearing in the diskads of the systems in the form of bundles---this was explicited in Examples \ref{exa:elimination_of_quotients_sketches} and  \ref{exa:elimination_of_quotients_sketches_the_binary_relations} in the case of the models for a limit sketch.
\end{example}

\begin{remark}\label{rem:final_universality}
Under the conditions of Theorem \ref{th:localisation_universal}, the factorisation of Theorem \ref{theorem:final} may be shown to be unique by using an obvious transfinite induction.
\end{remark}

\section{Concusions}\label{sec:conclusion}

\subsection{Conclusions for Motivation 1}
In Section \ref{ssec:result_1}, one of our main goals was to provide a language that would allow us to show strict universal properties from weak definitions. In this paper, we address this question in the form of Theorem \ref{th:localisation_universal}. This theorem shows us what the main ingredients that are responsible for universal properties look like and most of them pertain to the sets of vertebrae associated with our systems of premodels (see conditions (i), (ii) and (iii)). 

In fact, many sections and concepts were introduced in this paper because of these vertebrae. The need for each of these sections can be explained by the following storyline. At the centre of things is Section \ref{ssec:Models}, which introduces the concept of system of premodels. This structure is a formal way to present the lifting problems associated with our vertebrae. To handle these lifting problems, we have to introduce the analytic and quotient species given in Definition \ref{def:analytic_quotient_species}. 
However, because these species need some formal setting, the concept of constructor is introduced in Section \ref{sssec:Constructor}, which \emph{a fortiori} motivates the introduction of preconstructors in Section \ref{sssec:Pre-constructor}. Note that the main purpose of the latter is to allow the handling of the premodel structure (e.g., the maps $e_{c,s}:P\mathtt{rou}(c) \to RP\mathtt{ou}(c)$ defined in Section \ref{ssec:premodels}) while the purpose of the former is to allow the handling of the vertebrae associated with systems of premodels. 
The way one handles the species is formalised via the tools of Section \ref{sec:From_narratives_to_combinatorial_cat}, in which is expressed our small object argument (Proposition \ref{prop:small_object_argument_general}). This section really allows us to see the big picture without introducing too much detail. On the other hand, from Section \ref{ssec:Tomes_of_a_constructor} to the end of Section \ref{sec:Constructors}, we give all the details of this big picture in the case of systems of premodels. We also use Section \ref{sec:Combinatorial_cat} to explain what it takes, in terms of required assumptions, to be able to apply the small object argument of Section \ref{sec:From_narratives_to_combinatorial_cat}. 
The need for Section \ref{sec:Universal_property} naturally presents itself if one is interested to know more about the universal properties satisfied by the models living in systems of premodels. As one is able to see there, this section heavily relies on the concept of species introduced in Section \ref{sssec:Constructor} and hence the concept of vertebra.

The fact that this last section relies so heavily on the vertebrae is not so surprising when one knows that vertebrae are meant to encode some sort of homotopical information and that, on the other hand, Homotopy Theory is all about coherence property. In fact, this idea of coherence---and universal property---coming from vertebrae is already discussed in my thesis \cite{Thesis_Tuy} and this is exactly the spirit in which Theorem \ref{th:localisation_universal} should be regarded. In this respect, I will use the rest of this conclusion to explain why the formalism of systems of premodels is something that one might want to consider if one wants to solve higher coherence problem.

A way to put it would be to ask what happens if one starts changing the assumptions of Theorem \ref{theo:elimitation_reflection_simple_case} (see Section \ref{sec:introduction}) in terms of homotopical properties. The notion of epimorphism used thereof could be replaced with a notion of epimorphism up to homotopy. For instance, the arrow $\beta:\mathbb{S}' \to \mathbb{D}'$ could be called a \emph{weak epimorphism} if for every pair of arrows $f,g:\mathbb{D}' \to X$ for which the equation $f \circ \beta = g \circ \beta$ holds, we can form the pushout $\mathbb{S}^{\prime \prime}$ of $\beta$ with itself (see below) so that a given arrow $\beta':\mathbb{S}^{\prime \prime} \to \mathbb{D}^{\prime \prime}$ factorises the universal arrow induced by $f$ and $g$ under $\mathbb{S}'$ as follows.
\begin{equation}\label{eq:homotopy_conclusion}
\xymatrix{
\mathbb{S}' \ar@{}[rd]|>>{\rotatebox[origin=c]{-90}{\huge{\text{$\llcorner$}}}}\ar[r]^{\beta}\ar[d]_{\beta} & \mathbb{D}' \ar[d]^{\delta_1'} \ar@/^1.1pc/[rrd]^{f}&&\\
\mathbb{D}' \ar[r]^{\delta_2'} \ar@/_1.5pc/[rrr]_{g} & \mathbb{S}^{\prime \prime} \ar[r]^{\beta^{\prime\prime}}& \mathbb{D}^{\prime \prime}\ar@{-->}[r]&X
}
\end{equation}

A quick look at the beginning of the proof of Theorem \ref{th:localisation_universal}, in which $\beta$ should be viewed as the transitive quotientor $\upnu(\vartheta)$,  shows that such a notion of weak epimorphism would imply that the universal solution of the reflection would be unique up to a homotopy relation as defined in Equation (\ref{eq:homotopy_conclusion}). However, this type of statement would only hold if the vertebra  
\begin{equation}\label{eq:vertebra_dim_2}
\xymatrix{
\mathbb{S}' \ar@{}[rd]|>>{\rotatebox[origin=c]{-90}{\huge{\text{$\llcorner$}}}}\ar[r]^{\beta}\ar[d]_{\beta} & \mathbb{D}' \ar[d]^{\delta_1'}&\\
\mathbb{D}' \ar[r]^{\delta_2'} & \mathbb{S}^{\prime \prime} \ar[r]^{\beta^{\prime\prime}}& \mathbb{D}^{\prime \prime}
}
\end{equation}
satisfies some nice compositional properties, and, more specifically, compositional properties of the type defined in \cite{Thesis_Tuy}. In other words, our vertebra would need to satisfy axioms of the same type as those usually considered in the case of (co)limit sketches -- the compositional properties would try to recapture the idea of composition of cells in (Higher) Category Theory. 

Interestingly, these axioms would also mingle different vertebrae together. For instance, it is interesting to note that our current discussion has made us consider two vertebrae: one for which $\beta'$ is a stem (as usual) and one for which $\beta'$ is both a seed and a coseed, given in Equation (\ref{eq:vertebra_dim_2}). This pair of vertebrae can be arranged in the form of the following diagram.
\[
\xymatrix@C-10pt@R-10pt{
&\mathbb{D}_1\ar[rd]^{\delta_1}&&\mathbb{D}'\ar[rd]^{\delta_1'}&&\\
\mathbb{S}\ar[ru]^{\gamma_1}\ar[rd]_{\gamma_2}&&\mathbb{S}'\ar[ru]^{\beta}\ar[rd]_{\beta}&&\mathbb{S}^{\prime\prime}\ar[r]^{\beta'}&\mathbb{D}^{\prime\prime}\\
&\mathbb{D}_2\ar[ru]_{\delta_2}&&\mathbb{D}'\ar[ru]_{\delta_2'}&&\\
}
\]

Such a commutative diagram defines what is called a \emph{spine} (of degree 1) in \cite{Thesis_Tuy}. There, spines are shown to be essential in the understanding of higher coherence results of the type mentioned above and one can see that these structures arise very naturally once one starts talking about universal properties.  The degree of a spine hides a dimensional nature and it is interesting to note that this dimensional aspect already arises among the examples discussed in \cite[Section 4]{TholenInjnotNat} when it is asked whether weak reflections can possess strict universal property such as functoriality and naturality.

In conclusion, the idea of universal property and coherence fits the language of systems of premodels nicely, so that these structures appear to be the right setting to address the question whether a class of algebraic objects defined via weak lifting properties can satisfy strict (or at least stricter than expected) universal properties---and an important part of the work to be done in this direction can already be perceived in \cite{Thesis_Tuy}. 

\subsection{Conclusions for Motivation 2}\label{ssec:conclusion_motivation_2}
In Section \ref{ssec:result_2}, our other main goal was to prove Theorem \ref{theo:Elimination_quotient_intro}, along with Propositions \ref{prop:Elimination_quotient_intro_1} and  \ref{prop:Elimination_quotient_intro_2}. These results were proven in different sections of the present paper. Before addressing the usefulness of these results, we briefly recapitulate their proof below.

Let $(D,K)$ be a limit sketch, seen as a croquis, and consider the system of premodels defined in Example \ref{ex:Models_for_a_sketch_system} for the inclusion $\mathbf{Set}^{D}  \hookrightarrow \mathbf{Pr}_{\mathbf{Set}}(K)$. First, Example \ref{exa:elimination_of_quotients_final} tells us that the reflector $\rho_A:A \to G(A)$ associated with a premodel \footnote{Also called a `presentation' in Section \ref{ssec:result_2}} $A$ in $\mathbf{Set}^{D}$ is given by Theorem \ref{theorem:final}. Its strict universal property then follows from Remark \ref{rem:final_universality}, where one needs to look at Example \ref{exa:universality_models_and_sheaves} in order to use Theorem \ref{th:localisation_universal}. The functoriality of the reflection $A \mapsto G(A)$  and the naturality of the reflector $\rho_A$ obviously follows from this (strict) universal property. 

Now, if one consider the transfinite construction of the reflector $\rho_A:A \to G(A)$ given in Section \ref{ssec:Factorisable_morphisms}, one may see that the transfinite sequence that gives rise to the reflector $\rho_A$ is of the desired form 
\begin{itemize}
\item[-] for Proposition \ref{prop:Elimination_quotient_intro_1} by using Example \ref{exa:elimination_of_quotients_sketches};
\item[-] for Proposition \ref{prop:Elimination_quotient_intro_2} by using 
Example \ref{exa:clarifies_description_colimits};
\item[-] for Theorem \ref{theo:Elimination_quotient_intro} by using Examples \ref{exa:elimination_of_quotients_sketches}--\ref{exa:clarifies_description_colimits}, for which one needs to realise that a sum of the form
\[
E(X)(\_):=\sum_{c \in K} D(\mathtt{ou}(c),\_) \times X[c]
\]
is the same thing as a left Kan extension $E(X)$ of the form given in Equation (\ref{eq:conclusion_left_Kan_extension}), where $K$ must regarded as a discrete category.
\begin{equation}\label{eq:conclusion_left_Kan_extension}
\xymatrix{
D\ar@{-->}[rd]^{E(X)}&\\
K\ar@{}[ru]|<<<<{\rotatebox[origin=c]{115}{$\Rightarrow$}a_i}\ar[u]^{\mathtt{ou}}\ar[r]_-{X[\_]}&\mathbf{Set}
}
\end{equation}
\end{itemize}

The question that now remains to be answered is: what is the combinatorial presentation given by Theorem \ref{theo:Elimination_quotient_intro} useful for? Recall that, according to Theorem \ref{theo:Elimination_quotient_intro}, the reflector associated with a presentation $X$ in $\mathbf{Set}^D$ is the transfinite composition of a sequence of arrows as follows
\[
\xymatrix{
B_0(X)+E_0(X) \ar[r]^-{p_0}& B_1(X)+E_1(X) \ar[r]^-{p_1} & B_2(X)+E_2(X) \ar[r]^-{p_2} & \dots
}
\]
where, for every $i \geq 0$, the object $E_{i+1}(X)$ is the left Kan extension of the functor
\[
\begin{array}{lllll}
\hat{S}_i[\_]&:&K& \to &\mathbf{Set}\\
&&c& \mapsto &\mathrm{lim}_{\mathtt{Es}(c)}\, S_i \circ \mathtt{in}(c)\\
\end{array}
\]
where, here, the functor $S_i:D \to \mathbf{Set}$ denotes the sum $E_i(X)+B_i(X)$.
\[
\xymatrix{
D\ar@{-->}[rd]^{E_{i+1}(X)}&\\
K\ar@{}[ru]|<<<<{\rotatebox[origin=c]{115}{$\Rightarrow$}a_i}\ar[u]^{\mathtt{ou}}\ar[r]_-{\hat{S}_i[\_]}&\mathbf{Set}
}
\]

The restriction of the quotient map $p_{i+1}$ (see Section \ref{ssec:result_2}) to the object $E_{i+1}(X)$ gives us a way to organise the data of $E_{i+1}(X)$ with respect to its fibres. Of course, this organisation is also present in Kelly's construction via the quotients acting on $E_{i+1}(X)$ (see Example \ref{exa:clarifies_description_colimits}), but this organisation is also unlikely to be the one that one wants to consider if one decides to study the combinatorial properties of the models. In fact, while Kelly's construction forces us to consider an actual quotient of the object $E_{i+1}(X)$, the elimination of quotients leaves the object $E_{i+1}(X)$ free of quotients, so that one can now use any other type of relations on $E_{i+1}(X)$ without being forced to deal with the relations of the localisation. Furthermore, the formalism of quotient maps (formalised in terms of quotiented arrows in Section \ref{ssec:Quotiented-arrows}) makes compatibility and distributivity questions between potential new relations and those forced by the localisation much easier to study.

For instance, one could want to study the colimits of the category of models for $(D,K)$. Recall that colimits in this category are given by the images of the reflection $G$ on the corresponding colimits in the category of premodels $\mathbf{Set}^D$, as shown below.
\[
\mathrm{col}\, F = G(\mathrm{col}\, F)
\]

In addition, recall that a colimit of the form $\mathrm{col}\, F$ in $\mathbf{Set}^D$ can be seen as a quotiented sum. 
\[
\Big(\sum_{x} F(x) \Big)/\sim
\]

The relations $\sim$ acting on the sum $\sum_{x} F(x)$ usually generates the type of identifications that one wants to study. Specifically, one usually wants to understand how these propagate through the transfinite constructions building the models. However, their propagation is usually non-obvious and requires some more-or-less non-trivial case-by-case analysis, depending on how complicated the theory $(D,K)$ is. This case-by-case analysis might not even depend on the quotients implied by the localisation and might instead depend on the properties of the objects $F(x)$. In order to be efficient and clear, this case-by-case analysis needs to be processed in a quotient-free environment separated from the quotients generated by the localisation, but what is better than a quotient map whose domain is a quotient-free left Kan extension of the form given in Equation (\ref{eq:conclusion_left_Kan_extension}) to make such a separation? Interestingly, the construction of the quotient maps $p_{i+1}$ has motivated the formalisation of the concept of \emph{quotient} (in Section \ref{ssec:Quotiented-arrows}), so that our results open the door to the development of a new language to talk about quotients living in algebraic objects in general.

\appendix
\section{\label{Appendix:first}}

Recall that the category of sets $\mathbf{Set}$ is complete and cocomplete. The limit $\mathrm{lim}_D F$ of
a functor $F:D \to \mathbf{Set}$ for some small category $D$ is given by the set
\begin{equation}\label{eq:description_of_limits_in_set}
\{(x_d)_{d \in \mathrm{Obj}(D)}~|~x_d \in F(d)\text{ and for any }t:d\to d'\text{ in }D:~F(t)(x_d) = x_{d'}\}
\end{equation}
while the colimit $\mathrm{col}_D F$ of
a functor $F:D \to \mathbf{Set}$ for some small category $D$ is given by the quotient set
\[
\{(d,x)~|~d \in \mathrm{Obj}(D);~ x \in F(d)\}/\sim
\]
where $\sim$ denotes the binary relation whose relations $(d,x) \sim (d',x')$ are defined when
there exists an object $e$ and two arrow $t:d \to e$ and $t':d' \to e$ in $D$ such that the equation $F(t)(x) = F(t')(x')$ holds. Note that in the case where $D$ is a preorder category $\kappa$ for some ordinal $\kappa$, the binary relation $\sim$ is an equivalence relation.

\begin{proof}[Proof of Proposition \ref{prop:SGA_limits_colimits_commute}]
A proof may be found in \cite[Corollaire 9.8]{SGA4}. For the sake of self-contained\-ness, the proof is recalled in this appendix.  Let $F_{\_}(\_):\kappa \times D \to \mathbf{Set}$ be a functor. An equivalence class for the equivalence relation $\sim$ will be denoted into brackets, i.e. $[(k,x)]$. The notation $$(x_d)^F_{d \in \mathrm{Obj}(D)}$$ will be used to mean that the collection $(x_d)_{d \in \mathrm{Obj}(D)}$ is compatible with the action of the functor $F$ in the appropriate way (see Equation (\ref{eq:description_of_limits_in_set})). By definition, the following equations hold.
\[
\mathrm{col}_{\kappa}\mathrm{lim}_D F =\{[k,(x_d)^F_{d \in \mathrm{Obj}(D)}]~|~(x_d)^F_{d  \in \mathrm{Obj}(D)} \in \mathrm{lim}_D F_k(\_)\}
\]
\[
\mathrm{lim}_D\mathrm{col}_{\kappa} F =\{([k_d,x_d])^F_{d \in \mathrm{Obj}(D)}~|~[k_d,x_d] \in \mathrm{col}_{\kappa} F_{\_}(d)\}
\]
The natural transformation $\mathrm{col}_{\kappa}\,\mathrm{lim}_{D} \Rightarrow \mathrm{lim}_{D}\,\mathrm{col}_{\kappa}(\_)$ is given by the following mapping.
\[
[k,(x_d)^F_{d  \in \mathrm{Obj}(D)}] \mapsto ([k,x_d])^F_{d \in \mathrm{Obj}(D)}
\]
Let us prove its surjectiveness. Consider an element in $\mathrm{lim}_D\mathrm{col}_{\kappa} F$ of the following form. 
\[
([k_d,x_d])^F_{d \in \mathrm{Obj}(D)}
\]
By definition of the compatibility with the action of $F$, for any arrow $t:d \to d'$ in $D$, there exist arrows $s_{d}:k_{d} \to e_{t}$ and $s_{d'}:k_{d'} \to e_{t}$ in $\kappa$ such that the next equation holds.
\begin{equation}\label{eq:compatibility_col_lim_F_commutativity}
F_{s_{d}}(d) \circ F_{k_d}(t)(x_d) = F_{s_{d'}}(d')(x_{d'})
\end{equation}
Since $\kappa$ is a limit ordinal greater than or equal to $|D|$, we may define the following supremum in $\kappa$.
\[
\underbrace{\xymatrix{
&&\cup_{t \in \mathrm{Ar}(D)} e_{t}&&\\
e_{t_0}\ar[rru]^{g_{t_0}}&e_{t_1}\ar[ru]|{g_{t_1}}&e_{t_2}\ar[u]|{g_{t_2}}&\dots&e_{t}\ar[llu]_{g_t}
}}_{\text{cardinality given by }|D|}
\]
Denote the supremum $\cup_{t \in \mathrm{Ar}(D)} e_{t}$ by $e$. Note that for any pair of arrows $t:d \to d'$ and $t':d^{\prime\prime} \to d$ in $D$, the arrows $g_t \circ s_d:k_{d} \to e$ and $g_{t'} \circ s_d:k_{d} \to e$ are equal in $\kappa$. The family made of the elements
$
F_{g_t \circ s_d}(d)(x_d)
$
for every object $d$ in $D$
is then compatible with the action of $F$, since, for any arrow $t:d \to d'$ in $D$, the following equation holds from Equation (\ref{eq:compatibility_col_lim_F_commutativity}).
\[
F_{e}(t) \circ F_{g_t \circ s_{d}}(d)(x_d) =  F_{g_t \circ s_{d}}(d) \circ F_{k_d}(t)(x_d) = F_{g_{t}}(d) \circ F_{s_{d'}}(d')(x_{d'})
\]
In addition, it is not hard to check that the mapping rule of the natural transformation $\mathrm{col}_{\kappa}\,\mathrm{lim}_{D}(\_) \Rightarrow \mathrm{lim}_{D}\,\mathrm{col}_{\kappa}(\_)$ includes the rule
\[
[e,(F_{g_t \circ s_d}(d)(x_d))^F_{d \in \mathrm{Obj}(D)}] \mapsto ([k_d,x_d])^F_{d \in \mathrm{Obj}(D)}
\]
since $(k_d,x_d) \sim (e,F_{g_t \circ s_{d}}(d)(x_d))$.
Let us now prove its injectiveness. Note that any equality $([k,x_d])^F_{d \in \mathrm{Obj}(D)} = ([k',x_d'])^F_{d \in \mathrm{Obj}(D)}$ implies the existence of cospans
\[
\xymatrix{
&e_d&\\
k\ar[ru]^{s_d}&&k'\ar[lu]_{s'_d}
}
\]
such that the identity $F_{s_d}(d)(x_d) = F_{s'_d}(d)(x_d')$ holds for every object $d$ in $D$. Now, define the following supremum, which will be denoted by $e'$.
\[
\underbrace{\xymatrix{
&&\cup_{d \in \mathrm{Obj}(D)} e_{d}&&\\
e_{d_0}\ar[rru]^{g_0}&e_{d_1}\ar[ru]|{g_1}&e_{d_2}\ar[u]|{g_2}&\dots&e_{d}\ar[llu]_{g_d}
}}_{\text{cardinality below }|D|}
\]
For every object $d$ in $D$, the arrows $g_d \circ s_d:k \to e'$ are equal in $\kappa$. The same is true for 
$g_d \circ s_d':k' \to e'$. It follows that the equation
\[
\mathrm{lim}_d F_{g_d \circ s_d}(d)((x_d)^F_{d}) = \mathrm{lim}_d F_{g_d \circ s'_d}(d)((x_d')^F_{d})
\]
holds, which implies the identity $[k,(x_d)^F_{d  \in \mathrm{Obj}(D)}]=[k',(x_d')^F_{d  \in \mathrm{Obj}(D)}]$.
\end{proof}

\begin{proof}[Proof of Proposition \ref{prop:eta_commutes_with_colimits_O_kappa}]
We keep the convention set in the proof of Proposition \ref{prop:SGA_limits_colimits_commute}. We only need to check that the diagram of the statement commutes. For any set $X$, the unit $\eta_X:X \to \mathrm{lim}_D\Delta_D(X)$ maps an element of $x \in X$ to the constant collection $(x)_{d \in \mathrm{Obj}(D)}$. Similarly, for any functor $X:\kappa \to \mathbf{Set}$, the unit $\eta_{X(\_)}:X(\_) \to \mathrm{lim}_D\Delta_D(X(\_))$ maps an element of $x \in X(k)$ to the constant collection $(x)_{d \in \mathrm{Obj}(D)}$ in $\mathrm{lim}_D\Delta_D(X(k))$. The diagram of the statement is therefore encoded by the following mapping rules.
\[
\xymatrix{
[(k,x)]\ar@{=}[d]\ar@{|->}[rr]^-{\mathrm{col}_{\kappa}\eta_{F(\_)}}&&([k,(x)_{d \in \mathrm{Obj}(D)}]) \ar@{|->}[d]^{\cong}\\
[(k,x)]\ar@{|->}[rr]_-{\eta_{\mathrm{col}_{\kappa}F(\_)}}&&([(k,x)])_{d \in \mathrm{Obj}(D)}
}
\]
In particular, this shows that the diagram commutes.
\end{proof}

\bibliographystyle{amsplain}

\begin{thebibliography}{10}

\bibitem{AdamekRosicky} Ad\'{a}mek, J.;  Rosick\'{y}, J. \emph{Locally Presentable and Accessible Categories};  {London Mathematical Society Lecture Note Series}; Cambridge University Press:  Cambridge, UK, 1994.

\bibitem{Bourceux}  Borceux, F. Flat functors and Cauchy completeness. In \emph{Handbook of Categorical Algebra}; Cambridge University Press:  Cambridge, UK, 1994; Volume 1. 

\bibitem{FreydKelly} Freyd, P.J.;  Kelly, G.M. Categories of continuous functors. \emph{J. Pure Appl. Algebra} \textbf{1972},  \emph{2}, 169--191.

\bibitem{Kelly} Kelly, G.M. A unified treatment of transfinite constructions for free algebras, free monoids, colimits, associated sheaves, and so on. \emph{Bull. Aust. Math. Soc.} \textbf{1980}, \emph{22},  1--83.

\bibitem{Lawvere1963} Lawvere, W. Functorial Semantics of Algebraic Theories. Ph.D. Thesis, Columbia University, New York, NY, USA, 1963. 

\bibitem{AdamekTholen} Ad\'{a}mek, J.; Herrlich, H.; Rosick\'{y}, J.;  Tholen, W.  On a generalized Small-Object Argument for the Injective Subcategory Problem.\emph{ Cah. Topol. Geom. Differ. Categ.} \textbf{2002}, \emph{43},  83--106.

\bibitem{Hirschhorn} Hirschhorn, P.S.  Model categories and their localizations. \emph{Math. Surv. Monogr.} \textbf{2003},   \emph{99}, 457.

\bibitem{Quillen67} Quillen, D.\emph{ Homotopical Algebra}; Morel, J.M., Teissier, B., Eds.; Lecture Notes in Mathematics; Springer: Berlin/Heidelberg, Germany, 1967; Volume 43.

\bibitem{Garner09} Garner, R. Understanding the small object argument. \emph{Appl. Categ. Struct.} \textbf{2009},  \emph{17} ,  247--285. 

\bibitem{Garner10} Garner, R.  Homomorphisms of higher categories. \emph{Adv. Math. } \textbf{2010},  \emph{224}, 2269--2311. 

\bibitem{Batanin98}  Batanin, M. Monoidal globular categories as natural environement for the theory of weak n-categories. \emph{Adv.~Math. } \textbf{1998}, \emph{136}, 39--103.
\bibitem{Berger} Berger, C. A Cellular Nerve for Higher Categories.  \emph{Adv. Math.} \textbf{ 2002}, \emph{ 169},  118--175.

\bibitem{EssentialWeakTholen} Tholen, W.  Essential weak factorization systems. \emph{Contrib. Gen. Algebra} \textbf{2001}, \emph{13}, 321--333.

\bibitem{TholenInjnotNat} Ad\'{a}mek, J.; Herrlich, H.; Rosick\'{y}, J.; W. Tholen, W.  Injective hulls are not natural.  \emph{Algebra Universalis}  \textbf{2002}, \emph{48}, 379--388.

\bibitem{Poizat83} Poizat, B. Une th\'{e}orie de Galois imaginaire. \emph{J. Symb. Log.} \textbf{1983}, \emph{48}, 1151--1170.

\bibitem{Shelah90}  Shelah, S.  \emph{Classification Theory and the Number of Nonisomorphic Models}, 2nd ed.; Elsevier: Amsterdam, The~Netherlands, 1990.

\bibitem{Hatcher} Hatcher, A.   \emph{Algebraic Topology}; Cambridge University Press: Cambridge, UK, 2002. 

\bibitem{Letter_Grothendieck}  Grothendieck, A.; Colmez,  P. \emph{Grothendieck-Serre Correspondence: Grothendieck's Letter to Serre};  American Mathematical Society: Paris, France, 2014. 

\bibitem{KissKearnes} Kiss, E.W.; Kearnes, K. \emph{The Shape of Congruence Lattices}. American Mathematical Society: Providence, Rhode Island, 2013.

\bibitem{McKenzie} McKenzie, R.  Some unsolved problems between lattice theory and equational logic. In Proceedings of the University of Houston Lattice Theory Conference, Houston, TX, USA, March 22-24, 1973; pp. 564--573. 

\bibitem{Brown} Brown, K. Abstract homotopy theory and generalized sheaf cohomology. \emph{Trans. Am. Math. Soc.} \textbf{1973}, \emph{186}, 419--458.

\bibitem{ProWar} Pronk, D.A.; Warren, M.A. Bicategorical fibration structures and stacks.  \emph{Theory  Appl. Categ.} \textbf{2014},\emph{ 29}, 836--873. 

\bibitem{Joyal_Tierney} Joyal, A.; Tierney, M. Strong stacks and classifying spaces.  In \emph{Category Theory}; Springer: Berlin, Germany,~1991. 

\bibitem{Maltsi} Maltsiniotis, G. Grothendieck $\infty$-Groupoids, and Still Another Definition Of $\infty$-Categories. \emph{arXiv} \textbf{2010}, arXiv:1009.2331. Available online: https://arxiv.org/abs/1009.2331 (accessed on 13 September 2010).

\bibitem{Rezk} Rezk, C. A model for the homotopy theory of homotopy theory.  \emph{Trans. Am. Math. Soc. } \textbf{2001}, \emph{353}, 973--1007.

\bibitem{Stanculescu} Stanculescu, A.E. Stacks and sheaves of categories as fibrant objects, I. \emph{Theory  Appl. Categ.}  \textbf{2014}, \emph{29}, 654--695. 

\bibitem{Simpson} Simpson, C. Homotopy Theory of Higher Categories: From Segal Categories to n-Categories and Beyond. In~\emph{New Mathematical Monographs}; Cambridge University Press:  Cambridge, UK, 2011; Volume 19.

\bibitem{Resoution_poly} M\'{e}tayer, F. Resolutions by Polygraphs. \emph{Theory Appl. Categ}. \textbf{2003},  \emph{11}, 148--184.

\bibitem{MacLane} Lane, S.M. \emph{Categories for the Working Mathematician, Graduate Texts in Mathematics}, 2nd eds.;  Springer: Berlin/Heidelberg, Germany, 1998. 

\bibitem{DugIsak} Dugger, D.; Isaksen, D.C. \emph{Weak Equivalences of Simplicial Presheaves};  Priddy, S., Goerss, P., Eds.;  Homotopy Theory: Relations With Algebraic Geometry, Group Cohomology, and Algebraic K-Theory; American Mathematical Society: Northwestern University, Evanston, IL, 2004; pp. 97--113. 

\bibitem{LafMetWor} Lafont, Y.; M\'{e}tayer, F.; Worytkiewicz, K. A folk model structure on omega-cat. \emph{Adv. Math. } \textbf{2010}, \emph{224}, 1183--1231.

\bibitem{B_Toen} Toën, B. The homotopy theory of dg-categories and derived Morita theory.  \emph{Invent. Math.}  \textbf{2007}, \emph{167}, 615--667.

\bibitem{Joyal06} Joyal, A. \emph{Foncteurs Analytiques Et Esp\`{e}ces De Structures}; In: Combinatoire énumérative (Montreal, Quebec, 1985), Vol. 1234 of Lecture Notes in Mathematics, Springer, Berlin, 1986, pp. 126--159. 

\bibitem{Thesis_Tuy} Tuy\'{e}ras, R. \emph{Sketches in Higher Category Theory}; Macquarie University: Sydney, Australia, 2015. 

\bibitem{SGA4} Artin, M.; Grothendieck, A.; Verdier, J.-L. \emph{S\'{e}minaire De G\'{e}om\'{e}trie Alg\'{e}brique Du Bois Marie -- 1963-64 -- Th\'{e}orie Des Topos Et Cohomologie \'{e}tale Des Sch\'{e}mas - (SGA 4) - Volume 1};  Springer-Verlag: Berlin, Germany; New York, NY, USA,  1972; Volume 269,  pp. xix + 525.

\end{thebibliography}

\end{document}